\newcommand{\quotient}[2]{{\raisebox{.2em}{$#1$}\left/\raisebox{-.2em}{$#2$}\right.}}
\newcommand{\Hom}{\operatorname{Hom}}
\newcommand{\tr}{\operatorname{tr}}
\newcommand{\Tr}{\operatorname{Tr}}
\newcommand{\SL}{\operatorname{SL}}
\newcommand{\id}{{\rm id}}
\newcommand{\Span}{\operatorname{Span}}
\newcommand{\End}{\operatorname{End}}
\newcommand{\GL}{\operatorname{GL}}
\newcommand{\PGL}{\operatorname{PGL}}
\newcommand{\Mod}{\operatorname{Mod}}
\newcommand{\St}{\operatorname{St}}
\newcommand{\Mat}{\operatorname{Mat}}
\newcommand{\heightexch}[3]{
	\begin{tikzpicture}[baseline=-0.4ex,scale=0.5, >=stealth]
	\draw [fill=gray!60,gray!45] (-.7,-.75) rectangle (.4,.75) ;
	\draw[#1] (0.4,-0.75) to (.4,.75);
	\draw[line width=1.2] (0.4,-0.3) to (-.7,-.3);
	\draw[line width=1.2] (0.4,0.3) to (-.7,.3);
	\draw (0.65,0.3) node {\scriptsize{$#2$}};
	\draw (0.65,-0.3) node {\scriptsize{$#3$}};
	\end{tikzpicture}
}
\newcommand{\heightcurve}{
\begin{tikzpicture}[baseline=-0.4ex,scale=0.5]
\draw [fill=gray!20,gray!45] (-.7,-.75) rectangle (.4,.75) ;
\draw[-] (0.4,-0.75) to (.4,.75);
\draw[line width=1.2] (-.7,-0.3) to (-.4,-.3);
\draw[line width=1.2] (-.7,0.3) to (-.4,.3);
\draw[line width=1.15] (-.4,0) ++(-90:.3) arc (-90:90:.3);
\end{tikzpicture}
}
\numberwithin{equation}{section}
\newtheorem{Theorem}{Theorem}[section]
\newtheorem*{Theorem*}{Theorem}
\newtheorem{Corollary}[Theorem]{Corollary}
\newtheorem{Lemma}[Theorem]{Lemma}
\newtheorem{Proposition}[Theorem]{Proposition}
 { \theoremstyle{definition}
\newtheorem{Definition}[Theorem]{Definition}

\newtheorem{Example}[Theorem]{Example}
\newtheorem{Remark}[Theorem]{Remark}
\newtheorem{Notations}[Theorem]{Notations} }
\begin{document}
\allowdisplaybreaks

\newcommand{\arXivNumber}{2202.07649}

\renewcommand{\PaperNumber}{064}

\FirstPageHeading

\ShortArticleName{Mapping Class Group Representations Derived from Stated Skein Algebras}

\ArticleName{Mapping Class Group Representations\\ Derived from Stated Skein Algebras}

\Author{Julien KORINMAN}

\AuthorNameForHeading{J.~Korinman}

\Address{Department of Mathematics, Faculty of Science and Engineering, Waseda University,\\
3-4-1 Ohkubo, Shinjuku-ku, Tokyo, 169-8555, Japan}
\Email{\href{julien.korinman@gmail.com}{julien.korinman@gmail.com}}
\URLaddress{\url{https://sites.google.com/site/homepagejulienkorinman/}}

\ArticleDates{Received March 09, 2022, in final form August 22, 2022; Published online August 26, 2022}

\Abstract{We construct finite-dimensional projective representations of the mapping class groups of compact connected oriented surfaces having one boundary component using stated skein algebras.}

\Keywords{mapping class groups; stated skein algebras; quantum moduli spaces; quantum Teichm\"uller spaces}

\Classification{57R56; 57N10; 57M25}

\section{Introduction}

Let $\Sigma$ be a compact oriented surface and denote by $\Mod(\Sigma)$ its mapping class group. The goal of this paper is to define and study finite-dimensional projective representations $\rho\colon \Mod(\Sigma)\to \PGL(V)$. The techniques to define them are quite standard and make use of the skein algebras and their recent generalizations: the (reduced) stated skein algebras. Let $\mathcal{A}\subset \partial \Sigma$ be a (possibly empty) finite set of embedded open intervals. The pair $\mathbf{\Sigma}=(\Sigma, \mathcal{A})$ will be referred to as a~\textit{marked surface}. Given $A^{1/2} \in \mathbb{C}^*$, the \textit{stated skein algebra} $\mathcal{S}_A(\mathbf{\Sigma})$ is a~generalization of the Kauffman-bracket skein algebra introduced by Bonahon--Wong~\cite{BonahonWongqTrace} and L\^e~\cite{LeStatedSkein} which admits an interesting quotient $\overline{\mathcal{S}}_A(\mathbf{\Sigma})$ named \textit{the reduced stated skein algebra} in~\cite{CostantinoLe19}. When $\mathcal{A}=\varnothing$, then $\mathcal{S}_A(\mathbf{\Sigma})=\overline{\mathcal{S}}_A(\mathbf{\Sigma})$ is the usual Kauffman-bracket skein algebra. In general, the stated skein algebra and its reduced version admit non trivial finite-dimensional representations if and only if the parameter $A$ is a root of unity. In all the paper, we will assume that $A\in \mathbb{C}^{*}$ is a root of unity of odd order~$N$. Let $\mathcal{S}$ be either $\mathcal{S}_A(\mathbf{\Sigma})$ or $\overline{\mathcal{S}}_A(\mathbf{\Sigma})$ and consider a finite-dimensional representation
\[ r\colon \ \mathcal{S} \to \End(V).\]
The mapping class group $\Mod(\Sigma)$ admits a natural right action on $\mathcal{S}$ by sending a stated diagram $(D,s)$ to $\big(\phi^{-1}(D), s\circ \phi\big)$ for $\phi \in \Mod(\Sigma)$ (see Section~\ref{sec_mcg} for precise definitions). As a consequence, we can twist the representation $r$ to a new representation $\phi \cdot r\colon \mathcal{S} \to \End(V)$ defined by the formula $(\phi \cdot r) (X):= r ( X^{\phi})$. For $G\subset \Mod(\Sigma)$ a subgroup (which will be either $\Mod(\Sigma)$ or its Torelli subgroup), the representation $r$ will be called $G$-\textit{stable} if $r$ and $\phi \cdot r$ are canonically isomorphic for all $g\in G$. By definition, this means that for all $g\in G$, there exists a linear operator $\rho(\phi) \in \GL(V)$ unique up to multiplication by a scalar (this is the meaning of being canonically isomorphic) such that the following Egorov identity holds:
\[ (\phi \cdot r) (X) = \rho(\phi) r(X) \rho(\phi)^{-1} \qquad \text{for all } X\in \mathcal{S}.\]
The assignement $\phi \mapsto \rho(\phi)$ then defines a projective representation $\rho\colon G \to \PGL(V)$. Therefore a $G$-stable representation of $\mathcal{S}$ induces a projective representation of $G$.
 For instance the Witten--Reshetikhin--Turaev representations $r^{\rm WRT}\colon \mathcal{S}_A(\Sigma, \varnothing) \to \End(V(\Sigma))$ arising in modular TQFTs \cite{RT, Tu} are $\Mod(\Sigma)$-stable and induce well-studied representations $\rho^{\rm WRT}\colon \Mod(\Sigma) \to \PGL(V(\Sigma))$ (see, e.g.,~\cite{Koju2} and references therein for details on these representations). A second family of examples arises from the skein representations in non semi-simple TQFTs~\cite{BCGPTQFT} which induce representations of the mapping class group and its Torelli subgroup (see~\cite{DeRenziGeerPatureaRunkel_MCG}).

 In general, finding $G$-stable representations is a difficult problem. Let $Z$ be the center of $\mathcal{S}$ and $\widehat{\mathcal{X}}:= \operatorname{Specm}(Z)$. An irreducible representation $r\colon \mathcal{S}\to \End(V)$ sends central elements
to scalar operators so it induces a character in $\widehat{\mathcal{X}}$ and we get a character map $\chi\colon \mathrm{Irrep}(\mathcal{S}) \to \widehat{\mathcal{X}}$. In \cite{FrohmanKaniaLe_UnicityRep, KojuAzumayaSkein}, it is proved that $\chi$ is surjective and that there exists a Zariski open subset $\mathcal{AL} \subset \widehat{\mathcal{X}}$, named \textit{Azumaya locus}, such that the restricted map $\chi\colon \chi^{-1}(\mathcal{AL}) \to \mathcal{AL}$ is a bijection. The right action of the mapping class group on~$\mathcal{S}$ restricts to a right action on the center $Z$ so it induces a left action of $\Mod(\Sigma)$ on $\mathcal{X}$. Now if $x\in \mathcal{AL}$ is $G$-stable and $r_x\colon \mathcal{S}\to \End(V)$ is an irreducible representation (unique up to unique isomorphism) with $\chi(r_x)=x$, then $r_x$ is clearly $G$-stable and we get a projective representation $\rho_x\colon \Mod(\Sigma)\to \PGL(\Sigma)$. The affine variety $\widehat{\mathcal{X}}$ admits a well-understood geometric interpretation as a finite cover of the $\SL_2$ relative representation variety of $\mathbf{\Sigma}$ and it is easy to find points which are stable under the action of the mapping class group or the Torelli group. However computing the Azumaya locus is a quite difficult problem (see \cite{GanevJordanSafranov_FrobeniusMorphism, KojuSurvey} for recent developments) and there is no reason to believe, in general, that it contains such fixed points. For instance for the Witten--Reshetikhin--Turaev representations, the induced characters do not belong to the Azumaya locus (see~\cite{BonahonWong4}) and it is a highly non trivial fact (which follows from TQFTs properties) that they are $\Mod(\Sigma)$-stable.

The main idea of this paper is to replace the standard skein algebras by particular (reduced) stated skein algebras. Let $\Sigma_{g,1}$ be a genus $g$ surface with $1$ boundary component, $a_{\partial} \subset \partial \Sigma_{g,1}$ a~boundary arc and consider the marked surface $\mathbf{\Sigma}_g^*:=(\Sigma_{g,1}, \{a_{\partial}\})$. In \cite{GanevJordanSafranov_FrobeniusMorphism}, the authors computed explicitly the Azumaya locus of $\mathcal{S}_A(\mathbf{\Sigma}_g^*)$. In the reduced case, we will prove

\begin{Theorem}\label{theorem1}
When $A$ is a root of unity of odd order, the reduced stated skein algebra $\overline{\mathcal{S}}_A(\mathbf{\Sigma}_g^*)$ is Azumaya.
\end{Theorem}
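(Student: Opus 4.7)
The strategy is to reduce the claim to the well-known fact that quantum tori at roots of unity of odd order are Azumaya over their centers, via the splitting homomorphism for reduced stated skein algebras combined with the Chebyshev--Frobenius central embedding.

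Concretely, I would first fix a maximal system of $2g$ disjoint, properly embedded arcs on $\Sigma_{g,1}$ whose complement is a polygon carrying an isotopic copy of the boundary arc $a_{\partial}$ among its marked arcs. Applying the Costantino--L\^e splitting theorem iteratively yields an injective algebra morphism from $\overline{\mathcal{S}}_A(\mathbf{\Sigma}_g^*)$ into the reduced stated skein algebra of the cut polygon, which is an explicit quantum affine space (an iterated Ore extension of quasi-commuting variables). Using the Chebyshev--Frobenius embedding $\overline{\mathcal{S}}_1(\mathbf{\Sigma}_g^*)\hookrightarrow Z\bigl(\overline{\mathcal{S}}_A(\mathbf{\Sigma}_g^*)\bigr)$, I would then identify the center $Z$ precisely and show, using the ambient quantum affine space, that $\overline{\mathcal{S}}_A(\mathbf{\Sigma}_g^*)$ is a finitely generated projective $Z$-module whose rank is constant and equal to the square of the PI-degree. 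Azumaya-ness follows from the standard Brown--Goodearl criterion for affine prime PI algebras, combined with smoothness of $\operatorname{Specm}(Z)$ at every closed point (inherited from smoothness of the associated torus variety).

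The main obstacle is the constant-rank step: one needs to rule out the appearance of any ramification divisor in $\operatorname{Specm}(Z)$. For the non-reduced algebra $\mathcal{S}_A(\mathbf{\Sigma}_g^*)$, Ganev--Jordan--Safranov have shown that such a divisor does appear, corresponding to central characters where the boundary holonomy becomes scalar, so that the Azumaya locus is only a proper open subset. The whole content of the theorem is thus that the additional ``bad arc'' relations defining $\overline{\mathcal{S}}_A(\mathbf{\Sigma}_g^*)$ cut away precisely this divisor. The cleanest route is probably to bypass any explicit comparison with the non-reduced case and to present $\overline{\mathcal{S}}_A(\mathbf{\Sigma}_g^*)$, handle by handle, as a matrix algebra over a quantum torus via an iterated handle-attachment construction; this would simultaneously pin down the rank and exhibit the Azumaya property manifestly.
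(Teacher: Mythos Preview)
Your proposal takes a route entirely different from the paper's, and the step you yourself flag as the main obstacle is not actually overcome---it remains a plan rather than an argument.

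The paper does not attempt any direct presentation of $\overline{\mathcal{S}}_A(\mathbf{\Sigma}_g^*)$ as a matrix algebra over a quantum torus, nor does it try to verify constant rank by hand. Instead it proceeds via Brown--Gordon's theory of Poisson orders. The Chebyshev--Frobenius morphism makes $\overline{\mathcal{S}}_A(\mathbf{\Sigma}_g^*)$ into a $(\mathbb{C}^*)$-equivariant Poisson order over $\overline{\mathcal{X}}(\mathbf{\Sigma}_g^*)$, so by Brown--Gordon the isomorphism type of the fibre $\big(\overline{\mathcal{S}}_A(\mathbf{\Sigma}_g^*)\big)_\rho$ is constant along equivariant symplectic leaves. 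The geometric input, coming from Alekseev--Malkin and Ganev--Jordan--Safronov, is that $\overline{\mathcal{X}}(\mathbf{\Sigma}_g^*)=\mu^{-1}(\SL_2^1)$ consists of a \emph{single} equivariant symplectic leaf (the symplectic leaves $\mu^{-1}(C_b)$ are permuted transitively by the toric action). Hence all fibres are isomorphic; since the fully Azumaya locus is nonempty (being open dense), every fibre is a matrix algebra of the correct size, and the algebra is Azumaya. No explicit handle-by-handle or quantum-torus computation is needed.

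Regarding your sketch: embedding into the reduced skein algebra of the cut polygon (equivalently, the quantum trace into the balanced Chekhov--Fock algebra) is correct and the paper uses it elsewhere, but being a subalgebra of an Azumaya quantum torus with matching PI-degree does not by itself force Azumaya-ness---you would still need that the extension of centers is faithfully flat and that the base-changed algebra equals the quantum torus, which is precisely the ramification question you have not resolved. Your closing suggestion to ``present $\overline{\mathcal{S}}_A(\mathbf{\Sigma}_g^*)$, handle by handle, as a matrix algebra over a quantum torus'' is not carried out; the fusion operation (cobraided tensor product) does not transparently preserve the property of being a matrix algebra over a quantum torus, so this would require substantial new work. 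The Poisson-order argument sidesteps all of this by converting the algebraic constancy question into a geometric one about symplectic leaves, which is already answered in the literature.
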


Note that the non-reduced stated skein algebra $\mathcal{S}_A(\mathbf{\Sigma}_g^*)$ appeared in literature in various contexts.
\begin{enumerate}\itemsep=0pt
\item They were first defined independently by Buffenoir--Roche and Alekseev--Grosse--Scho\-me\-rus under the name \textit{quantum moduli spaces} in \cite{AlekseevGrosseSchomerus_LatticeCS1,AlekseevGrosseSchomerus_LatticeCS2, AlekseevSchomerus_RepCS, BuffenoirRoche, BuffenoirRoche2} where they appeared as deformation quantization of the Fock--Rosly \cite{FockRosly} and Alekseev--Kosman--Malkin--Meinrenken \cite{AlekseevKosmannMeinrenken,AlekseevMalkin_PoissonLie, AlekseevMalkin_PoissonCharVar} moduli spaces (see also \cite{KojuTriangularCharVar}), that we will denote by $\mathcal{X}_{\SL_2}(\mathbf{\Sigma}_g^*)$. As an affine variety, $\mathcal{X}_{\SL_2}(\mathbf{\Sigma}_g^*)= \Hom(\pi_1(\Sigma_{g,1}), \SL_2)$ is the representation variety of $\Sigma_{g,1}$ (see Section \ref{sec_geometric} for details).
\item They were rediscovered independently by Habiro under the name \textit{quantum representation variety} in \cite{Habiro_QCharVar}.
\item They appear as a particular case of stated skein algebras defined by Bonahon--Wong \cite{BonahonWongqTrace} and L\^e \cite{LeStatedSkein}.
\item They eventually appeared under the name \textit{internal skein algebras} in the work of Ben~Zvi--Brochier--Jordan \cite{BenzviBrochierJordan_FactAlg1, BenzviBrochierJordan_FactAlg2} and further studied by Cooke--Ganev--Gunningham--Jordan--Saf\-ro\-nov in \cite{Cooke_FactorisationHomSkein, GanevJordanSafranov_FrobeniusMorphism, GunninghamJordanSafranov_FinitenessConjecture}.
\end{enumerate}
The equivalence between quantum moduli spaces and stated skein algebras was proved in \cite{BullockFrohmanKania_LGFT,Faitg_LGFT_SSkein, KojuPresentationSSkein}. The equivalence between internal skein algebras and quantum moduli spaces is proved in~\cite{BenzviBrochierJordan_FactAlg1}. The equivalence between internal skein algebras and stated skein algebras is proved in~\cite{Haioun_Sskein_FactAlg}. The equivalence between quantum representation varieties and stated skein algebras will appear in the forthcoming paper~\cite{KojuMurakami_QCharVar} (and will not be used in the present paper).
Note that among these four equivalent definitions of $\mathcal{S}_A(\mathbf{\Sigma}_g^*)$, only the stated skein approach makes appear the reduced stated skein algebra $\overline{\mathcal{S}}_A(\mathbf{\Sigma}_g^*)$ naturally: it is the quotient of the stated skein algebra by the kernel of the quantum trace (bad arcs ideal).

In order to describe the representations, let us describe $\widehat{\mathcal{X}}$. Like before, $\mathcal{S}=\mathcal{S}_A$ denotes either a stated skein algebra or a reduced stated skein algebra at a root unity~$A$ of odd order. Let $\mathcal{S}_{+1}$ be the algebra $\mathcal{S}$ taken with parameter $A^{1/2}=+1$. The algebra $\mathcal{S}_{+1}$ is commutative and there exists an embedding \cite{BonahonWong1, KojuQuesneyClassicalShadows}
\[ \operatorname{Ch}_A \colon \ \mathcal{S}_{+1} \hookrightarrow \mathcal{Z}(\mathcal{S}_A), \]
into the center of $\mathcal{S}_A$, named \textit{Chebyshev--Frobenius} morphism. Write $\widehat{\mathcal{X}}:= \operatorname{Specm}\left( \mathcal{Z}(\mathcal{S}_A) \right)$ and $\mathcal{X} := \operatorname{Specm}(\mathcal{S}_{+1})$. Then $\operatorname{Ch}_A$ induces a dominant map $\pi\colon \widehat{\mathcal{X}} \to \mathcal{X}$ which is a finite branched covering and is equivariant for the mapping class group action. As described in Section~\ref{sec_mcg}, a~point $x\in \widehat{\mathcal{X}}$ is $G$-invariant if and only if its image $\pi(x)$ is $G$-invariant. If $\mathcal{S}=\mathcal{S}_A(\mathbf{\Sigma}_g^*)$, then $\widehat{\mathcal{X}}=\mathcal{X}$ and by \cite[Theorem~1.3]{KojuQuesneyClassicalShadows}, one has a Poisson $\Mod(\Sigma_{g,1})$-equivariant isomorphism
\[ \widehat{\mathcal{X}} = \mathcal{X} \cong \mathcal{X}_{\SL_2}(\mathbf{\Sigma}_{g}^*) = \Hom (\pi_1(\Sigma_{g,1}), \SL_2). \]
The proof of Theorem \ref{theorem1} essentially follows from Brown--Gordon theory of (equivariant) Poisson orders and from the study in \cite{AlekseevKosmannMeinrenken, AlekseevMalkin_PoissonLie, AlekseevMalkin_PoissonCharVar, GanevJordanSafranov_FrobeniusMorphism} of the symplectic leaves of $ \mathcal{X}_{\SL_2}(\mathbf{\Sigma}_g^*)$. Let $\gamma_{\partial}\in \pi_1(\Sigma_{g,1})$ be the class of a small curve encircling once the boundary component of $\Sigma_{g,1}$. The \textit{moment map} $\mu\colon \Hom(\pi_1(\Sigma_{g,1}), \SL_2) \to \SL_2$ is the map sending a representation $\rho$ to $\mu(\rho):= \rho(\gamma_{\partial})$. Note that the subsets $\mu^{-1}(g)$ are preserved by the mapping class group action.

The main construction of the present paper is summarized as follows. Let $A$ a root of unity of odd order~$N$.
Let $G\subset \Mod(\Sigma_{g,1})$ be a subgroup of the mapping class group and $\mathcal{O}\subset \Hom(\pi_1(\Sigma_{g,1}), \SL_2)$ a finite $G$-orbit included in a leaf $\mu^{-1}(g)$ with $g=\left(\begin{smallmatrix} a & b \\c & d \end{smallmatrix}\right) \in \SL_2$. The orbit is said in the big cell if $a\neq 0$ and in the reduced cell else. To each such orbit, thanks to Theorem~\ref{theorem1}, we will associate a $G$-stable representation of either the stated skein algebra $\mathcal{S}_A(\mathbf{\Sigma}_g^*)$ when the orbit is in the big cell, or of the reduced stated skein algebra $\overline{\mathcal{S}}_A(\mathbf{\Sigma}_g^*)$ if the orbit is in the reduced cell.
Thanks to this $G$-stable representation, we will construct in Section~\ref{sec_representations} a finite-dimensional projective representation
\[ \pi_{\mathcal{O}}\colon \ G \to \PGL(W(\mathcal{O}))\]
such that $\dim ( W(\mathcal{O}) ) = \begin{cases}
N^{3g} |\mathcal{O}|& \text{if $\mathcal{O}$ is in the big cell}, \\
N^{3g-1} | \mathcal{O} | & \text{if $\mathcal{O}$ is reduced}.
\end{cases}$

The end of the paper is devoted to the study of the kernel of the representations $\pi$. More precisely, we will find criteria to prove that a given mapping class does not belong to the kernel of $\pi$. The simplest such criterion is the following, which should be compared to the work of Costantino--Martelli in \cite[Theorem~7.1]{CostantinoMartelli} on Turaev--Viro representations.

\begin{Theorem}\label{theorem2}
Let $\pi\colon G \to \End(W(\mathcal{O}))$ be a representation of $G\subset \Mod(\Sigma_{g,1})$ associated to a~root of unity of odd order $N$. Consider a mapping class $\phi\in G$ and a simple closed curve $\alpha \subset \Sigma_{g,1}$ such that $\alpha$ and $\beta:= \phi(\alpha)$ are not isotopic.
Suppose that there exists a~triangulation~$\Delta$ of $\mathbf{\Sigma}_g^*$ such that
\begin{enumerate}\itemsep=0pt
\item[$1)$] there exists $\rho \in \mathcal{O}$ which admits a $\Delta$-lift,
\item[$2)$] both $\alpha$ and $\beta$ intersect each edge of $\Delta$ at most $N-1$ times.
\end{enumerate}
Then $\pi(\phi)\neq \id$.
\end{Theorem}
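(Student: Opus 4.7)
I would argue by contradiction and assume $\pi(\phi)=\id$ in $\PGL(W(\mathcal{O}))$. Then the lift $\rho(\phi)\in \GL(W(\mathcal{O}))$ of the projective operator is a scalar, and applying the Egorov identity to $X=[\beta]$ gives
\[ r([\alpha])\;=\;r\bigl([\beta]^{\phi}\bigr)\;=\;\rho(\phi)\,r([\beta])\,\rho(\phi)^{-1}\;=\;r([\beta]), \]
where $r$ denotes the underlying representation of the (reduced) stated skein algebra on $W(\mathcal{O})$. So it suffices to produce a vector in $W(\mathcal{O})$ on which $r([\alpha])$ and $r([\beta])$ disagree, which I would read off from an explicit triangulation model of~$r$.

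The triangulation $\Delta$ supplies such a model via the Bonahon--Wong quantum trace $\operatorname{Tr}_\Delta$, which embeds the (reduced) stated skein algebra into a quantum torus $\mathcal{Z}_\Delta$ with generators $\{Z_e\}$ indexed by the edges $e$ of $\Delta$. At a root of unity of odd order $N$, the $N$-th powers $Z_e^N$ are central, and a character of this central subalgebra is precisely the data of a $\Delta$-lift of a classical representation. Hypothesis~(1) thus supplies a character $\chi$ compatible with the orbit $\mathcal{O}$, which in turn determines a Weyl (``clock-and-shift'') representation of $\mathcal{Z}_\Delta$ in which the monomials $Z^{\mathbf{k}}=\prod_e Z_e^{k_e}$ with $0\le k_e\le N-1$ act as linearly independent operators. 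The construction of $\pi_{\mathcal{O}}$ carried out in Section~\ref{sec_representations} arranges that this Weyl module, restricted along $\operatorname{Tr}_\Delta$, sits as a summand of $W(\mathcal{O})$.

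The intersection bound~(2) now enters decisively. The quantum trace $\operatorname{Tr}_\Delta([\gamma])$ of a simple closed curve $\gamma$ is a sum of monomials in the $Z_e$ whose $e$-exponents are bounded in absolute value by the geometric intersection number $i(\gamma,e)$. Hypothesis~(2) therefore ensures that every exponent appearing in $\operatorname{Tr}_\Delta([\alpha])$ and $\operatorname{Tr}_\Delta([\beta])$ lies in the range $\{-(N-1),\dots,N-1\}$, so these Laurent polynomials project to linear combinations of \emph{linearly independent} operators in the Weyl module. Since $\alpha$ and $\beta$ are non-isotopic simple closed curves, their quantum traces are distinct elements of $\mathcal{Z}_\Delta$---by injectivity of $\operatorname{Tr}_\Delta$ together with the fact that a simple closed curve is determined by the top-degree (or ``tropical'') part of its quantum trace---and this distinctness survives in the Weyl module. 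Hence $r([\alpha])\ne r([\beta])$, contradicting the opening paragraph.

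The main obstacle I foresee is the penultimate step: controlling the quantum trace of a simple closed curve finely enough to guarantee that two non-isotopic curves with edge-intersections smaller than $N$ remain distinguishable after reduction of exponents modulo $N$. This should rest on a highest-term (Newton-polytope, or tropical) description of $\operatorname{Tr}_\Delta$ in shear coordinates, in the same spirit as the Costantino--Martelli argument \cite[Theorem~7.1]{CostantinoMartelli} in the Turaev--Viro setting; the rest of the plan is largely bookkeeping about how the Weyl module attached to a $\Delta$-lift is identified with a piece of $W(\mathcal{O})$.
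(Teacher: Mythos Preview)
Your overall strategy---contradict $\pi(\phi)=\id$ by pulling the Egorov identity down to a single summand $V(\rho)$, then use the quantum trace to embed into a quantum torus and argue that the intersection bound keeps the monomials of $\Tr_\Delta([\alpha])$ and $\Tr_\Delta([\beta])$ distinguishable in the Weyl module---is exactly the paper's approach (Proposition~\ref{prop_kernel_QT}, Lemma~\ref{lemma_criterion2}, Theorem~\ref{theorem_criterion}). But there is one genuine gap in your argument, and it is precisely the point you flagged as an obstacle.

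Your claim that the monomials $Z^{\mathbf{k}}$ with exponents in $\{-(N-1),\dots,N-1\}$ act as linearly independent operators is false. The centre of the balanced Chekhov--Fock algebra $\mathcal{Z}_q(\mathbf{\Sigma}_g^*,\Delta)$ is \emph{not} generated by the $N$-th powers alone: by Theorem~\ref{theorem_centerCF} it is generated by the image of $\operatorname{Fr}_N$ together with the extra boundary element $H_\partial=Z^{\mathbf{k}_\partial}$, where $\mathbf{k}_\partial$ sends every edge to~$2$. Equivalently, equation~\eqref{eq_K0} gives $K_\Delta^0=NK_\Delta+\mathbb{Z}\mathbf{k}_\partial$. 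So two monomials $Z^{\mathbf{k}_1}$ and $Z^{\mathbf{k}_2}$ with small exponents can still collapse in the Weyl module whenever $\mathbf{k}_1-\mathbf{k}_2$ is a multiple of $\mathbf{k}_\partial$; your linear-independence assertion ignores this relation, and with it your ``distinctness survives'' step does not follow.

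The paper resolves this with one extra observation you are missing: for a \emph{closed} curve $\gamma$, every balanced map $\mathbf{k}\in\underline{\St}^a(\gamma)$ arising in $\Tr_\Delta([\gamma])$ satisfies $\mathbf{k}(a_\partial)=0$ (Lemma~\ref{lemma_St}(2)), simply because $\gamma$ does not meet the boundary arc. Evaluating the relation $\mathbf{k}_1-\mathbf{k}_2=n\mathbf{k}_\partial+N\mathbf{k}_0$ at $a_\partial$ then forces $n=0$, killing the $H_\partial$ ambiguity; the intersection bound $\lvert\mathbf{k}_i(e)\rvert\le N-1$ together with the parity constraint in Lemma~\ref{lemma_St}(1) then forces $\mathbf{k}_0=0$ (this is Lemma~\ref{lemma_criterion2}). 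The concrete distinguishing monomial is the all-plus state $\mathds{k}_\gamma$, whose value at each edge is the geometric intersection number $\lvert\gamma\cap e\rvert$: since $\alpha$ and $\beta$ are non-isotopic, some edge has $\lvert\alpha\cap e\rvert\neq\lvert\beta\cap e\rvert$, so (after possibly swapping $\alpha$ and $\beta$) the monomial $\mathds{k}_\beta$ lies in $\underline{\St}^a(\beta)\setminus\underline{\St}^a(\alpha)$, and its coset in $K_\Delta/K_\Delta^0$ witnesses the inequality in Lemma~\ref{lemma_criterion1}. This is the ``highest-term'' argument you anticipated; the missing ingredient is the boundary-arc vanishing that neutralises $H_\partial$.

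Two minor corrections: the balanced Chekhov--Fock algebra is generated by $Z^{\mathbf{k}}$ for balanced $\mathbf{k}\in K_\Delta$, not by single-edge generators $Z_e$ (the indicator of one edge is typically not balanced); and in the non-reduced (big-cell) case one must use the refined quantum trace of Section~\ref{sec_QT_nonreduced} rather than $\Tr_\Delta$, though the argument is otherwise identical (Lemma~\ref{lemma_refined}).
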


The condition ``$\rho$ admits a $\Delta$ lift'' is a generic condition which permits the use of quantum Teichm\"uller theory to study $\pi$. It says that the representation $\rho$ is described by a $\SL_2$ version of the shear-bend coordinates (see Sections~\ref{sec_QT_reduced} and~\ref{sec_QT_nonreduced} for details).

{\bf Plan of the paper.}
The paper is organized as follows. In Section~\ref{sec_SSkein} we review the definition and main properties of stated skein algebras at odd roots of unity and their reduced version. Section~\ref{sec_geometric} is devoted to the geometric study of $\mathcal{X}_{\SL_2}(\mathbf{\Sigma}_g^*)$ following \cite{AlekseevKosmannMeinrenken, AlekseevMalkin_PoissonLie, AlekseevMalkin_PoissonCharVar, GanevJordanSafranov_FrobeniusMorphism}. In Section~\ref{sec_algebraic}, we review the notions of Azumaya loci and Poisson orders which led to the proof of Theorem~\ref{theorem1}.
Parts of the exposition of these three sections already appeared in the author's unpublished survey~\cite{KojuSurvey}.
In Section~\ref{sec_representations}, we construct the representations $ \pi\colon G \to \PGL(V(\mathcal{O}))$ and provide some tools to prove that a given mapping class does not belong to the kernel.

\section{Reduced stated skein algebras}\label{sec_SSkein}

\subsection{Definition of the reduced stated skein algebras}

\begin{Definition} A \textit{marked surface} $\mathbf{\Sigma}=(\Sigma, \mathcal{A})$ is a compact oriented surface $\Sigma$ (possibly with boundary) with a finite set $\mathcal{A}=\{a_i\}_i$ of orientation-preserving immersions $a_i\colon [0,1] \hookrightarrow \partial \Sigma$, \textit{named boundary arcs}, whose restrictions to $(0,1)$ are embeddings and whose interiors are pairwise disjoint. We say that $\mathbf{\Sigma}=(\mathbf{\Sigma}, \mathcal{A})$ is \textit{unmarked} if $\mathcal{A}=\varnothing$. A \textit{puncture} is a connected component of $\partial \Sigma \setminus \mathcal{A}$.

An \textit{embedding} $f\colon (\Sigma, \mathcal{A}) \to (\Sigma', \mathcal{A}')$ of marked surfaces is a orientation-preserving proper embedding $f\colon \Sigma \to \Sigma'$ so that for each boundary arc $a \in \mathcal{A}$ there exists $a' \in \mathcal{A}$ such that $f\circ a$ is the restriction of $a'$ to some subinterval of $[0,1]$. When several boundary arcs $a_1, \dots, a_n$ in $\mathbf{\Sigma}$ are mapped to the same boundary arc $b$ of $\mathbf{\Sigma}'$ we include in the definition of $f$ the datum of a total ordering of $\{a_1,\dots, a_n\}$.
 Marked surfaces with embeddings form a category $\mathrm{MS}$. We denote by $\mathrm{MS}^{(1)}\subset \mathrm{MS}$ the full subcategory generated by connected marked surfaces having exactly one boundary arc.
\end{Definition}

By abuse of notation, we will often denote by the same letter the embedding $a_i$ and its image $a_i((0,1)) \subset \partial \Sigma$ and both call them boundary arcs. We will also abusively identify $\mathcal{A}$ with the disjoint union $\bigsqcup_i a_i((0,1)) \subset \partial \Sigma$ of open intervals.

 \begin{Notations} Let us name some marked surfaces. Let $\Sigma_{g,n}$ be an oriented connected surface of genus $g$ with $n$ boundary components.
 \begin{enumerate}\itemsep=0pt
 \item The \textit{once-punctured monogon} $\mathbf{m}_1=(\Sigma_{0,2}, \{a\})$ is an annulus with one boundary arc in one of its boundary component.
 \item The \textit{bigon} $\mathbb{B}$ is a disc with two boundary arcs, the \textit{triangle} $\mathbb{T}$ is a disc with three boundary arcs. We also write $\mathbb{D}_1^+:= (\Sigma_{0,2}, \{a,b\})$ the annulus with one boundary arc in each boundary component.
 \item We denote by $\mathbf{\Sigma}_{g}^*=(\Sigma_{g,1}, \{a_{\partial}\} )$ the surface $\Sigma_{g,1}$ with a single boundary arc in its only boundary component.
 \end{enumerate}

 \end{Notations}

 There are three natural operations on the category of marked surfaces.
 \begin{enumerate}\itemsep=0pt
 \item The \textit{disjoint union} $\bigsqcup$ which endows $\mathrm{MS}$ with a symmetric monoidal structure. (It is actually a coproduct in $\mathrm{MS}$.)
 \item The \textit{gluing operation} described as follows.
 Let $\mathbf{\Sigma}=(\Sigma, \mathcal{A})$ be a marked surface and $a,b\in \mathcal{A}$ two boundary arcs. Set $\Sigma_{a\#b} := \quotient{\Sigma}{a(t) \sim b(1-t)}$ and $\mathcal{A}_{a\#b}:= \mathcal{A}\setminus (a\cup b)$. The marked surface $\mathbf{\Sigma}_{a\#b}=(\Sigma_{a\#b}, \mathcal{A}_{a\#b})$ is said obtained from $\mathbf{\Sigma}$ by gluing $a$ and $b$.
 \item The \textit{fusion operation} is described as follows. Consider a marked surface $\mathbf{\Sigma}$ with two boundary arcs $a$, $b$ as before and denote by $i$, $j$, $k$ the three boundary arcs (edges) of the triangle~$\mathbb{T}$. The fusion of $\mathbf{\Sigma}$ along~$a$ and~$b$ is the marked surface $\mathbf{\Sigma}_{a\circledast b} := ( \mathbf{\Sigma}\bigsqcup \mathbb{T} )_{a\# i , b\# j }$. For $\mathbf{\Sigma}_1=(\Sigma_1, \{a_1\}), \mathbf{\Sigma}_2=(\Sigma_2, \{a_2\}) \in \mathrm{MS}^{(1)}$, we denote by $\mathbf{\Sigma}_1\wedge \mathbf{\Sigma}_2$ the fusion $(\mathbf{\Sigma}_1\bigsqcup \mathbf{\Sigma}_2)_{a_1 \circledast a_2}$.
 \end{enumerate}

 \begin{Remark}\label{remark_fusion}\quad
 \begin{enumerate}\itemsep=0pt
 \item $\mathbf{m}_1$ is obtained from $\mathbb{B}$ by fusioning its two boundary arcs.
 \item $\mathbf{\Sigma}_1^*$ is obtained from $\mathbb{D}_1^+$ by fusioning its two boundary arcs.
 \item We have $\mathbf{\Sigma}_{g}^* \wedge \mathbf{\Sigma}_{g'}^* \cong \mathbf{\Sigma}_{g+g'}^*$ so $\mathbf{\Sigma}_g^*\cong (\mathbf{\Sigma}_1^*)^{\wedge n}$.
 \end{enumerate}
 These remarks permitted the authors of \cite{AlekseevKosmannMeinrenken, AlekseevMalkin_PoissonCharVar} to reduce the study of the Poisson geometry of the relative character variety of $\mathbf{\Sigma}_g^*$ to the study of the representation variety of $\mathbb{D}_1^+$, made in~\cite{AlekseevMalkin_PoissonLie}, as we shall briefly review in Section~\ref{sec_geometric}.
 \end{Remark}

A \textit{tangle} is a compact framed, properly embedded $1$-dimensional manifold $T\subset \Sigma \times (0,1)$ with $\partial T \subset \mathcal{A}\times (0,1)$ such that for every point of~$\partial T$ the framing is parallel to the $(0,1)$ factor and points to the direction of~$1$. The \textit{height} of $(v,h)\in \Sigma \times (0,1)$ is~$h$. If $a$ is a boundary arc and~$T$ a~tangle, we impose that no two points in $\partial_aT:= \partial T \cap a\times(0,1)$ have the same heights, hence the set $\partial_aT$ is totally ordered by the heights. Two tangles are isotopic if they are isotopic through the class of tangles that preserve the boundary height orders. By convention, the empty set is a~tangle only isotopic to itself. A~\textit{state} is a~map $s\colon \partial T \to \{ \pm \}$ and a \textit{stated tangle} is a~pair~$(T,s)$.

 \begin{figure}[!h]
\centerline{\includegraphics[width=6cm]{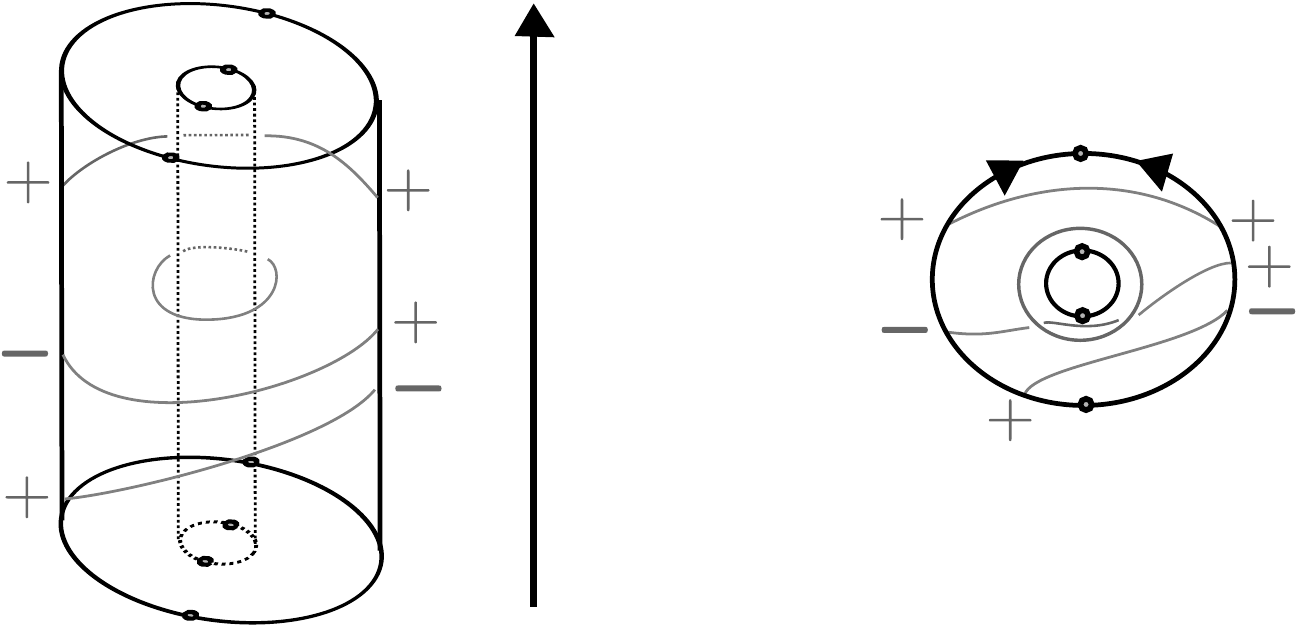}}
\caption{On the left: a stated tangle. On the right: its associated diagram. The arrows represent the height orders. }
\label{fig_statedtangle}
\end{figure}

 \begin{Definition}
 Let $k$ be a (unital associative) commutative ring and let $A^{1/2} \in k^{\times}$ be an invertible element. The \textit{stated skein algebra} $\mathcal{S}_{A}(\mathbf{\Sigma})$ is the free $k$-module generated by isotopy classes of stated tangles in $\Sigma\times (0, 1)$ modulo the following skein relations
 	\begin{equation*}
 \begin{split}&
\begin{tikzpicture}[baseline=-0.4ex,scale=0.5,>=stealth]	
\draw [fill=gray!45,gray!45] (-.6,-.6) rectangle (.6,.6) ;
\draw[line width=1.2,-] (-0.4,-0.52) -- (.4,.53);
\draw[line width=1.2,-] (0.4,-0.52) -- (0.1,-0.12);
\draw[line width=1.2,-] (-0.1,0.12) -- (-.4,.53);
\end{tikzpicture}
=A
\begin{tikzpicture}[baseline=-0.4ex,scale=0.5,>=stealth]
\draw [fill=gray!45,gray!45] (-.6,-.6) rectangle (.6,.6) ;
\draw[line width=1.2] (-0.4,-0.52) ..controls +(.3,.5).. (-.4,.53);
\draw[line width=1.2] (0.4,-0.52) ..controls +(-.3,.5).. (.4,.53);
\end{tikzpicture}
+A^{-1}
\begin{tikzpicture}[baseline=-0.4ex,scale=0.5,rotate=90]	
\draw [fill=gray!45,gray!45] (-.6,-.6) rectangle (.6,.6) ;
\draw[line width=1.2] (-0.4,-0.52) ..controls +(.3,.5).. (-.4,.53);
\draw[line width=1.2] (0.4,-0.52) ..controls +(-.3,.5).. (.4,.53);
\end{tikzpicture}
\qquad
\text{and}\qquad
\begin{tikzpicture}[baseline=-0.4ex,scale=0.5,rotate=90]
\draw [fill=gray!45,gray!45] (-.6,-.6) rectangle (.6,.6) ;
\draw[line width=1.2,black] (0,0) circle (.4) ;
\end{tikzpicture}
= -\big(A^2+A^{-2}\big)
\begin{tikzpicture}[baseline=-0.4ex,scale=0.5,rotate=90]
\draw [fill=gray!45,gray!45] (-.6,-.6) rectangle (.6,.6) ;
\end{tikzpicture}
;
\\
&
\begin{tikzpicture}[baseline=-0.4ex,scale=0.5,>=stealth]
\draw [fill=gray!45,gray!45] (-.7,-.75) rectangle (.4,.75) ;
\draw[->] (0.4,-0.75) to (.4,.75);
\draw[line width=1.2] (0.4,-0.3) to (0,-.3);
\draw[line width=1.2] (0.4,0.3) to (0,.3);
\draw[line width=1.1] (0,0) ++(90:.3) arc (90:270:.3);
\draw (0.65,0.3) node {\scriptsize{$+$}};
\draw (0.65,-0.3) node {\scriptsize{$+$}};
\end{tikzpicture}
=
\begin{tikzpicture}[baseline=-0.4ex,scale=0.5,>=stealth]
\draw [fill=gray!45,gray!45] (-.7,-.75) rectangle (.4,.75) ;
\draw[->] (0.4,-0.75) to (.4,.75);
\draw[line width=1.2] (0.4,-0.3) to (0,-.3);
\draw[line width=1.2] (0.4,0.3) to (0,.3);
\draw[line width=1.1] (0,0) ++(90:.3) arc (90:270:.3);
\draw (0.65,0.3) node {\scriptsize{$-$}};
\draw (0.65,-0.3) node {\scriptsize{$-$}};
\end{tikzpicture}
=0,
\qquad
\begin{tikzpicture}[baseline=-0.4ex,scale=0.5,>=stealth]
\draw [fill=gray!45,gray!45] (-.7,-.75) rectangle (.4,.75) ;
\draw[->] (0.4,-0.75) to (.4,.75);
\draw[line width=1.2] (0.4,-0.3) to (0,-.3);
\draw[line width=1.2] (0.4,0.3) to (0,.3);
\draw[line width=1.1] (0,0) ++(90:.3) arc (90:270:.3);
\draw (0.65,0.3) node {\scriptsize{$+$}};
\draw (0.65,-0.3) node {\scriptsize{$-$}};
\end{tikzpicture}
=A^{-1/2}
\begin{tikzpicture}[baseline=-0.4ex,scale=0.5,>=stealth]
\draw [fill=gray!45,gray!45] (-.7,-.75) rectangle (.4,.75) ;
\draw[-] (0.4,-0.75) to (.4,.75);
\end{tikzpicture}
\qquad \text{and}
\qquad
A^{1/2}
\heightexch{->}{-}{+}
- A^{5/2}
\heightexch{->}{+}{-}
=
\heightcurve.
\end{split}
\end{equation*}
The product of two classes of stated tangles $[T_1,s_1]$ and $[T_2,s_2]$ is defined by isotoping $T_1$ and $T_2$ in $\Sigma\times (1/2, 1) $ and $\Sigma\times (0, 1/2)$ respectively and then setting $[T_1,s_1]\cdot [T_2,s_2]=[T_1\cup T_2, s_1\cup s_2]$. Now consider an embedding $f\colon \mathbf{\Sigma}_1\to \mathbf{\Sigma}_2$ of marked surfaces and define a proper embedding $\widetilde{f}\colon \Sigma_1\times (0,1) \to \Sigma_2\times (0,1)$ such that: $(1)$ $\widetilde{f}(x,t)=(f(x), \varphi(x,t))$ for $\varphi$ a smooth map and $(2)$~if $a_1$, $a_2$ are two boundary arcs of $\mathbf{\Sigma}_1$ mapped to the same boundary arc $b$ of $\mathbf{\Sigma}_2$ and the ordering of $f$ is $a_1< a_2$, then for all $x_1\in a_1$, $x_2\in a_2$, $t_1, t_2 \in (0,1)$ one has $\varphi(x_1,t_1)<\varphi(x_2, t_2)$. We define $f_* \colon \mathcal{S}_A(\mathbf{\Sigma}_1)\to \mathcal{S}_A(\mathbf{\Sigma}_2)$ by $f_*([T,s]):= \big[\widetilde{f}(T), s\circ \widetilde{f}^{-1}\big]$.
The assignment $\mathbf{\Sigma}\to \mathcal{S}_A(\mathbf{\Sigma})$ defines a symmetric monoidal functor
$ \mathcal{S}_A \colon \mathrm{MS} \to \mathrm{Alg}_k$.
\end{Definition}

\begin{figure}[!h]
\centerline{\includegraphics[width=8cm]{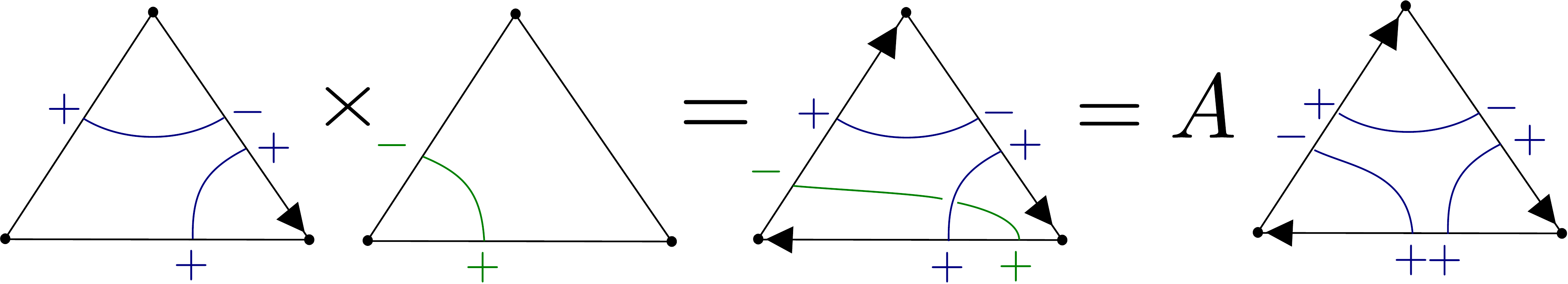} }
\caption{An illustration of the product in stated skein algebras.}\label{fig_product}
\end{figure}

\subsection{Splitting morphism and comodule structure}

Let $a$, $b$ be two distinct boundary arcs of $\mathbf{\Sigma}$, denote by $\pi\colon \Sigma\rightarrow \Sigma_{a\#b}$ the projection and $c:=\pi(a)=\pi(b)$. Let $(T_0, s_0)$ be a stated framed tangle of $\Sigma_{a\#b}\times (0,1)$ transversed to $c\times (0,1)$ and such that the heights of the points of $T_0 \cap c\times (0,1)$ are pairwise distinct and the framing of the points of $T_0 \cap c\times (0,1)$ is vertical towards $1$. Let $T\subset \Sigma \times (0,1)$ be the framed tangle obtained by cutting~$T_0$ along~$c$.
Any two states $s_a\colon \partial_a T \rightarrow \{-,+\}$ and $s_b\colon \partial_b T \rightarrow \{-,+\}$ give rise to a state $(s_a, s_0, s_b)$ on~$T$.
Both the sets $\partial_a T$ and $\partial_b T$ are in canonical bijection with the set $T_0\cap c$ by the map $\pi$. Hence the two sets of states~$s_a$ and~$s_b$ are both in canonical bijection with the set ${\rm St}(c):=\{ s\colon c\cap T_0 \rightarrow \{-,+\} \}$.

\begin{Definition}\label{def_gluing_map}
Let $\theta_{a\#b}\colon \mathcal{S}_{A}(\mathbf{\Sigma}_{a\#b}) \rightarrow \mathcal{S}_{A}(\mathbf{\Sigma})$ be the linear map given, for any $(T_0, s_0)$ as above, by
\[ \theta_{a\#b} ( [T_0,s_0] ) := \sum_{s \in {\rm St}(c)} [T, (s, s_0 , s) ].\]
\end{Definition}

\begin{figure}[!h]
\centerline{\includegraphics[width=8cm]{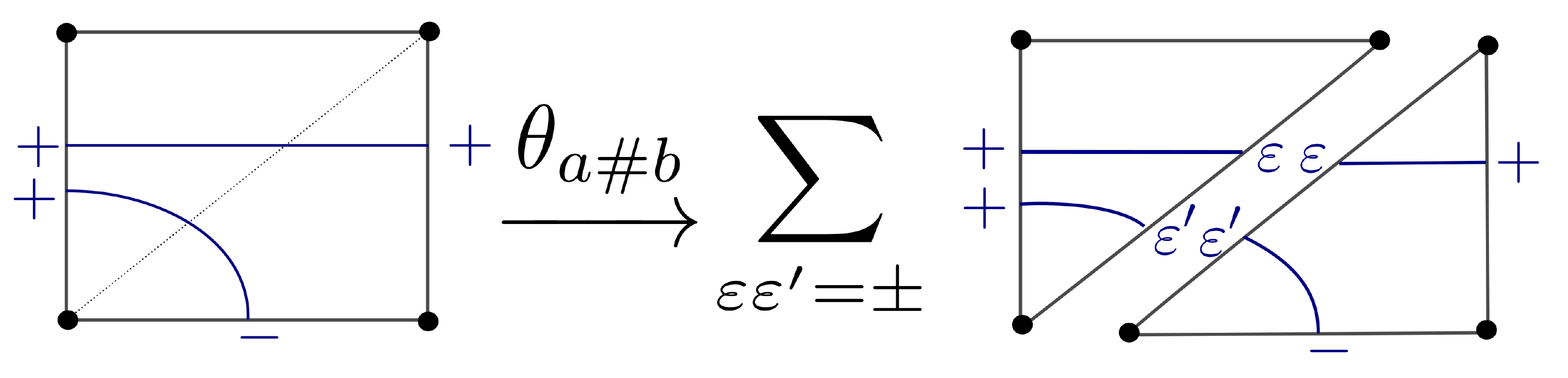} }
\caption{An illustration of the gluing map $\theta_{a\#b}$.}\label{fig_gluingmap}
\end{figure}

\begin{Theorem}[{\cite[Theorem~3.1]{LeStatedSkein}}] The map $\theta_{a\#b}$ is an injective morphism of algebras. \end{Theorem}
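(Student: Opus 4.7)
The plan is to establish that $\theta_{a\#b}$ is a well-defined algebra morphism, and then to handle injectivity by a leading-term analysis in an appropriate basis of stated simple diagrams. For well-definedness as a $k$-linear map, the sum $\sum_{s\in \mathrm{St}(c)}[T,(s,s_0,s)]$ must be invariant under isotopy of $(T_0,s_0)$ in $\Sigma_{a\#b}\times(0,1)$, under the choice of transverse representative with prescribed heights on $c$, and under each Kauffman-bracket and state skein relation. Any relation or isotopy whose support is disjoint from $c\times(0,1)$ lifts verbatim to $\Sigma\times(0,1)$ and is preserved term by term. The delicate case is when a local move forces a strand of $T_0$ across $c$ or exchanges the heights of two points of $T_0\cap c$: here I would use the height-exchange skein relation together with the bigon/state relations localised on both sides of $c$. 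The off-diagonal contributions of the state sum cancel pairwise, while the diagonal terms reproduce the expected expression after exchange, as a direct case-by-case check on local models of strand configurations crossing $c$.

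Next I would verify multiplicativity. The product $[T_1,s_1]\cdot [T_2,s_2]$ is defined by isotoping $T_1$ into $\Sigma_{a\#b}\times(1/2,1)$ and $T_2$ into $\Sigma_{a\#b}\times(0,1/2)$, so the cut of the stacked tangle along $c\times(0,1)$ splits as the disjoint union of the cuts of $T_1$ and $T_2$ placed in the appropriate height strata. Because the relevant intersection points on $c$ are disjoint, $\mathrm{St}(c)$ for the stacked tangle factors as a product $\mathrm{St}(c)_{T_1}\times \mathrm{St}(c)_{T_2}$, and the sum over states factorises as a product, giving $\theta_{a\#b}([T_1,s_1]\cdot[T_2,s_2]) = \theta_{a\#b}([T_1,s_1])\cdot\theta_{a\#b}([T_2,s_2])$.

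For injectivity I would use the basis of $\mathcal{S}_A(\mathbf{\Sigma}_{a\#b})$ consisting of simple diagrams equipped with states whose values are monotonically ordered along each boundary arc (such bases are a standard tool in the stated-skein literature). For such a basis element $(D_0,s_0)$, pick a representative meeting $c$ transversally in $k$ points with prescribed distinct heights; then $\theta_{a\#b}(D_0,s_0)$ is a sum of $2^k$ stated diagrams in $\mathbf{\Sigma}$, each of which, after rewriting into the standard basis of $\mathcal{S}_A(\mathbf{\Sigma})$, expands into a linear combination of basis elements. The key step is to equip the basis of $\mathcal{S}_A(\mathbf{\Sigma})$ with a partial order — combining intersection complexity with $a$ and $b$, the state pattern on $\partial_a T\cup\partial_b T$, and the induced height permutation — in such a way that $\theta_{a\#b}(D_0,s_0)$ admits a unique maximal summand. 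One natural choice is the summand with all new states on $a$ and $b$ equal to $+$, which, once its heights are arranged to be increasing, is already a basis element; this leading term then records $(D_0,s_0)$ faithfully, so distinct inputs yield distinct leading basis elements and the family $\{\theta_{a\#b}(D_0,s_0)\}$ is linearly independent.

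The main obstacle will be precisely this leading-term bookkeeping: the skein relations mix basis elements of different complexities, and one must choose the order on $\mathcal{S}_A(\mathbf{\Sigma})$ carefully enough to guarantee a unique maximal summand genuinely determined by the input and not annihilated by corrections coming from reordering heights or resolving crossings near $c$. An alternative route that sidesteps most of this combinatorics is to invoke the factorisation-homology / internal-skein interpretation alluded to in the introduction, where gluing corresponds to a relative tensor product of module categories and the splitting map is identified with an inclusion that is injective for formal reasons; but the direct combinatorial argument sketched above is the most self-contained and is the route I would pursue first.
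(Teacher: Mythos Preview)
The paper does not give its own proof of this theorem: it is stated with attribution to \cite[Theorem~3.1]{LeStatedSkein} and used as a black box. So there is nothing in the present paper to compare your argument against directly.

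That said, your sketch is essentially the route taken in L\^e's original paper. Well-definedness is checked exactly by verifying invariance under an isotopy that exchanges the heights of two consecutive intersection points on $c$; this is the one nontrivial local check, and it reduces to a $2\times 2$ identity among the boundary skein relations (the height-exchange and cup/cap relations) on the two sides of the cut. Your multiplicativity argument is correct as stated. For injectivity, L\^e's argument is precisely a leading-term argument with respect to the basis of simple, \emph{increasingly stated} diagrams: the term where every new endpoint on $a$ and on $b$ carries the state $+$ is already such a basis element (no further rewriting is needed, since the cut diagram is simple and the state sequence is constant $+$), and distinct basis elements of $\mathcal{S}_A(\mathbf{\Sigma}_{a\#b})$ give distinct such leading terms because the original diagram is recovered by regluing. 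So the ``main obstacle'' you flag is milder than you suggest: no crossing resolution near $c$ occurs for this particular summand, and the ordering one needs is just the lexicographic one on state sequences along $a$ and $b$ (with $+>-$), under which the all-$+$ term is strictly maximal.

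The alternative factorisation-homology route you mention is genuinely different and does give a conceptual proof, but it imports a great deal of machinery; the combinatorial argument you outline is both the original one and the one implicitly relied upon here.
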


Recall that the bigon $\mathbb{B}$ is a disc with two boundary arcs, says $a_L$, $a_R$. While gluing two bigons $\mathbb{B}$ and $\mathbb{B}'$ together along $a_R \# a_L'$, we get another bigon. Thus we have a coproduct
\[ \Delta:= \theta_{a_R\# a_L'} \colon \ \mathcal{S}_A(\mathbb{B}) \to \mathcal{S}_A(\mathbb{B})^{\otimes 2}, \]
which endows $\mathcal{S}_A(\mathbb{B})$ with a structure of Hopf algebra. Further define a co-R-matrix $\mathbf{r}\colon \mathcal{S}_A(\mathbb{B})^{\otimes 2} \allowbreak \to k$ by
\[ \mathbf{r} \left( \adjustbox{valign=c}{\includegraphics[width=1.7cm]{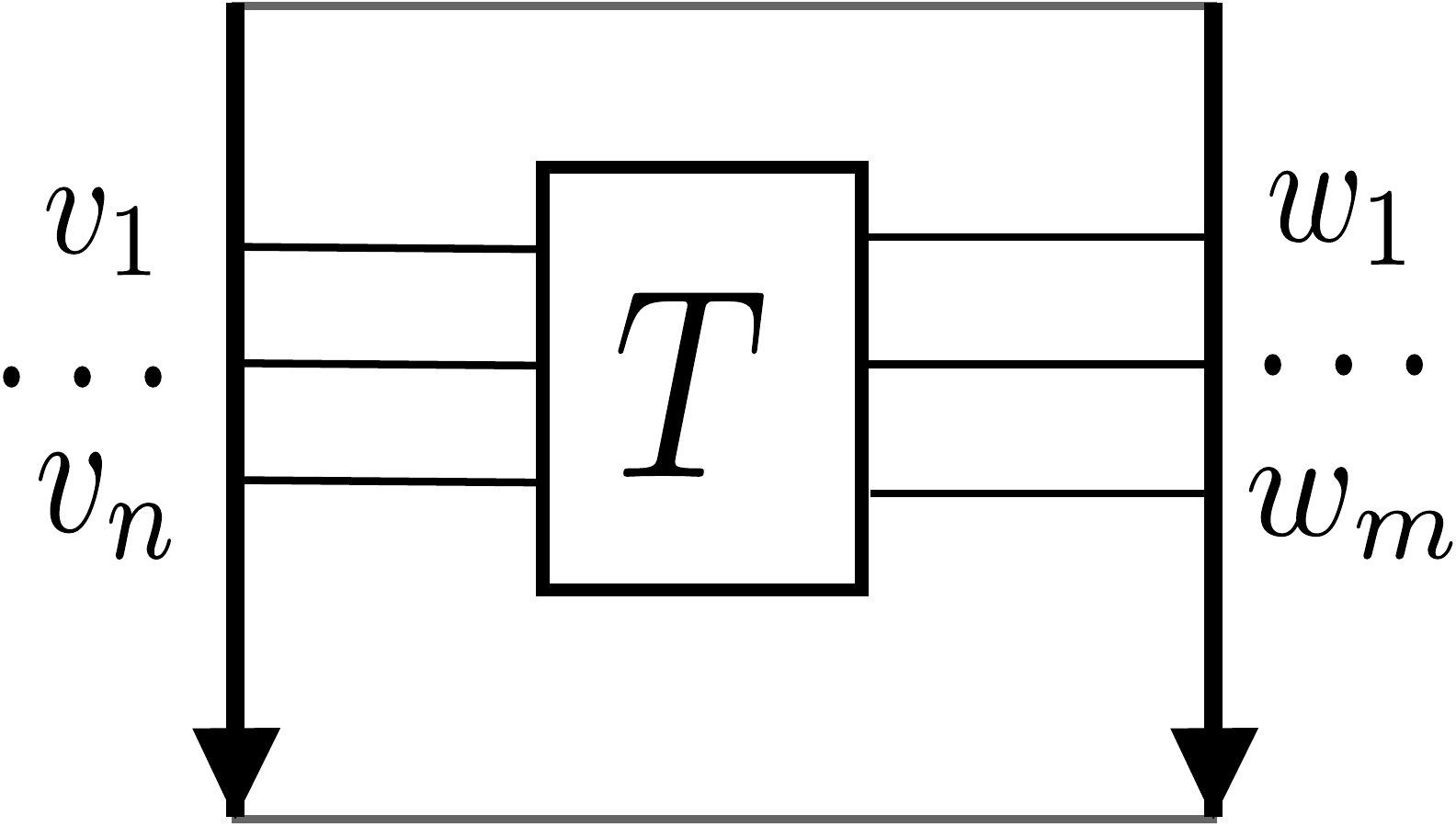}} \otimes\adjustbox{valign=c}{\includegraphics[width=1.7cm]{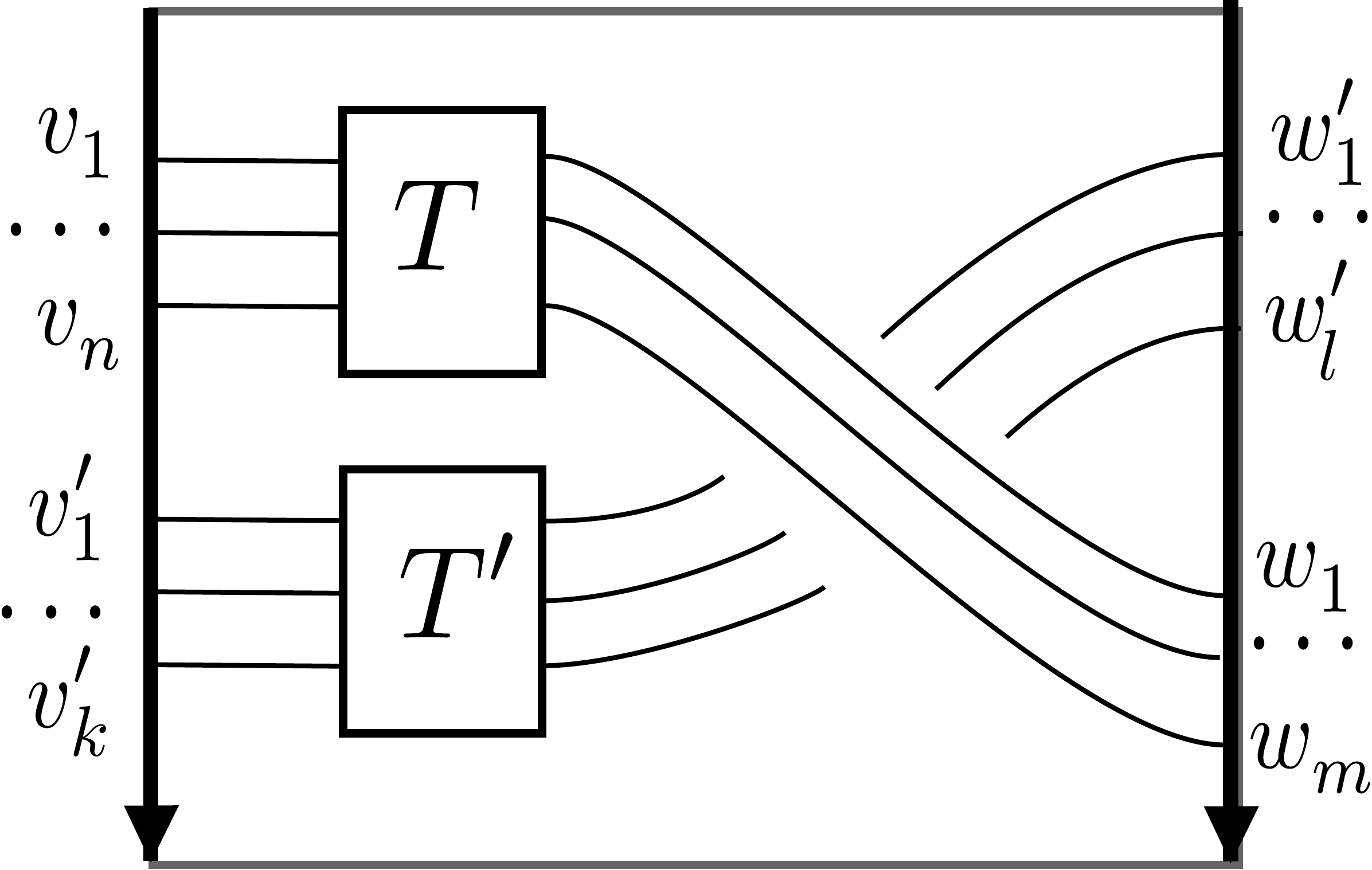}} \right)
 := \epsilon\left( \adjustbox{valign=c}{\includegraphics[width=3cm]{Figure-03c}} \right),\]
and a co-twist $\Theta\colon\mathcal{S}_A(\mathbb{B})\to k$ by
\[\Theta \left(\adjustbox{valign=c}{\includegraphics[width=1.7cm]{Figure-03a}} \right) := \epsilon \left( \adjustbox{valign=c}{\includegraphics[width=2cm]{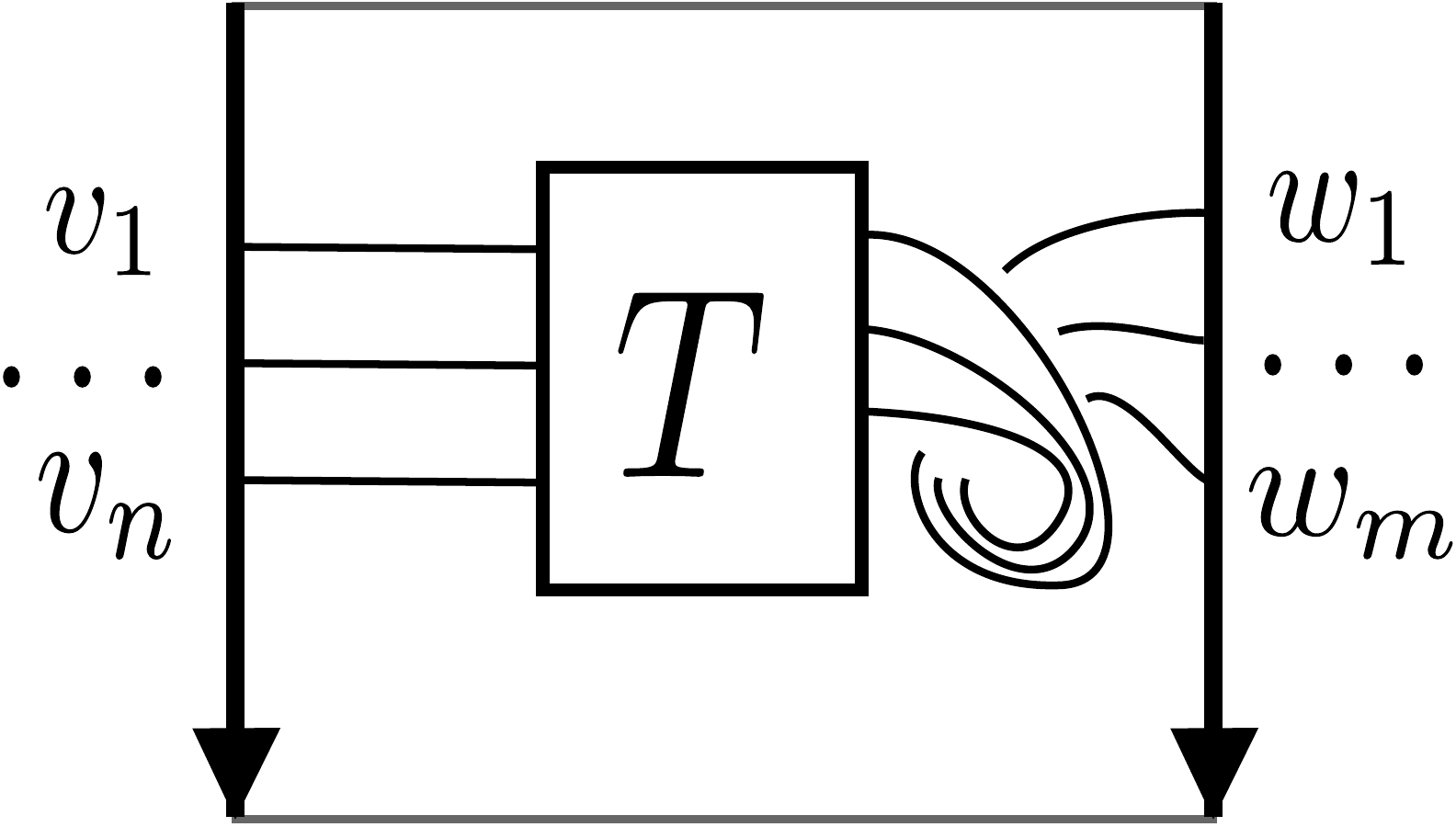}} \right).\]

Let $\alpha_{ij}:= \adjustbox{valign=c}{\includegraphics[width=0.8cm]{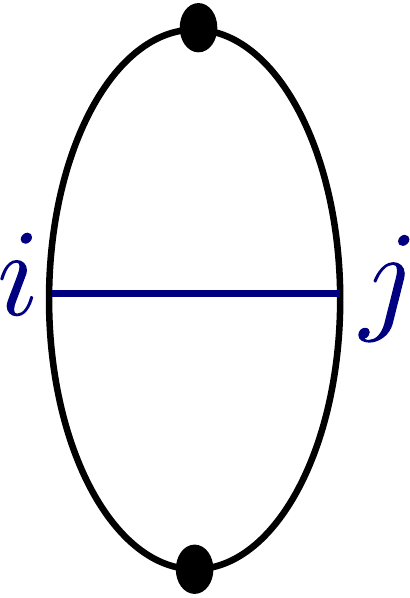}} \in \mathcal{S}_q(\mathbb{B})$ for $i,j = \pm$.

\begin{Theorem}[{\cite{CostantinoLe19, KojuQuesneyClassicalShadows}}] \label{theorem_bigon}The co-ribbon Hopf algebra $(\mathcal{S}_A(\mathbb{B}), \Delta, \mathbf{r}, \Theta)$ is isomorphic to $\mathcal{O}_q[\SL_2]$ through an isomorphism sending $\alpha_{++}$, $\alpha_{+-}$, $\alpha_{-+}$, $\alpha_{--}$ to $a$, $b$, $c$, $d$ respectively.
\end{Theorem}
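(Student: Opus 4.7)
The plan is to proceed in three stages: establish that $\mathcal{S}_A(\mathbb{B})$ is generated by the four arcs $\alpha_{ij}$, derive the $\mathcal{O}_q[\SL_2]$ relations from the skein relations with $q = A^2$, and finally match the coalgebra, co-R-matrix and co-twist on generators.

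First, I would verify that $\mathcal{S}_A(\mathbb{B})$ is generated as a $k$-algebra by $\alpha_{++},\alpha_{+-},\alpha_{-+},\alpha_{--}$. Any stated tangle in $\mathbb{B}\times(0,1)$ can be isotoped to a generic diagram; closed components are removed using the Kauffman relation and the loop value $-(A^2+A^{-2})$, and interior crossings are resolved by the first skein relation. The remaining diagram is a collection of stated arcs. Arcs with both endpoints on the same boundary arc can be pushed toward a cap and evaluated by the boundary relations (the third family of skein relations, which kill $++$ and $--$ caps and evaluate $+-$ caps to a scalar), so only arcs of type $\alpha_{ij}$ survive. Stacking them vertically and re-ordering heights on each boundary arc via the height-exchange relation expresses any element as a polynomial in the four $\alpha_{ij}$.

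Second, I would derive the relations of $\mathcal{O}_q[\SL_2]$. The height-exchange relation applied to two parallel strands with prescribed end-states on $a_L$ and $a_R$ gives the six $q$-commutation relations $\alpha_{ij}\alpha_{kl} = A^{?}\alpha_{kl}\alpha_{ij} + \text{correction}$ between the generators, the correction term appearing precisely when a crossing of the two strands can be resolved nontrivially. The $q$-determinant relation $\alpha_{++}\alpha_{--} - A\,\alpha_{+-}\alpha_{-+} = 1$ would be obtained by taking the product of two crossing arcs, resolving the single crossing by the first skein relation, and then evaluating the resulting $+-$ and $-+$ caps on each boundary arc using the third family of skein relations together with the normalisation $A^{-1/2}$ for the trivial $+-$ cap. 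Together these yield the defining relations of $\mathcal{O}_q[\SL_2]$ under the identification $\alpha_{++},\alpha_{+-},\alpha_{-+},\alpha_{--}\mapsto a,b,c,d$.

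Third, I would check the co-ribbon Hopf algebra structure on generators. The splitting map applied to a single arc gives $\Delta(\alpha_{ij}) = \theta_{a_R\#a_L'}(\alpha_{ij}) = \sum_{k\in\{\pm\}} \alpha_{ik}\otimes \alpha_{kj}$, which is exactly the matrix coproduct on $\mathcal{O}_q[\SL_2]$. The counit $\epsilon$ reads off the value of a stated arc on the bigon with no interior after all boundary relations are applied, recovering $\epsilon(a)=\epsilon(d)=1$ and $\epsilon(b)=\epsilon(c)=0$. The co-R-matrix $\mathbf{r}(\alpha_{ij}\otimes\alpha_{kl})$ is computed by drawing two crossing arcs in a single bigon, resolving the crossing by the Kauffman relation and evaluating $\epsilon$ on the two resulting diagrams; the resulting scalars are the entries of the standard $R$-matrix of $\mathcal{O}_q[\SL_2]$. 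The co-twist $\Theta$ on a single arc is computed the same way from a diagram with a positive kink, reproducing the standard ribbon element.

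The main obstacle will be the bookkeeping in the second step: the half-integer powers $A^{1/2}, A^{5/2}, A^{-1/2}$ appearing in the boundary and height-exchange relations must combine in such a way as to produce precisely the $q$-commutation and $q$-determinant relations with $q = A^2$ and the correct signs. Once this step is completed, the coalgebra and ribbon structure follows essentially formally from the definitions of the splitting map, $\mathbf{r}$ and $\Theta$, together with the fact that $\mathcal{O}_q[\SL_2]$ is itself determined by its algebra structure and its values on the matrix coefficients.
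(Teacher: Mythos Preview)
The paper does not actually prove this theorem: it is stated with attribution to \cite{CostantinoLe19, KojuQuesneyClassicalShadows} and no argument is given. So there is no ``paper's own proof'' to compare against; your outline is essentially a reconstruction of the direct verification carried out in those references.

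Your strategy is the right one and is exactly how the cited papers proceed: generate by the four stated arcs, extract the quadratic relations and the $q$-determinant from the boundary and height-exchange skein relations, then read off $\Delta$, $\epsilon$, $\mathbf{r}$, $\Theta$ on generators from the gluing map and the pictorial definitions. Two small points to watch. First, the $q$-determinant should read $\alpha_{++}\alpha_{--}-q^{-1}\alpha_{+-}\alpha_{-+}=1$ with $q=A^2$, i.e.\ the coefficient is $A^{-2}$, not $A$; you correctly flag this bookkeeping as the delicate part, but the specific formula you wrote is off. Second, your first step (generation) is not quite enough on its own to conclude that the map $\mathcal{O}_q[\SL_2]\to\mathcal{S}_A(\mathbb{B})$ is an \emph{isomorphism}: surjectivity plus the relations only gives a surjection, and injectivity requires either invoking L\^e's basis theorem for $\mathcal{S}_A(\mathbb{B})$ (Theorem~\ref{theorem_basic_ppty}(1)) and matching it against the standard PBW-type basis of $\mathcal{O}_q[\SL_2]$, or a dimension/filtration argument. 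You should make this step explicit.
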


For a general marked surface $\mathbf{\Sigma}$ with a boundary arc $c$, since $\left( \mathbb{B} \cup \mathbf{\Sigma}\right)_{a_R\# c}\allowbreak =\mathbf{\Sigma}$ and $ (\mathbf{\Sigma}\cup \mathbb{B} )_{c\# a_L} = \mathbf{\Sigma}$, we have some left and right comodule maps
\[\Delta_c^L := \theta_{a_R\#c} \colon \ \mathcal{S}_A(\mathbf{\Sigma}) \to \mathcal{O}_q[\SL_2] \otimes \mathcal{S}_A(\mathbf{\Sigma})\]
 and
\[\Delta_c^R:= \theta_{c\#a_L}\colon \ \mathcal{S}_A(\mathbf{\Sigma}) \to \mathcal{S}_A(\mathbf{\Sigma})\otimes \mathcal{O}_q[\SL_2] , \]
as illustrated in Figure~\ref{fig_comodule_maps}.

\begin{figure}[!h]\centering
\includegraphics[width=11cm]{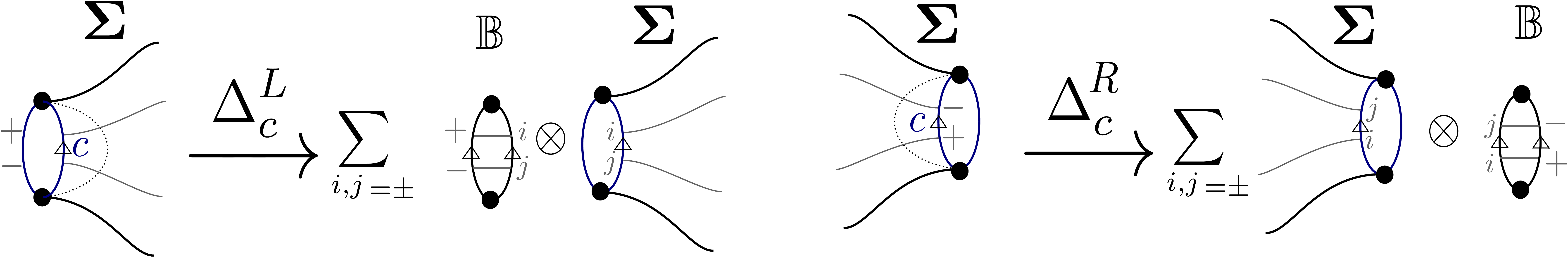}
\caption{Comodule maps.}\label{fig_comodule_maps}
\end{figure}

Note that by composing the $\Delta_a^L$ for $a\in \mathcal{A}$, we get a comodule map $\Delta^L\colon \mathcal{S}_A(\mathbf{\Sigma}) \to (\mathcal{O}_q[\SL_2])^{\otimes \mathcal{A}} \allowbreak \otimes \mathcal{S}_A(\mathbf{\Sigma})$.

\subsection{Bad arcs and reduced stated skein algebras}

Let $\mathbf{\Sigma}$ a marked surface and $p$ a boundary puncture between two consecutive boundary arcs $a$ and $b$ on the same boundary component $\partial$ of $\partial \Sigma$.
The orientation of $\Sigma$ induces an orientation of $\partial$ so a cyclic ordering of the elements of $\partial \cap \mathcal{A}$. We suppose that $a$ is followed by $p$ which is followed by $b$ in this ordering. We denote by $\alpha(p)$ an arc with one endpoint $v_a\in a$ and one endpoint $v_b \in b$ such that $\alpha(p)$ can be isotoped inside $\partial$. Let $\alpha(p)_{ij}\in \mathcal{S}_A(\mathbf{\Sigma})$ be the class of the stated arc $(\alpha(p), s)$ where $s(v_b)=i$ and $s(v_a)=j$ (see Figure~\ref{fig_bad_arc}).

\begin{figure}[!h]\centering
\includegraphics[width=4cm]{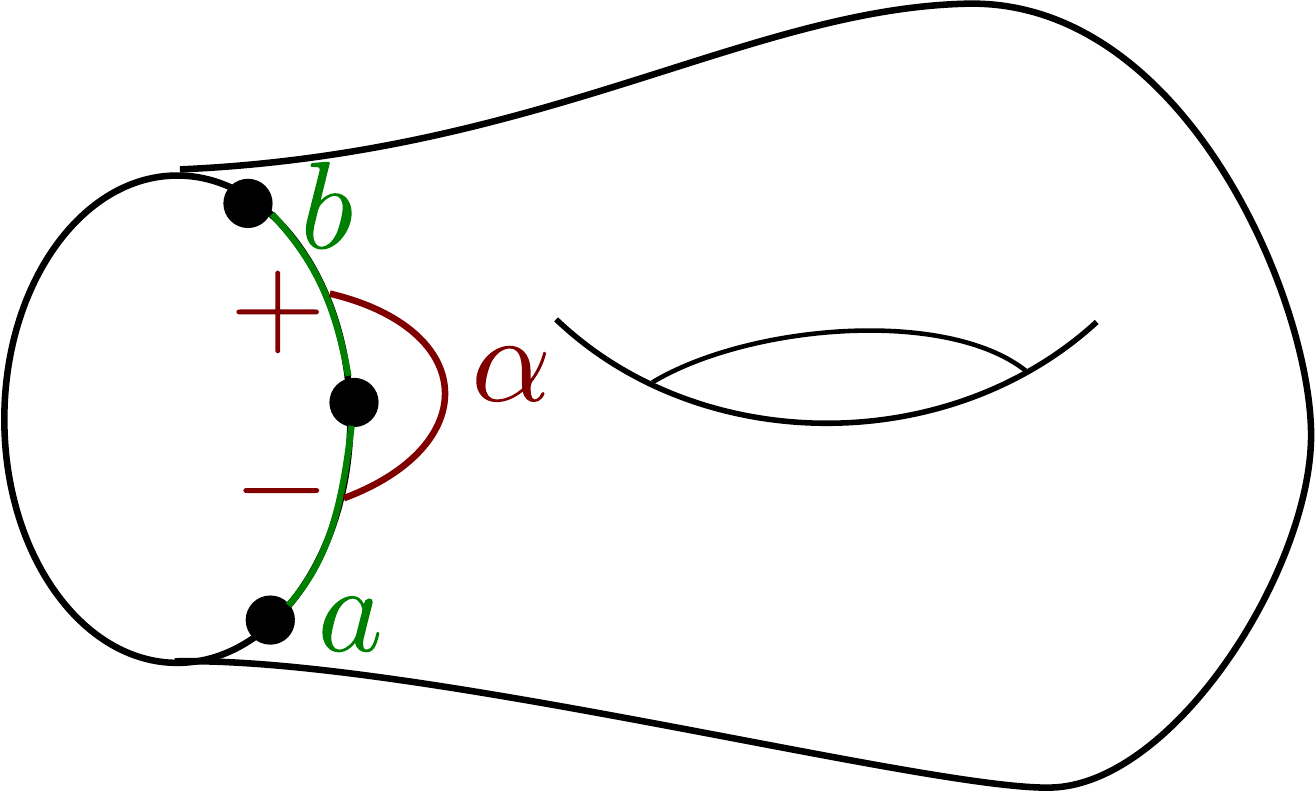}
\caption{A bad arc.}\label{fig_bad_arc}
\end{figure}

\begin{Definition}
The element $\alpha(p)_{-+}\in \mathcal{S}_A(\mathbf{\Sigma})$ is called the \textit{bad arc associated to}~$p$.
The \textit{reduced stated skein algebra} $\overline{\mathcal{S}}_A(\mathbf{\Sigma})$ is the quotient of $\mathcal{S}_A(\mathbf{\Sigma})$ by the ideal generated by all bad arcs.
\end{Definition}

\subsection{Basic properties}

\begin{Theorem}\label{theorem_basic_ppty} Let $k$ be an arbitrary commutative, unital ring with $A^{1/2}\in k^{\times}$.
\begin{enumerate}\itemsep=0pt
\item[$1.$] $\mathcal{S}_{A}(\mathbf{\Sigma})$ and $\overline{\mathcal{S}}_A(\mathbf{\Sigma})$ are free $k$ modules with $($explicit$)$ bases~$\mathcal{B}$ and $\overline{\mathcal{B}}$ made of $($classes$)$ of stated tangles {\rm \cite[Theorem~7.1]{CostantinoLe19}}, {\rm \cite[Theorem~2.11]{LeStatedSkein}}.
 \item[$2.$] $\mathcal{S}_A(\mathbf{\Sigma})$ and $\overline{\mathcal{S}}_A(\mathbf{\Sigma})$ are domains~{\rm \cite{BonahonWongqTrace, CostantinoLe19, LeStatedSkein, PrzytyckiSikora_SkeinDomain}}.
 \item[$3.$] $\mathcal{S}_A(\mathbf{\Sigma})$ and $\overline{\mathcal{S}}_A(\mathbf{\Sigma})$ are finitely generated. When $\mathbf{\Sigma}$ does not have unmarked component, we even have explicit finite presentations of them {\rm (\cite{BullockGeneratorsSkein}} for unmarked surfaces, {\rm \cite{KojuPresentationSSkein}} for marked surfaces$)$.
 \end{enumerate}
 \end{Theorem}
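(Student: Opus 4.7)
The statement compiles three distinct results, each proven in the literature, so the plan is to treat items (1)--(3) separately and sketch the technique that established each. Throughout, the key conceptual tool is an embedding of the (reduced) stated skein algebra into a quantum torus coming from an ideal triangulation.

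For item (1), the plan for $\mathcal{S}_A(\mathbf{\Sigma})$ is a diamond/Bergman-style argument: declare an underlying stated tangle to be \emph{simple} if its diagram has no crossings, no contractible loops, no arcs with both endpoints on the same boundary arc that cobound a disc with that arc, and the states are non-decreasing along each boundary arc in the chosen height order. The skein relations (crossing resolution, trivial loops, height exchange and the boundary relations for the four stated corner arcs) are then oriented as rewriting rules; one checks confluence locally on each relation, which produces a spanning set $\mathcal{B}$ of simple stated tangles. Linear independence is the nontrivial part: the cleanest way is to exhibit a faithful representation, namely the Bonahon--Wong / L\^e quantum trace on a triangulated surface, whose image has manifestly independent evaluations on the proposed basis. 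For $\overline{\mathcal{S}}_A(\mathbf{\Sigma})$, the basis $\overline{\mathcal{B}} \subset \mathcal{B}$ consists of those simple stated diagrams having no arc parallel to a bad arc corner, and one must show that the bad-arc ideal $I_{\rm bad}$ is $k$-spanned by $\mathcal{B}\setminus\overline{\mathcal{B}}$. This is exactly the statement proved by Costantino--L\^e, and the argument is a careful height-exchange computation showing that each basis diagram that ``contains'' a bad arc can be written as a product of a bad arc and another basis element.

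For item (2), the argument reduces to showing that a quantum torus is a domain, via the quantum trace. In the unmarked triangulable case, Bonahon--Wong produce an embedding $\tr_\Delta\colon \mathcal{S}_A(\Sigma) \hookrightarrow \mathcal{T}_\Delta$ into a balanced Chekhov--Fock algebra. L\^e extended this to stated skein algebras of triangulable marked surfaces, and Costantino--L\^e show that the kernel of the extension is precisely the bad-arc ideal, so $\overline{\mathcal{S}}_A(\mathbf{\Sigma})$ embeds as well. Closed surfaces are handled by puncturing once, embedding into the punctured surface algebra, and invoking the preceding case (here one also needs Przytycki--Sikora's direct diagrammatic argument for the purely closed case). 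Thus integrality is inherited from $\mathcal{T}_\Delta$.

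For item (3), finite generation follows from the gluing and fusion structure: using the decomposition of $\mathbf{\Sigma}$ into triangles (and bigons and once-punctured monogons, when no unmarked component is present), the splitting morphisms $\theta_{a\#b}$ make $\mathcal{S}_A(\mathbf{\Sigma})$ a subalgebra of a tensor product of generators-for-pieces, each of which is finitely generated; combining this with Theorem~\ref{theorem_bigon} (the bigon algebra is $\mathcal{O}_q[\SL_2]$) gives a finite generating set. For the unmarked case Bullock's classical argument expresses any curve as a polynomial in a finite family of simple closed curves. The explicit finite presentation in the marked case exploits the $\mathcal{O}_q[\SL_2]^{\otimes \mathcal{A}}$-comodule structure $\Delta^L$ and the fact that all relations can be pulled back along splitting maps from a finite list at the triangle/bigon level.

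The main obstacle is the linear independence half of item (1) for the \emph{reduced} algebra: one must simultaneously show that $\mathcal{B}\setminus\overline{\mathcal{B}}$ spans the bad-arc ideal (so that $\overline{\mathcal{B}}$ spans the quotient) \emph{and} that no further relations occur among the elements of $\overline{\mathcal{B}}$. Both halves are most naturally packaged by factoring through the quantum trace whose kernel is $I_{\rm bad}$: the image is a sub-quantum-torus whose standard monomial basis is indexed precisely by $\overline{\mathcal{B}}$, and once this identification is in place items (2) and (3) for $\overline{\mathcal{S}}_A(\mathbf{\Sigma})$ are almost immediate consequences of the same statements for the quantum torus.
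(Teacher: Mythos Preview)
The paper does not prove this theorem at all: it is stated as a compendium of results from the cited literature, with the references embedded directly in the statement. So there is no ``paper's own proof'' to compare against; I can only assess whether your sketch faithfully reflects the arguments in those references.

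Your outline for item~(1) is broadly right, and your identification of the quantum trace as the engine behind item~(2) for triangulable surfaces matches the literature. Two points, however, need correction.

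First, in item~(3) you write that the splitting morphisms make $\mathcal{S}_A(\mathbf{\Sigma})$ a subalgebra of a finite tensor product of finitely generated pieces, and conclude finite generation from that. This inference is invalid: subalgebras of finitely generated (even noetherian) algebras are not in general finitely generated. The actual proofs in \cite{BullockGeneratorsSkein} and \cite{KojuPresentationSSkein} proceed in the opposite direction, exhibiting an explicit finite set of curves/stated arcs and showing by induction on diagram complexity (crossings, intersections with a fixed arc system) that every skein is a polynomial in them. The splitting maps are used to \emph{verify} relations, not to deduce finite generation.

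Second, your handling of closed surfaces in item~(2) is off. There is no natural injective algebra map from $\mathcal{S}_A(\Sigma_g)$ into $\mathcal{S}_A(\Sigma_{g,1})$ obtained by ``puncturing once''; filling the puncture goes the other way and is surjective, not injective. Przytycki--Sikora's argument is a self-contained filtration argument on the closed surface and does not pass through the triangulable case. Likewise, for item~(1) the linear-independence proof in \cite{LeStatedSkein} is a direct confluence/state-sum argument valid for all marked surfaces, not an appeal to the quantum trace (which would only cover the triangulable ones).
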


\subsection{The quantum fusion operation}

Recall that the triangle $\mathbb{T}$ is a disc with three boundary arcs, say $i$, $j$, $k$ and that we defined the fusion of a marked surface $\mathbf{\Sigma}$ along two boundary arcs $a$, $b$ by
\[ \mathbf{\Sigma}_{a \circledast b}:= \left( \mathbf{\Sigma}\bigsqcup \mathbb{T} \right) _{a\#i, b\#j}.
\]

\begin{Definition}\label{def_fusion_quantique}
Let $(H,\mu_H, \eta, \Delta, \epsilon_H, S, \mathbf{r})$ be a cobraided Hopf algebra and $\mathbf{A}=(A, \mu, \epsilon)$ be an algebra object in $H\otimes H-\mathrm{Comod}$ and denote by $\Delta_{H\otimes H} \colon A \to H \otimes H \otimes A$ its comodule map. Write $\Delta^1:= (\id \otimes \epsilon \otimes \id) \circ \Delta_{H\otimes H}\colon A\to H\otimes A$ and $\Delta^2\colon (\epsilon \otimes \id \otimes \id) \circ \Delta_{H\otimes H}\colon A \to H \otimes A$.
 The \textit{fusion} $\mathbf{A}_{1\circledast 2}$ is the algebra object $(A_{1\circledast 2}, \mu_{1\circledast 2}, \epsilon_{1\circledast 2})$ in $H-\mathrm{Comod}$ where
 \begin{enumerate}\itemsep=0pt
 \item[1)] $A_{1\circledast 2}= A$ as a $k$-module and $\epsilon_{1 \circledast 2}=\epsilon$,
 \item[2)] the product is the composition
\[ \mu_{1 \circledast 2} \colon \ A\otimes A \xrightarrow{\Delta_1\otimes \Delta_2} H\otimes A \otimes H \otimes A \xrightarrow{\id \otimes \tau_{A,H} \otimes \id} H\otimes H \otimes A \otimes A \xrightarrow{\mathbf{r}\circ \tau_{H,H} \otimes \mu} A,\]
 \item[3)] the comodule map is $\Delta_H:= (\mu_H \otimes \id) \circ \Delta_{H\otimes H} $.
 \end{enumerate}
 \end{Definition}
 For instance, if $V$ and $W$ are two algebra objects in $H-\mathrm{Comod}$, then $V\otimes_k W$ is an algebra object in $H^{\otimes 2}-\mathrm{Comod}$ and its fusion $(V\otimes_k W)_{1\circledast 2}$ is called the \textit{cobraided tensor product} and is denoted by $V\overline{\otimes} W$. Identify $V$ with $V\otimes 1$ and $W$ with $1\otimes W$ in $V\otimes W$. Its product is characterized by the formula
\[ \mu (x\otimes y) = \begin{cases}
 \mu_V(x\otimes y), &\text{if }x,y \in V, \\
 \mu_W(x\otimes y), &\text{if }x,y \in W, \\
 x\otimes y, & \text{if }x\in V, y \in W, \\
 c_{W,V}(x\otimes y), & \text{if }x\in W, y \in V.
 \end{cases}
\]
 Here, the braiding is defined by
\begin{align*}
 c_{V,W} \colon \ V\otimes W & \xrightarrow{\tau_{V,W}} W\otimes V \xrightarrow{\Delta_W\otimes \Delta_V} H\otimes W \otimes H \otimes V \\
 & \xrightarrow{\id_H \otimes \tau_{W,H} \otimes \id_V} H\otimes H \otimes W \otimes V \xrightarrow{r\otimes \id_W \otimes \id_V} W\otimes V.
\end{align*}

Now consider a marked surface $\mathbf{\Sigma}_{a\circledast b}$ obtained by fusioning two boundary arcs $a$ and $b$. Fix an orientation $\mathfrak{o}
 $ of the boundary arcs of $\mathbf{\Sigma}\bigsqcup \mathbb{T}$, as in Figure~\ref{fig_fusion} and let~$\mathfrak{o}'$ the induced orientation of the boundary arcs of~$\mathbf{\Sigma}'$.
 Define a linear map $\Psi_{a\circledast b}\colon \mathcal{S}_A(\mathbf{\Sigma}) \to \mathcal{S}_A(\mathbf{\Sigma}_{a\#b})$ by $\Psi_{a\circledast b} ([D,s]^{\mathfrak{o}}):= [D',s']$ where $(D',s')$ is obtained from $(D,s)$ by gluing to each point of $D\cap a$ a straight line in $\mathbb{T}$ between~$e_1$ and~$e_3$ and by gluing to each point of $D\cap b$ a straight line in $\mathbb{T}$ between~$e_2$ and~$e_3$. Figure~\ref{fig_fusion} illustrates $\Psi_{a\circledast b}$.
 \begin{figure}[!h]\centering\includegraphics[width=6cm]{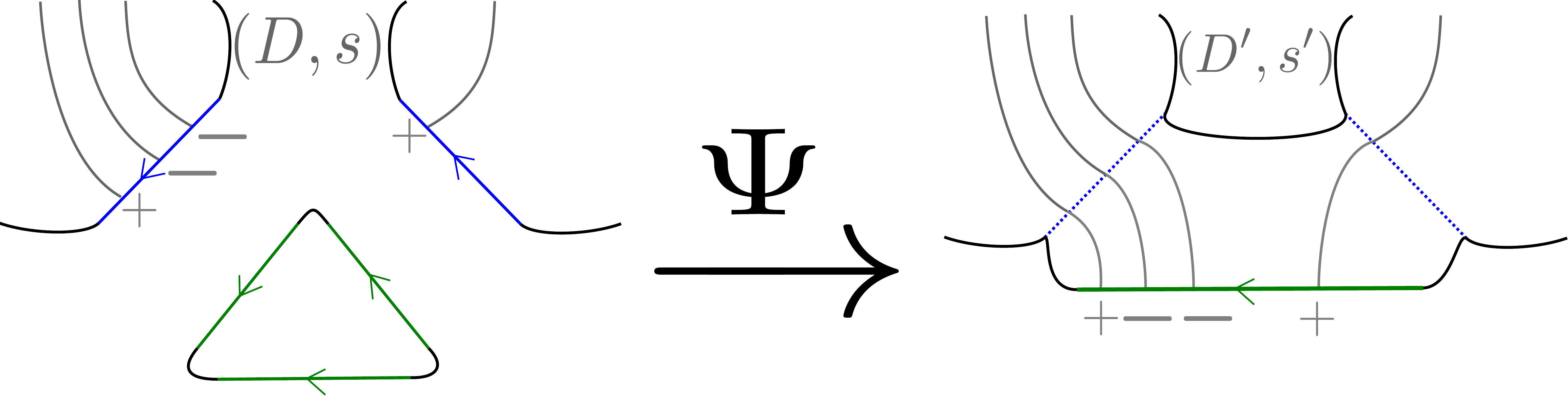}
\caption{An illustration of $\Psi_{a\circledast b}$.}\label{fig_fusion}
\end{figure}
 \begin{Theorem}[{Costantino--L\^e \cite[Theorem~4.13]{CostantinoLe19}}]\label{theorem_fusion} The linear map $\Psi_{a\circledast b}$ is an isomorphism of $k$-modules which identifies $\mathcal{S}_A(\mathbf{\Sigma}_{a\circledast b})$ with the fusion $\mathcal{S}_A(\mathbf{\Sigma})_{a\circledast b}$.
 \end{Theorem}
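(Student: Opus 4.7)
The plan is to prove Theorem~\ref{theorem_fusion} in two stages. First, establish that $\Psi_{a\circledast b}$ is a bijection of $k$-modules; then verify that the product on $\mathcal{S}_A(\mathbf{\Sigma}_{a\circledast b})$ pulled back along $\Psi_{a\circledast b}$ coincides with the fusion product of Definition~\ref{def_fusion_quantique}.

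For bijectivity, I would combine the splitting morphism $\theta = \theta_{a\# i,\, b\# j}\colon \mathcal{S}_A(\mathbf{\Sigma}_{a\circledast b}) \hookrightarrow \mathcal{S}_A(\mathbf{\Sigma})\otimes \mathcal{S}_A(\mathbb{T})$, which is injective by L\^e's theorem, with an explicit basis argument. Any stated diagram $(D',s')$ in $\mathbf{\Sigma}_{a\circledast b}$ can be isotoped to be transverse to the two gluing arcs arising from $a\sim i$ and $b\sim j$, and $D'\cap \mathbb{T}$ becomes a collection of arcs with endpoints on the edges $e_1, e_2, e_3$. Using the triangle skein relations---closed-curve removal, bigon/crossing resolution, and height exchange---one normalises the $\mathbb{T}$-part to a linear combination of products of straight $e_1$-to-$e_3$ and $e_2$-to-$e_3$ arcs, which is precisely what $\Psi_{a\circledast b}$ produces. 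Comparing with the bases of Theorem~\ref{theorem_basic_ppty}(1) then shows that the resulting map $\Psi_{a\circledast b}^{-1}$ sends the stated-tangle basis of $\mathcal{S}_A(\mathbf{\Sigma}_{a\circledast b})$ onto the stated-tangle basis of $\mathcal{S}_A(\mathbf{\Sigma})$.

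To identify the pulled-back algebra structure with the fusion product, I would compute the product $\Psi_{a\circledast b}([D_1,s_1])\cdot \Psi_{a\circledast b}([D_2,s_2])$ by stacking in $\mathbf{\Sigma}_{a\circledast b}\times (0,1)$ and then normalising the triangle contribution. Stacking places the $e_1$- and $e_2$-strands of $[D_1,s_1]$ above those of $[D_2,s_2]$, producing a height mismatch: the $e_2$-strand of the first factor sits above the $e_1$-strand of the second, while in the new ordering on $e_3$ the two families must interleave. Applying the height exchange relations inside $\mathbb{T}$, together with Theorem~\ref{theorem_bigon}'s identification of bigon arcs with the generators $\alpha_{\pm\pm}$ of $\mathcal{O}_q[\SL_2]$, expresses the correction as the co-R-matrix $\mathbf{r}$ evaluated on the relevant states, while splitting off the $e_1$- and $e_2$-strands near $a$ and $b$ extracts exactly the comodule maps $\Delta^1$ and $\Delta^2$. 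The resulting formula matches $\mu_{1\circledast 2}$ from Definition~\ref{def_fusion_quantique}.

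The main obstacle is the bookkeeping in this last calculation---tracking the tensor-factor order, powers of $A^{1/2}$, and signs in the triangle's skein output and matching them with the abstract co-R-matrix formula. One can reduce the verification to a small set of generators (stated arcs meeting $a$ or $b$) since both products are $k$-bilinear and agree with the original product whenever the lifts to the triangle are disjoint, confining the delicate computation to the universal case of two colliding strands, namely the identification of the triangle's height-exchange coefficients with the matrix entries of $\mathbf{r}$ on $\mathcal{O}_q[\SL_2]$.
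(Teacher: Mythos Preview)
Your overall strategy is sound and would lead to a complete proof, but the paper's argument for bijectivity is quite different from yours and worth knowing. You propose to normalise the triangle part of any diagram to straight $e_1$--$e_3$ and $e_2$--$e_3$ arcs using skein relations (this is surjectivity), and then to establish injectivity by comparing the stated-tangle bases of Theorem~\ref{theorem_basic_ppty}(1) on both sides. This works, but the basis comparison requires some care: you must check that $\Psi_{a\circledast b}$ really carries basis elements of $\mathcal{S}_A(\mathbf{\Sigma})$ to distinct basis elements of $\mathcal{S}_A(\mathbf{\Sigma}_{a\circledast b})$, and the combinatorics of L\^e's basis (ordering conventions on boundary arcs, reduced diagrams) makes this slightly fiddly.

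The paper instead builds an explicit left inverse $\Phi$ to $\Psi_{a\circledast b}$, following an argument of Higgins. One composes the gluing map $\theta\colon \mathcal{S}_A(\mathbf{\Sigma}_{a\circledast b}) \to \mathcal{S}_A(\mathbb{T})\otimes \mathcal{S}_A(\mathbf{\Sigma})$ with the morphism $i_*\colon \mathcal{S}_A(\mathbb{T})\to \mathcal{O}_q[\SL_2]$ induced by an embedding $i\colon \mathbb{T}\hookrightarrow \mathbb{B}$ (sending $e_3$ to $a_R$ and $e_1,e_2$ to $a_L$ with $e_1>e_2$), and then applies the counit $\epsilon$ of $\mathcal{O}_q[\SL_2]$ on the bigon factor. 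A direct diagrammatic check shows $\Phi\circ\Psi_{a\circledast b}=\id$, so $\Psi_{a\circledast b}$ is injective; combined with surjectivity (which, as you note, follows from the height-exchange relation) this gives bijectivity without ever touching a basis. The advantage of the paper's route is that it is basis-free and the verification of $\Phi\circ\Psi_{a\circledast b}=\id$ is a one-line picture; the advantage of yours is that it is self-contained and does not require introducing the auxiliary embedding $i$. For the identification of the pulled-back product with the fusion product, your plan is essentially what the paper does (a diagrammatic height-exchange computation inside the added triangle), so there is no discrepancy there.
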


 Costantino and L\^e proved Theorem \ref{theorem_fusion} in the particular case where $\mathbf{\Sigma}=\mathbf{\Sigma}_1\bigsqcup \mathbf{\Sigma}_2$ with~$a$,~$b$ boundary arcs of~$\mathbf{\Sigma}_1$ and~$\mathbf{\Sigma}_2$ respectively. In this case, the conclusion of the theorem reads:
\[ \mathcal{S}_A(\mathbf{\Sigma}_{a\circledast b}) \cong \mathcal{S}_A(\mathbf{\Sigma}_1) \overline{\otimes} \mathcal{S}_A(\mathbf{\Sigma}_2).\]
 Higgins generalized in~\cite{Higgins_SSkeinSL3} this result for $\SL_3$ stated skein algebras. Higgins' proof extends word-by-word to prove the more general statement of Theorem~\ref{theorem_fusion} as reproduced here:

\begin{proof} The fact that $\Psi_{a\circledast b}$ is surjective is an easy consequence of the skein relation:
\[
 A^{1/2}
\heightexch{->}{-}{+}
- A^{5/2}
\heightexch{->}{+}{-}
=
\heightcurve.
\] The injectivity is proved using the following elegant argument of Higgins in \cite{Higgins_SSkeinSL3}. Since $\mathbf{\Sigma}_{a\circledast b}$ is obtained from $\mathbf{\Sigma}\bigsqcup \mathbb{T}$ by gluing some boundary arcs, we have a gluing map $\theta\colon \mathcal{S}_A(\mathbf{\Sigma}_{a\circledast b}) \to \mathcal{S}_A (\mathbb{T}) \otimes \mathcal{S}_A(\mathbf{\Sigma})$. Let $i\colon \mathbb{T} \to \mathbb{B}$ be the embedding of marked surfaces sending $e_3$ to $a_R$ and~$e_1$,~$e_2$ to~$a_L$ with $e_1>e_2$ and denote by $i_* \colon \mathcal{S}_A \to \mathcal{O}_q[\SL_2]$ the induced morphism. Consider the composition
\[ \Phi\colon \ \mathcal{S}_A(\mathbf{\Sigma}_{a\circledast b}) \xrightarrow{\theta} \mathcal{S}_A (\mathbb{T}) \otimes \mathcal{S}_A(\mathbf{\Sigma})\xrightarrow{i_*\otimes \id} \mathcal{O}_q[\SL_2] \otimes \mathcal{S}_A(\mathbf{\Sigma})\xrightarrow{\epsilon \otimes \id} \mathcal{S}_A(\mathbf{\Sigma}).
\]
 As illustrated in Figure~\ref{fig_defusion}, it is easy to see that $\Phi$ is a left inverse to $\Psi_{a\circledast b}$, thus $\Psi_{a\circledast b}$ is an isomorphism. It remains to prove that
 the pullback by $\Psi_{a\circledast b}$ of the product in $ \mathcal{S}_A(\mathbf{\Sigma}_{a\circledast b}) $ is the fusion product
$ \mu_{a\circledast b}$. This fact is illustrated in Figure~\ref{fig_fusion_product}.\end{proof}
\begin{figure}[!h]\centering \includegraphics[width=16cm]{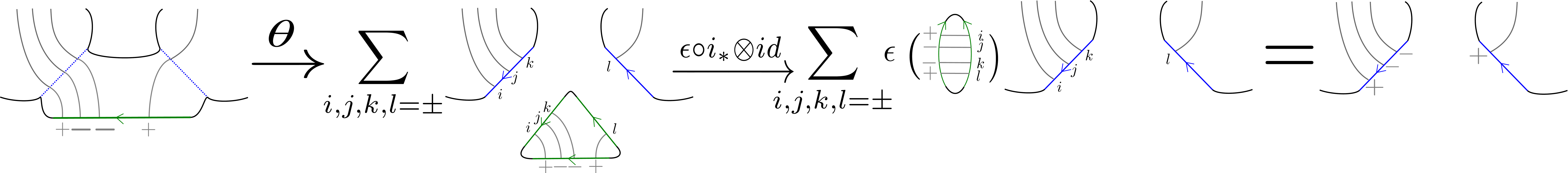}
\caption{An illustration of the equality $\Phi \circ \Psi_{a\circledast b} = \id$.}\label{fig_defusion}
\end{figure}

\begin{figure}[!h]\centering \includegraphics[width=10cm]{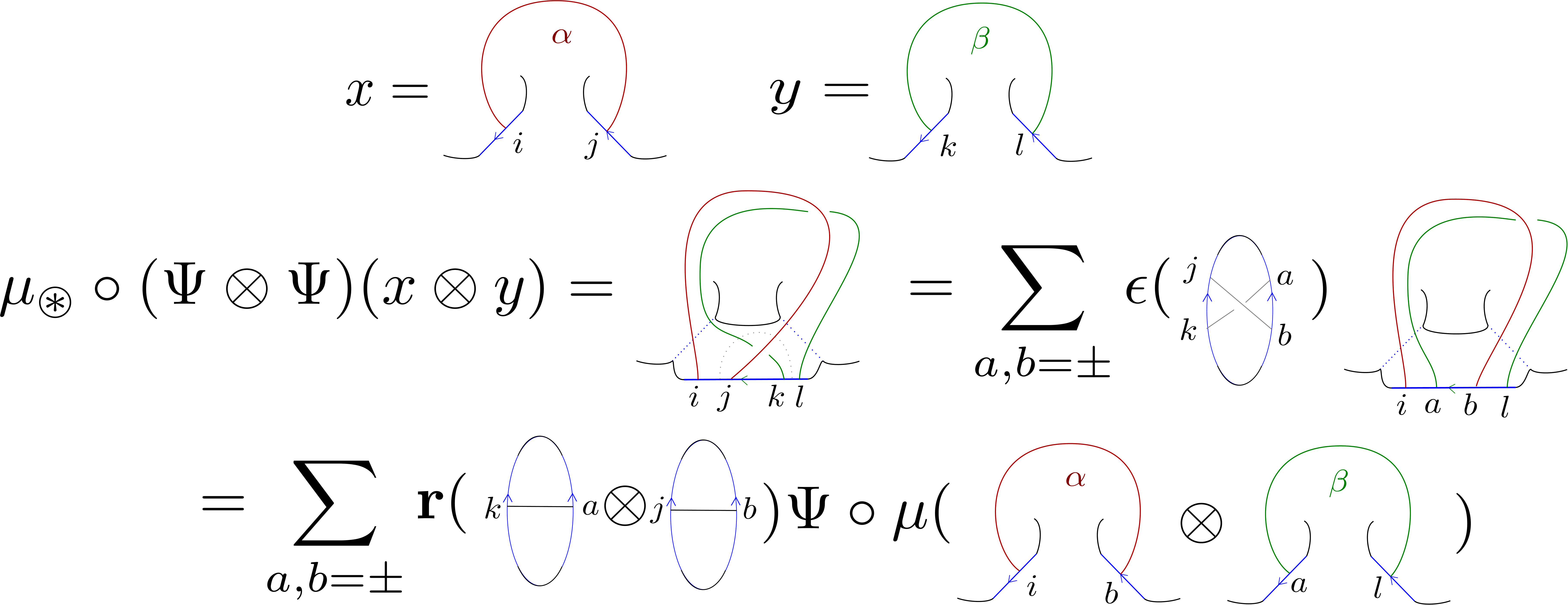}
\caption{An illustration of the fusion product $\mu_{\circledast}$.}\label{fig_fusion_product}
\end{figure}

\subsection{Centers and PI-degrees at roots of unity}\label{sec_center}

\begin{Notations}
Suppose that $A\in \mathbb{C}^*$ is a root of unity of odd order $N$. We denote by
\begin{itemize}\itemsep=0pt
\item $\mathcal{Z}(\mathbf{\Sigma})$ the center of $\mathcal{S}_A(\mathbf{\Sigma})$ and $\overline{\mathcal{Z}}(\mathbf{\Sigma})$ the center of $\overline{\mathcal{S}}_A(\mathbf{\Sigma})$,
\item $\widehat{\mathcal{X}}(\mathbf{\Sigma}):= \operatorname{Specm}( \mathcal{Z}(\mathbf{\Sigma}))$ and $\widehat{\overline{\mathcal{X}}}(\mathbf{\Sigma}):= \operatorname{Specm}\big( \overline{\mathcal{Z}}(\mathbf{\Sigma})\big)$,
\item $\mathcal{X}(\mathbf{\Sigma}):= \operatorname{Specm}(\mathcal{S}_{+1}(\mathbf{\Sigma}))$ and $\overline{\mathcal{X}}(\mathbf{\Sigma}):= \operatorname{Specm}( \overline{\mathcal{S}}_{+1}(\mathbf{\Sigma}))$.
\end{itemize}
\end{Notations}

\begin{Theorem}[{\cite{BonahonWongqTrace} for unmarked surfaces, \cite{KojuQuesneyClassicalShadows} for marked surfaces}] \label{theorem_chebyshev}
When $A\in \mathbb{C}^*$ is a root of unity of odd order $N$, there exist embeddings
\[ \operatorname{Ch}_A \colon \ \mathcal{S}_{+1}(\mathbf{\Sigma}) \hookrightarrow \mathcal{Z} ( \mathbf{\Sigma} )\qquad \text{and} \qquad \operatorname{Ch}_A \colon \ \overline{\mathcal{S}}_{+1}(\mathbf{\Sigma}) \hookrightarrow \overline{\mathcal{Z}} ( \mathbf{\Sigma} )
\]
named \textit{Chebyshev--Frobenius morphisms}, sending the $($commutative$)$ algebra at $+1$ into the center of the skein algebra at $A^{1/2}$. Moreover, $\operatorname{Ch}_A$ is characterized by the facts that if $\gamma$ is a closed curve, then $\operatorname{Ch}_A(\gamma) = T_N(\gamma)$, where $T_N(X)$ is the $N^{\rm th}$ Chebyshev polynomial of first type, and if $\alpha_{ij}$ is a stated arc, then $\operatorname{Ch}_A(\alpha_{ij})= \alpha_{ij}^{(N)}$ is the class of $N$ parallel copies of $\alpha_{ij}$ pushed along the framing direction.
\end{Theorem}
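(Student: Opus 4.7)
The plan is to construct the Chebyshev--Frobenius morphism as a $k$-algebra homomorphism by prescribing its values on a generating set (closed curves and stated arcs), and to verify well-definedness, centrality, and injectivity separately. The unmarked case is handled by Bonahon--Wong \cite{BonahonWongqTrace}, so I would focus on extending the construction to the marked setting and then passing to the reduced quotient.

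First I would set up the map locally on the bigon. By Theorem~\ref{theorem_bigon}, $\mathcal{S}_A(\mathbb{B})\cong \mathcal{O}_q[\SL_2]$, and it is classical that, when $q^2=A^2$ is an $N$-th root of unity, the quantum coordinate algebra $\mathcal{O}_q[\SL_2]$ admits a Hopf-algebra Frobenius embedding $\mathcal{O}_{+1}[\SL_2]\hookrightarrow \mathcal{Z}(\mathcal{O}_q[\SL_2])$ sending $a,b,c,d$ to $a^N,b^N,c^N,d^N$. Translated through the isomorphism, this exactly sends $\alpha_{ij}$ to $\alpha_{ij}^{(N)}$ (the class of $N$ parallel copies along the framing direction), giving the desired morphism on bigons. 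Next, I would globalize using the splitting morphism of Definition~\ref{def_gluing_map}: pick a triangulation $\Delta$ of $\mathbf{\Sigma}$ and iterate $\theta$ to embed $\mathcal{S}_A(\mathbf{\Sigma})$ into a tensor product of copies of $\mathcal{S}_A(\mathbb{T})$, each of which is itself obtained by gluing bigons. Declaring $\operatorname{Ch}_A$ at the triangle level (by fusion, using the $\SL_2$-Frobenius on each bigon factor) and checking compatibility with all the gluing maps $\theta_{a\#b}$ transports the Frobenius to arbitrary $\mathbf{\Sigma}$.

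For well-definedness on arbitrary stated tangles, the defining assignment must respect the skein, boundary, and height-exchange relations. The crux is that all relations carry coefficients of the form $A^{\pm k/2}$ which, upon substituting $N$-fold parallel copies, get promoted to $A^{\pm kN/2}$; since $A$ has odd order $N$, these coefficients collapse to $(\pm 1)^k$, matching the relations of $\mathcal{S}_{+1}$. Centrality is proved using the cobraided comodule structure: for any stated tangle $T'$ one computes the commutator $[\operatorname{Ch}_A(X),T']$ via the $R$-matrix; again the $R$-matrix entries lie in $A^{\mathbb{Z}/2}$ and pick up an $N$-th power when one factor is $N$-parallelized, hence equal $1$. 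Injectivity follows from a PBW-type basis argument: the leading-order part of $\operatorname{Ch}_A$ on the canonical basis $\mathcal{B}$ of Theorem~\ref{theorem_basic_ppty} is $N$-th power of monomials in appropriate quantum tori coordinates (via the quantum trace or the embedding into the quantum torus of a triangulation), which is manifestly injective.

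For the reduced version, I need to verify that $\operatorname{Ch}_A$ sends the bad-arc ideal into itself, so that it descends. The image of the bad arc $\alpha(p)_{-+}$ is $\alpha(p)_{-+}^{(N)}$, which by the height-exchange relation can be rewritten as a product (up to scalars that are $N$-th roots of unity, hence $\pm 1$) of $N$ individual stacked bad arcs $\alpha(p)_{-+}$, each of which lies in the bad-arc ideal of $\mathcal{S}_A(\mathbf{\Sigma})$. Hence $\operatorname{Ch}_A$ descends to a map $\overline{\mathcal{S}}_{+1}(\mathbf{\Sigma}) \to \overline{\mathcal{Z}}(\mathbf{\Sigma})$, and injectivity of the reduced map follows by the same basis argument applied to the reduced basis $\overline{\mathcal{B}}$. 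The main obstacle is the verification of the height-exchange relations across arbitrary boundary arcs: one must show that after parallelizing $N$ times, the mixed-state contributions that normally prevent the exchange from being trivial all cancel — this is where the oddness of $N$ is used crucially, and it requires a careful bookkeeping in the bigon Hopf algebra before the gluing argument can be invoked globally.
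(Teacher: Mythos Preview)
The paper does not supply its own proof of this theorem: it is stated as a citation result, attributed to \cite{BonahonWongqTrace} for unmarked surfaces and \cite{KojuQuesneyClassicalShadows} for marked ones, and used as a black box throughout. So there is no in-paper argument to compare your proposal against.

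That said, your sketch has real gaps as a standalone proof. First, your triangulation strategy does not cover all marked surfaces: surfaces with unmarked boundary components (in particular the unmarked surfaces themselves) are not triangulable in the sense used here, so the reduction to triangles and bigons is unavailable precisely where the Chebyshev side of the statement lives. Second, and more seriously, your outline treats the map as a pure Frobenius ($N$-fold parallel copy) on generators and then asserts that ``all relations carry coefficients $A^{\pm k/2}$ which get promoted to $A^{\pm kN/2}$''. This is not how the Kauffman relation behaves on an $N$-cable: resolving a single crossing of $N$-parallel strands produces $2^{N^2}$ terms, not one term with exponent scaled by $N$, and it is exactly the nontrivial algebraic identity $T_N\big(x+x^{-1}\big)=x^N+x^{-N}$ (together with the ``miraculous cancellations'' of \cite{BonahonWong1}) that makes the closed-curve assignment $\gamma\mapsto T_N(\gamma)$ compatible with the arc assignment $\alpha_{ij}\mapsto \alpha_{ij}^{(N)}$. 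Your write-up never mentions the Chebyshev polynomial on the construction side, so the link between the arc-Frobenius and the curve-Chebyshev is missing. The approach actually taken in \cite{KojuQuesneyClassicalShadows} goes through the quantum trace into balanced Chekhov--Fock algebras, where the Frobenius $Z\mapsto Z^N$ is transparent, and then checks that its restriction to the skein subalgebra lands in the skein subalgebra and realizes $T_N$ on closed curves; this is the step that requires genuine work and that your outline elides.
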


 \begin{Definition} \label{def_central_elements}\quad
 \begin{enumerate}\itemsep=0pt
 \item For $p$ an inner puncture (i.e., an unmarked connected component of $\partial \Sigma$), we denote by $\gamma_p \in \mathcal{S}_A(\mathbf{\Sigma})$ the class of a peripheral curve encircling $p$ once.
 \item For $\partial \in \pi_0(\partial \Sigma)$ a boundary component which intersects $\mathcal{A}$ non trivially, denote by $p_1, \dots, p_n$ the boundary punctures in $\partial$ cyclically ordered by the orientation of $\partial$ (induced from that of $\Sigma$) and define the elements in $\overline{\mathcal{S}}_A(\mathbf{\Sigma})$:
\[ \alpha_{\partial} := \alpha(p_1)_{++} \cdots \alpha(p_n)_{++}, \qquad \text{and} \qquad \alpha_{\partial}^{-1}:= \alpha(p_1)_{--} \cdots \alpha_(p_n)_{--}.\]
 In $\overline{\mathcal{S}}_A(\mathbf{\Sigma})$, we have $\alpha_{\partial} \alpha_{\partial}^{-1} =1$ (see~\cite{KojuAzumayaSkein} for a~proof).
\end{enumerate}
\end{Definition}

 For a prime ring $R$ with finite rank $r$ over its center, the rank $r$ is a~perfect square (by a~celebrated theorem of Posner--Formanek detailed in \cite[Theorem~1.13.3]{BrownGoodearl}) and we call \textit{PI-degree} of~$R$ the square root $\sqrt{r}$.

 \begin{Theorem}\label{theorem_center} Suppose that $A\in \mathbb{C}^*$ is a root of unity of odd order~$N$.
 \begin{enumerate}\itemsep=0pt
\item[$1.$] If $\mathbf{\Sigma}$ is unmarked, then $(i)$~the center of $\mathcal{S}_A(\mathbf{\Sigma})$ is generated by the image of the Chebyshev--Frobenius morphism together with the eventual peripheral curves $\gamma_p$ for $p$ an inner puncture. $(ii)$~$\mathcal{S}_A(\mathbf{\Sigma})$ is finitely generated over the image of the Chebyshev--Frobenius morphism $($so over its center$)$ and $(iii)$ for $\mathbf{\Sigma}=(\Sigma_{g,n}, \varnothing)$ the PI-degree of $\mathcal{S}_A(\mathbf{\Sigma})$ is $N^{3g-3+n}$~{\rm \cite{FrohmanKaniaLe_UnicityRep}}.
\item[$2.$] For any marked surface then $(i)$~the center of $\overline{\mathcal{S}}_A(\mathbf{\Sigma})$ is generated by the image of the Chebyshev--Frobenius morphism together with the peripheral curves $\gamma_p$ associated to inner punctures and the elements $\alpha_{\partial}^{\pm 1}$ associated to boundary components $\partial \in \pi_0(\partial \Sigma)$. $(ii)$~both~$\mathcal{S}_A(\mathbf{\Sigma})$ and $\overline{\mathcal{S}}_A(\mathbf{\Sigma})$ are finitely generated over the image of the Chebyshev--Fro\-be\-nius morphisms $($so over their center$)$. $(iii)$~For $\mathbf{\Sigma}=(\Sigma_{g,n}, \mathcal{A})$, the PI-degree of $\overline{\mathcal{S}}_A(\mathbf{\Sigma})$ is $N^{3g-3+n+|\mathcal{A}|}$~{\rm \cite{KojuAzumayaSkein}}.
\item[$3.$] The center of $\mathcal{S}_A(\mathbf{\Sigma}_{g}^*)$ is equal to the image of the Chebyshev--Frobe\-nius morphism~{\rm \cite{GanevJordanSafranov_FrobeniusMorphism}}.
\end{enumerate}
\end{Theorem}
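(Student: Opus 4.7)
The plan is to tackle the three parts in sequence, sharing a common tool: an embedding of the (reduced) stated skein algebra into a quantum torus $\mathbb{T}_{q, B_\Delta}$ associated to an ideal triangulation $\Delta$ of $\mathbf{\Sigma}$, via quantum trace or shear--bend coordinates. This embedding sends canonical generators to Laurent monomials in edge variables and reduces delicate questions about the skein algebra to linear-algebraic ones involving the skew-symmetric matrix $B_\Delta$.

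For the centrality claims in parts~1(i), 2(i), and~3: the Chebyshev--Frobenius images are central by Theorem~\ref{theorem_chebyshev}; each peripheral curve $\gamma_p$ is central by a direct isotopy argument, since any tangle can be pushed off a small collar neighborhood of $\gamma_p$ without height interactions; and for $\alpha_\partial^{\pm 1}$ in the reduced setting, centrality follows because the bad-arc ideal annihilates the off-diagonal comodule contributions arising from Theorem~\ref{theorem_bigon}, while the relation $\alpha_\partial \alpha_\partial^{-1} = 1$ comes from iterated use of the quantum determinant relation. Showing that these elements actually \emph{generate} the center (the harder direction) is controlled by the PI-degree computation of parts 1(iii) and~2(iii): the quantum torus $\mathbb{T}_{q,B_\Delta}$ has PI-degree $N^{\operatorname{rank}(B_\Delta)/2}$ at an odd $N$-th root, and a local triangle-by-triangle rank computation yields $\operatorname{rank}(B_\Delta) = 2(3g-3+n+|\mathcal{A}|)$ in the reduced case and $2(3g-3+n)$ in the unmarked case. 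Since the subalgebra generated by the claimed central elements already has field of fractions of the matching codimension in $\operatorname{Frac}(\mathbb{T}_{q,B_\Delta})$, equality with the full center follows. Finite generation over the Chebyshev image (parts 1(ii), 2(ii)) then follows from the stated-tangle basis of Theorem~\ref{theorem_basic_ppty} combined with a filtration by intersection numbers with $\Delta$: the $N$-th powers dominate the leading terms of the filtration, so finitely many cosets suffice.

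For part~3, the center of $\mathcal{S}_A(\mathbf{\Sigma}_g^*)$ equals the Chebyshev image because $\mathbf{\Sigma}_g^*$ has no inner puncture (ruling out any $\gamma_p$), and in the non-reduced setting the boundary elements $\alpha(p_\partial)_{ij}$ do not assemble into new central elements (unlike the reduced case, where the bad-arc ideal makes $\alpha_\partial$ grouplike). Using the identification with the quantum moduli space and the quantum-trace embedding into $\mathbb{T}_{q,B_\Delta}$, one checks that the center of this quantum torus is exactly the subalgebra generated by $N$-th powers, which matches the Chebyshev image on the nose. I expect the main obstacle to be the explicit rank computation of $B_\Delta$ in the presence of boundary arcs and its compatibility with the bad-arc quotient: one has to use the \emph{reduced} quantum trace of Costantino--L\^e rather than the non-reduced Bonahon--Wong version so that the quantum-torus embedding descends to $\overline{\mathcal{S}}_A(\mathbf{\Sigma})$, and the corresponding rank drop corresponds precisely to the central elements $\alpha_\partial^{\pm 1}$ appearing in part 2(i).
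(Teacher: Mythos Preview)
The paper does not give its own proof of this theorem: each part is stated with an external citation (part~1 to \cite{FrohmonKaniaLe\_UnicityRep}, part~2 to \cite{KojuAzumayaSkein}, part~3 to \cite{GanevJordanSafranov\_FrobeniusMorphism}), and the text moves directly on to Theorem~\ref{theorem_PI_Deg}. So there is no in-paper argument to compare against; the paper treats Theorem~\ref{theorem_center} as input.

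That said, your strategy---embed into a quantum torus via the (reduced) quantum trace, compute the kernel of the Weil--Petersson form modulo~$N$, and read off both the PI-degree and the shape of the center---is indeed the approach taken in the cited references, and is exactly what the paper itself later exploits in Sections~\ref{sec_QT_reduced}--\ref{sec_QT_nonreduced}. One point in your sketch needs correction, however. In your treatment of part~3 you assert that ``the center of this quantum torus is exactly the subalgebra generated by $N$-th powers.'' This is false for the relevant quantum tori: already for the balanced Chekhov--Fock algebra $\mathcal{Z}_q(\mathbf{\Sigma}_g^*,\Delta)$ the paper records (equation~\eqref{eq_K0} and Theorem~\ref{theorem_centerCF}) that $K_\Delta^0 = N K_\Delta + \mathbb{Z}\mathbf{k}_\partial$, so the central element $H_\partial = Z^{\mathbf{k}_\partial}$ is not an $N$-th power. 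Likewise for the refined torus $\overline{\mathcal{Z}}_q(\mathbf{\Sigma}_g^*,\Delta)$ of Section~\ref{sec_QT_nonreduced}, Lemma~\ref{lemma_refined} gives $\overline{K}_\Delta^0 = K_\Delta^0 \oplus N\mathbb{Z}\mathbf{k}_{\widehat{a_\partial}}$, again containing the non-$N$-th-power class $\mathbf{k}_\partial$. So to conclude that the center of the \emph{non-reduced} $\mathcal{S}_A(\mathbf{\Sigma}_g^*)$ is exactly the Chebyshev image, you cannot simply intersect with the quantum-torus center; you must argue additionally that the extra central generator $H_\partial$ (equivalently $\alpha_\partial$) does \emph{not} lie in the image of the non-reduced skein algebra under $\phi$, or use a direct argument on the comodule structure as in \cite{GanevJordanSafranov_FrobeniusMorphism}. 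Your heuristic that ``the boundary elements do not assemble into new central elements in the non-reduced setting'' is the right intuition, but it is precisely the step that requires proof, and the quantum-torus center computation alone does not supply it.
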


Denote by $\pi\colon \widehat{\mathcal{X}}(\mathbf{\Sigma}) \to \mathcal{X}(\mathbf{\Sigma})$ and by $\overline{\pi}\colon \widehat{\overline{\mathcal{X}}}(\mathbf{\Sigma}) \to \overline{\mathcal{X}}(\mathbf{\Sigma})$ the dominant maps induced by the Chebyshev--Frobenius morphisms. Note that, since $\overline{\mathcal{S}}_{+1}(\mathbf{\Sigma})$ is a quotient of $\mathcal{S}_{+1}(\mathbf{\Sigma})$, then $\overline{\mathcal{X}}(\mathbf{\Sigma})$ is a subvariety of $\mathcal{X}(\mathbf{\Sigma})$. Theorem~\ref{theorem_center} implies that $\pi$ and $\overline{\pi}$ are finite branched coverings.

\begin{Theorem}\label{theorem_PI_Deg} Suppose that $A\in \mathbb{C}^*$ is a root of unity of odd order $N$. Then the PI-degree of~$\mathcal{S}_A(\mathbf{\Sigma}_{g}^*)$ is $N^{3g}$.
\end{Theorem}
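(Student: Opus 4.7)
The starting point is Theorem \ref{theorem_center}(3), which identifies the center $\mathcal{Z}(\mathcal{S}_A(\mathbf{\Sigma}_g^*))$ with the image $\operatorname{Ch}_A(\mathcal{S}_{+1}(\mathbf{\Sigma}_g^*))$. Combined with the Poisson-equivariant isomorphism $\mathcal{X}(\mathbf{\Sigma}_g^*) \cong \Hom(\pi_1(\Sigma_{g,1}), \SL_2) = \SL_2^{2g}$ recalled in the introduction, this gives that $\widehat{\mathcal{X}}(\mathbf{\Sigma}_g^*) = \mathcal{X}(\mathbf{\Sigma}_g^*)$ has Krull dimension $6g$. Since $\mathcal{S}_A(\mathbf{\Sigma}_g^*)$ is prime (Theorem \ref{theorem_basic_ppty}(2)) and module-finite over its center (Theorem \ref{theorem_center}(2)(ii)), Posner--Formanek gives that the PI-degree $d$ satisfies $d^2 = [\operatorname{Frac}\mathcal{S}_A : \operatorname{Frac}\mathcal{Z}]$, equivalently $d^2$ is the generic rank of $\mathcal{S}_A$ as a $\mathcal{Z}$-module.

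My plan is to reduce to the genus-one case by fusion. Remark \ref{remark_fusion} gives $\mathbf{\Sigma}_g^* \cong (\mathbf{\Sigma}_1^*)^{\wedge g}$, and Theorem \ref{theorem_fusion} identifies $\mathcal{S}_A(\mathbf{\Sigma}_g^*) \cong \mathcal{S}_A(\mathbf{\Sigma}_1^*)^{\overline{\otimes} g}$ as the iterated cobraided tensor product. It then suffices to prove: (a) $\mathcal{S}_A(\mathbf{\Sigma}_1^*)$ has PI-degree $N^3$; and (b) PI-degree is multiplicative under the cobraided tensor product for these algebras. For (a), use $\mathbf{\Sigma}_1^* \cong (\mathbb{D}_1^+)_{a \circledast b}$ together with Theorems \ref{theorem_fusion} and \ref{theorem_bigon} to write $\mathcal{S}_A(\mathbf{\Sigma}_1^*) \cong (\mathcal{O}_q[\SL_2] \otimes \mathcal{O}_q[\SL_2])_{\mathrm{fus}}$ (the Alekseev--Grosse--Schomerus loop algebra $L_{1,0}$), then compute the PI-degree directly from this presentation, using that $\mathcal{O}_q[\SL_2]$ at an odd-order root of unity $N$ has PI-degree $N$ and that $\mathcal{X}(\mathbf{\Sigma}_1^*) = \SL_2^2$ is $6$-dimensional (so any $d > N^3$ would violate the rank bound).

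For (b), the Chebyshev--Frobenius morphism is compatible with fusion: at $A^{1/2} = +1$ the co-R-matrix $\mathbf{r}$ becomes the trivial pairing $\epsilon \otimes \epsilon$, so the cobraided tensor product degenerates to the ordinary commutative tensor product of Poisson algebras, and the Chebyshev image of the center of a fusion is the tensor product of the Chebyshev images of the centers of the factors. By generic freeness of $\mathcal{S}_A$ over $\mathcal{Z}$, the generic $\mathcal{Z}$-rank of the fusion is the product of the generic ranks of its tensor factors, which gives multiplicativity of the PI-degree and therefore $d = (N^3)^g = N^{3g}$.

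The \textbf{main obstacle} is step (b): although the classical specialization is transparent, one must verify at the quantum level that the nontrivial cobraiding $\mathbf{r}$ does not alter the generic $\mathcal{Z}$-rank or introduce hidden central elements beyond those already accounted for by Theorem \ref{theorem_center}(3). Concretely, this amounts to exhibiting a PBW-type $\mathbb{C}$-basis of $\mathcal{S}_A(\mathbf{\Sigma}_g^*)$ compatible with the Chebyshev filtration (with associated graded equal to $\mathcal{S}_{+1}(\mathbf{\Sigma}_1^*)^{\otimes g}$) whose graded pieces realize the expected ranks; such a basis is implicit in Theorem \ref{theorem_basic_ppty}(1) applied iteratively to each fusion factor, and the scalar ambiguities coming from $\mathbf{r}$ affect only the leading-order twisting, not the generic rank computation.
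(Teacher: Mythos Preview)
Your approach via fusion and multiplicativity of PI-degree is genuinely different from the paper's, which bypasses both issues by producing an explicit free basis of $\mathcal{S}_A(\mathbf{\Sigma}_g^*)$ over its center and counting. Using the arcs $\alpha_1,\beta_1,\dots,\alpha_g,\beta_g$ of Figure~\ref{fig_arcs_Alekseev} and the PBW-type basis of \cite[Theorem~3.7]{KojuPresentationSSkein}, the paper writes each basis monomial in the stated arcs $(\gamma)_{\pm\pm}$, splits every exponent by Euclidean division by $N$ so that the ``$N$-th power'' part is central (by the description of $\operatorname{Ch}_A$ on stated arcs in Theorem~\ref{theorem_chebyshev}), and reads off a basis over the center of cardinality $N^{6g}$. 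No genus-one reduction and no fusion-compatibility argument are needed.

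In your sketch, step~(a) is incomplete: that $\mathcal{O}_q[\SL_2]$ has PI-degree $N$ and that $\mathcal{X}(\mathbf{\Sigma}_1^*)\cong\SL_2^2$ is $6$-dimensional gives neither an upper nor a lower bound on the PI-degree of the fusion, since Krull dimension of the center does not by itself bound the rank over the center. Step~(b) is the more serious gap. You know from Theorem~\ref{theorem_center}(3) what the center is, and you know the underlying vector space of $A\overline{\otimes}B$ agrees with $A\otimes_{\mathbb{C}}B$, but to extract the rank you must still exhibit $\mathcal{S}_A(\mathbf{\Sigma}_g^*)$ as a free $\mathcal{Z}$-module and identify its rank; the cobraiding changes the $\mathcal{Z}$-module structure, and ``generic freeness'' alone does not pin down that number. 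Your closing sentence, that a suitable basis is ``implicit in Theorem~\ref{theorem_basic_ppty}(1) applied iteratively to each fusion factor'', is precisely the content of the paper's direct argument --- so your detour through fusion ultimately requires the same explicit basis computation the paper performs from the start.
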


\begin{figure}[!h]\centering \includegraphics[width=6cm]{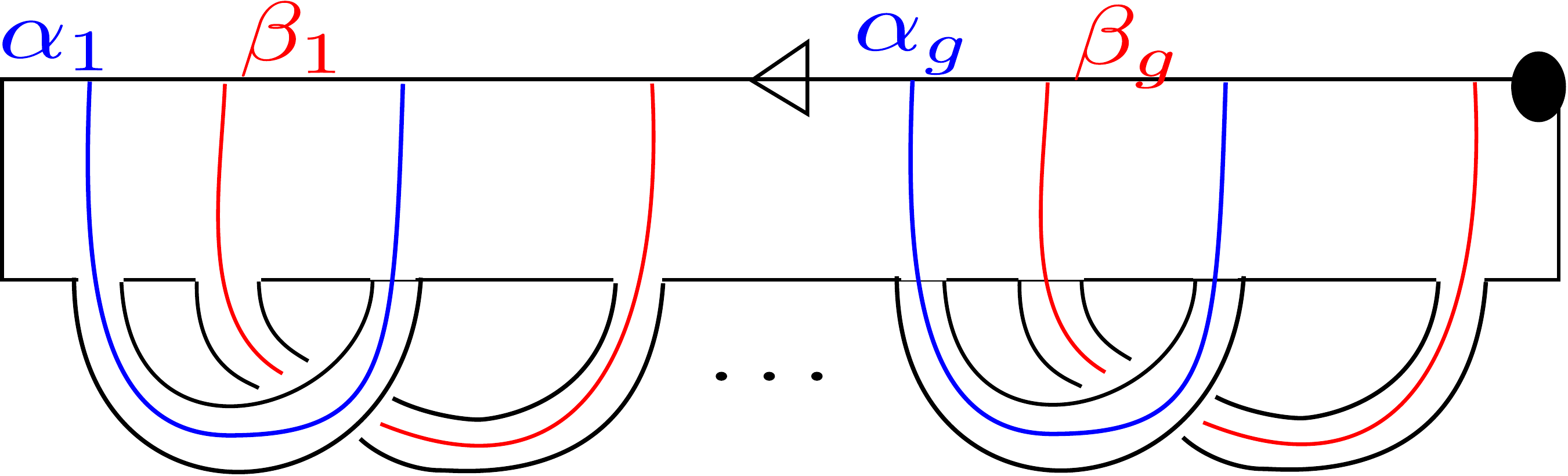}
\caption{Some arcs in $\mathbf{\Sigma}_g^*$.}\label{fig_arcs_Alekseev}
\end{figure}

\begin{proof}Let $\mathbb{G}=\{ \alpha_1, \beta_1, \dots, \alpha_g, \beta_g\}$ be the set of arcs in $\mathbf{\Sigma}_g^*$ drawn in Figure~\ref{fig_arcs_Alekseev}. For $\gamma \in \mathbb{G}$ with endpoints $v$, $w$ such that $h(v)<h(w)$ and for $i,j = \pm$, write $\gamma_{ij} \in \mathcal{S}_A(\mathbf{\Sigma}_g^*)$ the class of the arc~$\gamma$ with state sending~$v$ to $i$ and~$w$ to~$j$. For $n\geq 0$, write $n=q_n N +r_n$ its Euclidian division characterized by the property that $0\leq r_n <N$. Set
\[\gamma_{ij}^{\langle n\rangle}:= \big(\gamma_{ij}^{(N)}\big)^{q_n} (\gamma_{ij})^{r_n}, \]
where we recall that $\gamma_{ij}^{(N)}$ is the central element made of $N$ parallel copies of $\gamma_{ij}$ \big(here $\gamma_{ij}^{(N)} \neq \gamma_{ij}^N$\big). Let
\[\mathcal{B}(\gamma):= \big\{ \gamma_{++}^{\langle a \rangle } \gamma_{+-}^{\langle b \rangle } \gamma_{--}^{\langle c \rangle } , \,a,b,c \geq 0\big\} \cup \big\{ \gamma_{++}^{\langle a \rangle } \gamma_{-+}^{\langle b \rangle } \gamma_{--}^{\langle c \rangle } ,\, a,b,c \geq 0\big\}\]
and
\[\mathcal{B}^{0}(\gamma):= \big\{ \gamma_{++}^{\langle a \rangle } \gamma_{+-}^{\langle b \rangle } \gamma_{--}^{\langle c \rangle } ,\, 0\leq a,b,c \leq N-1\big\} \cup \big\{ \gamma_{++}^{\langle a \rangle } \gamma_{-+}^{\langle b \rangle } \gamma_{--}^{\langle c \rangle } ,\, 0\leq a,b,c \leq N-1\big\}.\]
Consider the sets
\[\mathcal{B}:= \big\{ x_1 y_1 \cdots x_g, y_g, \, x_i \in \mathcal{B}(\alpha_i), \, y_i \in \mathcal{B}(\beta_i)\big\},\]
and
\[\mathcal{B}^{0}:= \big\{ x_1 y_1 \cdots x_g, y_g, \, x_i \in \mathcal{B}^0(\alpha_i),\, y_i \in \mathcal{B}^0(\beta_i)\big\},\]
Then by \cite[Theorem~3.7]{KojuPresentationSSkein}, $\mathcal{B}$ is a basis of $\mathcal{S}_A(\mathbf{\Sigma}_{g}^*)$. By Theorem~\ref{theorem_center}, the center of $\mathcal{S}_A(\mathbf{\Sigma}_{g}^*)$ is the image of the Chebyshev--Frobenius morphism $\operatorname{Ch}_A$ and Theorem~\ref{theorem_chebyshev} implies that $\mathcal{B}^0$ is a basis of~$\mathcal{S}_A(\mathbf{\Sigma}_g^*)$ over its center. Therefore the PI-degree of $\mathcal{S}_A(\mathbf{\Sigma}_g^*)$ is the square root of the cardinal of~$\mathcal{B}^0$. Since $\mathcal{B}^0$ has~$N^{6g}$ elements, this concludes the proof.
\end{proof}

\begin{Remark} In~\cite{LeYu_Survey}, L\^e and Yu announced that they have computed explicitly the center of any stated skein algebra at roots of unity and that they have computed their PI-degree as well, in some still un-prepublished work. For a root of unity of odd order $N$ and a connected marked surface $\mathbf{\Sigma}$ of genus $g$ with $s$ boundary arcs and~$n_{\rm even}$ (resp.~$n_{\rm odd}$) boundary components with an even (resp.~odd) number of boundary arcs, they announced that the PI-degree of $\mathcal{S}_A(\mathbf{\Sigma})$ is $N^{3g-3+n_{\rm even}+\frac{3}{2}(n_{\rm odd}+s)}$. This agrees with our formula in Theorem~\ref{theorem_PI_Deg}.
\end{Remark}

\section{Geometric study}\label{sec_geometric}

\subsection{Poisson bracket arising from deformation quantization}

The algebras $\mathcal{S}_{+1}(\mathbf{\Sigma})$ and $\overline{\mathcal{S}}_{+1}(\mathbf{\Sigma})$ have Poisson brackets defined as follows. Let $\mathcal{S}_{+1}$ be either~$\mathcal{S}_{+1}(\mathbf{\Sigma})$ or $\overline{\mathcal{S}}_{+1}(\mathbf{\Sigma})$ with $A^{1/2}=+1 \in \mathbb{C}$ and denote by $\mathcal{S}_{A_{\hbar}}$ the same algebra taken in the ring~$\mathbb{C}[[\hbar]]$ of formal power series in $\hbar$ with $A_{\hbar}^{1/2}:= \exp(\hbar/2) \in \mathbb{C}[[\hbar]]$.
Consider the basis $B$ of the first item of Theorem~\ref{theorem_basic_ppty} made of stated tangles.
An element $b\in B$ can be seen both as an element of~$\mathcal{S}_{+1}$ or~$\mathcal{S}_{A_{\hbar}}$, and we define a linear isomorphism $\Psi\colon \mathcal{S}_{+1} \otimes_{\mathbb{C}}\mathbb{C}[[\hbar]] \cong \mathcal{S}_{A_{\hbar}}$ by setting $\Psi(b)=b$ for all $b\in B$. Let $\star$ denote the pull-back by $\Psi$ of the product of~$\mathcal{S}_{A_{\hbar}}$.

\begin{Definition}\label{def_Poisson_bracket} The Poisson bracket $\{\cdot, \cdot\}$ on $\mathcal{S}_{+1}$ is defined by
\[ x \star y - y \star x \equiv \hbar \{x,y\} \ \big({\rm mod}~\hbar^2\big) \qquad \text{for all }x,y \in \mathcal{S}_{+1}.\]
\end{Definition}

As a result $\overline{\mathcal{X}}(\mathbf{\Sigma}) \subset \mathcal{X}(\mathbf{\Sigma})$ are affine Poisson varieties. The Poisson bracket $\{\cdot, \cdot\}$ does not depend on the choice of the basis $B$.

 Note that if $f\colon \mathcal{S}_A(\mathbf{\Sigma}_1) \to \mathbf{S}_A(\mathbf{\Sigma}_2)$ is a morphism of algebras for $A^{1/2}$ a generic element, then it induces a morphism of algebras $f_{\hbar}\colon \mathcal{S}_{A_{\hbar}}(\mathbf{\Sigma}_1) \to \mathbf{S}_{A_{\hbar}}(\mathbf{\Sigma}_2)$ so Definition~\ref{def_Poisson_bracket} implies that the induced morphism $f_{+1}\colon \mathcal{S}_{+1}(\mathbf{\Sigma}_1) \to \mathcal{S}_{+1}(\mathbf{\Sigma}_2)$ is Poisson and defines a Poisson map $f^*\colon \mathcal{X}(\mathbf{\Sigma}_2) \to \mathcal{X}(\mathbf{\Sigma}_1)$. We will illustrate this remark on three examples:
 \begin{enumerate}\itemsep=0pt
 \item The splitting morphisms $\theta_{a\#b}\colon \mathcal{S}_A(\mathbf{\Sigma}_{a\#b}) \to \mathcal{S}_A(\mathbf{\Sigma})$ induces a Poisson map $\theta_{a\#b}^*\colon \mathcal{X}(\mathbf{\Sigma}) \to \mathcal{X}(\mathbf{\Sigma}_{a\#b})$. In particular, writing $\SL_2^{\rm D}:= \mathcal{X}(\mathbb{B})$,
 the coproduct $\Delta\colon \mathcal{S}_A(\mathbb{B}) \to \mathcal{S}_A(\mathbb{B})^{\otimes 2}$ induces a Poisson group law on~$\SL_2^{\rm D}$: we say that $\SL_2^{\rm D}$ is a~Poisson--Lie group (see Section~\ref{sec_relcharvar} for details).
 \item The comodule map $\Delta^L\colon \mathcal{S}_A(\mathbf{\Sigma}) \to \mathcal{O}_q[\SL_2]^{\otimes \mathcal{A}} \otimes \mathcal{S}_A(\mathbf{\Sigma})$ induces a Poisson action
\[ \nabla^L\colon \ \big(\SL_2^{\rm D}\big)^{\mathcal{A}} \times \mathcal{X}(\mathbf{\Sigma}) \to \mathcal{X}(\mathbf{\Sigma}).\]
 We say that $\mathcal{X}(\mathbf{\Sigma})$ is a $\big(\SL_2^{\rm D}\big)^{\mathcal{A}}$-Poisson variety.
 \item Let $i\colon \mathbf{m}_1 \hookrightarrow \mathbf{\Sigma}_g^*$ be the embedding of marked surfaces illustrated in Figure~\ref{fig_moment_map}.

\begin{figure}[!h]
\centering \includegraphics[width=7cm]{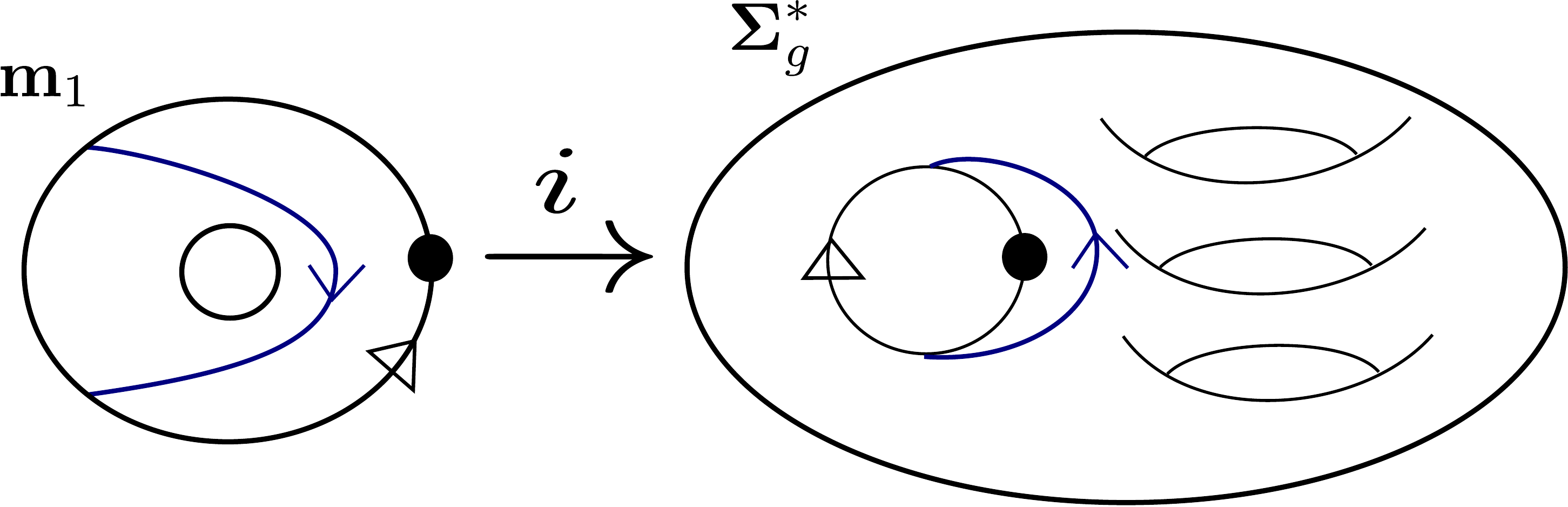}
\caption{The marked surfaces embedding defining the quantum and classical moment maps.}\label{fig_moment_map}
\end{figure}

 The induced algebra morphism
\[ \mu_q := i_*\colon \ \mathcal{S}_A(\mathbf{m}_1) \to \mathcal{S}_A(\mathbf{\Sigma}_g^*) \]
 is called the \textit{quantum moment map}. Writing $\SL_2^{\rm STS}:= \mathcal{X}(\mathbf{m}_1)$, the quantum moment map induces a Poisson map
\[ \mu \colon \ \mathcal{X}(\mathbf{\Sigma}_g^*) \to \SL_2^{\rm STS},\]
 named the \textit{classical moment map}. Note that $\mu$ is a morphism of $\SL_2^{\rm D}$-Poisson varieties.
 \end{enumerate}

For latter use, we now define an important toric action. Let $\iota\colon \mathbb{C}^* \hookrightarrow \SL_2$ be the diagonal embedding $\iota(z) := \left(\begin{smallmatrix} z & 0 \\ 0 & z \end{smallmatrix}\right)$.

\begin{Definition} The toric action of $(\mathbb{C}^*)^{\mathcal{A}}$ on $\mathcal{X}(\mathbf{\Sigma})$ is the action induced by the $(\SL_2)^{\mathcal{A}}$ action through the embedding $\iota^{\mathcal{A}}\colon (\mathbb{C}^*)^{\mathcal{A}} \hookrightarrow (\SL_2)^{\mathcal{A}}$. It reduces to a similar toric action on $\overline{\mathcal{X}}(\mathbf{\Sigma})$.
\end{Definition}

When $\mathbf{\Sigma}$ does not contain any unmarked component, then the Poisson varieties~$\mathcal{X}(\mathbf{\Sigma})$ and $\overline{\mathcal{X}}(\mathbf{\Sigma})$ are smooth so they can be seen as analytic manifolds as well. We can thus partition $\mathbf{X}(\mathbf{\Sigma})$ into its symplectic leaves.

\begin{Definition}\quad\samepage
\begin{enumerate}\itemsep=0pt
\item Let $X$ be a smooth Poisson variety. Consider the equivalence relation $\sim$ on $X$ by
 writing $x\sim y$ if there exists a finite sequence $x=p_0, p_1, \dots, p_k=y$ and functions $h_0, \dots, h_{k-1} \in \mathcal{O}[X]$ such that $p_{i+1}$ is obtained from~$p_i$ by deriving along the Hamiltonian flow of~$h_i$.
 The orbits for this relation are called the \textit{symplectic leaves}: they are the biggest connected smooth symplectic subvarieties of $X$.
\item When $X$ is either $\mathcal{X}(\mathbf{\Sigma})$ or $\overline{\mathcal{X}}(\mathbf{\Sigma})$, consider the equivalence relation $\sim$ where $x\sim y$ if there exists an element $t\in (\mathbb{C}^*)^{\mathcal{A}}$ such that $t\cdot x $ and $y$ belong to the same symplectic leaf. The orbits for this relation are called the \textit{equivariant symplectic leaves}.
\end{enumerate}
\end{Definition}

The classification of representations of (reduced) stated skein algebras at roots of unity is closely related to the computation of the equivariant symplectic leaves as detailed in~\cite{KojuSurvey} and briefly reviewed in Section~\ref{sec_PO}. In particular, Theorem~\ref{theorem1} will follow from the fact that $\overline{\mathcal{X}}(\mathbf{\Sigma}_g^*)$ contains a single equivariant symplectic leaf.

\subsection{Relative representation varieties in particular cases}\label{sec_relcharvar}

 The Poisson variety $\mathcal{X}(\mathbf{\Sigma})$ admits a geometric description named \textit{relative representation variety} first defined by Fock--Rosly in \cite{FockRosly} and studied independently in \cite{AlekseevKosmannMeinrenken, AlekseevMalkin_PoissonCharVar,GHJW_ModSpacesParBd,KojuTriangularCharVar}. The precise relation between relative representation varieties and $\mathcal{X}(\mathbf{\Sigma})$ is proved in \cite[Theorem~1.3]{KojuQuesneyClassicalShadows} (see also \cite[Theorem~4.7]{KojuPresentationSSkein} for an alternative proof). Let us first describe this moduli space in the particular cases of $\mathbb{B}$, $\mathbf{m}_1$, $\mathbb{D}_1^+$. Let us first introduce the classical and quantum $R$-matrices.
 Consider the matrices in $\mathfrak{sl}_2$:
\[ E = \begin{pmatrix} 0 & 0 \\ 1 & 0 \end{pmatrix}, \qquad F = \begin{pmatrix} 0 & 1 \\ 0 & 0 \end{pmatrix},\qquad H= \begin{pmatrix} 1 &0\\0 & -1 \end{pmatrix}.\]
 Consider these matrices as operators acting on a $2$-dimensional vector space $V$ with ordered basis $(v_+, v_-)$ (the standard representation of $\mathfrak{sl}_2$). Define an endomorphism $q^{\frac{H\otimes H}{2}}\in \End(V\otimes V)$ by $q^{\frac{H\otimes H}{2}}\cdot v_{\varepsilon_1} \otimes v_{\varepsilon_2} := A^{\varepsilon_1 \varepsilon_2} v_{\varepsilon_1} \otimes v_{\varepsilon_2}$. Then $\mathscr{R}$ is the matrix in the ordered basis $(v_+\otimes v_+, v_+\otimes v_-, v_-\otimes v_+, v_-\otimes v_-)$ of the operator
\[ \mathscr{R}= \tau \circ q^{\frac{H\otimes H}{2}} \exp_q\big( \big(q-q^{-1}\big)E \otimes F \big) = \tau \circ q^{\frac{H\otimes H}{2}} \circ \big( \mathds{1}_2 + \big(q-q^{-1}\big) E \otimes F \big).\]
Define the classical $r$-matrices:
\[ r^+:= \frac{1}{2} H\otimes H +2E\otimes F, \qquad r^{-} := \frac{1}{2} H\otimes H +2F\otimes E.\]
Then, writing $A^{1/2}=\exp(\hbar/2)$, one has
\begin{align}\label{eq_classicalr}
 & \mathscr{R}\equiv \tau ( \mathds{1}\otimes \mathds{1} + \hbar r^+) \ \big({\rm mod}~\hbar^2\big) \equiv ( \mathds{1}\otimes \mathds{1} + \hbar r^-)\tau \ \big({\rm mod}~\hbar^2\big), \\
 \label{eq_classicalr2}
&\mathscr{R}^{-1}\equiv ( \mathds{1}\otimes \mathds{1} - \hbar r^+)\tau \ \big({\rm mod}~\hbar^2\big) \equiv \tau ( \mathds{1}\otimes \mathds{1} - \hbar r^-) \ \big({\rm mod}~\hbar^2\big).
\end{align}

\textit{The case of the bigon:}
 The quantum group $\mathcal{O}_q[\SL_2]$ is generated by elements $a$, $b$, $c$, $d$ and, writing $N:= \left(\begin{smallmatrix} a & b \\ c & d \end{smallmatrix}\right)$, it is defined by the relations:
\[ N \odot N = \mathscr{R}^{-1} (N\odot N) \mathscr{R}, \qquad \det_q(N):= ad-q^{-1}bc = 1.\]
 Recall that, writing $\alpha_{ij}:= \adjustbox{valign=c}{\includegraphics[width=0.8cm]{Figure-03e}} $, the isomorphism $\mathcal{S}_A(\mathbb{B}) \cong \mathcal{O}_q[\SL_2]$ of Theorem~\ref{theorem_bigon} sends $\alpha_{++}$, $\alpha_{+-}$, $\alpha_{-+}$, $\alpha_{--}$ to $a$, $b$, $c$, $d$ respectively.

 Replacing $A$ by $\exp(\hbar/2)$ and developing using equations \eqref{eq_classicalr}, \eqref{eq_classicalr2}, we find
 \begin{align*}
 & N \odot N \equiv (1- \hbar r^+) \tau (N\odot N) \tau (1+\hbar r^+) \ \big({\rm mod}~\hbar^2\big) \\
 & \quad \Leftrightarrow \quad \tau (N\odot N) \tau -N\odot N \equiv \hbar \big( r^{+} (N\odot N) - (N\odot N) r^+ \big) \ \big({\rm mod}~\hbar^2\big).
 \end{align*}

 Therefore $\mathcal{X}(\mathbb{B})$ can be identified with the variety $\SL_2$ together with the Poisson bracket defined by
 \begin{equation}\label{eq_DrinfeldBracket}
 \{ N \otimes N \}^{\rm D} = r^+ (N \odot N) - (N\odot N) r^+.
 \end{equation}

We will denote by $\SL_2^{\rm D}$ the obtained Poisson--Lie group (the D stands for Drinfel'd who first defined it in~\cite{DrinfeldrMatrix}). We can rewrite equation \eqref{eq_DrinfeldBracket} as
\begin{gather*}
\{ a, b\}^{\rm D} = -ab \{a, c\}^{\rm D} = -ac \{ b, c \}^{\rm D} = 0,\\
\{ d, b \}^{\rm D} = db \{d, c \}^{\rm D} = dc \{ a, d\}^{\rm D}= -2bc.
\end{gather*}

 Consider the double Bruhat cells decomposition
\[\SL_2^{\rm D} = X_{00} \bigsqcup X_{01} \bigsqcup X_{10} \bigsqcup X_{11},\]
 where $\left(\begin{smallmatrix} a & b \\ c & d \end{smallmatrix}\right)$ is in $X_{11}$ if $bd\neq 0$, is in $X_{10}$ if $b=0$, $c\neq 0$, is in $X_{01}$ if $c=0$, $b\neq 0$ and is in $X_{00}$ if it is diagonal. The Weil group of $\SL_2$ is $W= \{w_0, w_1\}$ where $w_0$ is the class of the identity $\dot{w}_0:= \mathds{1}_2$ and $w_1$ is the class of $\dot{w}_1:= \left(\begin{smallmatrix} 0 & 1 \\ -1 & 0 \end{smallmatrix}\right)$. Denote by $B^+$ (resp.~$B^-$) the subgroup of $\SL_2$ of upper (resp. lower) triangular matrices. A simple computation shows that
 \[X_{ij} = B^+ \dot{w}_i B^+ \cap B^- \dot{w}_j B^-.\]

 \begin{Theorem}[{Hodges--Levasseur \cite[Theorem~B.2.1]{HodgesLevasseur_OqG}}] The equivariant symplectic leaves of $\SL_2^{\rm D}$ are the double Bruhat cells~$X_{ij}$.
 \end{Theorem}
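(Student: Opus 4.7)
The plan is a direct case-by-case analysis on the four double Bruhat cells, using the explicit brackets listed just above the theorem.

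First I would verify that the Bruhat stratification $\SL_2^{\rm D}=X_{00}\sqcup X_{01}\sqcup X_{10}\sqcup X_{11}$ is preserved both by the Hamiltonian flows and by the toric $T\times T$ action. For the toric action this is immediate from $T\subset B^+\cap B^-$: left and right multiplication by $T$ preserves each coset type $B^+w_iB^+\cap B^-w_jB^-$. For Hamiltonian flows it is enough to check that the ideals generated by $b$, by $c$ and by $bc$ are Poisson ideals in $\mathcal{O}[\SL_2^{\rm D}]$. Using $\{a,b\}=-ab$, $\{d,b\}=db$, $\{b,c\}=0$, both generators of brackets involving $b$ lie in $(b)$, so $(b)$ is Poisson; the case of $(c)$ is symmetric, and the Leibniz rule then handles $(bc)$.

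Next I would treat the small cells. On $X_{00}$ (the diagonal torus) the only potentially nontrivial bracket among $a$ and $d=a^{-1}$ is $\{a,d\}=-2bc$, which vanishes identically, so the symplectic leaves of $X_{00}$ are points; the toric action is transitive on the one-dimensional $X_{00}\cong \mathbb{C}^*$, so $X_{00}$ forms one equivariant leaf. On $X_{01}=\{c=0,\ b\neq 0\}$ one has $\{a,b\}=-ab\neq 0$, so the Poisson bivector has maximal rank $2$ on the two-dimensional stratum; hence $X_{01}$ is itself a single symplectic leaf. The case of $X_{10}$ is symmetric.

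The heart of the argument is the big cell $X_{11}$, which is three-dimensional. A short computation shows that $b/c$ is a Casimir on $X_{11}$:
\begin{align*}
\{b/c,a\}&=c^{-1}\{b,a\}-bc^{-2}\{c,a\}=c^{-1}(ab)-bc^{-2}(ac)=0,\\
\{b/c,d\}&=c^{-1}\{b,d\}-bc^{-2}\{c,d\}=c^{-1}(-bd)-bc^{-2}(-cd)=0,
\end{align*}
and $\{b/c,b\}=\{b/c,c\}=0$ is trivial. The level sets $\{b/c=\lambda\}$, $\lambda\in\mathbb{C}^*$, are therefore the symplectic leaves of $X_{11}$, each of dimension $2$. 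The toric action $(s,r)\cdot N=\operatorname{diag}(s,s^{-1})\, N\,\operatorname{diag}(r,r^{-1})^{-1}$ sends $b/c$ to $s^2r^2\cdot b/c$, which surjects onto $\mathbb{C}^*$, so $T\times T$ permutes the leaves of $X_{11}$ transitively and the whole of $X_{11}$ is a single equivariant leaf.

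Combining the four cases with the first step identifies the equivariant symplectic leaves with the $X_{ij}$. The main obstacle is the stratification check of the first step together with the identification of $b/c$ as the Casimir of the big cell; everything else is bookkeeping once those two points are in place.
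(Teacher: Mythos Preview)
The paper does not prove this theorem at all: it is quoted as a result of Hodges--Levasseur and used as a black box. So there is no paper argument to compare against, and your direct verification is a genuine addition rather than a reproduction. Your strategy---check that the double Bruhat strata are Poisson subvarieties, compute the leaves inside each stratum by hand, then use the torus to sweep out each stratum---is correct and is the natural elementary route for $\SL_2$. The Hodges--Levasseur proof to which the paper defers is a general $\mathcal{O}_q[G]$ argument via the primitive spectrum, so your approach trades generality for concreteness.

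Two small points to tighten. First, on $X_{11}$ you show that $b/c$ is a Casimir, which only gives that each leaf lies inside a level set; to conclude that the level sets \emph{are} the leaves you still need that the Poisson rank is~$2$ everywhere on $X_{11}$ and that each level set $\{b/c=\lambda\}$ is connected. Both are easy (for the rank, note that $\{a,d\}=-2bc\neq 0$ on $X_{11}$, which covers the locus $a=0$ where your $\{a,b\}=-ab$ argument breaks down; connectedness is a routine check), but they should be stated. Second, the precise form of the toric action on $\mathcal{X}(\mathbb{B})$ depends on the comodule conventions in the paper and may differ by signs or inverses from what you wrote; fortunately your conclusion only uses that $b/c$ scales by a nonconstant character of $T\times T$, which survives any such discrepancy.
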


\textit{The case of the once-punctured monogon:}
 Let $\alpha \subset \mathbf{m}_1$ be the unique corner arc with endpoint~$v$ and~$w$ such that $v<_{\mathfrak{o}_+} w$ and $\alpha_{ij}$ the stated arc with state $i$ on $v$ and $j$ on $w$ and $h(v)<h(w)$. Graphically, this means that $\alpha_{ij}:= \adjustbox{valign=c}{\includegraphics[width=1cm]{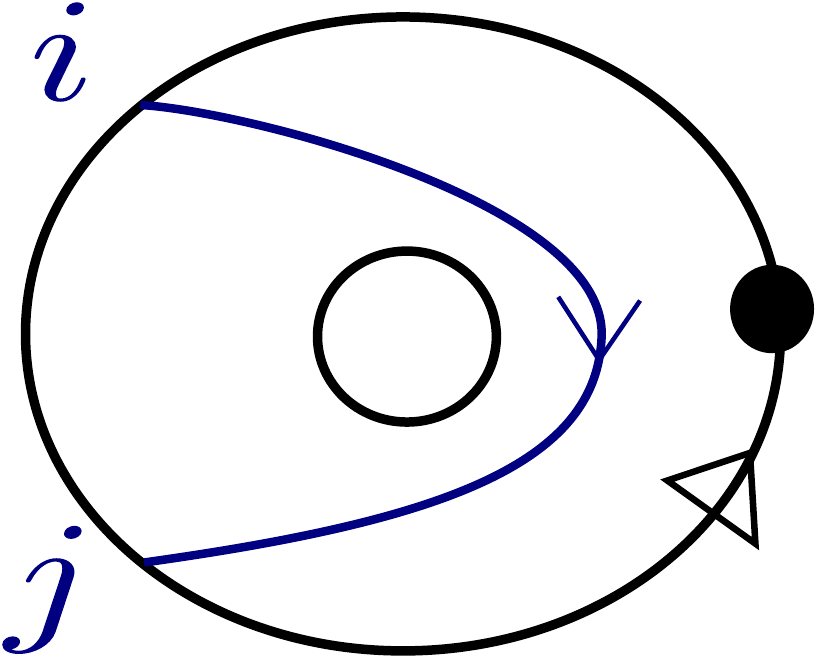}}$. Consider the $2\times 2$ matrix:
\[
N=\begin{pmatrix} a & b \\ c & d \end{pmatrix} := \begin{pmatrix} 0 & -A^{5/2} \\ A^{1/2} & 0 \end{pmatrix} \begin{pmatrix} \alpha_{++} & \alpha_{+-} \\ \alpha_{-+} & \alpha_{--} \end{pmatrix} =\begin{pmatrix} -A^{5/2}\alpha_{-+} & -A^{5/2}\alpha_{--} \\ A^{1/2}\alpha_{++} & A^{1/2}\alpha_{+-} \end{pmatrix}.
\]
 Then $\mathcal{S}_A(\mathbf{m}_1)$ is generated by $a$, $b$, $c$, $d$ with relations
\[ (\mathds{1}_1 \odot N) \mathscr{R}^{-1}(\mathds{1}_1 \odot N) \mathscr{R} = \mathscr{R} (\mathds{1}_1 \odot N) \mathscr{R}^{-1}(\mathds{1}_1 \odot N), \qquad \det_{q^2}(N)= 1.
\]
 The algebra $\mathcal{S}_A(\mathbf{m}_1)$ is called the \textit{braided quantum group} and were first considered by Majid (see~\cite{Majid_QGroups}).
 \par
 Write $\SL_2^{\rm STS}:= \mathcal{X}(\mathbf{m}_1)$: it is the variety $\SL_2$ equipped with the so-called Semenov-Tian-Shansky Poisson bracket obtained by
 replacing $A$ by $\exp(\hbar/2)$ and developing using equations \eqref{eq_classicalr}, \eqref{eq_classicalr2} as before. We find
\begin{gather*}
 \{N\otimes N\}^{\rm STS} = - (\mathds{1}_1 \odot N)r^+ (N\odot \mathds{1}_2) +\tau (N\odot N) \tau r^+ - r^- (N\odot N) \\
 \hphantom{\{N\otimes N\}^{\rm STS} =}{} + (N\odot \mathds{1}_2)r^- (\mathds{1}_2 \odot N).
\end{gather*}
 We can develop these equations to find
 \begin{gather*}
 \begin{split}
& \{c,d\}^{\rm STS} = 2ac \{d,b\}^{\rm STS}= 2ab \{d,a\}^{\rm STS} =0, \\
& \{b,a\}^{\rm STS} = 2ab \{a,c\}^{\rm STS} = 2ac \{c,b\}^{\rm STS}= 2a(a-d).
\end{split}
\end{gather*}
 Note that, unlike $\SL_2^{\rm D}$, $\SL_2^{\rm STS}$ is no longer a~Poisson--Lie group, i.e., the composition law of $\SL_2$ is not Poisson. Consider the (simple) Bruhat cells decomposition:
\[ \SL_2 = \SL_2^0 \bigsqcup \SL_2^1, \]
 where $\SL_2^0=B^-B^+$ is the subset of matrices $\left(\begin{smallmatrix} a & b \\ c & d \end{smallmatrix}\right)\in \SL_2$ with $a\neq 0$ and $\SL_2^1$ is its complementary. Extending the original work of Semenov-Tian-Shansky, Alekseev--Malkin proved that the symplectic leaves of $\SL_2^{\rm STS}$ are the intersections of the conjugacy classes in $\SL_2$ with the so-called dressing orbits of $\SL_2$, which are $(1)$ the big cell $\SL_2^0$ and $(2)$ the subsets
\[ C_b := \left\{ \begin{pmatrix} 0 & b \\ -b^{-1} & d \end{pmatrix}, \, d\in \mathbb{C} \right\}, \qquad b\in \mathbb{C}^*.\]
 We thus obtain

 \begin{Theorem}[{Alekseev--Malkin \cite[Section~4]{AlekseevMalkin_PoissonCharVar}}]\label{theorem_STS}
 The symplectic leaves of $\SL_2^{\rm STS}$ are
 \begin{enumerate}\itemsep=0pt
 \item[$1)$] the intersections $\SL_2^0 \cap C$ where $C$ is a conjugacy class of $\SL_2$,
 \item[$2)$] the singletons $\{g\}$ for $g\in \SL_2^1$.
 \end{enumerate}
 \end{Theorem}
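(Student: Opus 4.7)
The plan is to combine three direct verifications on the explicit Semenov--Tian--Shansky brackets listed above: (i) the trace $\tr = a+d$ is a Casimir, (ii) the bracket vanishes identically on the reduced cell $\SL_2^1 = \{a=0\}$, and (iii) on the big cell $\SL_2^0$ the Poisson tensor has rank $2$ away from the central elements $\{\pm I\}$. Together with the dressing-orbit description $\SL_2^0$, $C_b$ given in the setup, these three facts identify the symplectic leaves with the claimed intersections.

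First, I would verify that $\tr$ is Casimir directly from the six explicit brackets: each pairing cancels because the $a$- and $d$-contributions differ by a sign, e.g.\ $\{a+d,b\}^{\rm STS} = -2ab + 2ab = 0$ and $\{a+d,c\}^{\rm STS} = 2ac - 2ac = 0$. Hence every symplectic leaf lies in a level set of $\tr$, which up to connected components coincides with a conjugacy class of $\SL_2$. For the reduced cell I would observe that all six generator brackets are divisible by $a$, so the Poisson structure restricts to the zero bracket on $\SL_2^1$; every point of $\SL_2^1$ is then its own symplectic leaf. Using the parameterization $\SL_2^1 = \bigsqcup_{b\in\mathbb{C}^*} C_b$, the singleton $\{g\}$ for $g = \left(\begin{smallmatrix} 0 & b \\ -b^{-1} & d \end{smallmatrix}\right)$ is exactly the intersection $C_b \cap C$ with the unique conjugacy class $C$ of trace $d$, yielding case~$(2)$.

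For the big cell I would work in the local coordinates $(a,b,c)$ with $d=(1+bc)/a$ and write out the $3\times 3$ antisymmetric Poisson matrix. A short computation shows that the three entries $-2ab$, $2ac$, $2(1+bc-a^2)$ vanish simultaneously precisely at $b=c=0$, $a^2=1$, i.e.\ at $\pm I$; elsewhere in $\SL_2^0$ the rank is thus exactly~$2$. Since $\tr$ generates the Casimirs, the $2$-dimensional symplectic leaves are the connected components of $\SL_2^0 \cap \{\tr = t\}$. For $t\neq \pm 2$ the conjugacy class $C_t$ is irreducible of dimension $2$, so $\SL_2^0 \cap C_t$ is a nonempty Zariski-open and hence connected subset. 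For $t=\pm 2$ the level set decomposes as $\{\pm I\} \sqcup C_{\rm uni}^{\pm}$, and since $C_{\rm uni}^{\pm} \cong \SL_2/U$ is irreducible, its intersection with $\SL_2^0$ is connected; together with the singleton leaves $\{\pm I\}$ this recovers case~$(1)$ for all conjugacy classes.

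The main obstacle is the connectedness analysis for the special level sets $\{\tr = \pm 2\}$ on the big cell, where one must argue that the unipotent class remains connected after removing the $1$-dimensional locus $\{a=0\}\cap\{\tr=\pm 2\}$; this is best handled by the explicit parameterization of $C_{\rm uni}^{\pm}$ via $\SL_2/U$. Alternatively, one can avoid this by invoking the Alekseev--Malkin extension of the Semenov--Tian--Shansky theorem, which identifies symplectic leaves with intersections of conjugation orbits and dressing orbits directly; once the dressing orbits are taken as input (as in the paragraph preceding the statement), the entire result reduces to the elementary intersection computations carried out above.
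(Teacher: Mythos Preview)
The paper does not give a proof of this theorem: it is stated as a cited result of Alekseev--Malkin, with only the preceding paragraph explaining that their general theorem identifies the leaves with intersections of conjugacy classes and dressing orbits, the latter being listed explicitly. Your proposal therefore supplies an argument where the paper supplies none.

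Your direct computation is correct and self-contained. The three checks you isolate --- that $a+d$ is Casimir, that every bracket carries a factor of $a$ so the Poisson tensor dies on $\SL_2^1$, and that in the chart $(a,b,c)$ on $\SL_2^0$ the $3\times 3$ skew matrix has rank $2$ away from $\pm I$ --- are all easily read off from the six brackets displayed in the paper, and together with the irreducibility/connectedness of each $C_t\cap\SL_2^0$ they pin down the leaves. One small addition that makes the argument airtight is to note that $\{f,a\}$ lies in the ideal $(a)$ for every polynomial $f$ (again immediate from the list), so no Hamiltonian flow starting in $\SL_2^0$ ever reaches $\SL_2^1$; this ensures the leaves through big-cell points really are the connected components of $\{\tr=t\}\cap\SL_2^0$ and not larger.

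Compared with the paper's route, your approach trades the structural input (the Semenov--Tian--Shansky/Alekseev--Malkin identification of leaves with dressing-orbit/conjugacy-class intersections, which works for any Poisson--Lie group) for explicit rank and connectedness computations that are specific to $\SL_2$. What you gain is an argument that is entirely elementary and needs nothing beyond the six brackets already written down; what you lose is the conceptual explanation of \emph{why} the two pieces are conjugacy classes and dressing orbits. Your final paragraph already acknowledges this alternative and correctly observes that, once the dressing orbits $\SL_2^0$ and $C_b$ are accepted as input, the two approaches converge.
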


 Note that the toric action of $\mathbb{C}^*$ on $\SL_2^{\rm STS}$ is given by the formula
 \[ z \cdot \begin{pmatrix} a & b \\ c & d \end{pmatrix} = \begin{pmatrix} z^{-1} & 0 \\ 0 & z \end{pmatrix} \begin{pmatrix} a & b \\ c & d \end{pmatrix} \begin{pmatrix} z & 0 \\ 0 & z^{-1} \end{pmatrix} = \begin{pmatrix} a & z^2b \\ z^{-2}c & d \end{pmatrix}.\]
 In particular, $\mathbb{C}^*$ acts transitively on the dressing orbits $C_b$, $b\in \mathbb{C}^*$ which lye in~$\SL_2^1$, so we deduce

 \begin{Corollary}\label{coro_STS}
 The equivariant symplectic leaves of $\SL_2^{\rm STS}$ are the intersections $\SL_2^i \cap C$ where~$C$ is a conjugacy class and $i=0,1$.
 \end{Corollary}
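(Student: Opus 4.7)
The plan is to combine Theorem~\ref{theorem_STS}, which classifies the ordinary symplectic leaves, with a direct computation of the $\mathbb{C}^*$-orbits. Two preliminary observations do most of the work. First, the toric action preserves the Bruhat stratification: writing $g=\left(\begin{smallmatrix} a & b \\ c & d \end{smallmatrix}\right)$, the entries $a$ and $d$ are fixed by $z\cdot g$, hence so is the condition $a=0$ defining $\SL_2^1$. Second, the toric action is conjugation by $\operatorname{diag}(z^{-1},z)\in\SL_2$, so it preserves every conjugacy class $C$ of $\SL_2$. Consequently, each set $\SL_2^i\cap C$ is $\mathbb{C}^*$-invariant, and any equivariant symplectic leaf is contained in some $\SL_2^i\cap C$.

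It remains to show that each nonempty $\SL_2^i\cap C$ is a single equivariant leaf. For $i=0$, Theorem~\ref{theorem_STS} already tells us that $\SL_2^0\cap C$ is a \emph{single} symplectic leaf; it is automatically an equivariant leaf since saturating a $\mathbb{C}^*$-invariant leaf under the toric action changes nothing. For $i=1$, every symplectic leaf is a singleton $\{g\}$ with $g\in\SL_2^1$, so the equivariant leaf through $g$ is exactly the toric orbit $\mathbb{C}^*\cdot g$. One then parametrises any point of $\SL_2^1$ as $g=\left(\begin{smallmatrix} 0 & b \\ -b^{-1} & d \end{smallmatrix}\right)$ with $b\in\mathbb{C}^*$ and computes
\[
z\cdot g=\begin{pmatrix} 0 & z^{2}b \\ -z^{-2}b^{-1} & d \end{pmatrix},
\]
so that $\mathbb{C}^*\cdot g=\{g'\in\SL_2^1 : \operatorname{tr}(g')=d\}$, the $z^{2}$ covering $\mathbb{C}^*$ as $z$ ranges over $\mathbb{C}^*$.

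Finally, one checks that this orbit coincides with $\SL_2^1\cap C$, where $C$ is the conjugacy class of $g$. Since $g\in\SL_2^1$ forces $g\notin\{\pm\mathds{1}_2\}$, the class $C$ is not central and is therefore determined by its trace $d$; and every element of $\SL_2^1$ with trace $d$ has the form above, so it is indeed conjugate to $g$. Combining the two cases gives the claimed classification. There is no genuine obstacle here: the only slight subtlety is remembering that two conjugacy classes share the trace $\pm 2$, but the central class $\{\pm\mathds{1}_2\}$ does not meet $\SL_2^1$, so this ambiguity disappears on the stratum where the equivariant saturation is needed.
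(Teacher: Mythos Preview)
Your proof is correct and follows essentially the same approach as the paper: the paper records the explicit formula for the toric action, observes that $\mathbb{C}^*$ permutes the dressing orbits $C_b$ transitively, and immediately deduces the corollary. You have simply filled in the details more carefully---in particular the verification that the $\mathbb{C}^*$-orbit through a point of $\SL_2^1$ is exactly $\SL_2^1\cap C$, including the handling of the trace $\pm 2$ case---but the underlying argument is the same.
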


\textit{The case of $\mathbb{D}_1^+$:}
The algebra $\mathbb{D}_q^+(\SL_2):= \mathcal{S}_A(\mathbb{D}_1^+)$ is called the \textit{twisted Heisenberg double} and is presented as follows. Let $\alpha$, $\beta$ be the two arcs in $\mathbb{D}_1^+$ of Figure~\ref{fig_D1+}.

\begin{figure}[!h]\centering\includegraphics[width=2.5cm]{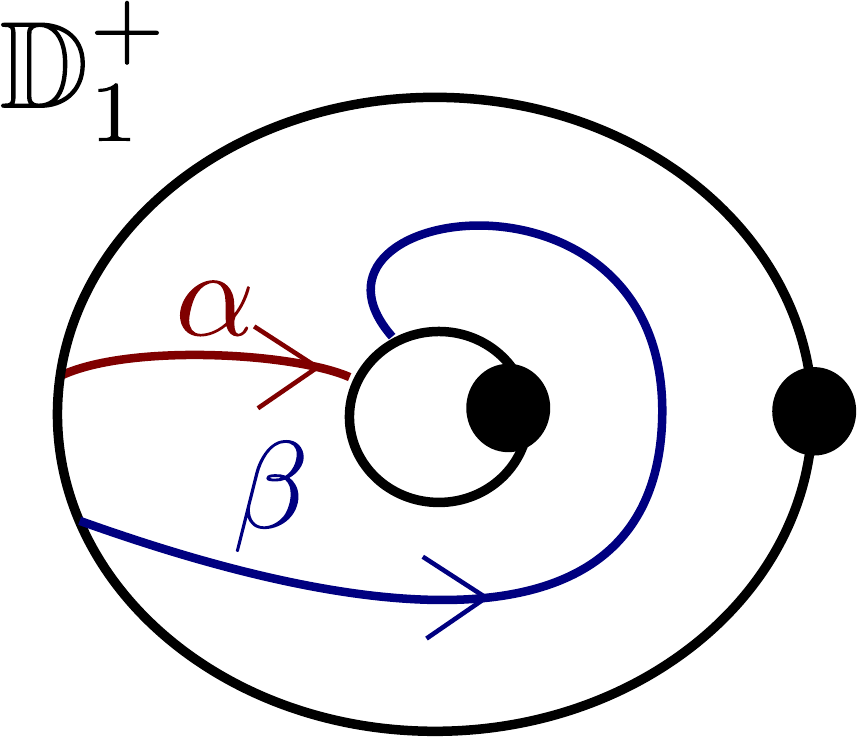}
\caption{Two arcs in $\mathbb{D}_1^+$.}\label{fig_D1+}
\end{figure}

Consider the matrices $N(\alpha)= \left(\begin{smallmatrix} \alpha_{++} & \alpha_{+-} \\ \alpha_{-+} & \alpha_{--} \end{smallmatrix}\right)$, $N(\beta)= \left(\begin{smallmatrix} \beta_{++} & \beta_{+-} \\ \beta_{-+} & \beta_{--} \end{smallmatrix}\right)$.

Then $D_q^+(\SL_2)$ is generated by the $\alpha_{ij}$, $\beta_{ij}$, for $i,j=\pm$ , modulo the relations $\det_q(N(\alpha))=\det_q(N(\beta))=1$ and
\[ N(a)\odot N(a) = \mathscr{R}^{-1} (N(a)\odot N(a))\mathscr{R}, \qquad N(\alpha)\odot N(\beta) = \mathscr{R} (N(\alpha)\odot N(\beta))\mathscr{R}, \]
 for $a=\alpha, \beta$.
 So $D_+(\SL_2):= \mathcal{X}(\mathbb{D}_1^+)$ is the variety $\SL_2\times \SL_2$ with the Poisson bracket described by
\begin{gather*}
 \{N(\alpha) \otimes N(\beta) \}^+ = r_+ (N(\alpha)\odot N(\beta)) + (N(\alpha)\odot N(\beta))r_+, \\
 \{N(a)\otimes N(a)\}^+ = r^+ (N(a) \odot N(a)) - (N(a)\odot N(a))r^+,
\end{gather*}
for $a=\alpha, \beta$.
The Poisson variety $D_+(\SL_2)$ was studied by Alekseev--Malkin in \cite{AlekseevMalkin_PoissonLie} inspired by the work of Semenov-Tian-Shansky. More precisely, they consider the bracket $\{\cdot, \cdot\}^{\rm AM}:= - \{\cdot, \cdot\}^+$ for which, using the notations $r:= r^+$ and $r^*:= -r^-$, one has (compare with \cite[equation~(80)]{AlekseevMalkin_PoissonLie}):
\[ \{N(\alpha)\odot N(\beta) \}^{\rm AM}= - \big( r ( N(\alpha) \odot N(\beta)) + (N(\alpha)\odot N(\beta))r^* \big).\]

Consider the partition
\[ D_+(\SL_2)= D_{00}\bigsqcup D_{01} \bigsqcup D_{10} \bigsqcup D_{11},\]
 where
\[
 D_{ij} = \big\{(g_1, g_2) \,|\, g_2^{-1}g_1 \in \SL_2^i, g_2g_1^{-1} \in \SL_2^j \big\}.
\]

 \begin{Theorem}[{Alekseev--Malkin \cite[Theorem~2]{AlekseevMalkin_PoissonLie}}]\label{theorem_leaves_D1} The symplectic leaves of $D_+(\SL_2)$ are the~$D_{ij}$.
 \end{Theorem}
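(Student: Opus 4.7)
The plan is to pull back the symplectic leaf classification of $\SL_2^{\rm STS}$ from Theorem~\ref{theorem_STS} to $D_+(\SL_2)$ via two Poisson moment maps, and then use the Heisenberg double structure of $D_+(\SL_2)$ to promote the resulting stratification to the actual leaf decomposition.

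First I would construct two Poisson moment maps $\mu_L, \mu_R \colon D_+(\SL_2) \to \SL_2^{\rm STS}$, one for each of the two boundary components of $\mathbb{D}_1^+$. Since each boundary component carries exactly one boundary arc, a collar neighborhood of each can be realised as an embedding $i_L, i_R \colon \mathbf{m}_1 \hookrightarrow \mathbb{D}_1^+$ of marked surfaces; by the same functoriality used to define the classical moment map $\mu \colon \mathcal{X}(\mathbf{\Sigma}_g^*) \to \SL_2^{\rm STS}$ in the examples following Definition~\ref{def_Poisson_bracket}, these yield Poisson maps. Working in the generators $N(\alpha), N(\beta)$ of $\mathcal{S}_A(\mathbb{D}_1^+)$ and using Theorem~\ref{theorem_bigon} together with the description of $\mathcal{S}_A(\mathbf{m}_1)$ given in the excerpt, a direct computation (after the twist by $\bigl(\begin{smallmatrix} 0 & -A^{5/2}\\ A^{1/2} & 0 \end{smallmatrix}\bigr)$ built into the definition of $N$ on $\mathbf{m}_1$) should produce the combined formula $(\mu_L, \mu_R)(g_1, g_2) = \bigl(g_2 g_1^{-1},\, g_2^{-1} g_1\bigr)$.

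Second, since the preimage under a Poisson map of a union of symplectic leaves is again a union of symplectic leaves, Theorem~\ref{theorem_STS} implies that each stratum $D_{ij} = \mu_R^{-1}(\SL_2^i) \cap \mu_L^{-1}(\SL_2^j)$ is a union of symplectic leaves of $D_+(\SL_2)$; in particular every leaf is contained in some~$D_{ij}$.

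The remaining task, and the main obstacle, is to show that each $D_{ij}$ is a single symplectic leaf rather than a disjoint union of smaller ones. For this one exploits the Heisenberg double structure of $D_+(\SL_2)$ over the Poisson--Lie group $\SL_2^{\rm D}$: by the general theorem of Semenov-Tian-Shansky, the symplectic leaves of such a double are precisely the connected components of the joint preimages, under the two moment maps, of symplectic leaves of the dual group, here~$\SL_2^{\rm STS}$. Combined with the transitivity of the dressing action of $\SL_2^{\rm D}$ on each leaf described by Theorem~\ref{theorem_STS}, this reduces the identification to checking that the $D_{ij}$ are connected and that the Hamiltonian flows of suitable matrix coefficients of $N(\alpha), N(\beta)$ act transitively on each stratum, a direct verification from the explicit Poisson brackets recorded in the excerpt.
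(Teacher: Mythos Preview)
The paper does not prove this theorem at all: it is quoted verbatim from \cite[Theorem~2]{AlekseevMalkin_PoissonLie} and used as a black box. So there is no ``paper's own proof'' to compare against; your proposal is an attempt to supply an argument where the paper simply cites one.

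On the merits of your sketch: the first two steps are sound in outline. Embeddings $\mathbf{m}_1\hookrightarrow\mathbb{D}_1^+$ (one for each boundary component) do exist as marked-surface embeddings and, by the functoriality discussed after Definition~\ref{def_Poisson_bracket}, induce Poisson maps $D_+(\SL_2)\to\SL_2^{\rm STS}$ whose classical formulas are indeed $(g_1,g_2)\mapsto g_2g_1^{-1}$ and $(g_1,g_2)\mapsto g_2^{-1}g_1$. Since $\SL_2^0$ and $\SL_2^1$ are unions of symplectic leaves of $\SL_2^{\rm STS}$ (they are the dressing orbits, by the discussion preceding Theorem~\ref{theorem_STS}), their preimages are unions of leaves, and hence each $D_{ij}$ is a union of leaves. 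So far so good.

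The gap is in your third step. First, $\SL_2^{\rm STS}$ is \emph{not} the Poisson--Lie dual of $\SL_2^{\rm D}$; the dual is a solvable group (the subgroup $B^+\times_T B^-$ of $\SL_2\times\SL_2$), and the STS bracket lives on $\SL_2$ itself via the factorization/dressing picture. More seriously, the ``general theorem of Semenov-Tian-Shansky'' you invoke is precisely what Alekseev--Malkin are establishing in \cite{AlekseevMalkin_PoissonLie} for this class of examples: the identification of the leaves of the Heisenberg double with the cells $D_{ij}$ is the content of the theorem, not a prior input. Your final sentence (``a direct verification from the explicit Poisson brackets'') is where all the work actually lives: one has to compute the rank of the Poisson bivector on each stratum and exhibit enough Hamiltonian flows to connect any two points of a given $D_{ij}$. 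That computation is genuinely nontrivial (it occupies several pages in \cite{AlekseevMalkin_PoissonLie}) and cannot be waved away. Note also a potential circularity: Theorem~\ref{theorem_STS} is from the \emph{later} paper \cite{AlekseevMalkin_PoissonCharVar}, which builds on \cite{AlekseevMalkin_PoissonLie}, so deducing Theorem~\ref{theorem_leaves_D1} from Theorem~\ref{theorem_STS} reverses the logical order of the original arguments.
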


Note that the symplectic leaves are preserved by the toric $(\mathbb{C}^*)^2$ action.

\subsection{Classical fusion operation}

We now describe the classical equivalent of Theorem~\ref{theorem_fusion}. Let $\mathbf{r}_{\hbar}\colon \mathcal{O}_{\hbar}[\SL_2]^{\otimes 2}\to \mathbb{C}[[\hbar]]$ be the co-R-matrix for parameter $q:=\exp(\hbar)$ and note that
\[ \mathbf{r}_{\hbar} \equiv \epsilon \otimes \epsilon + \hbar r^+ \ \big({\rm mod}~\hbar^2\big).\]
 In this formula, we see $r^+ \in \mathfrak{sl}_2\otimes \mathfrak{sl}_2$ as an element of the Zariski tangent space of $\SL_2\times \SL_2$ at the neutral element $(\mathds{1}_2, \mathds{1}_2)$, i.e., $r^+ \in \operatorname{Der}\big(\mathcal{O}[\SL_2]^{\otimes 2}, \mathbb{C}_{\varepsilon \otimes \varepsilon}\big)$ is a derivation valued in~$\mathbb{C}$ with $\mathcal{O}[\SL_2]^{\otimes 2}$-module structure induced by $\epsilon \otimes \epsilon$.

Let $G$ be an affine algebraic Poisson--Lie group with Poisson structure given by a classical $r$-matrix $r^+ \in \mathfrak{g}\otimes \mathfrak{g}$ (i.e., with Poisson structure given by the cocomutator $\delta\colon \mathfrak{g}\to \mathfrak{g}\otimes \mathfrak{g}$, $\delta(X)=[X, r^+]$). A~$G$-Poisson affine variety is a complex affine variety~$X$ with an algebraic Poisson action $G\times X \to X$.

\begin{Definition} Let $X$ be a $G^2$-Poisson affine variety and denote by $\Delta_{G\times G}\colon \mathcal{O}[X]\to \mathcal{O}[G]\otimes \mathcal{O}[X]$ it comodule map. Wite $\Delta^1:= (\id\otimes \epsilon\times \id)\circ \Delta_{G\times G}$ and $\Delta^2:= (\epsilon \otimes \id \otimes \id)$.
 The \textit{fusion} of~$X$ is the $G$-Poisson affine variety~$X^{\circledast}$ define by
 \begin{enumerate}\itemsep=0pt
 \item As a $\mathbb{C}$-algebra, $\mathcal{O}[X^{\circledast}]=\mathcal{O}[X]$.
 \item For $x\in \mathcal{O}[X]$ and $i=1,2$, write $\Delta^i (x) = \sum x_{(i)}'\otimes x_{(i)}''$. The Poisson bracket is defined by
\[ \{ x, y \}^{\circledast} := \{x,y\} + \sum r^+ (y'_{(2)} \otimes x'_{(1)}) x_{(1)}''y_{(2)}'' - \sum r^+ (x'_{(2)}\otimes y'_{(1)})y''_{(1)}x''_{(2)}.\]
 \item The $G$ action is given by the comodule map $\Delta_G:= (\mu_G \otimes \id) \circ \Delta_{G\times G} $.
 \end{enumerate}
 \end{Definition}

 In the particular case where $X$ is smooth, consider $X$ as a smooth manifold and denote by~$\pi_X $ the Poisson bivector field defining the Poisson structure (i.e., $\{f,g\}(x)=\langle D_xf \otimes D_xg, \pi_{X,x}\rangle $). Let $r^- := \sigma(r^+)$, where $\sigma(x\otimes y)=y\otimes x$. Let $a_{G\times G}\colon \mathfrak{g}\otimes \mathfrak{g} \to \Gamma(X, T_X)$ the infinitesimal action induced by the action of~$G^2$ on~$X$. Then the fusion $X^{\circledast}$ is the manifold $X$ with the Poisson bivector field
 \[ \pi_{X^{\circledast}} = \pi_X + a_{G\times G} (r^- - r^+).\]
 This is using this formula that the concept of fusion was introduced in the work of Alekseev--Malkin \cite{AlekseevMalkin_PoissonCharVar}. As we shall see, for a connected marked surface with non-trivial marking, then~$\mathcal{X}(\mathbf{\Sigma})$ is smooth. Recall the coaction $\Delta^L\colon \mathcal{S}_{+1}(\mathbf{\Sigma}) \to \mathcal{O}[\SL_2]^{\otimes \mathcal{A}} \otimes \mathcal{S}_{+1}(\mathbf{\Sigma})$ induced by gluing some bigons to the boundary arcs. This endow $\mathcal{X}(\mathbf{\Sigma})$ with a structure of $\big(\SL_2^{\rm D}\big)^{\mathcal{A}}$ Poisson variety. In particular, by choosing two boundary arcs~$a$, $b$, we get a structure of $\big(\SL_2^{\rm D}\big)^2$-Poisson variety on~$\mathcal{X}(\mathbf{\Sigma})$.
 As a consequence of Theorem \ref{theorem_fusion}, we get

 \begin{Theorem}\label{coro_fusion}
 If $\mathbf{\Sigma}_{a\circledast b}$ is obtained from $\mathbf{\Sigma}$ by fusioning $a$ and $b$, then $\mathcal{X}(\mathbf{\Sigma}_{a\circledast b})\cong \mathcal{X}(\mathbf{\Sigma})^{a\circledast b}$.
 \end{Theorem}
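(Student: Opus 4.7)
The idea is to take the semi-classical limit of the quantum fusion isomorphism of Theorem~\ref{theorem_fusion}. Following Definition~\ref{def_Poisson_bracket}, set $A_{\hbar}^{1/2} := \exp(\hbar/2)$ and view $\mathcal{S}_{A_{\hbar}}(\mathbf{\Sigma})$ and $\mathcal{S}_{A_{\hbar}}(\mathbf{\Sigma}_{a\circledast b})$ as flat deformations over $\mathbb{C}[[\hbar]]$ of the Poisson algebras $\mathcal{S}_{+1}(\mathbf{\Sigma})$ and $\mathcal{S}_{+1}(\mathbf{\Sigma}_{a\circledast b})$, canonically identified as $\mathbb{C}[[\hbar]]$-modules with $\mathcal{S}_{+1}(\mathbf{\Sigma})[[\hbar]]$ and $\mathcal{S}_{+1}(\mathbf{\Sigma}_{a\circledast b})[[\hbar]]$ via their stated-tangle bases. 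By Theorem~\ref{theorem_fusion} applied over $\mathbb{C}[[\hbar]]$, the $\mathbb{C}[[\hbar]]$-linear isomorphism $\Psi_{a\circledast b}$ identifies the product of $\mathcal{S}_{A_{\hbar}}(\mathbf{\Sigma}_{a\circledast b})$ with the fusion product $\mu_{1\circledast 2}$ on $\mathcal{S}_{A_{\hbar}}(\mathbf{\Sigma})$ built from the $\hbar$-dependent co-R-matrix $\mathbf{r}_{\hbar}$ on $\mathcal{S}_{A_{\hbar}}(\mathbb{B}) \cong \mathcal{O}_{A_{\hbar}^{2}}[\SL_{2}]$. Since $\mathbf{r}_{\hbar} \equiv \epsilon \otimes \epsilon \pmod{\hbar}$, the fusion product collapses at $\hbar = 0$ to the ordinary commutative product, so the induced map $\Psi_{a\circledast b}^{*} \colon \mathcal{X}(\mathbf{\Sigma}_{a\circledast b}) \to \mathcal{X}(\mathbf{\Sigma})$ is an isomorphism of underlying affine varieties.

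To match the Poisson structures, expand $\mathbf{r}_{\hbar} \equiv \epsilon \otimes \epsilon + \hbar\, r^{+} \pmod{\hbar^{2}}$ and write the quantum product of $\mathcal{S}_{A_{\hbar}}(\mathbf{\Sigma})$ as $x \star y = xy + \hbar B(x,y) + O(\hbar^{2})$, so that $B(x,y) - B(y,x) = \{x,y\}_{\mathbf{\Sigma}}$. Substituting in Definition~\ref{def_fusion_quantique} and using the coaction identities $(\epsilon \otimes \id) \circ \Delta^{i} = \id$ to contract the counit factors yields
\[ \mu_{1\circledast 2}(x \otimes y) = xy + \hbar\Bigl[ B(x,y) + r^{+}\bigl(y'_{(2)} \otimes x'_{(1)}\bigr)\, x''_{(1)} y''_{(2)} \Bigr] + O\bigl(\hbar^{2}\bigr). \]
Antisymmetrizing in $(x,y)$ and extracting the $\hbar$-coefficient recovers precisely the classical fusion bracket formula stated just above, showing that $\Psi_{a\circledast b}^{*}$ is Poisson. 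The $\SL_{2}^{\rm D}$-coaction is preserved tautologically: on both sides it arises from gluing a bigon onto the unique remaining boundary arc (the image of the edge $k$ of $\mathbb{T}$), and the associated comodule maps are the $\hbar = 0$ reductions of the same map $\Delta_{H} = (\mu_{H} \otimes \id) \circ \Delta_{H \otimes H}$ appearing in Definition~\ref{def_fusion_quantique}.

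The main substance of the argument lies in the $\hbar$-expansion computation of Step~2: one must carefully combine the leading expansions of $\mathbf{r}_{\hbar}$, of the quantum associative product, and of the flip $\tau_{A,H}$, and invoke the coaction identities to contract the counit factors to the classical product $xy$ at leading order. Once this is done, the remaining verifications are formal consequences of Theorem~\ref{theorem_fusion} together with the definition of the Poisson structure via deformation quantization (Definition~\ref{def_Poisson_bracket}).
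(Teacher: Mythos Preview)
Your proposal is correct and follows essentially the same approach as the paper: both take the semi-classical limit of the quantum fusion isomorphism of Theorem~\ref{theorem_fusion}, expand the co-$R$-matrix as $\mathbf{r}_{\hbar}\equiv \epsilon\otimes\epsilon+\hbar r^{+}\pmod{\hbar^{2}}$, and antisymmetrize the fusion product to identify the $\hbar$-coefficient of the commutator with the classical fusion bracket. Your write-up is in fact slightly more detailed than the paper's, making explicit the role of the coaction identity $(\epsilon\otimes\id)\circ\Delta^{i}=\id$ in collapsing the leading term to $xy$, and addressing the compatibility of the $\SL_2^{\rm D}$-comodule structures, which the paper leaves implicit.
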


 \begin{proof}Recall from Theorem \ref{theorem_fusion} the isomorphism of vector spaces $\Psi_{a\circledast b}\colon \mathcal{S}_{\hbar}(\mathbf{\Sigma}) \cong \mathcal{S}_{\hbar}(\mathbf{\Sigma}_{a \circledast b})$ so that the pull-back $\mu_{\circledast}$ of the product in $ \mathcal{S}_{\hbar}(\mathbf{\Sigma}_{a \circledast b})$ is the fusion, in the sense of Definition \ref{def_fusion_quantique}, of the product $\mu$ of $ \mathcal{S}_{\hbar}(\mathbf{\Sigma})$. Denote by $\star_{\circledast}$ and $\star$ the products in $\mathcal{S}_{+1}(\mathbf{\Sigma})\otimes \mathbb{C}[[\hbar]]$ corresponding to $\mu_{\circledast}$ and $\mu$ respectively.
 For $x,y\in \mathcal{S}_{+1}(\mathbf{\Sigma})$, we have
 \[ x \star_{\circledast} y = \sum \mathbf{r}_{\hbar}(y'_{(2)} \otimes x'_{(1)} ) x''_{(1)} \star y''_{(2)}, \qquad \text{and} \qquad y\star_{\circledast} x = \sum \mathbf{r}_{\hbar}(x'_{(2)}\otimes y'_{(1)})y''_{(1)} \star x''_{(2)}.\]
 Using the equalities $ \mathbf{r}_{\hbar}(x\otimes y) \equiv \epsilon(x) \epsilon(y) + \hbar r^+(x\otimes y) \ \big({\rm mod}~\hbar^2\big)$ and $x\star y - y\star x = \hbar \{x,y\} \ \big({\rm mod}~\hbar^2\big)$, we find
\begin{gather*}
x\star_{\circledast} y - y\star_{\circledast} x \equiv \hbar\Big( \{x,y\} + \sum r^+ (y'_{(2)} \otimes x'_{(1)}) x_{(1)}''y_{(2)}'' \\
\hphantom{x\star_{\circledast} y - y\star_{\circledast} x \equiv \hbar\Big(}{}
- \sum r^+ (x'_{(2)}\otimes y'_{(1)})y''_{(1)}x''_{(2)}\Big) \ \big({\rm mod}~\hbar^2\big).\tag*{\qed}
\end{gather*}\renewcommand{\qed}{}
 \end{proof}

The main motivation for the authors of \cite{AlekseevMalkin_PoissonCharVar} to introduce the notion of fusion was to get the following decomposition (which is \cite[Theorem 2]{AlekseevMalkin_PoissonCharVar}):
 \begin{equation}\label{eq_poisson_structure}
 \mathcal{X}(\mathbf{\Sigma}_{g}^*) \cong \mathcal{X}(\mathbf{\Sigma}_{1}^*) ^{\circledast g}, \qquad \mathcal{X}(\mathbf{\Sigma}_1^*) \cong (D_q^+(\SL_2))_{1\circledast 2}.
 \end{equation}
 This reduces the study of $ \mathcal{X}(\mathbf{\Sigma}_{g}^*) $ to the study of $D_+(\SL_2)$ and $\SL_2^{\rm STS}$.

\subsection[The case of Sigma\_g\textasciicircum{}*: the representation variety]{The case of $\boldsymbol{\mathbf{\Sigma}_g^*}$: the representation variety}

 Let $v\in a$ be a based point in the single boundary arc of $\mathbf{\Sigma}_g^*$. As an affine variety, we set
\[\mathcal{X}_{\SL_2}(\mathbf{\Sigma}_g^*) := \Hom(\pi_1(\Sigma_{g,1}, v), \SL_2).\]
 We call it the \textit{representation variety}. Fix a set of generators $\mathbb{G}:=\{ \alpha_1, \beta_1, \dots, \alpha_g, \beta_g\}$ of the free group $\pi_1(\Sigma_{g,1}, v)$ such that the intersection form $\langle \cdot, \cdot \rangle $ satisfies $\langle \alpha_i, \beta_i\rangle = \delta_{i,j}$, $\langle \alpha_i, \alpha_j\rangle =\langle \beta_i, \beta_j \rangle \allowbreak =0$ (i.e., $\alpha_i$, $\beta_j$ are meridian and longitudes as in Figure \ref{fig_arcs_Alekseev}). So we have an isomorphism:
\[ \mathcal{X}_{\SL_2}(\mathbf{\Sigma}_g^*) \cong (\SL_2)^{2g}, \qquad \rho \mapsto (\rho(\alpha_1), \rho(\beta_1), \dots, \rho(\alpha_g), \rho(\beta_g) ).\]
 Let $\gamma_{\partial}:= [\alpha_1, \beta_1] \cdots [\alpha_g, \beta_g]$ be the loop encircling the boundary component. The \textit{moment map} is the map
\[
\mu \colon \ \mathcal{X}_{\SL_2}(\mathbf{\Sigma}_g^*) \to \SL_2^{\rm STS}, \qquad \rho \mapsto \rho(\gamma_{\partial}).
\]
The Poisson structure of $\mathcal{X}_{\SL_2} (\mathbf{\Sigma}_g^*) $ is described explicitly in \cite{AlekseevKosmannMeinrenken,FockRosly, KojuTriangularCharVar} and is characterized by equation~\eqref{eq_poisson_structure} that we take here as a definition. Let us describe the Poisson isomorphism \[
\Psi \colon \ \mathcal{O}[\mathcal{X}_{\SL_2}(\mathbf{\Sigma}_g^*)] \xrightarrow{\cong} \mathcal{S}_{+1}(\mathbf{\Sigma}_g^*)\] explicitly. For each $\gamma \in \mathbb{G}$, denote by $X^{\gamma}_{ij}$ the regular function in $\mathcal{O}[\mathcal{X}_{\SL_2}(\mathbf{\Sigma}_g^*)] $ sending a representation $\rho\colon \pi_1(\Sigma_{g,1})\to \SL_2$ to the $(i,j)$-th matrix coefficient of $\rho(\gamma)$. Therefore $\mathcal{O}[\mathcal{X}_{\SL_2}(\mathbf{\Sigma}_g^*)]$ is the quotient of the polynomial ring $\mathbb{C}\big[X_{ij}^{\gamma}, \gamma \in \mathbb{G}, i,j=\pm\big]$ by the ideal generated by the polynomials $X_{++}^{\gamma}X_{--}^{\gamma} - X_{+-}^{\gamma}X_{-+}^{\gamma} -1$, for $\gamma \in \mathbb{G}$.
\par Represent each $\gamma \in \mathbb{G}$ by an embedded arc in $\Sigma_{g,1}$ with no crossing and such that two such arcs do not intersect. Let~$\gamma_{ij} \in \mathcal{S}_A(\mathbf{\Sigma}_g^*)$ be the class of the arc $\gamma$ with blackboard framing oriented from its endpoint~$v$ to its endpoint~$w$ such that $h(v)<h(w)$ with the state sending~$v$ and~$w$ to~$i$ and~$j$ respectively.

\begin{Theorem}[{\cite[Theorem~1.3]{KojuQuesneyClassicalShadows}, \cite[Theorem~4.7]{KojuPresentationSSkein}}]\label{theorem_skein_charvar} We have an isomorphism of $\SL_2^{\rm D}$-Poisson varieties
\[\Psi \colon \ \mathcal{O}[\mathcal{X}_{\SL_2}(\mathbf{\Sigma}_g^*)] \xrightarrow{\cong} \mathcal{S}_{+1}(\mathbf{\Sigma}_g^*),\]
which intertwines the moment maps and which is characterized by the formula
\[ \Psi \begin{pmatrix} X_{++}^{\gamma} & X_{+-}^{\gamma} \\ X_{-+}^{\gamma} & X_{--}^{\gamma} \end{pmatrix} =
\begin{pmatrix} 0 & -1 \\ 1 & 0 \end{pmatrix}
\begin{pmatrix} \gamma_{++} & \gamma_{+-} \\ \gamma_{-+} & \gamma_{--} \end{pmatrix}
= \begin{pmatrix} - \gamma_{-+} & - \gamma_{--} \\ \gamma_{++} & \gamma_{+-} \end{pmatrix},
\]
for $\gamma \in \mathbb{G}$.
\end{Theorem}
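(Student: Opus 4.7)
My plan is to build $\Psi$ and verify its properties inductively, exploiting the fusion decompositions from Remark~\ref{remark_fusion} and equation~\eqref{eq_poisson_structure}, namely $\mathbf{\Sigma}_g^* \cong (\mathbf{\Sigma}_1^*)^{\wedge g}$ and $\mathbf{\Sigma}_1^* \cong (\mathbb{D}_1^+)_{a_1\circledast a_2}$. By Theorem~\ref{theorem_fusion} on the quantum side and Theorem~\ref{coro_fusion} on the Poisson side, it suffices to exhibit the isomorphism for $\mathbb{D}_1^+$ as a Poisson $\big(\SL_2^{\rm D}\big)^2$-variety and then transport it through iterated fusion. For $\mathbb{D}_1^+$, the algebra $D_q^+(\SL_2)$ comes with a presentation by two matrices $N(\alpha)$, $N(\beta)$ as recalled in Section~\ref{sec_relcharvar}, and the classical limit $D_+(\SL_2)$ is literally $\SL_2 \times \SL_2$, so the map sending the $(i,j)$-matrix coefficient of $g \mapsto g_{\alpha}$ (resp.~$g_{\beta}$) to the $(i,j)$-entry of the matrix $\left(\begin{smallmatrix} 0 & -1 \\ 1 & 0 \end{smallmatrix}\right) N(\alpha)$ (resp.~$N(\beta)$) is tautologically a morphism of affine varieties; one just needs to check it is Poisson.

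For the Poisson compatibility on $\mathbb{D}_1^+$, I would compare the bracket computed from Definition~\ref{def_Poisson_bracket} applied to the reflection-equation and RE-type relations presenting $D_q^+(\SL_2)$ with the Alekseev--Malkin bracket $\{N(\alpha)\odot N(\beta)\}^+ = r^+(N(\alpha)\odot N(\beta)) + (N(\alpha)\odot N(\beta))r^+$ together with the two Semenov-Tian-Shansky-type brackets for $N(\alpha), N(\alpha)$ and $N(\beta), N(\beta)$; these agree up to the sign absorbed by the $\left(\begin{smallmatrix} 0 & -1 \\ 1 & 0 \end{smallmatrix}\right)$ normalization, which is exactly how the sign $\left(\begin{smallmatrix} 0 & -A^{5/2} \\ A^{1/2} & 0 \end{smallmatrix}\right)$ arose in the case of $\mathbf{m}_1$. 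Equivariance under the $\big(\SL_2^{\rm D}\big)^2$-action is then automatic because both the comodule map $\Delta^L$ (which is the gluing map at $a_L$) and the classical coaction on $X^\gamma_{ij}$ by left multiplication have the same block form on $N(\alpha), N(\beta)$.

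The inductive step uses Theorem~\ref{coro_fusion} on the classical side and Theorem~\ref{theorem_fusion} on the quantum side: the fusion of $\mathbb{D}_1^+$ yielding $\mathbf{\Sigma}_1^*$ passes through $\Psi$ because the sign matrix $\left(\begin{smallmatrix} 0 & -A^{5/2} \\ A^{1/2}&0\end{smallmatrix}\right)$ defining $N$ in $\mathcal{S}_A(\mathbf{m}_1)$ specializes at $A^{1/2}=1$ to the $\left(\begin{smallmatrix} 0 & -1 \\ 1 & 0 \end{smallmatrix}\right)$ in the statement; and iterating, $\mathbf{\Sigma}_g^*$ corresponds on one side to $(\mathbf{\Sigma}_1^*)^{\wedge g}$ and on the other to the iterated fusion $\mathcal{X}_{\SL_2}(\mathbf{\Sigma}_1^*)^{\circledast g}$, which by equation~\eqref{eq_poisson_structure} is exactly $\mathcal{X}_{\SL_2}(\mathbf{\Sigma}_g^*)$. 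Bijectivity of the resulting map is easy: the explicit bases of $\mathcal{S}_A(\mathbf{\Sigma}_g^*)$ in Theorem~\ref{theorem_PI_Deg} (products of monomials in $\gamma_{ij}$ for $\gamma\in\mathbb{G}$) specialize at $A^{1/2}=1$ to a basis matching the polynomial generators $X^\gamma_{ij}$ of $\mathcal{O}[\mathcal{X}_{\SL_2}(\mathbf{\Sigma}_g^*)] $ modulo the $\det=1$ relations.

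Finally, for the moment-map intertwining, I would compute $\mu_q(\alpha_{ij})$ where $\alpha_{ij}$ is the corner arc generator of $\mathcal{S}_A(\mathbf{m}_1)$. The embedding $i\colon \mathbf{m}_1 \hookrightarrow \mathbf{\Sigma}_g^*$ sends this arc to a loop freely homotopic to $\gamma_\partial = [\alpha_1,\beta_1]\cdots[\alpha_g,\beta_g]$, and by unwinding the fusion-by-fusion behaviour of stated skein elements (each fusion multiplies the matrices $N$ of the glued arcs), one obtains that the image of $N$ in $\mathcal{S}_{+1}(\mathbf{\Sigma}_g^*)$ corresponds via $\Psi^{-1}$ to the matrix $X^{\gamma_\partial}$, i.e.~evaluation at $\gamma_\partial$. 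The main obstacle I expect is a bookkeeping difficulty: tracking the $\left(\begin{smallmatrix} 0 & -A^{5/2} \\ A^{1/2} & 0 \end{smallmatrix}\right)$ normalization factors through $g$ successive fusions, and ensuring that the quantum moment map matches $\rho\mapsto \rho(\gamma_\partial)$ with the \emph{correct} ordering of commutators dictated by the quantum product; this is essentially a computation in the braided tensor product following the algorithm of Alekseev--Malkin~\cite{AlekseevMalkin_PoissonCharVar} and requires careful verification but no conceptual novelty.
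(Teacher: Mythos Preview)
The paper does not contain its own proof of this theorem: it is stated as a citation to \cite[Theorem~1.3]{KojuQuesneyClassicalShadows} and \cite[Theorem~4.7]{KojuPresentationSSkein}, with no proof given here. So there is nothing in the present paper to compare your attempt against.

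That said, your strategy is the natural one and is in the spirit of the cited references: reduce to the building block $\mathbb{D}_1^+$ via the fusion decompositions of Remark~\ref{remark_fusion} and equation~\eqref{eq_poisson_structure}, check the claim there by hand, and then transport the isomorphism through fusion using Theorems~\ref{theorem_fusion} and~\ref{coro_fusion}. One small correction: for $\mathbb{D}_1^+$, the self-brackets $\{N(a)\otimes N(a)\}^+$ are of \emph{Drinfel'd} type, $r^+(N(a)\odot N(a)) - (N(a)\odot N(a))r^+$, not Semenov-Tian-Shansky type (compare with the bigon computation in Section~\ref{sec_relcharvar}); this matters for getting the signs right in your $\left(\begin{smallmatrix}0&-1\\1&0\end{smallmatrix}\right)$ normalization. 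Your acknowledged ``bookkeeping difficulty'' with the moment map is real but routine: it is exactly the computation carried out in the references, and amounts to checking that under successive fusions the matrix $N$ of the boundary arc becomes the ordered product of commutators $[N(\alpha_i),N(\beta_i)]$ in the braided tensor product.
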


 A key concept introduced by Alekseev--Kosmann-Schwarzbach--Meinrenken in \cite{AlekseevKosmannMeinrenken} is
 \begin{Definition}
 A smooth $G$-Poisson variety $X$ is \textit{non-degenerate} if the map
\[ \big(a_G, \pi^{\#}_X\big) \colon \ \mathfrak{g}\oplus T_X^{*} \to T_X\]
 is surjective, where $a_G \colon \mathfrak{g} \to T_X$ is the infinitesimal~$G$ action and $\pi_X^{\#}$ is the map induced from the Poisson bivector field $\pi_X$.
 \end{Definition}

 \begin{Lemma}[{Ganev--Jordan--Safronov \cite[Proposition~2.13]{GanevJordanSafranov_FrobeniusMorphism} following \cite[Section~10]{AlekseevKosmannMeinrenken}}]\label{lemma_XXX}
 Let~$X$ a~smooth $G^2\times H$-Poisson variety and $X^{\circledast}$ is the $G\times H$-Poisson variety obtained by fusion. If~$X$ is non degenerate then $X^{\circledast}$ is non-degenerate.
 \end{Lemma}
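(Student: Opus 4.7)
The proof follows the strategy of Alekseev--Kosmann-Schwarzbach--Meinrenken~\cite{AlekseevKosmannMeinrenken} adapted to the Poisson--Lie setting in~\cite{GanevJordanSafranov_FrobeniusMorphism}. The underlying smooth manifold of $X^{\circledast}$ equals that of $X$, the $H$-action is unchanged, and the $\mathfrak{g}$-action becomes diagonal with infinitesimal action $a^{(1)}+a^{(2)}$. Non-degeneracy of $X^{\circledast}$ therefore amounts to the pointwise surjectivity of
\[
\Phi^{\circledast} := \bigl(a^{(1)}+a^{(2)},\; a_H,\; \pi^{\#}_{X^{\circledast}}\bigr) \colon \mathfrak{g}\oplus\mathfrak{h}\oplus T^{*}_X \to T_X,
\]
to be derived from the surjectivity (given by hypothesis) of the analogous map $\Phi := (a^{(1)}, a^{(2)}, a_H, \pi^{\#}_X)$ with an additional $\mathfrak{g}$-summand in its domain.

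The first step is to expand the correction $B^{\#} := \pi^{\#}_{X^{\circledast}} - \pi^{\#}_X$ coming from $\pi_{X^{\circledast}} = \pi_X + a_{G\otimes G}(r^- - r^+)$. Using that $r^- - r^+ = \sigma(r) - r \in \Lambda^2\mathfrak{g}$ is antisymmetric, a basis expansion yields
\[
B^{\#}(\xi) = a^{(1)}\bigl(\rho(a^{(2)*}(\xi))\bigr) + a^{(2)}\bigl(\rho(a^{(1)*}(\xi))\bigr),
\]
where $\rho := \frac{1}{2}(\sigma(r)-r)^{\#}\colon \mathfrak{g}^{*}\to\mathfrak{g}$ is the antisymmetric map induced by the antisymmetric part of the $r$-matrix. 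In particular $\mathrm{Im}(B^{\#}) \subseteq V := a^{(1)}(\mathfrak{g}) + a^{(2)}(\mathfrak{g})$, so that $\mathrm{Im}(\pi^{\#}_{X^{\circledast}}) + V = \mathrm{Im}(\pi^{\#}_X) + V$.

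Writing $V = D + (a^{(1)}-a^{(2)})(\mathfrak{g})$ with $D := (a^{(1)}+a^{(2)})(\mathfrak{g}) \subseteq \mathrm{Im}(\Phi^{\circledast})$ tautologically, the surjectivity of $\Phi^{\circledast}$ reduces, via the non-degeneracy identity $T_X = V + a_H(\mathfrak{h}) + \mathrm{Im}(\pi^{\#}_X)$ for $X$, to showing that the antidiagonal subspace $(a^{(1)}-a^{(2)})(\mathfrak{g})$ is contained in $D + \mathrm{Im}(\pi^{\#}_{X^{\circledast}})$. The main obstacle is precisely this antidiagonal inclusion: for a prescribed element, one chooses a cotangent $\xi$ with $a^{(1)*}(\xi) = -a^{(2)*}(\xi) =: \eta$, whereupon the antisymmetry of $\rho$ yields $B^{\#}(\xi) = -(a^{(1)}-a^{(2)})(\rho(\eta))$; the Poisson compatibility of the two $G$-actions with $\pi_X$ combined with the surjectivity of $\Phi$ is then used to arrange $\rho(\eta)$ and $\pi^{\#}_X(\xi)$ so that the prescribed antidiagonal element is reached modulo terms already lying in $D + \mathrm{Im}(\pi^{\#}_{X^{\circledast}})$. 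The detailed linear-algebra computation is carried out in~\cite[Proposition~2.13]{GanevJordanSafranov_FrobeniusMorphism}.
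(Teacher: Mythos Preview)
The paper does not give its own proof of this lemma: it is stated with attribution to \cite[Proposition~2.13]{GanevJordanSafranov_FrobeniusMorphism} (following \cite[Section~10]{AlekseevKosmannMeinrenken}) and used as a black box. There is therefore nothing in the paper to compare your argument against beyond the bare citation.

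Your sketch is consistent with the strategy of the cited references: you correctly identify that the underlying manifold and the $H$-action are unchanged, that the new $G$-action is diagonal, that the Poisson bivector is shifted by $a_{G\times G}(r^{-}-r^{+})$, and that the correction term $B^{\#}$ lands in the span of the two $\mathfrak{g}$-orbit directions. The reduction to the antidiagonal piece is the right bottleneck. However, the final paragraph is not a proof but a narrative of what one hopes will happen: the claim that one can ``choose a cotangent $\xi$ with $a^{(1)*}(\xi)=-a^{(2)*}(\xi)=\eta$'' for a prescribed $\eta$, and then ``arrange $\rho(\eta)$ and $\pi_X^{\#}(\xi)$'' to hit a prescribed antidiagonal vector, is exactly the nontrivial step, and you defer it back to the reference. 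Since the paper itself only cites the result, your write-up is at the same level of detail as the paper; just be aware that what you have is an outline rather than a self-contained argument.
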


It follows from Theorem~\ref{theorem_leaves_D1} that $\mathbb{D}_+(\SL_2)$ is non-degenerate. So Lemma~\ref{lemma_XXX} and equation~\eqref{eq_poisson_structure} imply that $\mathcal{X}_{\SL_2}(\mathbf{\Sigma}_g^*)$ is non-degenerate as well. This property, together with the explicit description of the symplectic leaves of $\SL_2^{\rm STS}$ in Theorem~\ref{theorem_STS} easily imply

\begin{Theorem}[{Ganev--Jordan--Safronov \cite[Theorem~2.14]{GanevJordanSafranov_FrobeniusMorphism}}]\label{theorem_GJS_geom} The symplectic leaves of \linebreak $\mathcal{X}_{\SL_2}(\mathbf{\Sigma}_g^*)$ are the pull-back by $\mu$ of the dressing orbits, i.e., are
\begin{enumerate}\itemsep=0pt
\item[$1)$] the open dense leaf $\mu^{-1}\big(\SL_2^0\big)$,
\item[$2)$] the leaves $\mu^{-1}(C_b)$, $b\in \mathbb{C}^*$.
\end{enumerate}
\end{Theorem}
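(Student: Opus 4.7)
The strategy is to combine (i) non-degeneracy of $\mathcal{X}_{\SL_2}(\mathbf{\Sigma}_g^*)$ as an $\SL_2^{\rm D}$-Poisson variety with (ii) a general principle from Poisson--Lie theory, developed in \cite{AlekseevKosmannMeinrenken, AlekseevMalkin_PoissonLie}: for a non-degenerate Hamiltonian Poisson $G$-variety $X$ equipped with a moment map $\mu\colon X \to G^*$, the symplectic leaves of $X$ are precisely the connected components of the pre-images $\mu^{-1}(D)$ of dressing orbits $D \subset G^*$. Non-degeneracy is established in two stages. By Theorem~\ref{theorem_leaves_D1}, $D_+(\SL_2)$ admits open symplectic leaves $D_{ij}$, so it is non-degenerate as a $(\SL_2^{\rm D})^2$-Poisson variety. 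Then equation~\eqref{eq_poisson_structure} realizes $\mathcal{X}_{\SL_2}(\mathbf{\Sigma}_g^*)$ as an iterated fusion (a first internal fusion of the two $\SL_2^{\rm D}$-factors of $D_+(\SL_2)$ produces $\mathcal{X}(\mathbf{\Sigma}_1^*)$, and $g-1$ further binary fusions yield $\mathcal{X}(\mathbf{\Sigma}_g^*)$), and Lemma~\ref{lemma_XXX} preserves non-degeneracy at each fusion step.

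The application of the general principle proceeds as follows. The Poisson--Lie moment-map identity $a_{\SL_2}(\xi) = \pi_{\mathcal{X}}^{\#}(\mu^*\alpha_\xi)$, where $\alpha_\xi$ is the left-invariant $1$-form on $\SL_2^{\rm STS}$ paired with $\xi \in \mathfrak{sl}_2$, shows that infinitesimal $\SL_2^{\rm D}$-action vectors lie in the leaf tangent $\pi_{\mathcal{X}}^{\#}(T_x^*\mathcal{X})$ at every $x$; hence symplectic leaves are $\SL_2^{\rm D}$-invariant and, by equivariance of $\mu$, project into single connected dressing orbits. The non-degeneracy property $a_{\SL_2}(\mathfrak{sl}_2)|_x + \pi_{\mathcal{X}}^{\#}(T_x^*\mathcal{X}) = T_x \mathcal{X}$, combined with $d\mu_x (a_{\SL_2}(\xi)|_x) = a_{\SL_2}(\xi)|_{\mu(x)}$ spanning $T_{\mu(x)} D_{\mu(x)}$, forces $d\mu_x$ restricted to the leaf tangent to surject onto $T_{\mu(x)} D_{\mu(x)}$. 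A dimension count then identifies the leaf through $x$ with the full pre-image $\mu^{-1}(D_{\mu(x)})$.

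The dressing orbits of $\SL_2^{\rm STS}$ were enumerated in the discussion preceding Theorem~\ref{theorem_STS}: the connected big cell $\SL_2^0 = B^-B^+$ and the connected curves $C_b$, $b \in \mathbb{C}^*$. Combining this with the analysis above yields the stated list of symplectic leaves. The main technical obstacle is the general principle (ii) itself, which rests on a delicate interplay between non-degeneracy and the Poisson--Lie moment-map equation; in particular, it captures the subtle fact that leaves of $\mathcal{X}_{\SL_2}(\mathbf{\Sigma}_g^*)$ project onto the (often larger) dressing orbits of $\SL_2^{\rm STS}$ rather than onto the (smaller) symplectic leaves of $\SL_2^{\rm STS}$ given in Theorem~\ref{theorem_STS}, reflecting that the Poisson map $\mu$ need not send Casimirs of $\SL_2^{\rm STS}$ (such as the trace) to Casimirs of $\mathcal{X}_{\SL_2}(\mathbf{\Sigma}_g^*)$.
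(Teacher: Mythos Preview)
Your proposal is correct and follows essentially the same route as the paper: non-degeneracy of $D_+(\SL_2)$ from Theorem~\ref{theorem_leaves_D1}, propagation through iterated fusion via Lemma~\ref{lemma_XXX} and equation~\eqref{eq_poisson_structure}, and then the general principle that for a non-degenerate moment map the symplectic leaves are the preimages of dressing orbits in $\SL_2^{\rm STS}$. The paper condenses all of step~(ii) into the phrase ``easily imply'' whereas you spell out the moment-map mechanism and correctly flag the distinction between dressing orbits and symplectic leaves of $\SL_2^{\rm STS}$; this extra care is welcome but does not constitute a different argument.
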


Since the moment map is equivariant for the toric $\mathbb{C}^*$ action, the fact that $\mathbb{C}^*$ acts transitively on the $C_b$ implies

\begin{Corollary}\label{coro_leaves}
The equivariant symplectic leaves of $\mathcal{X}(\mathbf{\Sigma}_g^*)$ are the two leaves $\mu^{-1}\big(\SL_2^0\big)$ and $\mu^{-1}\big(\SL_2^1\big)=\overline{\mathcal{X}}(\mathbf{\Sigma}_g^*)$.
\end{Corollary}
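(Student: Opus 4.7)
The plan is to combine Theorem~\ref{theorem_GJS_geom} with the $\mathbb{C}^*$-equivariance of the moment map and the explicit description of the toric $\mathbb{C}^*$-action on $\SL_2^{\rm STS}$ recorded just before Corollary~\ref{coro_STS}. Since $\mu$ is $\mathbb{C}^*$-equivariant, the image under $\mu$ of an equivariant symplectic leaf is contained in an equivariant dressing orbit of $\SL_2^{\rm STS}$, and two ordinary leaves fuse into a single equivariant leaf precisely when their $\mathbb{C}^*$-translates meet.

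First I would use the formula
\[
z\cdot \begin{pmatrix} a & b \\ c & d \end{pmatrix} = \begin{pmatrix} a & z^2 b \\ z^{-2}c & d \end{pmatrix}
\]
to verify that both $\SL_2^0$ and $\SL_2^1$ are preserved by the toric action, while $\mathbb{C}^*$ acts transitively on $\{C_b : b\in \mathbb{C}^*\}$ via $z\cdot C_b = C_{z^2 b}$. Combined with Theorem~\ref{theorem_GJS_geom}, this shows that the symplectic leaves $\mu^{-1}(C_b)$ are all glued by the $\mathbb{C}^*$-action into the single equivariant leaf $\mu^{-1}(\SL_2^1) = \bigcup_{b\in \mathbb{C}^*} \mu^{-1}(C_b)$, whereas $\mu^{-1}(\SL_2^0)$ is already preserved and hence forms a second equivariant leaf on its own.

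To conclude I would identify $\mu^{-1}(\SL_2^1)$ with $\overline{\mathcal{X}}(\mathbf{\Sigma}_g^*)$. By the explicit matrix description of $\mathcal{S}_A(\mathbf{m}_1)$ recalled in Section~\ref{sec_relcharvar}, the entry $a$ of the generating matrix~$N$ equals $-A^{5/2}\alpha_{-+}$, so on $\SL_2^{\rm STS} = \mathcal{X}(\mathbf{m}_1)$ the coordinate~$a$ coincides, up to a unit, with the classical limit of the unique bad arc of $\mathbf{m}_1$. The embedding $i\colon \mathbf{m}_1 \hookrightarrow \mathbf{\Sigma}_g^*$ defining the moment map sends this bad arc to the unique bad arc of $\mathbf{\Sigma}_g^*$ (the only one, since $\mathbf{\Sigma}_g^*$ has a single boundary puncture), and the latter generates the defining ideal of $\overline{\mathcal{X}}(\mathbf{\Sigma}_g^*) \subset \mathcal{X}(\mathbf{\Sigma}_g^*)$. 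Therefore $\mu^{-1}(\SL_2^1) = \{a\circ\mu = 0\} = \overline{\mathcal{X}}(\mathbf{\Sigma}_g^*)$.

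The only genuinely nontrivial step is this last identification of the reduced subvariety with the vanishing locus of $a\circ \mu$, which amounts to tracking the bad arc under the embedding $i$; the counting of equivariant symplectic leaves itself follows formally from Theorem~\ref{theorem_GJS_geom} together with the explicit formula for the toric action.
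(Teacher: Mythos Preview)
Your proof is correct and follows the same approach as the paper: the paper deduces the corollary in one line from Theorem~\ref{theorem_GJS_geom} together with the $\mathbb{C}^*$-equivariance of $\mu$ and the transitivity of the toric action on the dressing orbits $C_b$, exactly as you do. You additionally spell out the identification $\mu^{-1}(\SL_2^1)=\overline{\mathcal{X}}(\mathbf{\Sigma}_g^*)$ by tracking the bad arc through the embedding $i$, a point the paper simply asserts without comment; this extra paragraph is a genuine clarification rather than a different method.
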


So Corollary~\ref{coro_leaves} shows that $\overline{\mathcal{X}}(\mathbf{\Sigma}_g^*)$ has a single equivariant symplectic leaf. This is the key fact that permits to prove Theorem \ref{theorem1} and permits to define mapping class group representations.

\section{Algebraic study}\label{sec_algebraic}

We now review two important concepts, the Azumaya locus and the theory of Poisson orders, which were first introduced by De~Concini--Kac~\cite{DeConciniKacRepQGroups} for the study of the representations of quantum enveloping algebras at roots of unity and further developed by De~Concini--Lybashen\-ko~\cite{DeConciniLyubashenko_OqG} in the study of $\mathcal{O}_qG$ and by various authors including those of~\cite{BrownGordon_ramificationcenters,BrownGoodearl, BrownGordon_PO, FrohmanKaniaLe_UnicityRep}.

\subsection{Azumaya loci}\label{sec_AL}

Let $\mathcal{A}$ be a $\mathbb{C}$-algebra such that:
\begin{enumerate}\itemsep=0pt
\item[(i)] $\mathcal{A}$ is affine (i.e., finitely generated),
\item[(ii)] $\mathcal{A}$ is prime,
\item[(iii)] $\mathcal{A}$ has finite rank over its center $Z$.
\end{enumerate}
By Theorem~\ref{theorem_basic_ppty}, stated skein algebras at roots of unity and their reduced versions satisfy these properties.
Write $S:= Z\setminus \{0\}$. A~theorem of Posner--Formanek \cite[Theorem~I.13.3]{BrownGoodearl} shows that the localization $S^{-1}\mathcal{A}$ is a central simple algebra with center $K=S^{-1}Z$, so is a~matrix algebra is some algebraic closure~$\overline{K}$ of~$K$, i.e., $\mathcal{A}\otimes_Z \overline{K} \cong \Mat_D\big(\overline{K}\big)$. In particular, this implies that the rank $r=D^2$ is a perfect square and justifies the definition of the PI-degree~$D$ of~$\mathcal{A}$.

Write $\mathcal{X}:= \operatorname{Specm}(Z)$ and for $x\in \mathcal{X}$ corresponding to a maximal ideal $\mathfrak{m}_x \subset Z$, consider the finite-dimensional algebra
\[\mathcal{A}_x:= \quotient{\mathcal{A}}{\mathfrak{m}_x \mathcal{A}}.\]

 \begin{Definition} The \textit{Azumaya locus} of $\mathcal{A}$ is the subset
\[ \mathcal{AL}:= \{ x \in \mathcal{X} \,|\, \mathcal{A}_x \cong \Mat_D(\mathbb{C}) \}, \]
 where $\Mat_D(\mathbb{C})$ is the algebra of $D\times D$ matrices. The algebra $\mathcal{A}$ is said \textit{Azumaya} if $\mathcal{AL}= \mathcal{X}$.
 \end{Definition}

In particular, for an Azumaya algebra, the set of isomorphism classes of irreducible representations is in $1$-to-$1$ correspondence with the characters over the center~$Z$.

 \begin{Remark}\label{remark_AzumayaLocus}
 An irreducible representation $\rho\colon \mathcal{A}\to \End(V)$ sends central elements to scalar operators so induces a point $x\in \mathcal{X}$. If $x\in \mathcal{AL}$, then $V$ is $D$-dimensional. By a theorem of Posner, if~$x$ does not belong to the Azumaya locus, then $\mathcal{A}_x$ has PI-degree strictly smaller than $\mathcal{A}$, therefore any irreducible representation $\rho\colon \mathcal{A}\to \End(V)$ inducing $x$ has dimension $\dim(V)<D$. So the Azumaya locus admits the following alternative definition:
\[\mathcal{AL}=\{ x \in \mathcal{X} \,| \,x \text{is induced by an irrep of maximal dimension }D\}.\]
 \end{Remark}

 \begin{Definition}
 Let $\mathcal{A}$ as before.
 \begin{enumerate}\itemsep=0pt
 \item The \textit{reduced trace} is the composition
\[ \operatorname{tr} \colon \ \mathcal{A} \hookrightarrow \mathcal{A}\otimes_Z \overline{K} \cong M_D(\overline{K}) \xrightarrow{\operatorname{tr}_D} \overline{K}, \]
 where $\operatorname{tr}_D ((a_{ij})_{i,j}):= \frac{1}{D}\sum_i a_{ii}$ (so that $\operatorname{tr}(1)=1$).
By \cite[Theorem~10.1]{Reiner03}, $\operatorname{tr}$ takes values in~$Z$.
 \item
 The \textit{discriminant ideal} of $\mathcal{A}$ is the ideal $\mathcal{D} \subset Z$ generated by the elements
\[ \det(\operatorname{tr}(x_ix_j)) \in Z, \qquad (x_1, \dots, x_{D^2}) \in (\mathcal{A})^{D^2}.\]
\end{enumerate}
 \end{Definition}

\begin{Theorem}[{Brown--Milen \cite[Theorem~1.2]{Brown_AL_discriminant}}]\label{theorem_UnicityRep} If $\mathcal{A}$ satisfies $(i)$, $(ii)$ and $(iii)$ then
\[ \mathcal{AL}= \operatorname{Specm}(Z) \setminus V(\mathcal{D}).\]
 In particular, the Azumaya locus is an open dense set.
 \end{Theorem}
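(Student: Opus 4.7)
The plan is to prove both inclusions separately using the reduced trace pairing $(a,b)\mapsto \operatorname{tr}(ab)$ on $\mathcal{A}$, valued in $Z$. The underlying observation is that after extending scalars from $Z$ to the algebraic closure $\overline{K}$ of $K=\operatorname{Frac}(Z)$, this pairing becomes the normalized matrix trace on $\mathcal{A}\otimes_Z \overline{K}\cong \Mat_D(\overline{K})$, which is non-degenerate; the discriminant ideal $\mathcal{D}$ is precisely the obstruction to non-degeneracy of this pairing after specialization at a maximal ideal of $Z$.

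For the easy direction $\mathcal{AL}\subseteq \operatorname{Specm}(Z)\setminus V(\mathcal{D})$, I would fix $x\in \mathcal{AL}$, so $\mathcal{A}_x\cong \Mat_D(\mathbb{C})$. The reduced trace on $\mathcal{A}$ descends modulo $\mathfrak{m}_x$ to a $\mathbb{C}$-linear tracial form on $\Mat_D(\mathbb{C})$ sending $1$ to $1$, which by uniqueness of the normalized trace on a matrix algebra must equal $\tfrac{1}{D}\operatorname{tr}_{\Mat_D}$. Lifting any $\mathbb{C}$-basis $\overline{y}_1,\dots,\overline{y}_{D^2}$ of $\Mat_D(\mathbb{C})$ to elements $y_i\in \mathcal{A}$ then produces a generator $\det(\operatorname{tr}(y_iy_j))$ of $\mathcal{D}$ whose value at $x$ is a nonzero multiple of $\det(\operatorname{tr}_{\Mat_D}(\overline{y}_i\overline{y}_j))\neq 0$.

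The reverse inclusion is the deeper direction. Given $x\notin V(\mathcal{D})$, I pick $x_1,\dots,x_{D^2}\in \mathcal{A}$ with Gram matrix $G=(\operatorname{tr}(x_ix_j))$ invertible in $M_{D^2}(Z_{\mathfrak{m}_x})$, and show that these elements form a free $Z_{\mathfrak{m}_x}$-basis of the localization $\mathcal{A}_{\mathfrak{m}_x}$. Injectivity of $\phi\colon Z_{\mathfrak{m}_x}^{D^2}\to \mathcal{A}_{\mathfrak{m}_x}$, $e_i\mapsto x_i$, follows from invertibility of $G$. For surjectivity, the companion pairing $\phi^*\colon a\mapsto (\operatorname{tr}(ax_i))_i$ yields a decomposition $\mathcal{A}_{\mathfrak{m}_x}=\phi(Z_{\mathfrak{m}_x}^{D^2})\oplus \ker(\phi^*)$; the kernel satisfies $\ker(\phi^*)\otimes_{Z_{\mathfrak{m}_x}} K=0$ because the generic trace form on $\mathcal{A}\otimes_Z K\hookrightarrow \Mat_D(\overline{K})$ is non-degenerate, and since $\mathcal{A}$ is prime (hence torsion-free over $Z$) this kernel must vanish. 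Consequently $\mathcal{A}_{\mathfrak{m}_x}$ is a free $Z_{\mathfrak{m}_x}$-module of rank $D^2$ carrying a non-degenerate reduced trace pairing; by the Auslander--Goldman characterization of Azumaya algebras, $\mathcal{A}_{\mathfrak{m}_x}$ is Azumaya over $Z_{\mathfrak{m}_x}$, and hence $\mathcal{A}_x\cong \Mat_D(\mathbb{C})$.

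The main obstacle is precisely this translation step from non-degeneracy of the local trace form to the Azumaya property, which requires invoking the Auslander--Goldman theorem (equivalently, the Artin--Procesi characterization of Azumaya loci through existence of an irreducible representation of the maximal dimension $D$). The final assertion that $\mathcal{AL}$ is Zariski open and dense is then immediate: $V(\mathcal{D})$ is closed by definition and proper, since the discriminant is a nonzero element of $Z$, as already shown in the forward direction by exhibiting a basis of $\mathcal{A}\otimes_Z \overline{K}$ with invertible generic Gram matrix.
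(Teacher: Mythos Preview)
The paper does not give its own proof of this theorem: it is quoted as \cite[Theorem~1.2]{Brown_AL_discriminant} and used as a black box, so there is nothing in the paper to compare your argument against directly.

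On its own merits, your outline is essentially the standard proof and is correct in spirit. The forward inclusion is fine as written. For the reverse inclusion, your argument that the $x_i$ form a free $Z_{\mathfrak{m}_x}$-basis of $\mathcal{A}_{\mathfrak{m}_x}$ is clean and correct; primeness of $\mathcal{A}$ indeed forces torsion-freeness over $Z$, so the kernel of $\phi^*$ vanishes. The only point where you are a bit loose is the final step: non-degeneracy of the reduced trace pairing on a free rank-$D^2$ module is \emph{not} literally the Auslander--Goldman criterion (which asks that $\mathcal{A}\otimes \mathcal{A}^{\mathrm{op}}\to \End_Z(\mathcal{A})$ be an isomorphism), and non-degeneracy of an arbitrary trace form on a $D^2$-dimensional $\mathbb{C}$-algebra does not by itself force it to be $\Mat_D(\mathbb{C})$ (e.g.\ $\mathbb{C}^{D^2}$ with the averaging trace). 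What makes it work here is that the reduced trace carries the degree-$D$ Cayley--Hamilton identity down to $\mathcal{A}_x$, and then Procesi's theory (or Artin--Procesi, which you do mention parenthetically) gives that a $D$-Cayley--Hamilton $\mathbb{C}$-algebra of dimension $D^2$ with non-degenerate trace form is $\Mat_D(\mathbb{C})$. If you make that invocation explicit rather than calling it ``Auslander--Goldman'', the argument is complete.
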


 The fact that the Azumaya locus is open dense seems to be well-known to the experts since a long time (see, e.g., \cite{BrownGoodearl, FrohmanKaniaLe_UnicityRep}) though the author was not able to find to whom attribute this folklore result (which is essentially a generalization of De~Concini--Kac pioneered work in \cite{DeConciniKacRepQGroups}). The reduced trace for skein algebra of unmarked surfaces was computed in~\cite{FrohmanKaniaLe_DimSkein} though the discriminant and the associated Azumaya loci are still unknown (in genus~$g\geq 2$).

Recall from Section~\ref{sec_center} the finite branched coverings $\pi\colon \widehat{\mathcal{X}}(\mathbf{\Sigma}) \to \mathcal{X}(\mathbf{\Sigma})$ and $\overline{\pi}\colon \widehat{\overline{\mathcal{X}}}(\mathbf{\Sigma}) \to \overline{\mathcal{X}}(\mathbf{\Sigma})$ induced by the Chebyshev--Frobenius morphisms.

 \begin{Definition} The \textit{fully Azumaya locus} of $\mathcal{S}_A(\mathbf{\Sigma})$ is the subset $\mathcal{FAL}\subset \mathcal{X}(\mathbf{\Sigma})$ of elements $x$ such that every point of the fiber $\pi^{-1}(x)$ belongs to the Azumaya locus of $\mathcal{S}_A(\mathbf{\Sigma})$.
 \par Similarly, the \textit{fully Azumaya locus} of $\overline{\mathcal{S}}_A(\mathbf{\Sigma})$ is the subset $\mathcal{FAL}\subset \overline{\mathcal{X}}(\mathbf{\Sigma})$ of elements $x$ such that every point of the fiber $\overline{\pi}^{-1}(x)$ belongs to the Azumaya locus of $\overline{\mathcal{S}}_A(\mathbf{\Sigma})$.
 \end{Definition}

The fully Azumaya locus was introduced by Brown and Gordon in~\cite{BrownGordon_ramificationcenters} in the case of the bigon. Since the projection map $\pi\colon \widehat{\mathcal{X}}(\mathbf{\Sigma}) \to \mathcal{X}(\mathbf{\Sigma})$ is finite, and since finite morphisms send closed sets to closed sets \cite[Example~2.35(b)]{Hart}, Corollary~\ref{theorem_UnicityRep} implies that the fully Azumaya loci are open dense subsets. The key theorem to work with the fully Azumaya locus is

\begin{Theorem}[{Brown--Gordon \cite[Corollary~2.7]{BrownGordon_ramificationcenters}}]
Let $\mathcal{A}$ be an affine prime $\mathbb{C}$-algebra finitely generated over its center $\mathcal{Z}$ and denote by $D$ its PI-degree. Let $R\subset \mathcal{Z}$ be a subalgebra such that~$\mathcal{Z}$ if finitely generated as a $R$-module. Let $M\in \mathcal{AL}(\mathcal{A})$ and $\mathfrak{m}:=M\cap R$. Then
\[ \quotient{\mathcal{A}}{\mathfrak{m} \mathcal{A}} \cong \Mat_D \left(\quotient{\mathcal{Z}}{\mathfrak{m}\mathcal{Z}}\right).\]
\end{Theorem}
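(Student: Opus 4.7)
The plan is to argue locally on $\operatorname{Specm}(\mathcal{Z})$, reducing everything to the classical fact that an Azumaya algebra over a local $\mathbb{C}$-algebra with algebraically closed residue field is a matrix algebra. First, since $\mathcal{Z}$ is a finite (hence integral) extension of $R$, lying-over/going-up guarantees that $\mathfrak{m}:=M\cap R$ is a maximal ideal of $R$ and that the fibre $\pi^{-1}(\mathfrak{m})=\{M=M_1,\dots,M_k\}$ is a finite set of maximal ideals of $\mathcal{Z}$. Localising at $\mathfrak{m}$ lets us assume that $R$ is local with residue field $\mathbb{C}$ and that $\mathcal{Z}$ is a semilocal $R$-algebra with maximal ideals exactly the $M_i$.

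Second, I would use the openness of $\mathcal{AL}$ together with the finiteness of $\pi$ (so that $\pi$ is closed) to argue that the hypothesis $M\in\mathcal{AL}$ should be read, in effect, as the full fibre $\pi^{-1}(\mathfrak{m})$ lying in $\mathcal{AL}$; this is the natural reading, matching the paper's notion of fully Azumaya locus. Under this reading, each localisation $\mathcal{A}_{M_i}$ is an Azumaya algebra of rank $D^2$ over the local ring $\mathcal{Z}_{M_i}$, and in particular free of rank $D^2$ as a $\mathcal{Z}_{M_i}$-module with $\mathcal{A}_{M_i}/M_i\mathcal{A}_{M_i}\cong\Mat_D(\mathbb{C})$.

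Third, I would exploit that $\mathcal{Z}/\mathfrak{m}\mathcal{Z}$ is an Artinian commutative $\mathbb{C}$-algebra whose maximal ideals are $\overline{M}_i=M_i/\mathfrak{m}\mathcal{Z}$. Chinese remainder gives
\[\mathcal{Z}/\mathfrak{m}\mathcal{Z}\cong\prod_{i=1}^k \mathcal{Z}_{M_i}/\mathfrak{m}\mathcal{Z}_{M_i}.\]
The orthogonal idempotents realising this splitting live in the centre of $\mathcal{A}/\mathfrak{m}\mathcal{A}$, so they induce a compatible decomposition $\mathcal{A}/\mathfrak{m}\mathcal{A}\cong\prod_i \mathcal{A}_{M_i}/\mathfrak{m}\mathcal{A}_{M_i}$. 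For each $i$ the factor is an Azumaya algebra over the local Artinian $\mathbb{C}$-algebra $\mathcal{Z}_{M_i}/\mathfrak{m}\mathcal{Z}_{M_i}$ with residue field $\mathbb{C}$; triviality of the Brauer group of such a ring (equivalently, a standard Nakayama/lifting-of-matrix-units argument starting from the isomorphism modulo $M_i$) yields $\mathcal{A}_{M_i}/\mathfrak{m}\mathcal{A}_{M_i}\cong\Mat_D(\mathcal{Z}_{M_i}/\mathfrak{m}\mathcal{Z}_{M_i})$. Assembling via $\Mat_D(\prod_i S_i)\cong\prod_i\Mat_D(S_i)$ gives the desired isomorphism.

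The main obstacle I expect is the second step: one has to justify that the whole fibre $\pi^{-1}(\mathfrak{m})$ sits in the Azumaya locus, since the literal hypothesis only says $M\in\mathcal{AL}$. This is exactly the reason the paper introduces the fully Azumaya locus, and it is the place where the finite covering $\pi$ and the openness of $\mathcal{AL}$ interact. Everything else is a purely formal manipulation with Artinian decompositions and the standard structure theorem for matrix Azumaya algebras over local rings with algebraically closed residue field.
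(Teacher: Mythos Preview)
The paper does not give its own proof of this theorem: it is simply quoted from Brown--Gordon \cite[Corollary~2.7]{BrownGordon_ramificationcenters} and immediately applied to obtain Corollary~\ref{theorem_BG_Ramifications}. So there is no argument in the paper to compare your plan against.

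That said, your sketch is a reasonable outline of the standard proof (localise, decompose the Artinian quotient $\mathcal{Z}/\mathfrak{m}\mathcal{Z}$ via the Chinese remainder theorem, and use that an Azumaya algebra over a local ring with residue field $\mathbb{C}$ is a matrix algebra). You have also correctly isolated the one genuine subtlety: the hypothesis as literally written only places the single point $M$ in the Azumaya locus, whereas the factor-by-factor argument needs every $M_i$ in the fibre $\pi^{-1}(\mathfrak{m})$ to be Azumaya. The paper sidesteps this entirely by only ever invoking the result under the stronger assumption that $\mathfrak{m}$ lies in the \emph{fully} Azumaya locus (see the corollary immediately following the theorem), so in the paper's applications your second step is simply an assumption rather than something to be proved.
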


 Recall for $x\in \mathcal{X}(\mathbf{\Sigma})$ the notation $\mathcal{S}_A(\mathbf{\Sigma})_{x}:= \quotient{ \mathcal{S}_A(\mathbf{\Sigma})}{\operatorname{Ch}_A(\mathfrak{m}_x) \mathcal{S}_A(\mathbf{\Sigma})}$. Let $\mathcal{Z}$ be the center of $\mathcal{S}_A(\mathbf{\Sigma})$ and write
\[ Z(x):= \quotient{\mathcal{Z}}{\operatorname{Ch}_A(\mathfrak{m}_x)\mathcal{Z}}.\]
For $x\in \overline{\mathcal{X}}(\mathbf{\Sigma})$, we define $\overline{\mathcal{S}}_A(\mathbf{\Sigma})_{x}$ and $\overline{Z}(x)$ similarly.

\begin{Corollary}\label{theorem_BG_Ramifications}
If $x$ belongs to the fully Azumaya locus of $\mathcal{S}_A(\mathbf{\Sigma})$ and $D$ denotes its PI-degree,
then
\[
\mathcal{S}_A(\mathbf{\Sigma})_{x}\cong \Mat_D(Z(x)).\]

Similarly, if $x$ belongs to the fully Azumaya locus of $\overline{\mathcal{S}}_A(\mathbf{\Sigma})$ and $D'$ denotes its PI-degree, then \[\overline{\mathcal{S}}_A(\mathbf{\Sigma})_{x}\cong \Mat_{D'}\big(\overline{Z}(x)\big).\]
\end{Corollary}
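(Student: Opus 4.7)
The plan is to apply the Brown--Gordon theorem quoted just above, taking $\mathcal{A}:=\mathcal{S}_A(\mathbf{\Sigma})$, its center $\mathcal{Z}$, and the subalgebra $R:=\operatorname{Ch}_A\bigl(\mathcal{S}_{+1}(\mathbf{\Sigma})\bigr)\subset \mathcal{Z}$. The only preparatory work is to check that the hypotheses of Brown--Gordon are met, and to identify the ideal $M\cap R$ with $\operatorname{Ch}_A(\mathfrak{m}_x)$ for each $M$ in the fiber $\pi^{-1}(x)$.

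First I would verify the hypotheses on $\mathcal{A}$ and on the pair $(R\subset \mathcal{Z})$. Theorem~\ref{theorem_basic_ppty} gives that $\mathcal{A}$ is affine and a domain, hence prime, and Theorem~\ref{theorem_center} gives that $\mathcal{A}$ is finitely generated as a module over $R$. Since $R\cong \mathcal{S}_{+1}(\mathbf{\Sigma})$ (via the injectivity of $\operatorname{Ch}_A$) is a finitely generated commutative $\mathbb{C}$-algebra, it is Noetherian, so the $R$-submodule $\mathcal{Z}\subset \mathcal{A}$ is itself finitely generated as an $R$-module. This confirms the hypothesis of Brown--Gordon that $\mathcal{Z}$ is module-finite over~$R$.

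Next I would identify maximal ideals. By construction the map $\pi\colon \widehat{\mathcal{X}}(\mathbf{\Sigma})\to \mathcal{X}(\mathbf{\Sigma})$ is induced, via $\operatorname{Specm}$, by the inclusion $R\hookrightarrow \mathcal{Z}$, so the fiber $\pi^{-1}(x)$ is in bijection with the set of maximal ideals $M\subset \mathcal{Z}$ satisfying $M\cap R=\operatorname{Ch}_A(\mathfrak{m}_x)$. Assuming $x$ lies in the fully Azumaya locus, every such $M$ belongs to $\mathcal{AL}(\mathcal{A})$; pick any one. Brown--Gordon's theorem then yields
\[
\mathcal{A}\big/\operatorname{Ch}_A(\mathfrak{m}_x)\mathcal{A}\;\cong\; \Mat_D\bigl(\mathcal{Z}/\operatorname{Ch}_A(\mathfrak{m}_x)\mathcal{Z}\bigr),
\]
where $D$ is the PI-degree of $\mathcal{A}$; the left-hand side is $\mathcal{S}_A(\mathbf{\Sigma})_x$ and the right-hand side is $\Mat_D(Z(x))$ by definition.

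Finally, the argument for $\overline{\mathcal{S}}_A(\mathbf{\Sigma})$ proceeds verbatim, with $R:=\operatorname{Ch}_A(\overline{\mathcal{S}}_{+1}(\mathbf{\Sigma}))\subset \overline{\mathcal{Z}}(\mathbf{\Sigma})$ and PI-degree $D'$ given by Theorem~\ref{theorem_center}. There is no substantive obstacle: the entire proof is a dictionary translation between the language of the Chebyshev--Frobenius covering $\pi$ and the general Brown--Gordon framework. The only point requiring any care is the finite generation of $\mathcal{Z}$ over $R$, which, as noted, is automatic from Noetherianity of $R$ together with Theorem~\ref{theorem_center}.
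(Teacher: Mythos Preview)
Your proof is correct and follows exactly the approach the paper intends: the corollary is stated without proof immediately after the Brown--Gordon theorem, and your argument simply spells out the routine verification of hypotheses (affineness, primeness, module-finiteness of $\mathcal{Z}$ over $R$) and the identification $M\cap R=\operatorname{Ch}_A(\mathfrak{m}_x)$ needed to apply it. The only point not made explicit in your sketch is that the fiber $\pi^{-1}(x)$ is nonempty, but this is automatic from going-up for the finite extension $R\subset\mathcal{Z}$.
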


Note that the algebras $\overline{Z}(x)$ are easy to compute explicitly using Theorem~\ref{theorem_center}. For instance, in the case of $\mathbf{\Sigma}_g^*$, the center of $\overline{\mathcal{S}}_A(\mathbf{\Sigma}_g^*)$ is generated by the image of the Chebyshev--Frobenius morphism together
 with the boundary central elements $\alpha_{\partial}^{\pm 1}$ and $\alpha_{\partial}^{N}$ belongs to the image of~$\operatorname{Ch}_A$. Under the isomorphism of Theorem~\ref{theorem_skein_charvar}, $\operatorname{Ch}_A^{-1}\big(\alpha_{\partial}^{N}\big)$ corresponds to the regular function $X^{\gamma_{\partial}}_{-+}$ sending a representation $\rho\colon \pi_1(\Sigma_{g,1})\to \SL_2$ to the lower-left matrix coefficient of $\rho(\gamma_{\partial})=\mu(\rho)$. So
 \begin{gather}\label{eq_CharVar}
 \widehat{\overline{\mathcal{X}}}(\mathbf{\Sigma}_g^*) \cong \left\{ (\rho, z), \, \rho\colon \pi_1(\Sigma_{g,1})\to \SL_2, \, z\in \mathbb{C}^*, \, \text{such that}\, \rho(\gamma_{\partial}) = \left(\begin{smallmatrix} 0 & -z^{-N} \\ z^N & d\end{smallmatrix}\right)\!, \, d\in \mathbb{C} \right\}.\!\!\!
 \end{gather}
 The projection $\overline{\pi}\colon \widehat{\overline{\mathcal{X}}}(\mathbf{\Sigma}_g^*) \to \overline{\mathcal{X}}(\mathbf{\Sigma}_g^*)$ in this case is the regular covering sending $(\rho, z)$ to $\rho$. For $\rho\colon \pi_1(\Sigma_{g,1})\to \SL_2$ with $\rho(\gamma_{\partial})_{++}=0$, writing $\lambda:= \rho(\gamma)_{-+} \in \mathbb{C}^*$, we thus have
\[ \overline{Z}(\rho) \cong \quotient{ \mathbb{C}[X]}{ \big(X^N-\lambda\big)} \cong \quotient{ \mathbb{C}[X]}{\big(X^N-1\big)}.\]

So Corollary~\ref{theorem_BG_Ramifications} together with Remark~\ref{remark_AzumayaLocus} imply

\begin{Corollary}\label{coro_important}
The fully Azumaya locus of $\overline{\mathcal{S}}_A(\mathbf{\Sigma}_g^*)$ is the set of $\rho \in \overline{\mathcal{X}}(\mathbf{\Sigma}_g^*)$ such that
\[ \big(\overline{\mathcal{S}}_A(\mathbf{\Sigma}_g^*) \big)_{\rho} \cong \Mat_D\Big( \quotient{ \mathbb{C}[X]}{\big(X^N-1\big)}\Big),\]
where $D= N^{3g-1}$ is the PI-degree of $\overline{\mathcal{S}}_A(\mathbf{\Sigma}_g^*)$.
\end{Corollary}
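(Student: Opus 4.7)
The plan is to prove both inclusions directly from the material already assembled.

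For the forward direction, suppose $\rho \in \overline{\mathcal{X}}(\mathbf{\Sigma}_g^*)$ lies in the fully Azumaya locus. By Theorem \ref{theorem_center}, the center $\overline{\mathcal{Z}}(\mathbf{\Sigma}_g^*)$ is generated by the image of $\operatorname{Ch}_A$ together with $\alpha_\partial^{\pm 1}$, subject to the relation $\alpha_\partial^N = \operatorname{Ch}_A(X^{\gamma_\partial}_{-+})$ (up to sign), using the geometric identification of Theorem \ref{theorem_skein_charvar} together with the characterization $\operatorname{Ch}_A(\alpha) = \alpha^{(N)}$ of the Chebyshev--Frobenius morphism on stated arcs. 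Because $\rho(\gamma_\partial)_{-+} \in \mathbb{C}^*$ for $\rho \in \overline{\mathcal{X}}(\mathbf{\Sigma}_g^*)$, reducing modulo $\operatorname{Ch}_A(\mathfrak{m}_\rho)$ yields $\overline{Z}(\rho) \cong \mathbb{C}[X]/(X^N - \lambda) \cong \mathbb{C}[X]/(X^N-1)$ as noted in the excerpt. Then Corollary \ref{theorem_BG_Ramifications} gives
\[ \overline{\mathcal{S}}_A(\mathbf{\Sigma}_g^*)_\rho \cong \Mat_D(\overline{Z}(\rho)) \cong \Mat_D\big(\mathbb{C}[X]/(X^N-1)\big). \]

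For the converse, the key observation is that $X^N-1$ has $N$ distinct roots in $\mathbb{C}$, so the Chinese Remainder Theorem gives a splitting $\mathbb{C}[X]/(X^N-1) \cong \mathbb{C}^N$ as a commutative semisimple algebra with $N$ maximal ideals. Hence the hypothesis forces
\[ \overline{\mathcal{S}}_A(\mathbf{\Sigma}_g^*)_\rho \cong \Mat_D(\mathbb{C})^{\times N}, \]
which is a semisimple algebra with exactly $N$ isomorphism classes of simple modules, each of dimension $D = N^{3g-1}$, the PI-degree of $\overline{\mathcal{S}}_A(\mathbf{\Sigma}_g^*)$ computed in Theorem \ref{theorem_center}.

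Finally, I will match these $N$ irreducible modules with the $N$ points of the fiber $\overline{\pi}^{-1}(\rho)$. The fiber $\overline{\pi}^{-1}(\rho)$ is by definition $\operatorname{Specm}(\overline{Z}(\rho))$, which consists of $N$ points. Each $\widehat{\rho} \in \overline{\pi}^{-1}(\rho)$ corresponds to a maximal ideal of the center of $\overline{\mathcal{S}}_A(\mathbf{\Sigma}_g^*)_\rho$; under the isomorphism above this cuts out one matrix factor and yields an irreducible representation of $\overline{\mathcal{S}}_A(\mathbf{\Sigma}_g^*)$ of dimension $D$, which is the maximal dimension. By the alternative description of the Azumaya locus recalled in Remark \ref{remark_AzumayaLocus}, each such $\widehat{\rho}$ therefore belongs to the Azumaya locus of $\overline{\mathcal{S}}_A(\mathbf{\Sigma}_g^*)$, so $\rho$ lies in the fully Azumaya locus.

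I do not expect a genuine obstacle: the groundwork (the identification of $\overline{Z}(\rho)$ via equation \eqref{eq_CharVar}, the PI-degree computation in Theorem \ref{theorem_PI_Deg}, and Brown--Gordon's Corollary \ref{theorem_BG_Ramifications}) has already been assembled. The only point requiring any care is the bijection between maximal ideals of the semisimple algebra $\mathbb{C}[X]/(X^N-1)$ and points of $\overline{\pi}^{-1}(\rho)$, which uses oddness of $N$ only through the fact that $\rho(\gamma_\partial)_{-+} \neq 0$ for points of $\overline{\mathcal{X}}$ (ensuring $\overline{Z}(\rho)$ is reduced rather than some nilpotent quotient).
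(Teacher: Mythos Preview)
Your proof is correct and follows the same route as the paper: one inclusion comes directly from Brown--Gordon's Corollary~\ref{theorem_BG_Ramifications} together with the explicit computation of $\overline{Z}(\rho)\cong \mathbb{C}[X]/(X^N-1)$, and the other from Remark~\ref{remark_AzumayaLocus} after splitting $\Mat_D\big(\mathbb{C}[X]/(X^N-1)\big)\cong \Mat_D(\mathbb{C})^{N}$ and matching the $N$ simple factors with the $N$ points of $\overline{\pi}^{-1}(\rho)$ described by equation~\eqref{eq_CharVar}. A minor bookkeeping remark: the PI-degree $D=N^{3g-1}$ of the \emph{reduced} algebra is recorded in Theorem~\ref{theorem_center}(2)(iii), not Theorem~\ref{theorem_PI_Deg} (which treats the non-reduced case).
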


The proof of Theorem~\ref{theorem1} will consist in proving that $ \big(\overline{\mathcal{S}}_A(\mathbf{\Sigma}_g^*) \big)_{\rho}$ does not depend, up to isomorphism, on~$\rho$.

\subsection{Poisson orders}\label{sec_PO}

We now prove that if $x$, $y$ belong to the same equivariant symplectic leaf, then $(\mathcal{S}_A(\mathbf{\Sigma}))_x \cong (\mathcal{S}_A(\mathbf{\Sigma}))_y$ using the theory of Poisson orders. The theory began with the work of De~Concini--Kac on $U_q\mathfrak{g}$ in~\cite{DeConciniKacRepQGroups}, the work of De~Concini--Lyubashenko on~$\mathcal{O}_q[G]$ in~\cite{DeConciniLyubashenko_OqG} and was fully developed by Brown--Gordon in~\cite{BrownGordon_PO} that we closely follow.

\begin{Definition}\quad
\begin{itemize}\itemsep=0pt
\item A \textit{Poisson order} is a $4$-tuple $(\mathcal{A}, \mathcal{X}, \phi, D)$ where
\begin{enumerate}\itemsep=0pt
\item[1)] $\mathcal{A}$ is an (associative, unital) affine $\mathbb{C}$-algebra finitely generated over its center $Z$,
\item[2)] $\mathcal{X}$ is a Poisson affine $\mathbb{C}$-variety,
\item[3)] $\phi\colon \mathcal{O}[\mathcal{X}] \hookrightarrow Z$ a finite injective morphism of algebras,
\item[4)] $D\colon \mathcal{O}[\mathcal{X}] \to \operatorname{Der} (\mathcal{A})\colon z\mapsto D_z$ a linear map such that for all $f,g \in \mathcal{O}[\mathcal{X}]$, we have
\[ D_f(\phi(g))= \phi(\{f,g\}).\]
\end{enumerate}
\item Let $G$ be an affine Lie group. A Poisson order $(\mathcal{A}, \mathcal{X}, \phi, D)$ is said $G$-\textit{equivariant} if $G$ acts on $\mathcal{A}$ by automorphism such that its action preserves $\phi(\mathcal{O}[\mathcal{X}])\subset \mathcal{A}$ and such that it is $D$ equivariant in the sense that for every $g\in G$, $z\in \mathcal{O}[\mathcal{X}]$ and $a\in \mathcal{A}$, one has
\[ D_{g\cdot z}(a) = g D_z \big(g^{-1}a\big).\]
\end{itemize}
\end{Definition}

The \textit{equivariant symplectic leaves} are then the $G$-orbits of the symplectic leaves in $\mathcal{X}$. The main result of the theory of Poisson orders is the following

\begin{Theorem}[{Brown--Gordon \cite[Proposition~4.3]{BrownGordon_PO}}]\label{theorem_PO_equivariant} For $(\mathcal{A}, \mathcal{X}, \phi, D)$ a $G$-equivariant Poisson order, if $x,y\in \mathcal{X}$ belong to the same equivariant symplectic leaf then $\mathcal{A}_x\cong \mathcal{A}_y$.
\end{Theorem}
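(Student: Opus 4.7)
The plan is to reduce the statement to two atomic moves—a single step of a Hamiltonian flow on $\mathcal{X}$, and a single application of the $G$-action—and to exhibit an algebra isomorphism $\mathcal{A}_x \cong \mathcal{A}_y$ for each such move. Chaining these together gives the result, because by the very definition of an equivariant symplectic leaf, $y$ is reachable from $x$ by a finite sequence of Hamiltonian flows and $G$-translates.

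The $G$-action step is immediate from the equivariance hypothesis. For $g\in G$, the automorphism $\sigma_g\colon \mathcal{A}\to\mathcal{A}$, $a\mapsto g\cdot a$, restricts to an automorphism of $\phi(\mathcal{O}[\mathcal{X}])$ and, via $\phi$, implements the standard $G$-action on $\mathcal{O}[\mathcal{X}]$. Hence $\sigma_g$ sends the ideal $\phi(\mathfrak{m}_x)\mathcal{A}$ to $\phi(\mathfrak{m}_{g\cdot x})\mathcal{A}$ and descends to an algebra isomorphism $\mathcal{A}_x\xrightarrow{\sim}\mathcal{A}_{g\cdot x}$.

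The Hamiltonian-flow step is the core of the argument. Fix $h\in\mathcal{O}[\mathcal{X}]$ and let $\Phi_h^t$ denote the (analytic) Hamiltonian flow of $h$ on $\mathcal{X}$, defined on a neighborhood $U$ of $x$ for $|t|<\varepsilon$. The derivation $D_h\colon \mathcal{A}\to\mathcal{A}$ lifts the Hamiltonian vector field $X_h=\{h,\cdot\}$ via $\phi$, thanks to the compatibility $D_h(\phi(f))=\phi(\{h,f\})$. The goal is to integrate $D_h$ to an algebra isomorphism between the fibers over $x$ and over $\Phi_h^t(x)$. Because $\mathcal{A}$ is finitely generated as an $\mathcal{O}[\mathcal{X}]$-module via $\phi$, it defines a coherent sheaf $\mathcal{F}$ on $\mathcal{X}$ and $D_h$ becomes a first-order differential operator on $\mathcal{F}$ with symbol $X_h$. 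Picking module generators $a_1,\dots,a_N$ and writing $D_h(a_i)=\sum_j m_{ij}\, a_j$ with $m_{ij}\in\phi(\mathcal{O}[\mathcal{X}])$, the resulting holomorphic linear matrix ODE $\dot{s}(t)=M(\Phi_h^t(x))\,s(t)$ admits a unique holomorphic solution for $|t|<\varepsilon$, producing a linear isomorphism $\psi_t\colon \mathcal{A}_x\xrightarrow{\sim}\mathcal{A}_{\Phi_h^t(x)}$. The Leibniz rule satisfied by $D_h$ forces $\psi_t$ to be multiplicative, hence an algebra isomorphism.

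The main obstacle is precisely this integration step: since $D_h$ is generally not locally nilpotent, the naive formal series $\exp(tD_h)$ does not converge on $\mathcal{A}$, and a rigorous construction requires passing either to an analytic localization of $\mathcal{A}$ over $U$ or to the formal neighborhood of the symplectic leaf through $x$, where standard existence-uniqueness for linear ODEs with holomorphic coefficients applies. Once these two types of isomorphisms are in hand, one concludes by iterating them along any finite chain witnessing that $y$ lies in the equivariant symplectic leaf of $x$.
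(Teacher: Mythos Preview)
The paper does not give its own proof of this statement; it quotes the result directly from Brown--Gordon \cite[Proposition~4.3]{BrownGordon_PO} and uses it as a black box. Your sketch is essentially the strategy of the original Brown--Gordon argument: decompose the equivalence into single $G$-translates and single Hamiltonian flow steps, handle the former via the induced algebra automorphism $\sigma_g$, and handle the latter by integrating the derivation $D_h$ to a one-parameter family of algebra isomorphisms between fibers, after passing to an analytic (or formal) localization where the linear ODE with holomorphic coefficients can be solved. The identification of the main technical obstacle---that $\exp(tD_h)$ does not converge on $\mathcal{A}$ itself and one must work analytically over a neighborhood---is exactly the point that Brown--Gordon address. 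So your proposal is correct and matches the cited proof; there is simply nothing in the present paper to compare it against.
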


\begin{Corollary} If $\mathcal{X}$ contains an equivariant symplectic leaf which is dense, then it is included into the fully Azumaya locus. In particular, if $\mathcal{X}$ contains a single equivariant symplectic leaf, then $\mathcal{A}$ is Azumaya.
\end{Corollary}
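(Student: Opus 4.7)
The plan is to combine two ingredients: the open density of the fully Azumaya locus in $\mathcal{X}$, and the Brown--Gordon equivariance result of Theorem~\ref{theorem_PO_equivariant}. First I would verify that $\mathcal{FAL} \subset \mathcal{X}$ is open and dense. Since $\mathcal{A}$ is prime, the center $Z$ is a domain, so $\widehat{\mathcal{X}} := \operatorname{Specm}(Z)$ is irreducible. By Theorem~\ref{theorem_UnicityRep}, the Azumaya locus $\mathcal{AL} \subset \widehat{\mathcal{X}}$ is open dense, and its complement is a proper closed subset. Because $\phi\colon \mathcal{O}[\mathcal{X}] \hookrightarrow Z$ is finite, the induced map $\pi\colon \widehat{\mathcal{X}} \to \mathcal{X}$ is closed, so $\pi(\widehat{\mathcal{X}} \setminus \mathcal{AL})$ is a proper closed subset of $\mathcal{X}$, and $\mathcal{FAL}$ is its open dense complement.

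Now let $L \subset \mathcal{X}$ be a dense equivariant symplectic leaf, and pick $x_0 \in L \cap \mathcal{FAL}$, which is nonempty by the first step. By Corollary~\ref{theorem_BG_Ramifications}, $\mathcal{A}_{x_0} \cong \Mat_D(Z(x_0))$, where $D$ is the PI-degree. For any $y \in L$, Theorem~\ref{theorem_PO_equivariant} provides an isomorphism of $\mathbb{C}$-algebras $\mathcal{A}_y \cong \mathcal{A}_{x_0}$, which restricts to an isomorphism of centers identifying $Z(y)$ with $Z(x_0)$, so $\mathcal{A}_y \cong \Mat_D(Z(y))$. To conclude that $y \in \mathcal{FAL}$, I would decompose the finite-dimensional commutative Artinian algebra $Z(y)$ as a product $\prod_{\mathfrak{n} \in \pi^{-1}(y)} Z_{\mathfrak{n}}/\phi(\mathfrak{m}_y)Z_{\mathfrak{n}}$ over its local factors; quotienting $\mathcal{A}_y \cong \Mat_D(Z(y))$ by each $\mathfrak{n}$-factor then gives $\mathcal{A}/\mathfrak{n}\mathcal{A} \cong \Mat_D(\mathbb{C})$, so every point of $\widehat{\mathcal{X}}$ above $y$ lies in $\mathcal{AL}$, as required.

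For the ``in particular'' clause, a unique equivariant symplectic leaf must coincide with $\mathcal{X}$ and is therefore trivially dense, so the first part yields $\mathcal{FAL} = \mathcal{X}$, which unwinds to $\mathcal{AL} = \widehat{\mathcal{X}}$, i.e., $\mathcal{A}$ is Azumaya. I do not anticipate a serious obstacle: the only subtle point is the converse direction to Corollary~\ref{theorem_BG_Ramifications} used above (that $\mathcal{A}_y \cong \Mat_D(Z(y))$ already forces $y$ into $\mathcal{FAL}$), but this follows immediately from the Artinian decomposition of $Z(y)$ together with the alternative characterization of the Azumaya locus in Remark~\ref{remark_AzumayaLocus}.
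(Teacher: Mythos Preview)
Your proposal is correct and follows the natural argument. The paper gives no proof of this corollary at all, treating it as immediate from Theorem~\ref{theorem_PO_equivariant} together with the earlier observation (just before Corollary~\ref{theorem_BG_Ramifications}) that the fully Azumaya locus is open dense; your write-up simply spells out these implicit steps. One small point: in step~5 you assert that the isomorphism $\mathcal{A}_y\cong\mathcal{A}_{x_0}$ identifies $Z(y)$ with the center of $\mathcal{A}_y$, which is not entirely obvious a priori (the natural map $Z(y)\to Z(\mathcal{A}_y)$ need not be surjective in general). However, this is unnecessary for the conclusion: since $\mathcal{A}_y\cong\Mat_D(C)$ for some commutative $C$, every simple $\mathcal{A}_y$-module has dimension $D$, and for any $\mathfrak{n}\in\pi^{-1}(y)$ a simple $\mathcal{A}/\mathfrak{n}\mathcal{A}$-module is in particular a simple $\mathcal{A}_y$-module, so Remark~\ref{remark_AzumayaLocus} gives $\mathfrak{n}\in\mathcal{AL}$ directly. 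Your final paragraph already points to exactly this fix.
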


The main source of examples of Poisson orders come from

\begin{Example}\label{example_PO}
Let $\mathcal{A}_q$ a free, affine $\mathbb{C}\big(q^{\pm 1}\big)$-algebra, $N\geq 1$ and, writing $t:= N\big(q^N-1\big)$, the $\quotient{\mathbb{C}\big(q^{\pm 1}\big)}{\big(q^N-1\big)}$ algebra $\mathcal{A}_N:= \quotient{\mathcal{A}}{t}$ and $\pi\colon \mathcal{A}_q \to \mathcal{A}_N$ the quotient map. By fixing a basis $\mathcal{B}$ of $\mathcal{A}_q$ by flatness we can define a linear
 embedding $\hat{\cdot}\colon \mathcal{A}_{N} \to \mathcal{A}_q$ sending a basis element $b\in \mathcal{B}$ seen as element in $\mathcal{A}_{N}$ to the same element $\hat{b}$ seen as an element in~$\mathcal{A}_q$. Note that $\hat{\cdot}$ is a left inverse for~$\pi$. Suppose that the algebra $\mathcal{A}_{+1}=\mathcal{A}_q\otimes_{q=1}\mathbb{C}$ is commutative and suppose there exists a central embedding $\phi\colon \mathcal{A}_{+1} \hookrightarrow \mathcal{A}_{N}$ into the center of $\mathcal{A}_{N}$. Write $\mathcal{X}:=\operatorname{Specm}(\mathcal{A}_{+1})$ and define $D\colon \mathcal{A}_{+1} \to \operatorname{Der}(\mathcal{A}_N)$ by the formula
\[ D_xy:= \pi \left( \frac{ \big[\widehat{\phi(x)}, \hat{y}\big]}{N\big(q^N-1\big)} \right).\]
Clearly $D_x$ is a derivation, is independent on the choice of the basis~$\mathcal{B}$ and preserves $\phi(\mathcal{A}_{+1})$, so it defines a Poisson bracket $\{\cdot, \cdot \}_N$ on $\mathcal{A}_{+1}$ by
\begin{equation}\label{eq_bracket_order}
D_x \phi(y)= \phi (\{x,y\}_N).
\end{equation}
So, writing $\mathcal{X}=\operatorname{Specm}(\mathcal{A}_{+1})$, then $(\mathcal{A}_N, \mathcal{X}, \phi, D)$ is a Poisson order for this bracket. Note that if $\zeta_N$ is an $N$-th root of unity and $\mathcal{A}_{\zeta_N}=\mathcal{A}_q \otimes_{q=\zeta_N} \mathbb{C}$, we get a Poisson order $(\mathcal{A}_{\zeta_N}, \mathcal{X}, \phi, D)$ as well by tensoring by~$\mathbb{C}$.
\end{Example}

In particular, for $A$ a root of unity of odd order then $(\mathcal{S}_A(\mathbf{\Sigma}), \mathcal{X}(\mathbf{\Sigma}), \operatorname{Ch}_A, D)$ and $\big(\overline{\mathcal{S}}_A(\mathbf{\Sigma}), \allowbreak \overline{\mathcal{X}}(\mathbf{\Sigma}), \operatorname{Ch}_A, D\big)$ are Poisson orders. What is non trivial is the fact that the Poisson bracket $\{\cdot, \cdot\}_N$ coming from equation \eqref{eq_bracket_order} coincides with the Poisson bracket coming from quantization deformation in Definition~\ref{def_Poisson_bracket}. This fact is proved in \cite[Lemma~4.6]{KojuRIMS} and essentially follows from the existence of quantum traces.

Let $\varphi\colon \mathcal{O}_q[\SL_2] \to \mathbb{C}\big[X^{\pm 1}\big]$ be the morphism sending the generators $b$ and $c$ to $0$ and sending~$a$ and~$d$ to $X$ and $X^{-1}$. Said differently, $\varphi$ corresponds to the quotient map $\mathcal{O}_q[\SL_2]\cong \mathcal{S}_A(\mathbb{B}) \to \overline{\mathcal{S}}_A(\mathbb{B}) \cong \mathbb{C}\big[X^{\pm 1}\big]$.

Define an algebraic $(\mathbb{C}^*)^{ \mathcal{A}}$-action on $\mathcal{S}_{A}(\mathbf{\Sigma})$ by the coaction:
\[ \Delta^L_{\rm toric} \colon \ \mathcal{S}_{A}(\mathbf{\Sigma}) \xrightarrow{\Delta^L} (\mathcal{O}_q[\SL_2])^{\otimes \mathcal{A}} \otimes \mathcal{S}_{A}(\mathbf{\Sigma}) \xrightarrow{ (\varphi)^{\otimes \mathcal{A}} \otimes \id} \big(\mathbb{C}\big[X^{\pm 1}\big]\big)\otimes \mathcal{S}_{A}(\mathbf{\Sigma}).
\]
Then $\Delta^L_{\rm toric}$ induces similarly a $(\mathbb{C}^*)^{ \mathcal{A}}$-action on $\overline{\mathcal{S}}_{A}(\mathbf{\Sigma})$ by passing to the quotient.
Both actions preserve the image of the Chebyshev--Frobenius morphism and the equivariance of~$D$ for this action is an immediate consequence of the definition of~$D$ so the maps $\Delta^L_{\rm toric}$ endow
 $(\mathcal{S}_A(\mathbf{\Sigma}), \mathcal{X}(\mathbf{\Sigma}), \operatorname{Ch}_A, D)$ and $\big(\overline{\mathcal{S}}_A(\mathbf{\Sigma}), \overline{\mathcal{X}}(\mathbf{\Sigma}), \operatorname{Ch}_A, D\big)$ with a structure of $(\mathbb{C}^*)^{\mathcal{A}}$-equivariant Poisson order.
 Therefore Theorem~\ref{theorem_PO_equivariant} implies

\begin{Corollary}\label{coro_POSkein} If $x,y \in \mathcal{X}(\mathbf{\Sigma})$ belong to the same equivariant symplectic leaf, then $(\mathcal{S}_A(\mathbf{\Sigma}))_x\cong (\mathcal{S}_A(\mathbf{\Sigma}))_y$.
\par Similarly, if $x,y \in \overline{\mathcal{X}}(\mathbf{\Sigma})$ belong to the same equivariant symplectic leaf, then $\big( \overline{\mathcal{S}}_A(\mathbf{\Sigma})\big)_x\cong \big(\overline{\mathcal{S}}_A(\mathbf{\Sigma})\big)_y$.
\end{Corollary}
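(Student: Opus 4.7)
The statement is essentially a direct application of the Brown--Gordon Theorem~\ref{theorem_PO_equivariant} once we verify that the data $(\mathcal{S}_A(\mathbf{\Sigma}), \mathcal{X}(\mathbf{\Sigma}), \operatorname{Ch}_A, D)$ and $\big(\overline{\mathcal{S}}_A(\mathbf{\Sigma}), \overline{\mathcal{X}}(\mathbf{\Sigma}), \operatorname{Ch}_A, D\big)$ form $(\mathbb{C}^*)^{\mathcal{A}}$-equivariant Poisson orders in the sense of the definition given just before Theorem~\ref{theorem_PO_equivariant}. My plan is therefore to assemble the four pieces of data in each case and to match the equivariant leaves appearing in Brown--Gordon's statement with those defined via the toric coaction $\Delta_{\mathrm{toric}}^L$.

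First, I would note that Theorems~\ref{theorem_basic_ppty} and~\ref{theorem_center} give items (1)--(3) of the Poisson order definition: both $\mathcal{S}_A(\mathbf{\Sigma})$ and $\overline{\mathcal{S}}_A(\mathbf{\Sigma})$ are affine prime algebras, finitely generated over their centers, and the Chebyshev--Frobenius morphism $\operatorname{Ch}_A$ of Theorem~\ref{theorem_chebyshev} provides a finite injective map $\mathcal{S}_{+1}(\mathbf{\Sigma}) \hookrightarrow \mathcal{Z}(\mathbf{\Sigma})$ (resp.\ in the reduced case). Next, to produce the derivation map~$D$, I would invoke Example~\ref{example_PO}: the skein algebra is naturally the specialization at a root of unity of a flat $\mathbb{C}\big[q^{\pm 1/2}\big]$-form, and the formula $D_x(y) := \pi\big(\big[\widehat{\operatorname{Ch}_A(x)}, \hat{y}\big]/\big(N\big(q^N-1\big)\big)\big)$ furnishes a well-defined derivation. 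The only non-obvious point is that the Poisson bracket on $\mathcal{S}_{+1}(\mathbf{\Sigma})$ produced this way via \eqref{eq_bracket_order} coincides with the deformation quantization bracket of Definition~\ref{def_Poisson_bracket}, but this is the content of the cited result \cite[Lemma~4.6]{KojuRIMS} and uses the existence of quantum traces; I would simply quote it. This yields item (4).

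Next I would verify the equivariance. The coaction $\Delta_{\mathrm{toric}}^L$ defined just above Corollary~\ref{coro_POSkein} makes $(\mathbb{C}^*)^{\mathcal{A}}$ act by algebra automorphisms on both $\mathcal{S}_A(\mathbf{\Sigma})$ and $\overline{\mathcal{S}}_A(\mathbf{\Sigma})$. Since the Chebyshev--Frobenius morphism is built from $N$-fold parallel copies of tangles, it commutes with the coaction, so the subalgebra $\operatorname{Ch}_A(\mathcal{S}_{+1}(\mathbf{\Sigma}))$ is preserved by the torus action. The equivariance condition $D_{g\cdot z}(a) = g D_z(g^{-1}a)$ is immediate from the construction of $D$ in Example~\ref{example_PO}: the lift $\hat{\cdot}$ and commutator bracket are natural with respect to any algebra automorphism of the deformation, and the torus action is induced by such an automorphism at the level of the generic parameter. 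Thus both Poisson orders are $(\mathbb{C}^*)^{\mathcal{A}}$-equivariant.

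With the setup in place, the conclusion is a direct application of Theorem~\ref{theorem_PO_equivariant}: the equivariant symplectic leaves of $\mathcal{X}(\mathbf{\Sigma})$ (resp.\ $\overline{\mathcal{X}}(\mathbf{\Sigma})$) in the sense of that theorem coincide tautologically with the $(\mathbb{C}^*)^{\mathcal{A}}$-orbits of symplectic leaves as defined in Section~\ref{sec_geometric}, because the Poisson action of the torus comes from precisely the coaction $\Delta_{\mathrm{toric}}^L$ used to endow the Poisson order with its equivariant structure. Hence if $x,y$ lie in the same equivariant symplectic leaf, Theorem~\ref{theorem_PO_equivariant} yields $\mathcal{S}_A(\mathbf{\Sigma})_x \cong \mathcal{S}_A(\mathbf{\Sigma})_y$, and likewise for the reduced algebra. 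The only real subtlety in the whole argument is the identification of the quantization-theoretic bracket with the bracket coming from \eqref{eq_bracket_order}, which is the place I would lean most heavily on the cited literature.
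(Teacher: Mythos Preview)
Your proposal is correct and follows essentially the same approach as the paper: the paper's argument, given in the paragraphs immediately preceding the corollary, likewise invokes Example~\ref{example_PO} to build the Poisson order from the Chebyshev--Frobenius morphism, cites \cite[Lemma~4.6]{KojuRIMS} for the identification of the two Poisson brackets, verifies that the toric coaction $\Delta^L_{\rm toric}$ preserves the image of $\operatorname{Ch}_A$ and makes $D$ equivariant, and then applies Theorem~\ref{theorem_PO_equivariant}. Your write-up is slightly more explicit about why items (1)--(3) of the Poisson order definition hold, but there is no substantive difference.
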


In the case of $\mathcal{S}_A(\mathbf{\Sigma}_g^*)$, since $\mu^{-1}\big(\SL_2^0\big)$ is an open dense equivariant symplectic leaf by Theorem~\ref{theorem_GJS_geom}, it is included into the Azumaya locus. In the other hand, by Theorem~\ref{theorem_basic_ppty}, the PI-degree of $\overline{\mathcal{S}}_A(\mathbf{\Sigma}_g^*)$ is strictly smaller than the PI-degree of $\mathcal{S}_A(\mathbf{\Sigma}_g^*)$, so Remark~\ref{remark_AzumayaLocus} implies that the leaf $\mu^{-1}\big(\SL_2^1\big)=\overline{\mathcal{X}}(\mathbf{\Sigma}_g^*)$ does not intersect the Azumaya locus of $\mathcal{S}_A(\mathbf{\Sigma}_g^*)$, therefore we have

\begin{Corollary}[{Ganev--Jordan--Safronov \cite[Theorem~1.1]{GanevJordanSafranov_FrobeniusMorphism}}]\label{coro_AL} The Azumaya locus of $\mathcal{S}_A(\mathbf{\Sigma}_g^*)$ is $\mu^{-1}\big(\SL_2^0\big)$. \end{Corollary}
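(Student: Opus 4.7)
The plan is to prove both inclusions separately, using the Poisson order machinery of Section~\ref{sec_PO} for one direction and a PI-degree comparison for the other. The partition $\mathcal{X}(\mathbf{\Sigma}_g^*)=\mu^{-1}(\SL_2^0)\sqcup\mu^{-1}(\SL_2^1)$ (coming from the Bruhat decomposition $\SL_2=\SL_2^0\sqcup\SL_2^1$) reduces the statement to showing that the big-cell leaf sits inside $\mathcal{AL}$ while the reduced-cell leaf avoids it.

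For the inclusion $\mu^{-1}(\SL_2^0)\subseteq\mathcal{AL}$, I would combine Theorem~\ref{theorem_GJS_geom}, which asserts that $\mu^{-1}(\SL_2^0)$ is an open dense symplectic leaf of $\mathcal{X}(\mathbf{\Sigma}_g^*)$, with Corollary~\ref{coro_POSkein} applied to the $(\mathbb{C}^*)^{\mathcal{A}}$-equivariant Poisson order structure on $\mathcal{S}_A(\mathbf{\Sigma}_g^*)$. The Brown--Gordon theorem (Theorem~\ref{theorem_PO_equivariant}) gives $(\mathcal{S}_A(\mathbf{\Sigma}_g^*))_x\cong(\mathcal{S}_A(\mathbf{\Sigma}_g^*))_y$ for any two points $x,y$ on the same equivariant symplectic leaf; since $\mu^{-1}(\SL_2^0)$ is already a single symplectic leaf and therefore also a single equivariant symplectic leaf, all its points have isomorphic fibers. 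Because the Azumaya locus is open and dense by Theorem~\ref{theorem_UnicityRep}, it must meet this open dense leaf in a nonempty open set; hence at least one $x\in\mu^{-1}(\SL_2^0)$ lies in $\mathcal{AL}$, and the isomorphism of fibers then forces every point of $\mu^{-1}(\SL_2^0)$ to lie in $\mathcal{AL}$.

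For the reverse inclusion $\mathcal{AL}\subseteq\mu^{-1}(\SL_2^0)$, I would show that the complementary leaf $\mu^{-1}(\SL_2^1)$ is disjoint from $\mathcal{AL}$. By the identification in equation~\eqref{eq_CharVar} and the discussion around the Chebyshev--Frobenius morphism, $\mu^{-1}(\SL_2^1)$ coincides with the subvariety $\overline{\mathcal{X}}(\mathbf{\Sigma}_g^*)\subset\mathcal{X}(\mathbf{\Sigma}_g^*)$ cut out by the bad arc relations. For a point $x\in\overline{\mathcal{X}}(\mathbf{\Sigma}_g^*)$, the quotient $(\mathcal{S}_A(\mathbf{\Sigma}_g^*))_x$ factors through the reduced algebra $\overline{\mathcal{S}}_A(\mathbf{\Sigma}_g^*)_x$, so every irreducible representation of $\mathcal{S}_A(\mathbf{\Sigma}_g^*)$ with central character $x$ descends to an irreducible representation of $\overline{\mathcal{S}}_A(\mathbf{\Sigma}_g^*)$. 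By Theorem~\ref{theorem_center}(3) and Theorem~\ref{theorem_PI_Deg}, the PI-degree of $\overline{\mathcal{S}}_A(\mathbf{\Sigma}_g^*)$ is $N^{3g-1}$, strictly smaller than the PI-degree $N^{3g}$ of $\mathcal{S}_A(\mathbf{\Sigma}_g^*)$. Therefore the dimension of any such irreducible representation is at most $N^{3g-1}<N^{3g}$, and Remark~\ref{remark_AzumayaLocus} excludes $x$ from the Azumaya locus.

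The only delicate point in this plan is making sure that the two inclusions are each justified with the correct references and that the leaf-wise application of Brown--Gordon combines cleanly with the open-density of $\mathcal{AL}$; neither step involves serious new technical input beyond what is already assembled in Sections~\ref{sec_geometric} and~\ref{sec_algebraic}. In fact the main ``obstacle'' is purely bookkeeping: verifying that the single-leaf argument applies to the equivariant leaf $\mu^{-1}(\SL_2^0)$ (rather than to each dressing-fiber $\mu^{-1}(g)$ separately), which is automatic here since $\SL_2^0$ is itself a single equivariant dressing orbit so the whole big cell is one equivariant symplectic leaf. Once this is noted, the corollary follows immediately from the two inclusions.
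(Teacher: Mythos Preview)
Your two-step strategy matches the paper's argument exactly: the inclusion $\mu^{-1}(\SL_2^0)\subseteq\mathcal{AL}$ via Brown--Gordon applied to the open dense (equivariant) symplectic leaf, and the opposite inclusion via the PI-degree drop for the reduced algebra together with Remark~\ref{remark_AzumayaLocus}.

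One point in your second step is phrased too strongly. You assert that for $x\in\overline{\mathcal{X}}(\mathbf{\Sigma}_g^*)$ the whole fiber $(\mathcal{S}_A(\mathbf{\Sigma}_g^*))_x$ factors through $\overline{\mathcal{S}}_A(\mathbf{\Sigma}_g^*)$, i.e.\ that the bad arc already lies in $\mathfrak{m}_x\mathcal{S}_A(\mathbf{\Sigma}_g^*)$. What is immediate is only that the central element $\operatorname{Ch}_A(\alpha(p)_{-+})=\alpha(p)_{-+}^{(N)}$ lies in $\mathfrak{m}_x$; this does not by itself force the bad arc to vanish in the quotient (recall from the proof of Theorem~\ref{theorem_PI_Deg} that in general $\gamma_{ij}^{(N)}\neq\gamma_{ij}^N$, and even nilpotence would not give vanishing in a matrix-algebra quotient). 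The paper sidesteps this: for each $x\in\overline{\mathcal{X}}(\mathbf{\Sigma}_g^*)$ one simply \emph{pulls back} an irreducible $\overline{\mathcal{S}}_A(\mathbf{\Sigma}_g^*)$-module along the surjection $\mathcal{S}_A(\mathbf{\Sigma}_g^*)\twoheadrightarrow\overline{\mathcal{S}}_A(\mathbf{\Sigma}_g^*)$ to obtain an irreducible $\mathcal{S}_A(\mathbf{\Sigma}_g^*)$-module with central character $x$ and dimension $\leq N^{3g-1}<N^{3g}$; Remark~\ref{remark_AzumayaLocus} then excludes $x$ from $\mathcal{AL}$. So you only need the existence of \emph{one} small irreducible at $x$, not that all of them descend. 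With that adjustment your proof is complete and coincides with the paper's.
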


\begin{proof}[Proof of Theorem \ref{theorem1}]
By Corollary~\ref{coro_leaves}, then $\overline{\mathcal{X}}(\mathbf{\Sigma}_g^*)$ has a single equivariant symplectic orbit so Corollary~\ref{coro_POSkein} implies that the isomorphism class of $\big( \overline{\mathcal{S}}_A(\mathbf{\Sigma}_g^*)\big)_{\rho}$ does not depend on $\rho \in \overline{\mathcal{X}}(\mathbf{\Sigma}_g^*)$. Corollary~\ref{coro_important} implies that the fully Azumaya locus is equal to the whole space~$\overline{\mathcal{X}}(\mathbf{\Sigma}_g^*)$. This concludes the proof.
\end{proof}

\section{Mapping class groups representations}\label{sec_representations}

\subsection{Mapping class group action}\label{sec_mcg}

 Let $\Mod(\Sigma_{g,1})$ be the mapping class group of $\Sigma_{g,1}$ and define a right action of $\Mod(\Sigma_{g,1})$ on both $\mathcal{S}_A(\mathbf{\Sigma}_g^*)$ and $\overline{\mathcal{S}}_A(\mathbf{\Sigma}_g^*)$ by the formula
\[ \phi \cdot [D,s] := \big[\phi^{-1}(D), s\circ \phi \big], \qquad \phi \in \Mod(\Sigma_{g,1}).\]
The right action of $\Mod(\Sigma_{g,1})$ preserves the centers of $\mathcal{S}_A(\mathbf{\Sigma}_g^*)$ and $\overline{\mathcal{S}}_A(\mathbf{\Sigma}_g^*)$ as well as the image of the Chebyshev--Frobenius morphism so it induces a left action of $\Mod(\Sigma_{g,1})$ on $\mathcal{X}(\mathbf{\Sigma}_g^*)$, $\widehat{\overline{\mathcal{X}}}(\mathbf{\Sigma}_g^*)$ and ${\overline{\mathcal{X}}}(\mathbf{\Sigma}_g^*)$ in such a way that $\overline{\pi}\colon \widehat{\overline{\mathcal{X}}}(\mathbf{\Sigma}_g^*)\to {\overline{\mathcal{X}}}(\mathbf{\Sigma}_g^*)$ is equivariant. More explicitly, the action of $\Mod(\Sigma_{g,1})$ on ${\mathcal{X}}(\mathbf{\Sigma}_g^*)$ is given by
\[ (\phi \cdot \rho)(\gamma):= \rho( \phi(\gamma)), \qquad \text{for all } \phi \in \Mod(\Sigma_{g,1}), \ \gamma \in \pi_1(\Sigma_{g,1}), \ \rho\colon \ \pi_1(\Sigma_{g,1})\to \SL_2.\]
Since every mapping class in $\Mod(\Sigma_{g,1})$ leaves $\gamma_{\partial}$ invariant, then $\mu( \phi \cdot \rho) = \mu (\rho)$. Recall from equation \eqref{eq_CharVar} that an element of $\widehat{\overline{\mathcal{X}}}(\mathbf{\Sigma}_g^*)$ is a pair $\widehat{\rho}= (\rho, z)$ where $\rho\colon \pi_1(\Sigma_{g,1})\to \SL_2$, $z\in \mathbb{C}^*$ are such that $\rho(\gamma_{\partial})= \left(\begin{smallmatrix} 0 & -z^{-N} \\ z^N & d\end{smallmatrix}\right)$ for some $d\in \mathbb{C}$.

Since the central boundary element $\alpha_{\partial}$ is also preserved by every mapping class in $\Mod(\Sigma_{g,1})$, the action of $\Mod(\Sigma_{g,1})$ on
$\widehat{\overline{\mathcal{X}}}(\mathbf{\Sigma}_g^*)$ is given by the formula
\[ \phi \cdot (\rho, z) = (\phi \cdot \rho, z) \qquad \text{for all } \phi \in \Mod(\Sigma_{g,1}), \ (\rho,z) \in \widehat{\overline{\mathcal{X}}}(\mathbf{\Sigma}_g^*).\]

\subsection{Construction of mapping class groups representations}

Let $\mathcal{X}^0 \subset \mathcal{X}_{\SL_2}(\mathbf{\Sigma}_g^*)$ be the subset of representations $\rho\colon \pi_1(\Sigma_{g,1}) \to \SL_2$ such that $\mu (\rho) \in \SL_2^0$, i.e., $\mathcal{X}^0$ is the Azumaya locus of $\mathcal{S}_A(\mathbf{\Sigma}_g^*)$ by Corollary~\ref{coro_AL}. For each $\rho \in \mathcal{X}^0$, fix an irreducible representation
\[ r_{\rho} \colon \ \mathcal{S}_A(\mathbf{\Sigma}_g^*) \to \End( V(\rho))\]
 with induced central character $r_{\rho}$: so $r_{\rho}$ is unique up to unique isomorphism by Corollary~\ref{coro_AL} and has dimension $N^{3g}$ by Theorem~\ref{theorem_PI_Deg}.
 For $\phi\in \Mod(\Sigma_{g,1})$, consider the representation $\phi \bullet r_{\rho}\colon {\mathcal{S}}_A(\mathbf{\Sigma}_g^*) \to \End( V({\rho}))$ defined by
\[ (\phi \bullet r_{\rho}) (X) := r_{\widehat{\rho}} ( \phi \cdot X), \qquad \text{for all }X\in {\mathcal{S}}_A(\mathbf{\Sigma}_g^*).\]
Since the representation $\phi \bullet r_{\rho}$ has dimension $N^{3g}$, it is irreducible and since its induced central character is $\phi \cdot {\rho}$, there exists an isomorphism $L_{\rho}(\phi)\colon V(\rho) \xrightarrow{\cong} V(\phi \cdot {\rho})$, unique up to multiplication by an invertible scalar, such that:
\begin{equation}\label{eq_Egorov}
\phi \bullet r_{\rho}(X) = L_{\rho}(\phi) ( r_{\rho}(X) ) L_{\rho}(\phi)^{-1}, \qquad \text{for all }X\in {\mathcal{S}}_A(\mathbf{\Sigma}_g^*).
\end{equation}
Note that, for $\phi_1, \phi_2 \in \Mod(\Sigma_{g,1})$ then by unicity, we have
\begin{equation}\label{eq_machintruc} L_{\phi_1 \cdot {\rho}}(\phi_2) \circ L_{\rho}(\phi_1) = c L_{\rho}(\phi_2\circ \phi_1), \qquad \text{for some }c\in \mathbb{C}^*.\end{equation}

Similarly, for each $\widehat{\rho}=(\rho, z) \in \widehat{\overline{\mathcal{X}}}(\mathbf{\Sigma}_g^*)$, we fix one irreducible representation
\[ r_{\widehat{\rho}} \colon \ \overline{\mathcal{S}}_A(\mathbf{\Sigma}_g^*) \to \End( V(\widehat{\rho})),\]
whose induced character on the center is $\widehat{\rho}$. By Theorem~\ref{theorem1}, such a representation is unique up to isomorphism and has dimension $N^{3g-1}$ by Theorem~\ref{theorem_center}. Define the intertwiner $L_{\widehat{\rho}}\colon V(\widehat{\rho}) \to V(\phi \cdot \widehat{\rho})$ in the same manner, i.e., by the formula
\[ r_{\widehat{\rho}}(X^{\phi}) = L_{\widehat{\rho}}(\phi) ( r_{\rho}(X) ) L_{\widehat{\rho}}(\phi)^{-1}, \qquad \text{for all }X\in \overline{{\mathcal{S}}}_A(\mathbf{\Sigma}_g^*).\]

\begin{Definition}Let $G\subset \Mod(\Sigma_{g,1})$ be a subgroup and $\mathcal{O}\subset \Hom(\pi_1(\Sigma_{g,1}), \SL_2)$ a finite $G$-orbit included in some leaf $\mu^{-1}(g)$ for $g=\left(\begin{smallmatrix} a & b \\ c & d\end{smallmatrix}\right)$.
\begin{itemize}\itemsep=0pt
\item If $a\neq 0$, consider the finite-dimensional space
\[ W(\mathcal{O}) := \oplus_{{\rho} \in \mathcal{O}} V({\rho})\]
 and let $\pi\colon G \to \PGL(W(\mathcal{O}))$ be the projective finite-dimensional representation defined by
\[ \pi(g) := \oplus_{\widehat{\rho}\in \mathcal{O}} L_{{\rho}}(g), \qquad \text{for all }g\in G.\]
\item
 If $a=0$, further choose a scalar $z\in \mathbb{C}^*$ such that $z^N=c$ and consider the $G$ orbit $\overline{\mathcal{O}}_z= \{ (\rho, z), \rho \in \mathcal{O}\} \subset \widehat{\overline{\mathcal{X}}}(\mathbf{\Sigma}_g^*)$.
Define the finite-dimensional space
\[ W(\widehat{\mathcal{O}}_z) := \oplus_{\widehat{\rho} \in \widehat{\mathcal{O}}_z} V(\widehat{\rho}).\]
 Let $\pi\colon G \to \PGL\big(W(\widehat{\mathcal{O}}_z)\big)$ be the projective finite-dimensional representation defined by
\[\pi(g) := \oplus_{\widehat{\rho}\in \widehat{\mathcal{O}}_z} L_{\widehat{\rho}}(g), \qquad \text{for all }g\in G.\]
\end{itemize}
\end{Definition}

 \begin{Example}\quad
 \begin{enumerate}\itemsep=0pt
 \item For instance, for $H\subset \SL_2$ a finite subgroup, then
 \[\mathcal{O}_H:= \Hom(\pi_1(\Sigma_{g,1}), H) \subset \Hom(\pi_1(\Sigma_{g,1}), \SL_2)\] is a finite $\Mod(\Sigma_{g,1})$-orbit. In particular, taking $H=\{ \id\}$ for which $\mathcal{O}_H$ is the singleton formed by the trivial representation, then we obtain representations of $\Mod(\Sigma_{g,1})$ of dimension~$N^{3g}$.
 \item Similarly, taking for $G$ the Torelli subgroup and for $\mathcal{O}$ a singleton formed by a diagonal representation, i.e., a representation that factorizes through the diagonal embedding as $\rho\colon \pi_1(\Sigma_{g,1})\to \mathbb{C}^{*} \xrightarrow{\varphi} \SL_2$, we get representations of the Torelli group of dimensions $N^{3g}$.
 \end{enumerate}
 \end{Example}

\begin{Lemma}\label{lemma_equiv} For a finite $G$ orbit $\mathcal{O}$ in some leaf $\mu^{-1}(g)$ with $g\in \SL_2^1$, then the representation $\big(\pi, W\big(\widehat{\mathcal{O}}_z\big)\big)$ does not depend on the choice of $z$ up to isomorphism.
\end{Lemma}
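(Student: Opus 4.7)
The plan is to use the toric $\mathbb{C}^*$-action on $\overline{\mathcal{S}}_A(\mathbf{\Sigma}_g^*)$ to construct a canonical intertwiner between the two projective representations. Set $\eta := z_2 z_1^{-1}$; since $z_1^N = z_2^N$, we have $\eta \in \mu_N$, and because $N$ is odd the squaring map $\mu_N \to \mu_N$ is a bijection, so there exists $\zeta \in \mu_N$ with $\zeta^{-2} = \eta$ (the power $-2$ reflecting the weight of $\alpha_\partial$ under the toric action, cf.\ the formula for the $\mathbb{C}^*$-action on $\SL_2^{\rm STS}$ preceding Corollary~\ref{coro_STS}). Let $\psi_\zeta \in \operatorname{Aut}(\overline{\mathcal{S}}_A(\mathbf{\Sigma}_g^*))$ denote the toric automorphism associated to $\zeta$. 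I would first establish three facts about $\psi_\zeta$: (i)~$\psi_\zeta$ fixes $\operatorname{Ch}_A(\overline{\mathcal{S}}_{+1}(\mathbf{\Sigma}_g^*))$ pointwise, since the toric action by $z$ on $\overline{\mathcal{S}}_A$ intertwines under $\operatorname{Ch}_A$ with the toric action by $z^N$ on $\overline{\mathcal{S}}_{+1}$, which is trivial when $z=\zeta \in \mu_N$; (ii)~$\psi_\zeta(\alpha_\partial) = \eta\, \alpha_\partial$, obtained by comparing the toric rescaling of $\alpha_\partial^N$ on the quantum side with the classical rescaling of $X_{-+}^{\gamma_\partial}$ on $\overline{\mathcal{X}}(\mathbf{\Sigma}_g^*)$ via~\eqref{eq_CharVar}; (iii)~$\psi_\zeta$ commutes with the right $\Mod(\Sigma_{g,1})$-action, since the toric rescaling depends only on the boundary states of a stated tangle whereas mapping classes are represented by diffeomorphisms fixing $a_\partial$ pointwise.

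Given (i)--(iii), for each $\rho \in \mathcal{O}$ the twisted representation $r_{(\rho, z_1)} \circ \psi_\zeta$ of $\overline{\mathcal{S}}_A(\mathbf{\Sigma}_g^*)$ on $V(\rho, z_1)$ is irreducible with induced central character $(\rho, z_2)$, so by Theorem~\ref{theorem1} it is isomorphic to $r_{(\rho, z_2)}$. I then pick intertwiners $I_\rho\colon V(\rho, z_1)\to V(\rho, z_2)$, unique up to scalar, satisfying
\[ r_{(\rho, z_2)}(X)\circ I_\rho = I_\rho \circ r_{(\rho, z_1)}(\psi_\zeta(X)), \qquad X\in \overline{\mathcal{S}}_A(\mathbf{\Sigma}_g^*), \]
and assemble $I := \bigoplus_{\rho\in \mathcal{O}} I_\rho\colon W(\widehat{\mathcal{O}}_{z_1})\to W(\widehat{\mathcal{O}}_{z_2})$. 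For each $\phi \in G$ and $\rho \in \mathcal{O}$, the two maps $I_{\phi \cdot \rho}\circ L_{(\rho, z_1)}(\phi)$ and $L_{(\rho, z_2)}(\phi)\circ I_\rho$ from $V(\rho, z_1)$ to $V(\phi\cdot\rho, z_2)$ both conjugate $r_{(\phi\cdot\rho, z_2)}(X)$ to $r_{(\rho, z_1)}(\phi \cdot \psi_\zeta(X)) = r_{(\rho, z_1)}(\psi_\zeta(\phi \cdot X))$, where the equality uses (iii). Schur's lemma, applied to the irreducible representation $r_{(\rho, z_1)}$, forces them to agree up to a scalar, whence $I$ realises the desired isomorphism of projective representations.

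The main obstacle is clause (ii): pinning down the exact toric weight of $\alpha_\partial$ and keeping track of sign conventions between the classical and quantum sides of the Chebyshev--Frobenius morphism. Once (ii) is in place, (i) and (iii) follow readily from the definitions, and the closing Schur-theoretic step is formal. A secondary but minor point is that $\pi$ is really valued in $\GL(W)/(\mathbb{C}^*)^{\mathcal{O}}$---the ambiguity arising from the block-wise choice of each $L_{(\rho,z)}(\phi)$---and the argument above produces an equivalence precisely at this level, which is the sense in which the lemma is to be read.
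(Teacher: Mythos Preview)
Your approach is essentially the same as the paper's: both use the toric $\mathbb{C}^*$-action on $\overline{\mathcal{S}}_A(\mathbf{\Sigma}_g^*)$ restricted to the subgroup $\mu_N$ of $N$-th roots of unity, observing that for $\zeta\in\mu_N$ the automorphism fixes the image of $\operatorname{Ch}_A$ pointwise (so $\rho$ is unchanged) while moving the $z$-coordinate transitively, and that this automorphism commutes with the mapping class group action. The paper packages this as an algebra isomorphism $F_{\lambda,\hat\rho}\colon \End(V(\hat\rho))\xrightarrow{\cong}\End(V(\lambda\bullet\hat\rho))$ and simply asserts that the resulting $f_{\lambda,\widehat{\mathcal{O}}}$ is ``$G$-equivariant by construction''; you spell out the same equivariance via the Schur argument with your clause~(iii), which is a welcome addition. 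On the weight issue you flag in clause~(ii): the paper records the induced action on the center as $\lambda\bullet(\rho,z)=(\lambda^N\cdot\rho,\lambda z)$, whereas your computation via the $\SL_2^{\rm STS}$ formula gives weight $\pm 2$ on $\alpha_\partial$; either convention yields a weight coprime to the odd integer $N$, so $\mu_N$ still acts transitively on the fibre over $\rho$ and nothing in the argument is affected. Your closing remark that $\pi$ is naturally valued in $\GL(W)/(\mathbb{C}^*)^{\mathcal{O}}$ rather than $\PGL(W)$ is also correct and worth keeping in mind.
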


\begin{proof} Recall from Section \ref{sec_PO} the toric action of $\mathbb{C}^*$ on $\overline{\mathcal{S}}_A(\mathbf{\Sigma}_g^*)$ given by the co-action
\[
 \Delta_{\rm toric}^L \colon \ \overline{\mathcal{S}}_A(\mathbf{\Sigma}_g^*) \xrightarrow{\Delta^L_a} \mathcal{O}_q[\SL_2]\otimes \overline{\mathcal{S}}_A(\mathbf{\Sigma}_g^*) \xrightarrow{\varphi \otimes \id} \mathbb{C}[X^{\pm 1}] \otimes \overline{\mathcal{S}}_A(\mathbf{\Sigma}_g^*).
 \]
 Its induced action on the center of $\overline{\mathcal{S}}_A(\mathbf{\Sigma}_g^*)$ is given by
\[
 \lambda \bullet (\rho, z) := \left( \lambda^N \cdot \rho, \lambda z \right), \qquad \text{for all } \lambda \in \mathbb{C}^*, \ (\rho, z) \in \overline{\mathcal{X}}(\mathbf{\Sigma}_g^*),
 \]
 where
\[ \big(\lambda^N \cdot \rho\big) (\gamma) = \begin{pmatrix} \lambda^{N} & 0 \\ 0 & \lambda^{-N} \end{pmatrix} \rho(\gamma) \begin{pmatrix} \lambda^{-N} & 0 \\ 0 & \lambda^{N} \end{pmatrix}.\]
 For $\hat{\rho}=(\rho,z ) \in \overline{\mathcal{X}}(\mathbf{\Sigma}_g^*)$ and $\lambda \in \mathbb{C}^*$, the toric action of $\lambda$ on $\overline{\mathcal{S}}_A(\mathbf{\Sigma}_g^*)$ induces an isomorphism of algebras
\[ F_{\lambda, \hat{\rho}}\colon \ \End(V(\hat{\rho})) \cong \quotient{\overline{\mathcal{S}}_A(\mathbf{\Sigma}_g^*)}{\mathcal{I}_{\hat{\rho}}} \xrightarrow{ \lambda \bullet \cdot} \quotient{\overline{\mathcal{S}}_A(\mathbf{\Sigma}_g^*)}{\mathcal{I}_{\lambda \bullet \hat{\rho}}} \cong \End(V(\lambda \bullet\hat{\rho})).
\]
 Since the group of outer morphisms of a matrix algebra is trivial, the isomorphism $F_{\lambda, \hat{\rho}}$ is inner, so there exists $f_{\lambda, \hat{\rho}} \colon V(\hat{\rho}) \xrightarrow{\cong} V(\lambda \bullet \hat{\rho})$ such that
\[ F_{\lambda, \hat{\rho}} (X) = f_{\lambda, \hat{\rho}} X f_{\lambda, \hat{\rho}}^{-1}, \qquad \text{for all }X\in \End(V(\hat{\rho})).\]
 For $G\subset \Mod(\Sigma_{g,1})$ and $\widehat{\mathcal{O}}\subset \overline{\mathcal{X}}(\mathbf{\Sigma}_g^*)$ a finite $G$-orbit, define an isomorphism
 \[ f_{\lambda, \widehat{\mathcal{O}}}\colon \ W\big(\widehat{\mathcal{O}}\big) \xrightarrow{\cong} W\big(\lambda \bullet \widehat{\mathcal{O}}\big), \qquad f_{\lambda, \widehat{\rho}}:= \oplus_{\widehat{\rho}\in \widehat{\mathcal{O}}} f_{\lambda, \widehat{\mathcal{O}}}.
 \]
 By construction, the isomorphism $ f_{\lambda, \widehat{\mathcal{O}}}$ is $G$-equivariant, so it provides an isomorphism between the representations $\big(\pi, W\big(\widehat{\mathcal{O}}\big)\big)$ and $\big(\pi, W\big(\lambda \bullet \widehat{\mathcal{O}}\big)\big)$. In the particular case where $\lambda$ belongs to the subgroup $\nu(N)\subset \mathbb{C}^*$ of complexes such that $\lambda^N=1$, the action reads $\lambda\bullet (\rho, z)= (\rho, \lambda z)$ and this action acts transitively on the possible~$z$ so the conclusion of the lemma follows.
\end{proof}

 In virtue of Lemma~\ref{lemma_equiv} and by abuse of notations, we will now simply denote by $(\pi, W(\mathcal{O}))$ one of the pairwise isomorphic representations $(\pi, W(\widehat{\mathcal{O}}_z))$ associated to a reduced finite orbit~$\mathcal{O}$.

 \subsection{Kernel of the representations}

 The present paper was originally motivated by the hope to find faithful finite-dimensional representations of the mapping class groups (modulo center), it is thus natural to study the eventual kernel of the representations defined in the previous subsection. We will not achieve this goal but instead provide tools to prove that a given element does not belong to the kernel.

The kernel of the Witten--Reshetikhin--Turaev representations
\[
\rho^{\rm WRT}\colon \ \Mod(\Sigma_g) \to \PGL(V(\Sigma_g))
\] is not known but it is easy to see that it contains the normal subgroup generated by the elements $T_{\gamma}^N - \id$ where $T_{\gamma}$ are the Dehn--Twists associated to simple closed curves $\gamma$. These representations are defined using some (irreducible) representations $r^{\rm WRT}\colon \mathcal{S}_A(\Sigma_g) \to \End(V(\Sigma_g))$ of the usual skein algebras, the representation~$\rho^{\rm WRT}$ can be defined from~$r^{\rm WRT}$ by the same strategy than we used in the previous subsection, that is using an analogue of equation~\eqref{eq_Egorov}. The fact that the kernel of $\rho^{\rm WRT}$ is hard to compute is related to the fact that the kernel of $r^{\rm WRT}$ is unknown (though it is proved in~\cite{BonahonWong4} that is contains the elements $T_N([\gamma])+2\id$).

An advantage of our representations compared to $\rho^{\rm WRT}$ is the fact that the kernel of the representations $r_{\rho}\colon \mathcal{S}_A(\mathbf{\Sigma}_g)\to \End(V(\rho))$ and $r_{\hat{\rho}}\colon \overline{\mathcal{S}}_A(\mathbf{\Sigma}_g)\to \End(V(\widehat{\rho}))$ are well known: they correspond to the ideals $\mathcal{I}_{\rho}$ and $\mathcal{I}_{\widehat{\rho}}$ by definition. We can thus deduce

 \begin{Proposition}\label{prop_kernel}
 Let $G\subset \Mod(\Sigma_{g,1})$ and $\pi\colon G \to \PGL( W(\mathcal{O}))$ a representation associated to a finite orbit $\mathcal{O}$. Let $\phi \in G$ and suppose that there exists a closed curve $\gamma$ and $\rho \in \mathcal{O}$ when the orbit is in the big cell $($resp.~$\rho \in \widehat{\mathcal{O}}_z$ when the orbit is reduced$)$ such that
\[ \phi(\gamma) \not \equiv z \gamma \pmod{\mathcal{I}_{\rho}}, \qquad \text{for all }z\in \mathbb{C}^*.\]
 Then $\pi(\phi)\neq \id$.
 \end{Proposition}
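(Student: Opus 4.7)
The plan is to establish the contrapositive: assuming $\pi(\phi)=\id$ in $\PGL(W(\mathcal{O}))$, I will derive $\phi(\gamma)\equiv\gamma\pmod{\mathcal{I}_{\rho}}$ for every closed curve $\gamma$ and every $\rho$ in the relevant orbit, which is strictly stronger than (hence contradicts) the standing hypothesis that $\phi(\gamma)\not\equiv z\gamma\pmod{\mathcal{I}_{\rho}}$ for all $z\in\mathbb{C}^*$.

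First I would unpack the hypothesis. Writing $\pi(\phi)=\bigoplus_{\rho\in\mathcal{O}}L_\rho(\phi)$ with $L_\rho(\phi)\colon V(\rho)\to V(\phi\cdot\rho)$, a scalar multiple of the identity on $W(\mathcal{O})=\bigoplus_{\rho}V(\rho)$ must preserve each summand individually; hence $\phi\cdot\rho=\rho$ for every $\rho$ in the orbit, and each $L_\rho(\phi)$ is a scalar endomorphism of $V(\rho)$. Note that scalarity of $L_\rho(\phi)$ is well defined independently of the $\mathbb{C}^*$-ambiguity in its choice. The reduced case runs in parallel with $\widehat{\mathcal{O}}_z$ in place of $\mathcal{O}$, using the observation from Section~\ref{sec_mcg} that the action of $\Mod(\Sigma_{g,1})$ on $\widehat{\overline{\mathcal{X}}}(\mathbf{\Sigma}_g^*)$ fixes the toric lift $z$.

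Next I would invoke the Egorov identity~\eqref{eq_Egorov}. Since $L_\rho(\phi)$ is scalar, conjugation by it is trivial and the identity collapses to $r_\rho(\phi\cdot X)=r_\rho(X)$ for every $X\in\mathcal{S}_A(\mathbf{\Sigma}_g^*)$ (resp.\ $\overline{\mathcal{S}}_A(\mathbf{\Sigma}_g^*)$), that is, $\phi\cdot X\equiv X\pmod{\mathcal{I}_\rho}$. Specializing to an unstated closed curve $X=\gamma$, the right action of the mapping class group on the skein algebra reads $\phi\cdot\gamma=\phi^{-1}(\gamma)$, so $\phi^{-1}(\gamma)\equiv\gamma\pmod{\mathcal{I}_\rho}$. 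Running the same argument with $\phi$ replaced by $\phi^{-1}$ (legitimate since $\pi(\phi^{-1})=\pi(\phi)^{-1}=\id$) then yields the desired congruence $\phi(\gamma)\equiv\gamma\pmod{\mathcal{I}_\rho}$.

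The argument is essentially formal and I do not anticipate any hard step. The only delicate points to watch are the bookkeeping of the projective ambiguity in each $L_\rho(\phi)$, and the fact that for distinct $\rho,\rho'\in\mathcal{O}$ the summands $V(\rho)$, $V(\rho')$ are genuinely distinct subspaces of $W(\mathcal{O})$ (since they carry distinct central characters), so that a scalar operator on $W(\mathcal{O})$ cannot mix them.
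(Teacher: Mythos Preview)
Your proof is correct and follows the same route as the paper's own proof, which simply reads ``This is an immediate consequence of the definitions.'' You have merely unpacked what the paper leaves implicit: that $\pi(\phi)=\id$ forces each $L_\rho(\phi)$ to be scalar (hence $\phi\cdot\rho=\rho$), whence the Egorov identity~\eqref{eq_Egorov} collapses to $r_\rho(\phi\cdot X)=r_\rho(X)$, i.e., $\phi\cdot X\equiv X\pmod{\mathcal{I}_\rho}$ since $\mathcal{I}_\rho=\ker r_\rho$ by definition.
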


 \begin{proof} This is an immediate consequence of the definitions.
 \end{proof}

\subsection{The use of quantum Teichm\"uller theory in the reduced case}\label{sec_QT_reduced}

The criterion found in Proposition \ref{prop_kernel} is not easy to use in practice. In this subsection, we explain how the use of quantum traces and quantum Teichm\"uller spaces might simplify this criterion for practical computations in the case where the orbit is reduced.
Recall that the triangle~$\mathbb{T}$ is the marked surface made of a disc with three boundary arcs.

 \begin{Definition} A marked surface $\mathbf{\Sigma}$ is \textit{triangulable} if it can be obtained from a finite disjoint union $\mathbf{\Sigma}_{\Delta}= \bigsqcup_i \mathbb{T}_i$ of triangles by gluing some pairs of edges. A \textit{triangulation} $\Delta$ is then the data of the disjoint union $\mathbf{\Sigma}_{\Delta}$ together with the set of glued pair of edges.
 \end{Definition}
 The connected components $\mathbb{T}_i$ of $\mathbf{\Sigma}_{\Delta}$ are called \textit{faces} and their set is denoted $F(\Delta)$. The image in $\Sigma$ of the edges of the faces $\mathbb{T}_i$ are called \textit{edges} of $\Delta$ and their set is denoted $\mathcal{E}(\Delta)$. Note that each boundary arc is an edge in $\mathcal{E}(\Delta)$; the elements of the complementary $\mathring{\mathcal{E}}(\Delta):= \mathcal{E}(\Delta) \setminus \mathcal{A}$ are called \textit{inner edges}. Figure~\ref{fig_triangulation} illustrates a triangulation $\Delta_1$ of $\mathbf{\Sigma}_1^*$. Note that when $(\mathbf{\Sigma}, \Delta)$ is a triangulated marked surface, the fusion $\mathbf{\Sigma}_{1\circledast 2}$ admits a natural triangulation obtained from $\Delta$ by adding one face corresponding to the added triangle. Therefore the triangulation $\Delta_1$ of $\mathbf{\Sigma}_1^*$ induces a triangulation~$\Delta_g$ of~$\mathbf{\Sigma}_g^*$.

 \begin{figure}[!h]\centering
\includegraphics[width=3cm]{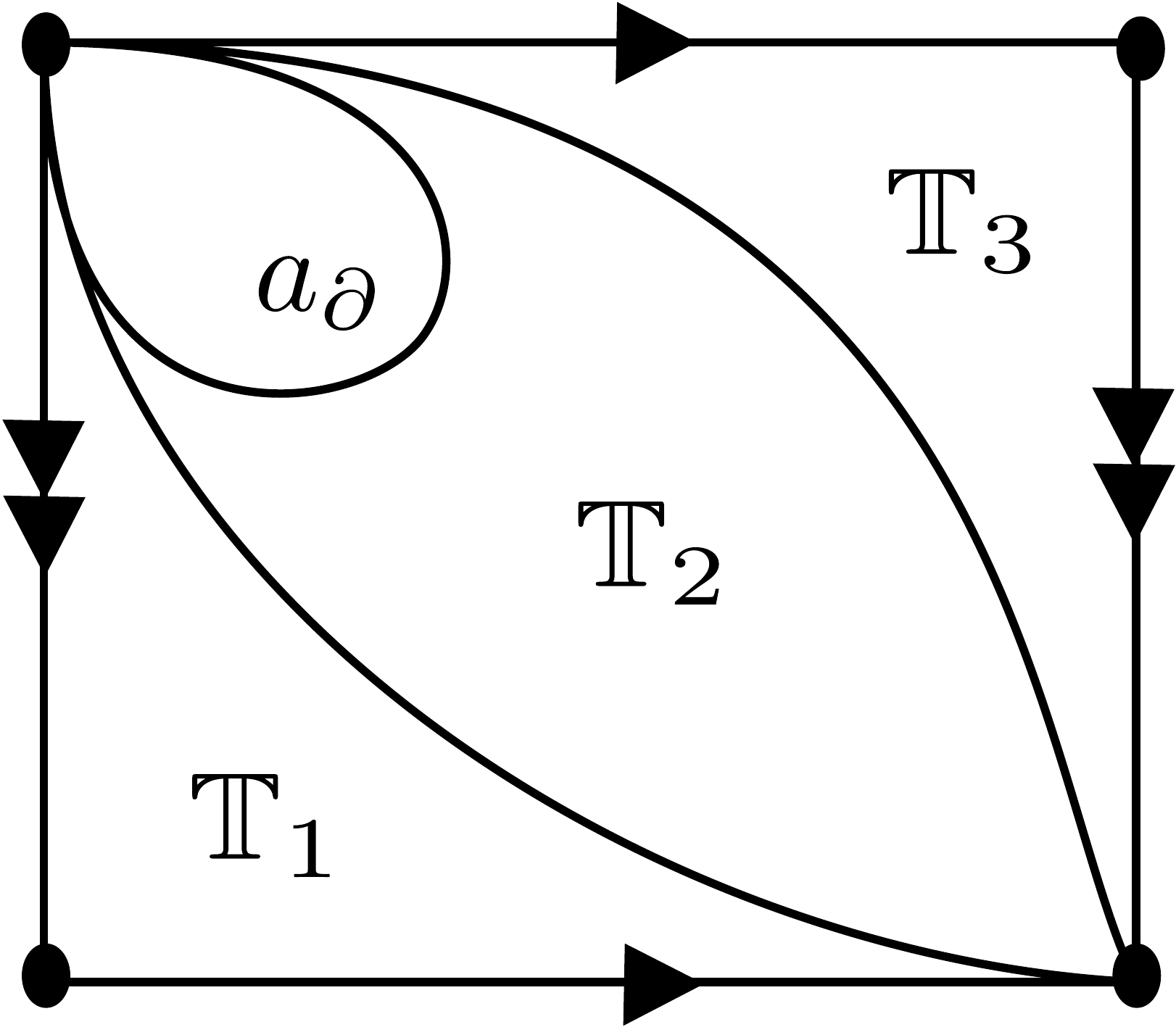}
\caption{A triangulation of $\mathbf{\Sigma}_1^*$.}\label{fig_triangulation}
\end{figure}

\begin{Definition}\quad
\begin{enumerate}\itemsep=0pt
\item Consider a pair $\mathbb{E}=(E,(\cdot, \cdot) )$ where $E$ is a free finite rank $\mathbb{Z}$-module (so $E\cong \mathbb{Z}^n$) and $(\cdot, \cdot)\colon E\times E \to \mathbb{Z}$ is a skew-symmetric pairing. The \textit{quantum torus} $\mathbb{T}_q(\mathbb{E})$ is the algebra generated by generators $Z_e$, $e\in E$ with relations $Z_aZ_b= A^{-(a,b)/4}Z_{a+b}$. Said differently, given $e=(e_1,\dots, e_n)$ a basis of~$E$, the quantum torus $\mathbb{T}_q(\mathbb{E})$ is isomorphic to the complex algebra generated by invertible elements $Z_{e_i}^{\pm 1}$ with relations $Z_{e_i}Z_{e_j}=A^{-(e_i,e_j)/2}Z_{e_j}Z_{e_i}$.
\item Let $(\mathbf{\Sigma}, \Delta)$ be a triangulated marked surface and denote by $\mathcal{E}(\Delta)$ the set of edges of the triangulation. A map $k\colon \mathcal{E}(\Delta)\to \mathbb{Z}$ is \textit{balanced} if for any face $\mathbb{T}$ of the triangulation with edges $a$, $b$, $c$ then $k(a)+k(b)+k(c)$ is even. We denote by $K_{\Delta}$ the $\mathbb{Z}$-module of balanced maps. For $e$ and $e'$ two edges, denote by $a_{e,e'}$ the number of faces $\mathbb{T}\in F(\Delta)$ such that~$e$ and~$e'$ are edges of~$\mathbb{T}$ and such that we pass from $e$ to $e'$ in the counter-clockwise direction in~$\mathbb{T}$. The \textit{Weil--Petersson} form $ (\cdot, \cdot )^{\rm WP}\colon K_{\Delta} \times K_{\Delta} \rightarrow \mathbb{Z}$ is the skew-symmetric form defined by $ ( \mathbf{k}_1, \mathbf{k}_2)^{\rm WP}:= \sum_{e,e'} \mathbf{k}_1(e)\mathbf{k}_2(e')(a_{e,e'}-a_{e',e})$. The \textit{balanced Chekhov--Fock algebra} is the quantum torus $\mathcal{Z}_q(\mathbf{\Sigma}, \Delta):= \mathbb{T}_q\big( K_{\Delta}, (\cdot, \cdot)^{\rm WP}\big)$.
\item If $A$ is root of unity of order $N$, the \textit{Frobenius morphism} is the central embedding
\[ \operatorname{Fr}_N\colon \ \mathbb{T}_{+1}(\mathbb{E}) \hookrightarrow \mathcal{Z} ( \mathbb{T}_q(\mathbb{E}) ), \qquad \operatorname{Fr}_N( Z_a) := (Z_a)^N = Z_{Na}.\]
\end{enumerate}
\end{Definition}

 \begin{Theorem}[{De~Concini--Procesi \cite[Proposition~7.2]{DeConciniProcesiBook}}]\label{theorem_QT} If $A \in \mathbb{C}^*$ is a root of unity, then $\mathbb{T}_q(\mathbb{E})$ is Azumaya.
 \end{Theorem}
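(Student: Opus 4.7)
The plan is to reduce the statement to the rank-two case via a normal-form argument for skew-symmetric bilinear forms over $\mathbb{Z}$. By the standard classification of such forms, there exists a $\mathbb{Z}$-basis $(f_1, g_1, \ldots, f_r, g_r, h_1, \ldots, h_s)$ of $E$ in which the only non-zero pairings among basis elements are $(f_i, g_i) = -(g_i, f_i) = d_i$ for some positive integers $d_1, \ldots, d_r$ (the $h_j$ span the radical of the pairing). The Laurent monomials $\{Z_a\}_{a\in E}$ form a $\mathbb{C}$-basis of $\mathbb{T}_q(\mathbb{E})$, and the defining relation $Z_a Z_b = A^{-(a,b)/4} Z_{a+b}$ depends only on the pairing; hence any pairing-preserving $\mathbb{Z}$-isomorphism $\varphi\colon E \to E'$ extends to an algebra isomorphism $\mathbb{T}_q(\mathbb{E}) \xrightarrow{\sim} \mathbb{T}_q(\mathbb{E}')$ by $Z_a \mapsto Z_{\varphi(a)}$. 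We may therefore assume the pairing is in the normal form above.

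In that basis $\mathbb{T}_q(\mathbb{E})$ splits as a tensor product over $\mathbb{C}$,
\[ \mathbb{T}_q(\mathbb{E}) \;\cong\; \bigotimes_{i=1}^{r} \mathcal{T}(\zeta_i) \;\otimes_{\mathbb{C}}\; \mathbb{C}\big[Z_{h_1}^{\pm 1}, \ldots, Z_{h_s}^{\pm 1}\big], \]
where $\zeta_i := A^{-d_i/2}$ and $\mathcal{T}(\zeta) := \mathbb{C}\langle X^{\pm 1}, Y^{\pm 1}\rangle/(XY - \zeta YX)$ is the rank-two quantum torus of parameter $\zeta$ (the factors $Z_{f_i}, Z_{g_i}$ for distinct $i$'s commute with each other and with the $Z_{h_j}$'s). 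The class of Azumaya $\mathbb{C}$-algebras is closed under tensor products over $\mathbb{C}$, and the commutative Laurent polynomial factor is trivially Azumaya, so it suffices to prove that $\mathcal{T}(\zeta)$ is Azumaya whenever $\zeta \in \mathbb{C}^*$ is a root of unity.

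For this last step, let $m := \mathrm{ord}(\zeta)$. A direct calculation shows $\mathcal{Z}(\mathcal{T}(\zeta)) = \mathbb{C}[X^{\pm m}, Y^{\pm m}]$, with $\{X^iY^j\}_{0\leq i,j < m}$ a basis of $\mathcal{T}(\zeta)$ over its center; in particular the PI-degree equals $m$. Any maximal ideal of the center has the form $\mathfrak{m}_{\alpha,\beta} = (X^m - \alpha,\, Y^m - \beta)$ for some $(\alpha,\beta) \in (\mathbb{C}^*)^2$, and I would exhibit an explicit $m$-dimensional irreducible representation of the $m^2$-dimensional algebra $\mathcal{T}(\zeta)/\mathfrak{m}_{\alpha,\beta}\mathcal{T}(\zeta)$ on $\mathbb{C}^m$: fixing $m$-th roots $\alpha^{1/m}$ and $\beta^{1/m}$, let $X$ act diagonally as $\alpha^{1/m}\,\mathrm{diag}(1,\zeta,\ldots,\zeta^{m-1})$ and $Y$ act as $\beta^{1/m}$ times the standard cyclic shift. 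A dimension count then identifies $\mathcal{T}(\zeta)/\mathfrak{m}_{\alpha,\beta}\mathcal{T}(\zeta)$ with $\Mat_m(\mathbb{C})$, so $\mathcal{T}(\zeta)$ is Azumaya. The only delicate point is verifying that the normal-form change of basis really produces an isomorphism of algebras (and not merely of graded vector spaces), which is immediate from the formula $Z_a Z_b = A^{-(a,b)/4} Z_{a+b}$, as the scalars depend only on the pairing.
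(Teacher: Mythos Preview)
The paper does not give its own proof of this statement; it simply cites De~Concini--Procesi \cite[Proposition~7.2]{DeConciniProcesiBook}. So there is nothing in the paper to compare against, and your argument stands on its own.

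Your approach is correct and is essentially the standard one: diagonalise the skew form over $\mathbb{Z}$, split the quantum torus as a tensor product of rank-two Weyl-type tori and a commutative Laurent polynomial ring, and check the rank-two case by hand. The clock-and-shift representation you write down for $\mathcal{T}(\zeta)/\mathfrak{m}_{\alpha,\beta}\mathcal{T}(\zeta)$ is irreducible of dimension $m=\operatorname{ord}(\zeta)$, and the dimension count $\dim_{\mathbb{C}}\mathcal{T}(\zeta)/\mathfrak{m}_{\alpha,\beta}\mathcal{T}(\zeta)=m^{2}$ forces the quotient to be $\Mat_m(\mathbb{C})$, so every closed point is Azumaya.

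One phrase deserves a word of care: ``the class of Azumaya $\mathbb{C}$-algebras is closed under tensor products over $\mathbb{C}$'' is slightly loose, since the algebras in question are Azumaya over their \emph{centers}, not over $\mathbb{C}$. What you need (and what is true here) is that for finitely generated $\mathbb{C}$-algebras $A_i$ Azumaya over $Z_i=\mathcal{Z}(A_i)$, the tensor product $\bigotimes_{\mathbb{C}} A_i$ has center $\bigotimes_{\mathbb{C}} Z_i$ and is Azumaya over it. Over an algebraically closed base field this follows at once by localising at a maximal ideal: by the Nullstellensatz a closed point of $\operatorname{Specm}\big(\bigotimes Z_i\big)$ is a tuple of closed points of the $\operatorname{Specm}(Z_i)$, and the fibre is a tensor product of matrix algebras, hence a matrix algebra. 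This is the only step where a reader might pause, so it is worth spelling it out.
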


The \textit{quantum trace} is an algebra embedding
\[ \Tr^{\Delta}_A\colon \ \overline{\mathcal{S}}_A(\mathbf{\Sigma}) \hookrightarrow \mathcal{Z}_q(\mathbf{\Sigma}, \Delta)\]
defined by Bonahon--Wong for unmarked surfaces in \cite{BonahonWongqTrace} and extended to marked surfaces by L\^e in~\cite{LeStatedSkein}. It is characterized as follows.
First consider the triangle $\mathbb{T}$ with edges $e_1$, $e_2$, $e_3$ and arcs $\alpha_i$ as in Figure~\ref{fig_triangle}.

 \begin{figure}[!h]\centering
 \includegraphics[width=2cm]{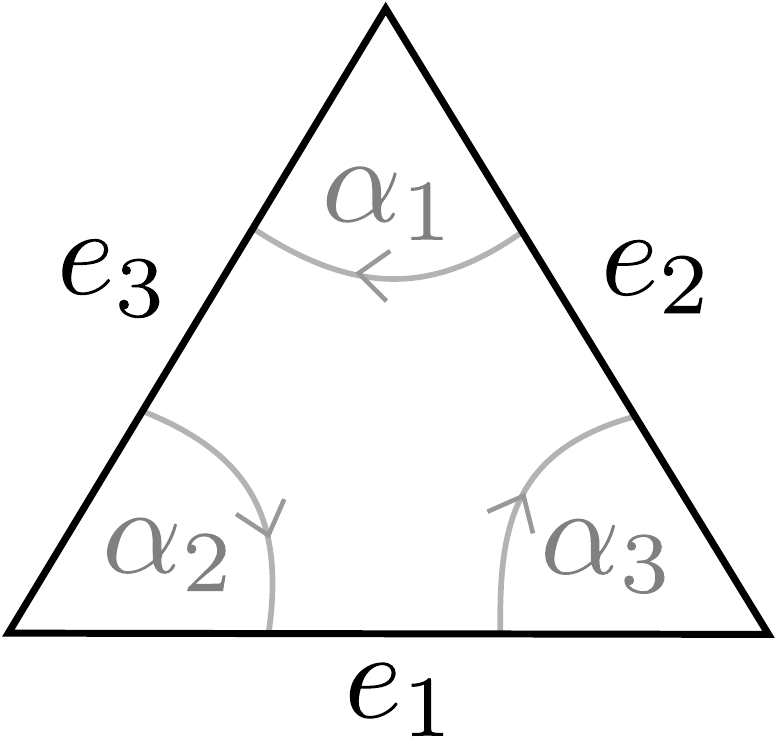}
\caption{The triangle and some arcs.}\label{fig_triangle}
\end{figure}

For $i\in \mathbb{Z}/3\mathbb{Z}$, let $\mathbf{k}_i $ be the balanced map sending $e_i$ to $0$ and $e_{i+1}$, $e_{i+2}$ to $1$.
The quantum trace $\Tr^{\mathbb{T}}_A\colon \overline{\mathcal{S}}_A(\mathbb{T}) \xrightarrow{\cong} \mathcal{Z}_q(\mathbb{T})$ is the isomorphism characterized by the facts that it sends $(\alpha_i)_{-+}$ to $0$, $(\alpha_i)_{++}$ to $Z^{\mathbf{k}_i}$ and $(\alpha_i)_{--}$ to $Z^{-\mathbf{k}_i}$.

Consider a general triangulated marked surface $(\mathbf{\Sigma}, \Delta)$, so $\mathbf{\Sigma}$ is obtained from a disjoint union of triangle $\mathbf{\Sigma}_{\Delta}:= \bigsqcup_{\mathbb{T}\in F(\Delta)} \mathbb{T}$ by gluing some pairs of edges. Let $\widehat{\mathcal{E}}$ be the set of edges (boundary arcs) of the disjoint union $\mathbf{\Sigma}_{\Delta}$ and let $\pi\colon \widehat{\mathcal{E}} \to \mathcal{E}$ be the induced surjective map.

Consider the embedding $\theta^{\Delta}\colon \mathcal{Z}_q(\mathbf{\Sigma}, \Delta) \hookrightarrow \otimes_{\mathbb{T}\in F(\Delta)} \mathcal{Z}_q(\mathbb{T})$ sending $Z^{\mathbf{k}}$ to
 $\otimes_{\mathbb{T}} Z^{\mathbf{k}_{\mathbb{T}}}$ where $\mathbf{k}_{\mathbb{T}} (e)\allowbreak := \mathbf{k}(\pi(e))$.
 \begin{Definition}[{\cite{BonahonWongqTrace, LeStatedSkein}}]
The quantum trace $\Tr^{\Delta}_A$ is the unique algebra morphism making commuting the following diagram
\[
\begin{tikzcd}
\overline{\mathcal{S}}_A(\mathbf{\Sigma})
\ar[r, hook, "\theta^{\Delta}"] \ar[d, dotted, hook, "\Tr^{\Delta}_A"]
&
 \otimes_{\mathbb{T}\in F(\Delta)} \overline{\mathcal{S}}_A(\mathbb{T})
 \ar[d, "\cong"', "\otimes_{\mathbb{T}} \Tr^{\mathbb{T}}_A"]
 \\
 \mathcal{Z}_q(\mathbf{\Sigma}, \Delta)
 \ar[r, hook, "\theta^{\Delta}"]
&
\otimes_{\mathbb{T}\in F(\Delta)} \mathcal{Z}_q(\mathbb{T}).
\end{tikzcd}
\]
\end{Definition}

An easy consequence of the definition is the fact that $\Tr^{\Delta}_{+1} \circ \operatorname{Ch}_A = \operatorname{Fr}_N \circ \Tr^{\Delta}_A$, i.e., that the Chebyshev--Frobenius morphism $\operatorname{Ch}_A$ is the restriction of the Frobenius $\operatorname{Fr}_N$ through the quantum trace (this is how $\operatorname{Ch}_A$ was first defined in~\cite{BonahonWongqTrace}).
The centers and PI-degrees of the balanced Chekhov--Fock algebras were computed in~\cite{BonahonWong2} for unmarked surfaces and in~\cite{KojuQuesneyQNonAb} for marked surfaces. In the particular case of $\mathbf{\Sigma}_g^*$, it is described as follows. Let us fix a triangulation~$\Delta$ of~$\mathbf{\Sigma}_g^*$. Let $\mathbf{k}_{\partial}\colon \mathcal{E}(\Delta) \to \mathbb{Z}$ be the balanced map sending every $e\in \mathcal{E}(\Delta)$ to $\mathbf{k}_{\partial}(e):= 2$. Then $H_{\partial}:= Z^{\mathbf{k}_{\partial}}$ is central in $\mathcal{Z}_q(\mathbf{\Sigma}_g^*, \Delta)$ and we have $\Tr^{\Delta}_A (\alpha_{\partial})= H_{\partial}$.

\begin{Theorem}[{\cite{KojuQuesneyQNonAb}}] \label{theorem_centerCF} Suppose that $A^{1/2}$ is a root of unity of odd order~$N$.
\begin{enumerate}\itemsep=0pt
\item[$1.$] The center of $\mathcal{Z}_q(\mathbf{\Sigma}_g^*, \Delta)$ is generated by the image of the Frobenius morphims $\operatorname{Fr}_N$ together with the elements $H_{\partial}^{\pm 1}$. In particular the quantum trace $\Tr^{\Delta}_A$ embeds the center of $\overline{\mathcal{S}}_A(\mathbf{\Sigma})$ into the center of $\mathcal{Z}_q(\mathbf{\Sigma}_g^*, \Delta)$.
\item[$2.$] The PI-degree of $\mathcal{Z}_q(\mathbf{\Sigma}_g^*, \Delta)$ is equal to $N^{3g-1}$, i.e., to the PI-degree of $\mathcal{S}_A(\mathbf{\Sigma}_g^*)$.
\end{enumerate}
\end{Theorem}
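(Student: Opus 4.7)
The algebra $\mathcal{Z}_q(\mathbf{\Sigma}_g^*, \Delta)$ is by construction a quantum torus $\mathbb{T}_q\bigl(K_\Delta, (\cdot,\cdot)^{\mathrm{WP}}\bigr)$, so both assertions reduce to a linear-algebra analysis of the Weil--Petersson skew-form modulo $N$. An Euler-characteristic count applied to a triangulation of $\mathbf{\Sigma}_g^*$ with a single vertex (the puncture) gives $F = 4g-1$ triangular faces and $E = 6g-1$ edges, so the balanced lattice $K_\Delta \subset \mathbb{Z}^{\mathcal{E}(\Delta)}$ has full rank $6g-1$.

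Two families of central monomials are immediate. First, the constant balanced map $k_\partial \equiv 2$ lies in the radical of $(\cdot,\cdot)^{\mathrm{WP}}$ over $\mathbb{Z}$: for every $k \in K_\Delta$,
\[
(k_\partial, k)^{\mathrm{WP}} = 2\sum_{e'} k(e')\Bigl(\sum_e a_{e,e'} - \sum_e a_{e',e}\Bigr) = 0,
\]
since both inner sums equal the number of triangles of $\Delta$ containing $e'$. In particular $H_\partial^{\pm 1}$ is central. Second, the commutation $Z^aZ^b = A^{-(a,b)^{\mathrm{WP}}/2}Z^bZ^a$ combined with $A^N = 1$ shows that every $Z^{Na}$, i.e.\ the image of $\operatorname{Fr}_N$, is central.

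The key technical input is the converse: a monomial $Z^a$ is central only if $a \in NK_\Delta + \mathbb{Z} k_\partial$, equivalently the induced skew-form $\bar\omega$ on the rank-$(6g-2)$ quotient $K_\Delta/\mathbb{Z} k_\partial$ has determinant coprime to $N$ for every odd $N$. I would prove this by induction on $g$ via the fusion decomposition $\mathbf{\Sigma}_g^* \cong \mathbf{\Sigma}_1^* \wedge \mathbf{\Sigma}_{g-1}^*$ of Remark~\ref{remark_fusion}: a triangulation of $\mathbf{\Sigma}_g^*$ assembled by fusion has balanced lattice and Weil--Petersson form that split into pieces coming from $\mathbf{\Sigma}_1^*$, from $\mathbf{\Sigma}_{g-1}^*$, and from the triangle introduced by the fusion, with controlled cross terms. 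The base case $g = 1$ is a small direct computation with the five edges of $\Delta_1$ of Figure~\ref{fig_triangulation} modulo $k_\partial$. I expect the inductive bookkeeping---tracking how the one-dimensional radical $\mathbb{Z} k_\partial$ is preserved under fusion and checking that each inductive step multiplies the determinant by a unit (in fact an odd unit, so a power of $\pm 2$ is enough)---to be the main obstacle.

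Granted the claim, standard theory of quantum tori at a primitive $N$-th root of unity immediately yields both assertions: the descended skew-form on $K_\Delta/NK_\Delta$ has radical spanned by $\bar k_\partial$ and non-degenerate complement of rank $6g-2$, giving $\text{PI-deg} = N^{(6g-2)/2} = N^{3g-1}$ and identifying $\mathcal{Z}(\mathcal{Z}_q)$ with the subalgebra generated by $\operatorname{Fr}_N\bigl(\mathbb{T}_{+1}(K_\Delta)\bigr)$ and $H_\partial^{\pm 1}$. The ``in particular'' clause then follows formally: Theorem~\ref{theorem_center} describes the generators of $\mathcal{Z}\bigl(\overline{\mathcal{S}}_A(\mathbf{\Sigma}_g^*)\bigr)$ as the image of $\operatorname{Ch}_A$ together with $\alpha_\partial^{\pm 1}$; these map under $\Tr^\Delta_A$ respectively into $\operatorname{Fr}_N\bigl(\Tr^\Delta_{+1}(\overline{\mathcal{S}}_{+1})\bigr)$, via the compatibility $\Tr^\Delta_A \circ \operatorname{Ch}_A = \operatorname{Fr}_N \circ \Tr^\Delta_{+1}$ recalled in the excerpt, and to $H_\partial^{\pm 1}$, both central by part~1.
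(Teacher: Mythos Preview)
The paper does not prove this theorem; it is quoted from \cite{KojuQuesneyQNonAb}, and the paper only records (again citing \cite{KojuQuesneyQNonAb}) the lattice identity $K_\Delta^0 = NK_\Delta + \mathbb{Z}\mathbf{k}_\partial$ as equation~\eqref{eq_K0}. So there is no in-paper argument to compare against, only that imported identity.

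Your overall architecture is the right one and matches what underlies the cited proof: reduce centrality in the quantum torus to the mod-$N$ radical of the Weil--Petersson form on $K_\Delta$, then read off both the center and the PI-degree. Your Euler-characteristic count ($E=6g-1$, $F=4g-1$), the verification that $\mathbf{k}_\partial$ lies in the integral radical, the observation that $\operatorname{Fr}_N$ lands in the center, and the deduction of the ``in particular'' clause from Theorem~\ref{theorem_center} together with the compatibility $\Tr^\Delta_A\circ\operatorname{Ch}_A=\operatorname{Fr}_N\circ\Tr^\Delta_{+1}$ are all correct.

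The genuine gap is precisely the converse you flag yourself: you do not prove that $K_\Delta^0 \subset NK_\Delta + \mathbb{Z}\mathbf{k}_\partial$, i.e.\ that the induced skew form on $K_\Delta/\mathbb{Z}\mathbf{k}_\partial$ is non-degenerate modulo every odd $N$. Your proposed induction via the fusion $\mathbf{\Sigma}_g^*\cong\mathbf{\Sigma}_1^*\wedge\mathbf{\Sigma}_{g-1}^*$ is a reasonable plan, but it is only a plan. Fusion glues a triangle along two boundary arcs, so the new Weil--Petersson form is not block-diagonal in the old pieces: the three edges of the added triangle couple to the former boundary edges of both factors, and the single radical direction $\mathbf{k}_\partial$ of the fused surface is not a direct sum of the two old radical directions. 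You would need to exhibit an explicit basis adapted to this gluing and compute (at least the $2$-adic valuation of) the resulting Pfaffian at each step; you have not done even the base case $g=1$, which is a concrete $4\times 4$ skew matrix on $K_{\Delta_1}/\mathbb{Z}\mathbf{k}_\partial$. Until that computation is carried out, neither the description of the center nor the value $N^{3g-1}$ of the PI-degree is established.
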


Let $\mathcal{Z}$ be the center of $\mathcal{Z}_q(\mathbf{\Sigma}_g^*, \Delta)$ and
consider the algebraic tori
\[ \mathcal{Y}(\mathbf{\Sigma}_g^*, \Delta):= \operatorname{Specm} ( \mathcal{Z}_{+1}(\mathbf{\Sigma}_g^*, \Delta))\] and $\widehat{\mathcal{Y}}(\mathbf{\Sigma}_g^*, \Delta):= \operatorname{Specm}( \mathcal{Z})$ and the natural regular covering $\pi\colon \widehat{\mathcal{Y}}(\mathbf{\Sigma}_g^*, \Delta) \to \mathcal{Y}(\mathbf{\Sigma}_g^*, \Delta)$. By Theorem~\ref{theorem_centerCF}, $\pi$ is a principal $( \mathbb{Z}/N\mathbb{Z})^{\times}$ bundle; more precisely, $\widehat{\mathcal{Y}}(\mathbf{\Sigma}_g^*, \Delta)$ is identified with the set of pairs $\widehat{\mathbf{z}}=(\mathbf{z}, h_{\partial})$ where $h_{\partial}\in \mathbb{C}^*$ and $\mathbf{z}\in \mathcal{Y}(\mathbf{\Sigma}_g^*, \Delta)$ corresponds to a character sending $H_{\partial}\in \mathcal{Z}_{+1}(\mathbf{\Sigma}, \Delta)$ to $h_{\partial}^N$.

The quantum trace in $A^{1/2}=+1$
\[ \Tr_{+1}^{\Delta}\colon \ \overline{\mathcal{S}}_{+1}(\mathbf{\Sigma}_g^*) \hookrightarrow \mathcal{Z}_{+1}(\mathbf{\Sigma}_g^*, \Delta)\] defines regular maps
$\widehat{\mathcal{NA}}\colon \widehat{\mathcal{Y}}(\mathbf{\Sigma}_g^*, \Delta) \to \widehat{\overline{\mathcal{X}}}(\mathbf{\Sigma}_g^*)$ and
 $\mathcal{NA}\colon \mathcal{Y}(\mathbf{\Sigma}_g^*, \Delta) \to \overline{\mathcal{X}}(\mathbf{\Sigma}_g^*)$ named the \textit{non-abelianization maps}. Since $\Tr_A^{\Delta}(\alpha_{\partial})=H_{\partial}$, $\widehat{\mathcal{NA}}$ is a morphism of principal $( \mathbb{Z}/N\mathbb{Z})^{\times}$ bundles: it sends a pair $(\mathbf{z}, h_{\partial})$ to $(\mathcal{NA}(\mathbf{z}), h_{\partial})$.

Since $\Tr_{+1}^{\Delta}$ is injective, the map $\mathcal{NA}$ is dominant, so by irreducibility of $\overline{\mathcal{X}}(\mathbf{\Sigma}_g^*)$, the image of~$\mathcal{NA}$ is dense. Unfortunately, a precise description of this image is still unknown.

\begin{Definition}A point $\rho \in \overline{\mathcal{X}}(\mathbf{\Sigma}_g^*)$ \textit{admits a }$\Delta$\textit{-lift} if $\rho$ is in the image of $\mathcal{NA}$. An orbit $\mathcal{O}\subset \overline{\mathcal{X}}(\mathbf{\Sigma}_g^*)$ \textit{admits a }$\Delta$\textit{-lift} if each of its points does.
\end{Definition}

For $\widehat{\mathbf{z}} \in \mathcal{Y}(\mathbf{\Sigma}_g^*, \Delta)$ corresponding to a character $\chi_{\widehat{\mathbf{z}}}$ over the center of $\mathcal{Z}_q(\mathbf{\Sigma}_g^*, \Delta)$, let $\mathcal{J}_{\widehat{\mathbf{z}}} \subset \mathcal{Z}_q(\mathbf{\Sigma}_g^*, \Delta)$ be the ideal generated by the kernel of $\chi_{\widehat{\mathbf{z}}}$. Then, writing $\widehat{\rho}:=\mathcal{NA}(\widehat{\mathbf{z}})$ and in virtue of Theorems~\ref{theorem_QT} and~\ref{theorem_centerCF}, we have
\begin{equation}\label{eq_isom}
 \End( V(\widehat{\rho})) \cong \quotient{ \overline{\mathcal{S}}_A(\mathbf{\Sigma})}{\mathcal{I}_{\widehat{\rho}}} \xrightarrow[\cong]{\tr^{\Delta}_A} \quotient{\mathcal{Z}_q(\mathbf{\Sigma}_g^*, \Delta)}{\mathcal{J}_{\widehat{\mathbf{z}}}}.
 \end{equation}
So a point $\rho \in \overline{\mathcal{X}}(\mathbf{\Sigma}_g^*)$ admits a $\Delta$-lift if and only if for one (or equivalently every) of its lifts $\widehat{\rho}\in \widehat{\overline{\mathcal{X}}}(\mathbf{\Sigma}_g^*)$, the simple $ \overline{\mathcal{S}}_A(\mathbf{\Sigma})$ module $V(\widehat{\rho})$ extends (in a non necessarily unique way) to a simple $\mathcal{Z}_q(\mathbf{\Sigma}_g^*, \Delta)$ module through the quantum trace. Equivalently, an orbit $\mathcal{O}$ admits a $\Delta$-lift if and only if the $ \overline{\mathcal{S}}_A(\mathbf{\Sigma})$ module $W(\mathcal{O})$ extends to a~$\mathcal{Z}_q(\mathbf{\Sigma}_g^*, \Delta)$ module. Unfortunately, the author is not aware of any criterion to prove that a given orbit $\mathcal{O}$ admits a $\Delta$-lift, so the usefulness of the methods developed in this subsection is purely conjectural. The criterion of Proposition~\ref{prop_kernel} can be improved as follows.

\begin{Proposition}\label{prop_kernel_QT}
 Let $G\subset \Mod(\Sigma_{g,1})$ and $\pi\colon G \to \PGL\big( W\big(\widehat{\mathcal{O}}\big)\big)$ a representation associated to a finite orbit $\mathcal{O}$ in the reduced cell. Let $\phi \in G$ and consider a simple closed curve $\gamma$ and $\widehat{\rho} \in \widehat{\mathcal{O}}$ which admits a $\Delta$-lift, i.e., such that $\widehat{\rho}=\widehat{\mathcal{NA}}(\widehat{\mathbf{z}})$.
 Suppose that
\[ \Tr_A^{\Delta} ( \phi(\gamma) ) \not \equiv \lambda \Tr_A^{\Delta} ( \gamma ) \pmod{\mathcal{J}_{\widehat{\mathbf{z}}}}, \qquad \text{for all }\lambda \in \mathbb{C}^*.\]
 Then $\pi(\phi)\neq \id$.
 \end{Proposition}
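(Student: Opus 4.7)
The plan is to argue by contraposition, mirroring Proposition~\ref{prop_kernel} but transporting the resulting congruence through the quantum trace into the balanced Chekhov--Fock algebra, where the ideal $\mathcal{J}_{\widehat{\mathbf{z}}}$ admits a transparent description.

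First I would suppose that $\pi(\phi)=\id$ in $\PGL(W(\widehat{\mathcal{O}}))$. With respect to the decomposition $W(\widehat{\mathcal{O}})=\bigoplus_{\widehat{\rho}'\in\widehat{\mathcal{O}}}V(\widehat{\rho}')$, the block $L_{\widehat{\rho}'}(\phi)$ maps $V(\widehat{\rho}')$ to $V(\phi\cdot\widehat{\rho}')$, so the assumption that $\pi(\phi)$ be a scalar matrix forces $\phi$ to fix every point of the finite orbit $\widehat{\mathcal{O}}$ and makes each diagonal block, in particular $L_{\widehat{\rho}}(\phi)\colon V(\widehat{\rho})\to V(\widehat{\rho})$, a scalar multiple of the identity. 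Substituting this into the Egorov identity~\eqref{eq_Egorov} and using that conjugation by a scalar is trivial yields $r_{\widehat{\rho}}(\phi\cdot X)=r_{\widehat{\rho}}(X)$ for every $X\in\overline{\mathcal{S}}_A(\mathbf{\Sigma}_g^*)$. Specialising to the simple closed curve $\gamma$ gives the key membership $\phi(\gamma)-\gamma\in\ker r_{\widehat{\rho}}=\mathcal{I}_{\widehat{\rho}}$.

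Next I would transport this relation through the quantum trace. Since $\Tr_A^{\Delta}$ is an embedding of algebras, $\Tr_A^{\Delta}(\mathcal{I}_{\widehat{\rho}})$ is contained in the two-sided ideal of $\mathcal{Z}_q(\mathbf{\Sigma}_g^*,\Delta)$ generated by $\Tr_A^{\Delta}(\ker\chi_{\widehat{\rho}})$. Theorem~\ref{theorem_centerCF}(1) ensures that $\Tr_A^{\Delta}$ sends the center of $\overline{\mathcal{S}}_A(\mathbf{\Sigma}_g^*)$ into the center of $\mathcal{Z}_q(\mathbf{\Sigma}_g^*,\Delta)$, and the defining property of the non-abelianization map provides the compatibility $\chi_{\widehat{\mathbf{z}}}\circ\Tr_A^{\Delta}=\chi_{\widehat{\rho}}$ on that center whenever $\widehat{\rho}=\widehat{\mathcal{NA}}(\widehat{\mathbf{z}})$. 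Hence $\Tr_A^{\Delta}(\ker\chi_{\widehat{\rho}})\subset\ker\chi_{\widehat{\mathbf{z}}}$, and after absorbing arbitrary factors from $\mathcal{Z}_q(\mathbf{\Sigma}_g^*,\Delta)$, $\Tr_A^{\Delta}(\mathcal{I}_{\widehat{\rho}})\subset\mathcal{J}_{\widehat{\mathbf{z}}}$. Applying $\Tr_A^{\Delta}$ to $\phi(\gamma)-\gamma\in\mathcal{I}_{\widehat{\rho}}$ then produces
\[\Tr_A^{\Delta}(\phi(\gamma))\equiv\Tr_A^{\Delta}(\gamma)\pmod{\mathcal{J}_{\widehat{\mathbf{z}}}},\]
which is the $\lambda=1$ instance of the congruence forbidden by the hypothesis, yielding the desired contradiction.

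The expected main obstacle is essentially bookkeeping in the first step: the intertwiners $L_{\widehat{\rho}'}(\phi)$ are only defined up to scalar and the various blocks of $\pi(\phi)$ must be compared after a single global choice of representatives, so some care is needed to confirm that projective triviality of $\pi(\phi)$ truly forces $L_{\widehat{\rho}}(\phi)\in\mathbb{C}^{*}\,\id$ on the particular summand $V(\widehat{\rho})$ used in step two. Once that is granted, the ideal inclusion $\Tr_A^{\Delta}(\mathcal{I}_{\widehat{\rho}})\subset\mathcal{J}_{\widehat{\mathbf{z}}}$ is a purely formal consequence of Theorem~\ref{theorem_centerCF} and the definition of $\widehat{\mathcal{NA}}$, and the hypothesis on $\gamma$ delivers the contradiction at $\lambda=1$ for free.
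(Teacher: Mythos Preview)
Your argument is correct and follows essentially the same route as the paper's proof, which simply invokes Proposition~\ref{prop_kernel} together with the isomorphism~\eqref{eq_isom}. You have unpacked both ingredients explicitly: your first step reproves the contrapositive of Proposition~\ref{prop_kernel} (indeed with the sharper conclusion $\lambda=1$), and your second step establishes the ideal inclusion $\Tr_A^{\Delta}(\mathcal{I}_{\widehat{\rho}})\subset\mathcal{J}_{\widehat{\mathbf{z}}}$, which is the half of~\eqref{eq_isom} actually needed for this direction.
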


\begin{proof}
The proposition is a direct consequence of Proposition~\ref{prop_kernel} and of equa\-tion~\eqref{eq_isom}.
\end{proof}

In order to be able to use the criterion of Proposition \ref{prop_kernel_QT} efficiently, we need to find a way to prove that, given two distinct simple closed curves $\alpha$ and $\beta$ in $\Sigma_{g,1}$ then
\[ \Tr_A^{\Delta}(\alpha) \not \equiv \lambda \Tr_A^{\Delta}(\beta) \pmod{\mathcal{J}_{\widehat{\mathbf{z}}}}, \qquad \text{for all }\lambda \in \mathbb{C}^*.\]

To achieve this goal we need to find a basis of $\mathcal{Z}_q(\mathbf{\Sigma}_g^*, \Delta)$ as a module over its center. The strategy is quite general: let $\mathbb{T}_q(\mathbb{E})$ be an arbitrary quantum torus with $A^{1/2}$ a root of unity of odd order $N$ and let $E^0 \subset E$ be the submodule defined as the kernel:
\[ E^0 := \ker \big( E\otimes_{\mathbb{Z}} E \xrightarrow{ (\cdot, \cdot) } \mathbb{Z} \to \mathbb{Z}/N\mathbb{Z} \big).\]
Then the center of $\mathbb{T}_q(\mathbb{E})$ is spanned by elements $Z_{e_0}$ for $e_0 \in E^0$ therefore, in order to find a~basis~$B$ of $\mathbb{T}_q(\mathbb{E})$ seen as module over its center, we need to find a representative set $E^1\subset E$ of the coset $\quotient{E}{E^0}$ and we can choose $B:= \Span \big( Z_e, e\in E^1\big)$.
\par It was proved in \cite{KojuQuesneyQNonAb} that the submodule $K_{\Delta}^0 \subset K_{\Delta}$ defined by
\[
 K_{\Delta}^0 :=\ker \big( K_{\Delta} \otimes_{\mathbb{Z}} K_{\Delta} \xrightarrow{ (\cdot, \cdot)^{\rm WP} } \mathbb{Z} \to \mathbb{Z}/N\mathbb{Z} \big)
 \]
is equal to
\begin{equation}\label{eq_K0}
 K_{\Delta}^0 = N K_{\Delta} + \mathbb{Z}\mathbf{k}_{\partial}.
 \end{equation}
Recall that $\mathbf{k}_{\partial}$ is the balanced map sending every edge to $+2$.

Let us now describe the image of simple closed curve by the quantum trace. Let $\gamma\subset \Sigma_{g,1}$ be a simple closed curve isotoped such that it is transversed to $\mathcal{E}(\Delta)$ with minimal number of intersection points.
 A \textit{full state} on $\gamma$ is a map $\hat{s}\colon \mathcal{E}(\Delta)\cap \gamma \rightarrow \{-,+\}$. A pair $(\gamma, \hat{s})$ induces on each face $\mathbb{T}\in F(\Delta)$ a stated diagram in $\mathbb{T}$. A full state $\hat{s}$ is \textit{admissible} if the restriction of $(\gamma,\hat{s})$ on each face does not contain bad arcs. We denote by ${\rm St}^a (\gamma)$ the set of admissible full states. For $\hat{s} \in {\rm St}^a (\gamma)$, we denote by $\mathds{k}(\hat{s})\in K_{\Delta}$ the balanced map defined by
\[ \mathds{k}(\hat{s}) (e) := \sum_{v\in \gamma \cap e} \hat{s}(v), \qquad e\in \mathcal{E}(\Delta).\]

\begin{Lemma}[{\cite[Lemma~2.17]{KojuAzumayaSkein}}]\label{lemma_qtr}
There exist some integers $n(\hat{s})\in \mathbb{Z}$ such that
\begin{equation*}
 \Tr_{A}^{\Delta} ([\gamma]) = \sum_{\hat{s} \in {\rm St}^a(\gamma)} A^{n(\hat{s})/2} Z^{\mathds{k}(\hat{s})}.
 \end{equation*}
\end{Lemma}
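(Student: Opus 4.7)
The plan is to reduce the computation to a single triangle by means of the commutative square defining $\Tr^{\Delta}_A$, and then to recognise each admissible full state as producing exactly one monomial on the Chekhov--Fock side.

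First I would isotope $\gamma$ so that its intersections with $\mathcal{E}(\Delta)$ are minimal and transverse, and observe that its restriction to any face $\mathbb{T} \in F(\Delta)$ is a disjoint union of arcs, each isotopic in $\mathbb{T}$ to one of the corner arcs $\alpha_i$. Iterating the gluing morphisms $\theta_{a\#b}$ of Definition~\ref{def_gluing_map} along the inner edges of $\Delta$ then yields
\[ \theta^{\Delta}([\gamma]) = \sum_{\hat{s}} \bigotimes_{\mathbb{T} \in F(\Delta)} [\gamma|_{\mathbb{T}}, \hat{s}|_{\mathbb{T}}], \]
where the sum ranges over all full states $\hat{s}\colon \gamma \cap \mathcal{E}(\Delta) \to \{+,-\}$ and $[\gamma|_{\mathbb{T}}, \hat{s}|_{\mathbb{T}}]$ denotes the induced stated diagram in $\overline{\mathcal{S}}_A(\mathbb{T})$.

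Next I would apply $\bigotimes_{\mathbb{T}} \Tr^{\mathbb{T}}_A$ to each summand. If $\hat{s}$ is non-admissible, then $[\gamma|_{\mathbb{T}}, \hat{s}|_{\mathbb{T}}]$ contains a bad arc in some face, hence vanishes in $\overline{\mathcal{S}}_A(\mathbb{T})$, and that term contributes $0$. If $\hat{s}$ is admissible, then in every triangle the stated diagram is a product of stated corner arcs $(\alpha_i)_{\epsilon \epsilon'}$ with $(\epsilon,\epsilon') \neq (-,+)$, and one uses the height-exchange skein relation together with the defining values $\Tr^{\mathbb{T}}_A((\alpha_i)_{\pm\pm}) = Z^{\pm \mathbf{k}_i}$ (and $\Tr^{\mathbb{T}}_A((\alpha_i)_{-+}) = 0$) to rewrite this product as a single monomial $A^{n_{\mathbb{T}}(\hat{s})/2} Z^{\mathds{k}(\hat{s})|_{\mathbb{T}}}$ for some integer $n_{\mathbb{T}}(\hat{s})$. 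Setting $n(\hat{s}) := \sum_{\mathbb{T}} n_{\mathbb{T}}(\hat{s})$ and recalling that $\theta^{\Delta}(Z^{\mathbf{k}}) = \bigotimes_{\mathbb{T}} Z^{\mathbf{k}|_{\mathbb{T}}}$ on the Chekhov--Fock side, the defining commutative diagram gives
\[ \theta^{\Delta}(\Tr^{\Delta}_A([\gamma])) = \sum_{\hat{s} \in {\rm St}^a(\gamma)} A^{n(\hat{s})/2} \bigotimes_{\mathbb{T}} Z^{\mathds{k}(\hat{s})|_{\mathbb{T}}} = \theta^{\Delta}\!\left( \sum_{\hat{s} \in {\rm St}^a(\gamma)} A^{n(\hat{s})/2} Z^{\mathds{k}(\hat{s})} \right), \]
and the lemma follows from the injectivity of $\theta^{\Delta}$ on $\mathcal{Z}_q(\mathbf{\Sigma}, \Delta)$.

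The main obstacle is the per-triangle computation: one must verify that every scalar produced while reducing a product of stated corner arcs to a single monomial $Z^{\mathds{k}(\hat{s})|_{\mathbb{T}}}$ is a half-integer power of $A$. This is essentially routine once one notes that all skein relations and all $q$-commutations in the quantum torus contribute only factors of the form $A^{\pm 1/2}$, but it is the step that actually manufactures the integers $n(\hat{s})$ and must be checked face by face. The admissibility condition is what ensures that this reduction never runs into a bad arc and therefore never collapses to $0$.
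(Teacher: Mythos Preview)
The paper does not give its own proof of this lemma; it simply cites \cite[Lemma~2.17]{KojuAzumayaSkein}. Your argument is precisely the standard one underlying that reference: push $[\gamma]$ through the splitting morphism $\theta^{\Delta}$, discard the non-admissible full states because they produce bad arcs in some $\overline{\mathcal{S}}_A(\mathbb{T})$, and then use the isomorphism $\Tr^{\mathbb{T}}_A\colon \overline{\mathcal{S}}_A(\mathbb{T})\xrightarrow{\cong}\mathcal{Z}_q(\mathbb{T})$ face by face, finishing with the injectivity of $\theta^{\Delta}$ on the Chekhov--Fock side. So there is nothing to compare: your route is the intended one.

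One point worth making explicit in your write-up: among the three non-bad corner arcs you also have $(\alpha_i)_{+-}$, whose image under $\Tr^{\mathbb{T}}_A$ is not listed in the paper's characterisation. It is still a single monomial (up to a half-integer power of $A$); the cleanest way to see this is that the reduced basis $\overline{\mathcal{B}}$ of $\overline{\mathcal{S}}_A(\mathbb{T})$ from Theorem~\ref{theorem_basic_ppty} consists exactly of admissible stated diagrams with increasing states along each edge, and under the isomorphism $\Tr^{\mathbb{T}}_A$ this basis is carried to the monomial basis $\{Z^{\mathbf{k}}\}$ of $\mathcal{Z}_q(\mathbb{T})$. An arbitrary admissible stated diagram in $\mathbb{T}$ differs from such a basis element only by height reorderings on each edge, and in $\overline{\mathcal{S}}_A(\mathbb{T})$ those reorderings cost only powers of $A^{1/2}$ (the cup term in the height-exchange relation either produces a trivial closed component or another admissible corner arc, and in either case one stays within a single $K_\Delta$-weight). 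That is the substance of the ``per-triangle computation'' you flagged, and once stated this way it is immediate.
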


We denote by $\underline{{\rm St}}^a(\gamma)\subset K_{\Delta}$ the set of balanced maps of the form $\mathds{k}(\hat{s})$ for $\hat{s}\in {\rm St}^a(\gamma)$.
Let $\mathds{k}_{\gamma} \in K_{\Delta}$ be the balanced map defined by
\[ \mathds{k}_{\gamma}(e) := | \gamma \cap e |, \qquad \text{for all }e\in \mathcal{E}(\Delta).\]
Recall that $\gamma$ is in minimal position with respect to $\mathcal{E}(\Delta) $ so $|\gamma \cap e|$ is the geometric intersection of $\gamma$ and~$e$.
Note also that $\mathds{k}_{\gamma}= \mathds{k}(s^+)\in \underline{{\rm St}}^a(\gamma) $ corresponds to the admissible state $s^+$ sending every point of $\gamma \cap \mathcal{E}(\gamma)$ to~$+$.

\begin{Lemma}\label{lemma_St}
Let $\mathds{k}=\mathds{k}(\hat{s})\in \underline{{\rm St}}^a(\gamma)$.
\begin{enumerate}\itemsep=0pt
\item[$1.$] For all $e\in \mathcal{E}(\Delta)$ then $-|\gamma \cap e | \leq \mathds{k}(e) \leq |\gamma \cap e|$ and $\mathds{k}(e)$ has the same parity than $|\gamma \cap e|$.
\item[$2.$] One has $\mathds{k}(a_{\partial})=0$.
\end{enumerate}
\end{Lemma}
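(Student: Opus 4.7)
Both assertions are essentially elementary once one unpacks the definitions; I would treat them independently.

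For part~1, I would start by identifying the state values $\{-,+\}$ with the integers $\{-1,+1\}\subset \mathbb{Z}$, which is the convention implicit in the formula $\mathds{k}(\hat{s})(e) := \sum_{v\in \gamma\cap e} \hat{s}(v)$. The right-hand side is then a sum of $|\gamma \cap e|$ terms, each of which lies in $\{-1,+1\}$. The triangle inequality immediately yields $|\mathds{k}(\hat{s})(e)| \leq |\gamma\cap e|$, which gives the first inequality. The parity statement follows because $+1$ and $-1$ are both odd, so the parity of a sum of $n$ such terms agrees with the parity of $n$; taking $n = |\gamma\cap e|$ gives the claim. No admissibility hypothesis on $\hat{s}$ is actually needed here.

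For part~2, the key observation is purely topological: the boundary arc $a_{\partial}$ is contained in $\partial \Sigma_{g,1}$, whereas a simple closed curve $\gamma \subset \Sigma_{g,1}$ may be assumed to lie in the interior $\operatorname{int}(\Sigma_{g,1})$. More precisely, since $\gamma$ is a closed $1$-manifold, any local intersection with a boundary arc can be removed by a small isotopy pushing $\gamma$ into the interior in a collar neighbourhood of $\partial \Sigma_{g,1}$; one must check that such an isotopy can be performed simultaneously with the reduction to minimal position with respect to all edges of $\Delta$. This follows because the collar isotopy can be made to commute with the inner-edge structure: near $\partial \Sigma_{g,1}$, pushing $\gamma$ inward can only decrease (or leave unchanged) the geometric intersection numbers with the inner edges $\mathring{\mathcal{E}}(\Delta)$, hence preserves minimality. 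Once $\gamma \cap a_{\partial} = \varnothing$, the sum defining $\mathds{k}(\hat{s})(a_{\partial})$ is empty, so $\mathds{k}(\hat{s})(a_{\partial})=0$.

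I do not expect any serious obstacle in either part. If anything, the only point requiring a moment of care is the compatibility of the collar isotopy in part~2 with the minimality convention adopted just before Lemma~\ref{lemma_qtr}, but this is a standard normalisation argument for curves on surfaces with boundary and can be dispatched in one or two lines.
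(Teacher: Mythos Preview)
Your proposal is correct and follows exactly the approach the paper has in mind: the paper's own proof consists of the single line ``This is an immediate consequence of the definition of $\underline{\St}^a(\gamma)$,'' and your argument is precisely the unpacking of that definition. The extra care you take in part~2 about compatibility of the collar isotopy with minimal position is harmless but unnecessary here, since a simple closed curve has geometric intersection number zero with any arc lying in $\partial \Sigma_{g,1}$, so minimal position automatically forces $\gamma\cap a_\partial=\varnothing$.
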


\begin{proof}This is an immediate consequence of the definition of $\underline{{\rm St}}^a(\gamma)$.
\end{proof}

\begin{Notations}
For $\mathbf{k}\in K_{\Delta}$ and $\gamma\subset \Sigma_{g,1}$ a simple closed curve, let
\[ S(\gamma):= \big\{ \mathbf{k}_0 \in K_{\Delta}^0 \text{ such that } \mathbf{k}+\mathbf{k}_0 \in \underline{\St}^a(\gamma) \big\} \subset K_{\Delta}^0.\]
For
 $\hat{\mathbf{z}}$ a character over the center of $\mathcal{Z}_q(\mathbf{\Sigma},\Delta)$, write
\[ x_{\hat{\mathbf{z}}, \mathbf{k}}(\gamma):= \sum_{ \mathbf{k}_0 \in S(\gamma), \, \mathbf{k}+\mathbf{k}_0=\mathds{k}(\hat{s})} \hat{\mathbf{z}}(\mathbf{k}_0) A^{n(\hat{s})/2}.\]
 \end{Notations}

 If $B\subset K_{\Delta}$ is a set of representatives of the coset $\quotient{K_{\Delta}}{K_{\Delta}^0}$ then by definition one has
\[ \Tr_A^{\Delta}([\gamma]) \equiv \sum_{\mathbf{k} \in B} x_{\hat{\mathbf{z}}, \mathbf{k}}(\gamma) Z^{\mathbf{k}} \pmod{\mathcal{J}_{\hat{\mathbf{z}}}},\]
 where the family $\big\{ Z^{\mathbf{k}},\, \mathbf{k}\in B\big\}$ is a basis of the space $\quotient{\mathcal{Z}_q(\mathbf{\Sigma}, \Delta)}{\mathcal{J}_{\hat{\mathbf{z}}}}$. We thus have proved

 \begin{Lemma}\label{lemma_criterion1}
 Let $\alpha, \beta\subset \Sigma_{g,1}$ be two simple closed curves and $\lambda\in \mathbb{C}^*$. Then
\[ \Tr_A^{\Delta}(\alpha) \equiv \lambda \Tr_A^{\Delta}(\beta) \pmod{\mathcal{J}_{\widehat{\mathbf{z}}}}\]
if and only if $x_{\hat{\mathbf{z}}, \mathbf{k}}(\alpha) = \lambda x_{\hat{\mathbf{z}}, \mathbf{k}}(\beta)$ for all $\mathbf{k}\in B$.

In particular, if there exists $\mathbf{k}\in K_{\Delta}$ such that $x_{\hat{\mathbf{z}}, \mathbf{k}}(\alpha)= 0$ and $x_{\hat{\mathbf{z}}, \mathbf{k}}(\beta)\neq 0$, then
\[ \Tr_A^{\Delta}(\alpha) \not \equiv \lambda \Tr_A^{\Delta}(\beta) \pmod{\mathcal{J}_{\widehat{\mathbf{z}}}}, \qquad \text{for all }\lambda \in \mathbb{C}^*.\]
\end{Lemma}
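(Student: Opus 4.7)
The plan is to rearrange the formula for $\Tr_A^\Delta([\gamma])$ from Lemma~\ref{lemma_qtr} by collecting admissible states according to their class in the finite group $K_\Delta/K_\Delta^0$, and then invoke the fact that the induced monomials $Z^{\mathbf{k}}$, $\mathbf{k}\in B$, form a basis of the simple module $\mathcal{Z}_q(\mathbf{\Sigma},\Delta)/\mathcal{J}_{\hat{\mathbf{z}}}$. Once this is in place, the equivalence amounts to comparing basis coefficients.

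More precisely, the first step is to use Lemma~\ref{lemma_qtr} to write
\[ \Tr_A^\Delta([\gamma]) = \sum_{\hat{s}\in \mathrm{St}^a(\gamma)} A^{n(\hat{s})/2} Z^{\mathds{k}(\hat{s})} . \]
Fix a set $B\subset K_\Delta$ of representatives of $K_\Delta/K_\Delta^0$. For every $\hat{s}\in \mathrm{St}^a(\gamma)$ there is a unique $\mathbf{k}\in B$ and a unique $\mathbf{k}_0\in K_\Delta^0$ with $\mathds{k}(\hat{s})=\mathbf{k}+\mathbf{k}_0$, so regrouping the sum gives
\[ \Tr_A^\Delta([\gamma]) = \sum_{\mathbf{k}\in B} \Bigl( \sum_{\substack{\mathbf{k}_0\in S(\gamma) \\ \mathbf{k}+\mathbf{k}_0=\mathds{k}(\hat{s})}} A^{n(\hat{s})/2} Z^{\mathbf{k}_0}\Bigr) Z^{\mathbf{k}}, \]
where I use that $Z^{\mathbf{k}_0}$ is central (since $\mathbf{k}_0\in K_\Delta^0$, by definition of $K_\Delta^0$) so it commutes past $Z^{\mathbf{k}}$. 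The next step is to reduce modulo $\mathcal{J}_{\hat{\mathbf{z}}}$: by definition of $\mathcal{J}_{\hat{\mathbf{z}}}$, each central generator $Z^{\mathbf{k}_0}$ is replaced by its scalar value $\hat{\mathbf{z}}(\mathbf{k}_0)$, so the inner sum reduces to $x_{\hat{\mathbf{z}},\mathbf{k}}(\gamma)$. Hence
\[ \Tr_A^\Delta([\gamma]) \equiv \sum_{\mathbf{k}\in B} x_{\hat{\mathbf{z}},\mathbf{k}}(\gamma)\, Z^{\mathbf{k}} \pmod{\mathcal{J}_{\hat{\mathbf{z}}}} . \]

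The third step is to observe that $\{Z^{\mathbf{k}} \mid \mathbf{k}\in B\}$ is a $\mathbb{C}$-basis of the simple quotient $\mathcal{Z}_q(\mathbf{\Sigma},\Delta)/\mathcal{J}_{\hat{\mathbf{z}}}$. Indeed, by Theorem~\ref{theorem_QT} the quantum torus $\mathcal{Z}_q(\mathbf{\Sigma},\Delta)$ is Azumaya of PI-degree equal to $\sqrt{|K_\Delta/K_\Delta^0|}$, its center is spanned by the $Z^{\mathbf{k}_0}$ with $\mathbf{k}_0\in K_\Delta^0$, and a standard computation shows that the images of the monomials indexed by $B$ form a basis of any simple quotient. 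Linear independence of these images then forces the equality $\Tr_A^\Delta(\alpha)\equiv \lambda \Tr_A^\Delta(\beta) \pmod{\mathcal{J}_{\hat{\mathbf{z}}}}$ to be equivalent to $x_{\hat{\mathbf{z}},\mathbf{k}}(\alpha)=\lambda\, x_{\hat{\mathbf{z}},\mathbf{k}}(\beta)$ for every $\mathbf{k}\in B$, which is the first assertion.

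The particular case is then immediate: if some $\mathbf{k}\in K_\Delta$ satisfies $x_{\hat{\mathbf{z}},\mathbf{k}}(\alpha)=0$ while $x_{\hat{\mathbf{z}},\mathbf{k}}(\beta)\neq 0$, then (translating $\mathbf{k}$ into its representative in $B$, which changes both $x$-values by the same non-zero scalar $\hat{\mathbf{z}}$-factor) no $\lambda\in \mathbb{C}^*$ can satisfy $0=\lambda\cdot(\text{non-zero})$, so the first part of the lemma rules out any such proportionality. The only subtle point, and the step I would double-check most carefully, is the change of representative within the coset $\mathbf{k}+K_\Delta^0$: the value $x_{\hat{\mathbf{z}},\mathbf{k}}(\gamma)$ is only defined up to a non-zero scalar from $\hat{\mathbf{z}}(K_\Delta^0)\subset \mathbb{C}^*$, so the vanishing/non-vanishing statements are genuinely well-defined on the coset class, and the conclusion is independent of the choice of $B$.
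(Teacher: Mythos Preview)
Your proposal is correct and follows essentially the same approach as the paper: the paper derives the expansion $\Tr_A^{\Delta}([\gamma]) \equiv \sum_{\mathbf{k}\in B} x_{\hat{\mathbf{z}},\mathbf{k}}(\gamma)\,Z^{\mathbf{k}} \pmod{\mathcal{J}_{\hat{\mathbf{z}}}}$ in the paragraph immediately preceding the lemma and then states the lemma as an immediate consequence of the fact that $\{Z^{\mathbf{k}}:\mathbf{k}\in B\}$ is a basis of $\mathcal{Z}_q(\mathbf{\Sigma},\Delta)/\mathcal{J}_{\hat{\mathbf{z}}}$. Your write-up simply makes the regrouping-by-cosets step and the well-definedness under change of representative in $B$ more explicit than the paper does.
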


Lemma~\ref{lemma_criterion1} together with Proposition~\ref{prop_kernel_QT} permit to prove that a given mapping class $\phi\in G\subset \Mod(\Sigma_{g,1})$ does not belong to the kernel of a representation $\pi\colon G \to \End( W(\mathcal{O}))$ associated to a reduced orbit, provided that we are able to compute the numbers $x_{\hat{\mathbf{z}}, \mathbf{k}}(\gamma)$.
As an illustration, let us derive from this criterion an easy consequence.

\begin{Lemma}\label{lemma_criterion2} Let $\pi\colon K_{\Delta} \to \quotient{K_{\Delta}}{K_{\Delta}^0}$ be the quotient map.
Suppose that $\gamma \subset \Sigma_{g,1}$ intersects each edge of the triangulation $\Delta$ at most $N-1$ times. Then the restriction $\pi\colon \underline{\St}^a(\gamma) \to \quotient{K_{\Delta}}{K_{\Delta}^0}$ is injective.
\end{Lemma}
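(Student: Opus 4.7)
The plan is to unfold the definition of the kernel $K_\Delta^0 = NK_\Delta + \mathbb{Z}\mathbf{k}_\partial$ given by \eqref{eq_K0} and verify injectivity edge-by-edge, using both the bounds and the parity constraints coming from the preceding Lemma on $\underline{\St}^a(\gamma)$, together with the assumption that $N$ is odd.

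Concretely, I would start with two admissible full states $\hat s_1, \hat s_2 \in {\rm St}^a(\gamma)$ whose images $\mathds{k}_i := \mathds{k}(\hat s_i)$ satisfy $\mathds{k}_1 - \mathds{k}_2 \in K_\Delta^0$, and write
\[ \mathds{k}_1 - \mathds{k}_2 = N\mathbf{m} + c\,\mathbf{k}_\partial \qquad \text{for some } \mathbf{m} \in K_\Delta,\ c \in \mathbb{Z}. \]
Evaluating at the boundary arc $a_\partial$ and using that $\mathds{k}_i(a_\partial) = 0$ (item~2 of the preceding Lemma) while $\mathbf{k}_\partial(a_\partial) = 2$, one obtains $N\mathbf{m}(a_\partial) + 2c = 0$. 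Since $N$ is odd and hence coprime to $2$, this forces $N \mid c$, so $c = Nc'$. Absorbing $c'\mathbf{k}_\partial$ into $\mathbf{m}$, the relation simplifies to $\mathds{k}_1 - \mathds{k}_2 = N(\mathbf{m} + c'\mathbf{k}_\partial)$, so $N$ divides $\mathds{k}_1(e) - \mathds{k}_2(e)$ for every edge $e \in \mathcal{E}(\Delta)$.

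The final step uses the hypothesis $|\gamma\cap e|\le N-1$: item~1 of the preceding Lemma then gives $|\mathds{k}_i(e)| \le N-1$, so $|\mathds{k}_1(e)-\mathds{k}_2(e)| \le 2N-2 < 2N$. Combined with divisibility by $N$, the difference lies in $\{-N, 0, N\}$. The same Lemma asserts that $\mathds{k}_i(e)$ has the same parity as $|\gamma\cap e|$, so $\mathds{k}_1(e)-\mathds{k}_2(e)$ is even; since $N$ is odd, the values $\pm N$ are excluded, leaving $\mathds{k}_1(e) = \mathds{k}_2(e)$. Hence $\mathds{k}_1 = \mathds{k}_2$.

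The argument is essentially linear bookkeeping, and there is no real obstacle. The only subtle point is that the oddness of $N$ is used twice in an essential way: first to promote the boundary relation $N\mathbf{m}(a_\partial) = -2c$ to divisibility $N\mid c$, so that the $c\mathbf{k}_\partial$ contribution can be absorbed, and second to exclude the residual $\pm N$ possibilities via the parity constraint on admissible state valuations.
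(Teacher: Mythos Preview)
Your proof is correct and follows essentially the same route as the paper: write the difference as $N\mathbf{m}+c\,\mathbf{k}_\partial$, use the vanishing at $a_\partial$ together with $N$ odd to kill the $\mathbf{k}_\partial$ contribution, then use the bounds and parity from Lemma~\ref{lemma_St} to force the remaining $N$-multiple to vanish edge by edge. The paper's version is slightly terser (it picks $n\in\{0,\dots,N-1\}$ as a coset representative and jumps straight to $n=0$), while you spell out more explicitly the two places where oddness of $N$ is used; but the argument is the same.
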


\begin{proof}Let $\mathbf{k}_1, \mathbf{k}_2 \in \underline{\St}^a(\gamma)$ be such that $\mathbf{k}_1-\mathbf{k}_2 \in K_{\Delta}^0$. By equation~\eqref{eq_K0}, there exists $\mathbf{k}_0 \in K_{\Delta}$ and $n\in \{0, \dots, N-1\}$ such that
\[ \mathbf{k}_1-\mathbf{k}_2 = n\mathbf{k}_{\partial} +N \mathbf{k}_0.\]
By the second item of Lemma~\ref{lemma_St}, one has $\mathbf{k}_1(a_{\partial})=\mathbf{k}_2(a_{\partial})=0$ so $n=0$. By the first item of Lemma~\ref{lemma_St}, for all $e\in \mathcal{E}(\Delta)$, one has $-(N-1)\leq \mathbf{k}_1(e), \mathbf{k}_2(e) \leq N-1$ and $\mathbf{k}_1(e)\equiv \mathbf{k}_2(e) \pmod{2}$ therefore $\mathbf{k}_0(e)=0$. We thus have proved that $\mathbf{k}_1=\mathbf{k}_2$. This concludes the proof.
\end{proof}

We eventually have the following criterion, which should be compared to Costantino--Martelli criterion in \cite[Lemma~7.1]{CostantinoMartelli} for the Turaev--Viro representations.

\begin{Theorem}\label{theorem_criterion}
Let $\pi\colon G \to \End(W(\mathcal{O}))$ be a representation of $G\subset \Mod(\Sigma_{g,1})$ associated to a~reduced orbit and a root of unity of odd order $N$. Consider a mapping class $\phi\in G$ and a~simple closed curve $\alpha \subset \Sigma_{g,1}$ such that $\alpha$ and $\beta:= \phi(\alpha)$ are not isotopic.
Suppose that there exists a~triangulation $\Delta$ of $\mathbf{\Sigma}_g^*$ such that
\begin{enumerate}\itemsep=0pt
\item[$1)$] there exists $\rho \in \mathcal{O}$ which admits a $\Delta$-lift,
\item[$2)$] both $\alpha$ and $\beta$ intersect each edge of $\Delta$ at most $N-1$ times.
\end{enumerate}
Then $\pi(\phi)\neq \id$.
\end{Theorem}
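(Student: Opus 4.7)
The plan is to use Proposition~\ref{prop_kernel_QT} with the curve $\gamma := \alpha$. Hypothesis~(1) furnishes a $\Delta$-lift $\widehat{\mathbf{z}} \in \widehat{\mathcal{Y}}(\mathbf{\Sigma}_g^*, \Delta)$ of some $\widehat{\rho} \in \widehat{\mathcal{O}}$ sitting over the given orbit, and the problem is reduced to proving
\[ \Tr_A^{\Delta}(\beta) \not\equiv \lambda\, \Tr_A^{\Delta}(\alpha) \pmod{\mathcal{J}_{\widehat{\mathbf{z}}}} \qquad \text{for every } \lambda \in \mathbb{C}^*. \]
By Lemma~\ref{lemma_criterion1} it suffices to produce a single balanced map $\mathbf{k} \in K_{\Delta}$ for which the coefficients $x_{\widehat{\mathbf{z}}, \mathbf{k}}(\alpha)$ and $x_{\widehat{\mathbf{z}}, \mathbf{k}}(\beta)$ are not proportional.

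Hypothesis~(2) feeds directly into Lemma~\ref{lemma_criterion2}, making the projection $\underline{\St}^a(\gamma) \to K_{\Delta}/K_{\Delta}^0$ injective for both $\gamma = \alpha$ and $\gamma = \beta$. Consequently each non-zero coefficient $x_{\widehat{\mathbf{z}}, \mathbf{k}}(\gamma)$ reduces to a single monomial $\widehat{\mathbf{z}}(\mathbf{k}_0) A^{n(\hat{s})/2}$ with no cancellation. The natural witness is $\mathbf{k} := \mathds{k}_{\alpha}$, realised by the all-plus admissible state $s^{+}$ on $\alpha$; by injectivity the coefficient $x_{\widehat{\mathbf{z}}, \mathds{k}_{\alpha}}(\alpha) = A^{n(s^{+})/2}$ is non-zero, so the proof is finished as soon as one establishes that $\pi(\mathds{k}_{\alpha}) \notin \pi(\underline{\St}^a(\beta))$; if this particular witness is inconclusive, one repeats the argument with the symmetric witness $\mathds{k}_{\beta}$.

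To analyse non-membership, suppose $\mathds{k}(\hat{s}') \equiv \mathds{k}_{\alpha} \pmod{K_{\Delta}^{0}}$ for some $\hat{s}' \in \St^a(\beta)$. From $K_{\Delta}^{0} = N K_{\Delta} + \mathbb{Z} \mathbf{k}_{\partial}$ (equation~\eqref{eq_K0}) together with $\mathds{k}_{\alpha}(a_{\partial}) = \mathds{k}(\hat{s}')(a_{\partial}) = 0$ (Lemma~\ref{lemma_St}(2)), the $\mathbf{k}_{\partial}$-coefficient of the difference must vanish, and one is reduced to $\mathds{k}(\hat{s}') = \mathds{k}_{\alpha} + N \mathbf{k}_{0}'$. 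The intersection bounds $|\alpha \cap e|, |\beta \cap e| \leq N-1$ combined with $|\mathds{k}(\hat{s}')(e)| \leq |\beta \cap e|$ force $\mathbf{k}_{0}'(e) \in \{-1, 0\}$ on every edge: the value $0$ requires $|\alpha \cap e| \leq |\beta \cap e|$ with matching parity, while the value $-1$ requires $|\alpha \cap e| + |\beta \cap e| \geq N$ with odd parity.

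The principal obstacle is the final edge-by-edge combinatorial check. When $\mathbf{k}_{0}'(e) = 0$ holds on every edge, one reads off $\mathds{k}_{\alpha}(e) \leq \mathds{k}_{\beta}(e)$ pointwise; running the same argument with witness $\mathds{k}_{\beta}$ gives the reverse inequality, whence $\mathds{k}_{\alpha} = \mathds{k}_{\beta}$, and the standard normal-curve fact that a connected simple closed curve in minimal position with a triangulation is determined by its edge intersection vector contradicts $\alpha \not\sim \beta$. The delicate point will be excluding edges with $\mathbf{k}_{0}'(e) = -1$, where one must exploit the triangle-admissibility constraint on $\hat{s}'$ together with the fact that the identification $\mathds{k}(\hat{s}') = \mathds{k}_{\alpha} - N$ on an edge transfers a large flow across a triangle in a way incompatible with admissibility on $\beta$; if necessary, the freedom to translate $\widehat{\mathbf{z}}$ within its toric orbit can be used to prevent accidental proportionality between the two sides.
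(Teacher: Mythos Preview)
Your overall strategy coincides with the paper's: reduce to Proposition~\ref{prop_kernel_QT} via Lemma~\ref{lemma_criterion1}, using Lemma~\ref{lemma_criterion2} to control the coefficients $x_{\widehat{\mathbf{z}},\mathbf{k}}$. The paper is more economical and does not single out the witness $\mathds{k}_\alpha$: it observes that non-isotopic $\alpha$, $\beta$ have different edge-intersection vectors, hence $\underline{\St}^a(\alpha)\neq\underline{\St}^a(\beta)$ as subsets of $K_\Delta$; after a possible swap it picks any $\mathbf{k}\in\underline{\St}^a(\beta)\setminus\underline{\St}^a(\alpha)$ and invokes Lemma~\ref{lemma_criterion2} to conclude $x_{\widehat{\mathbf{z}},\mathbf{k}}(\alpha)=0$ and $x_{\widehat{\mathbf{z}},\mathbf{k}}(\beta)\neq 0$, then finishes with Lemma~\ref{lemma_criterion1} and Proposition~\ref{prop_kernel_QT}.

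Your proposal is explicitly unfinished, and the point at which you stall is genuine: Lemma~\ref{lemma_criterion2} only guarantees injectivity of $\pi$ on each $\underline{\St}^a(\gamma)$ separately, and its proof hinges on the parity identity $\mathbf{k}_1(e)\equiv\mathbf{k}_2(e)\pmod 2$, valid only for states on the \emph{same} curve. Thus the implication ``$\mathbf{k}\notin\underline{\St}^a(\alpha)\Rightarrow[\mathbf{k}]\notin\pi(\underline{\St}^a(\alpha))$'' is not a formal consequence of that lemma; the paper asserts this step without further comment, so you have in fact located a point the paper leaves implicit. Your suggested patches, however, do not close the gap: the appeal to ``triangle-admissibility'' is not made precise; the toric freedom on $\widehat{\mathbf{z}}$ is not really available, since $\widehat{\mathbf{z}}$ is pinned down (up to a finite choice of lift) by the fixed $\rho\in\mathcal{O}$; and the symmetry argument in your last paragraph is circular, because reversing the roles of $\alpha$ and $\beta$ presupposes the very conclusion $\mathbf{k}_0'=0$ you are trying to establish.
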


\begin{proof}Let $\widehat{\mathbf{z}}$ be the character associated to an arbitrary lift of $\rho$.
If $\alpha\neq \beta$ then there exists $e\in \mathcal{E}(\Delta) $ such that $|\alpha \cap e| \neq |\beta \cap e|$ so Lemma~\ref{lemma_St} implies that $\underline{\St}^a(\alpha) \neq \underline{\St}^a(\beta)$. Exchanging~$\alpha$ and~$\beta$ if necessary, we can suppose that there exists $\mathbf{k} \in \underline{\St}^a(\beta)$ such that $\mathbf{k} \notin \underline{\St}^a(\alpha)$. Lemma~\ref{lemma_criterion2} implies that $x_{\hat{\mathbf{z}}, \mathbf{k}}(\alpha)= 0$ and $x_{\hat{\mathbf{z}}, \mathbf{k}}(\beta)\neq 0$ so Lemma~\ref{lemma_criterion1} implies that
\[ \Tr_A^{\Delta}(\alpha) \not \equiv \lambda \Tr_A^{\Delta}(\beta) \pmod{\mathcal{J}_{\widehat{\mathbf{z}}}}, \qquad \text{for all }\lambda \in \mathbb{C}^*.\]
We conclude using Proposition~\ref{prop_kernel_QT}.
\end{proof}

\subsection{The use of quantum Teichm\"uller theory in the non reduced case}\label{sec_QT_nonreduced}

We now extend all the tools developed in the previous subsection to the study of representation $\pi\colon G \to \End(W(\mathcal{O}))$ associated to orbits in the big cell.
Bonahon and Wong's quantum trace permits to embed the reduced stated skein algebras into quantum tori. In order to be able to embed the non-reduced stated skein algebras into quantum tori as well, L\^e and Yu developed in \cite{LeYu_SSkeinQTraces} the following trick. Let $\mathbf{\Sigma}_g^{**}=(\Sigma_{g,1}, \{a_{\partial}', a_{\partial}''\})$ be $\Sigma_{g,1}$ with two boundary arcs. Consider the marked surface embedding $j\colon \mathbf{\Sigma}_g^* \hookrightarrow \mathbf{\Sigma}_g^{**}$ which is the identity outside a neighborhood of $a_{\partial}$ and which sends $a_{\partial}$ to $a_{\partial}'$ as illustrated in Figure~\ref{fig_triangulation_open}. Alternatively, one can think of $\mathbf{\Sigma}_g^{**}$ as been obtained from $\mathbf{\Sigma}_g^*$ by gluing a triangle $\mathbb{T}_0$ with edges $\{a_{\partial}, a_{\partial}', a_{\partial}''\}$ along the unique boundary arc of $\mathbf{\Sigma}_g^*$. In particular, every triangulation $\Delta$ of $\mathbf{\Sigma}_g^*$ induces a triangulation $\Delta^*$ of $\mathbf{\Sigma}_g^{**}$ obtained by adding the face $\mathbb{T}_0$. An illustration in the genus $1$ case is given in Figure~\ref{fig_triangulation_open}.

 \begin{figure}[!h]
\centerline{\includegraphics[width=7cm]{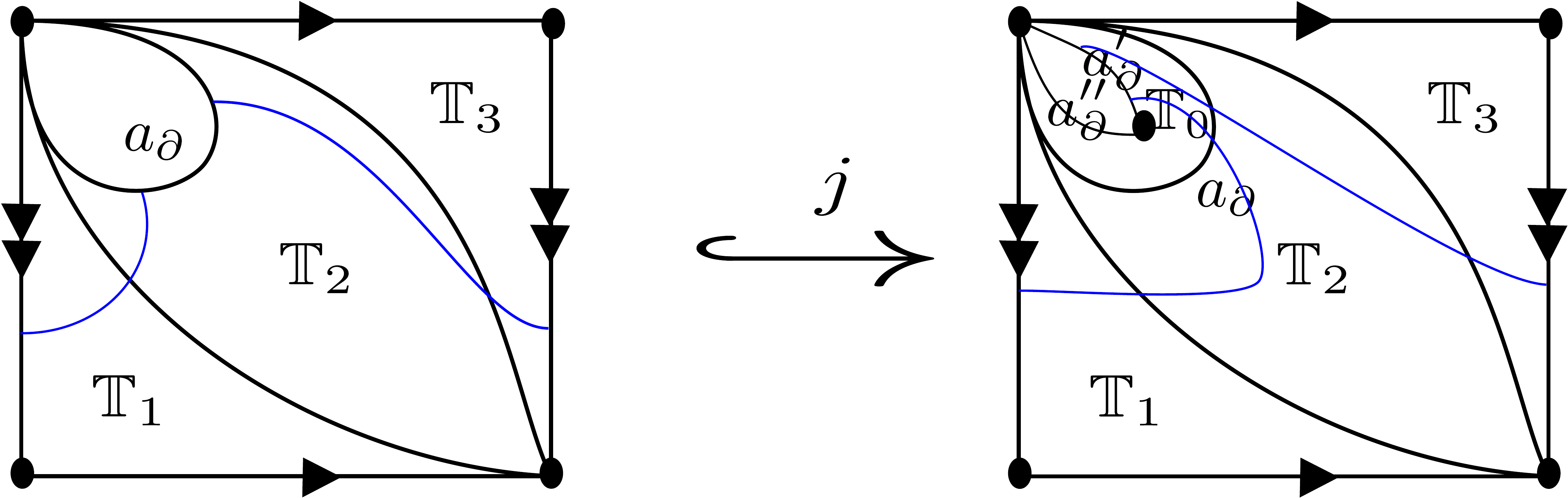} }
\caption{An illustration of the embedding $j$.}\label{fig_triangulation_open}
\end{figure}

L\^e and Yu proved in \cite{LeYu_SSkeinQTraces} that the embedding $j$ induces an embedding
\[ j_* \colon \ \mathcal{S}_A(\mathbf{\Sigma}_g^*) \hookrightarrow \overline{\mathcal{S}}_A(\mathbf{\Sigma}_g^{**})\]
of the non-reduced stated skein algebra of $\mathbf{\Sigma}_g^*$ onto the reduced stated skein algebra of $\mathbf{\Sigma}_g^{**}$. Therefore, by composition, we obtain an embedding
\[ \phi\colon \ \mathcal{S}_A(\mathbf{\Sigma}_g^*) \xrightarrow{j_*} \overline{\mathcal{S}}_A(\mathbf{\Sigma}_g^{**}) \xrightarrow{\Tr_q^{\Delta^*}} \mathcal{Z}_q(\mathbf{\Sigma}_g^{**}, \Delta^*).\]
The image of $\phi$ actually belongs to a smaller quantum torus that we now describe.

 Let $\overline{\mathcal{E}}:= \mathcal{E}(\Delta) \bigsqcup \{\widehat{a_{\partial}}\} $. Let $\overline{K}_{\Delta}$ denote the set of maps $\mathbf{k}\colon \overline{\mathcal{E}}(\Delta) \to \mathbb{Z}$ such that $(1)$ for any face of $\Delta$ with edges $a$, $b$, $c$, then $\mathbf{k}(a)+\mathbf{k}(b)+\mathbf{k}(c)$ is even and $(2)$ $\mathbf{k}(\widehat{a_{\partial}})$ is even. Define an injective linear map $i\colon \overline{K}_{\Delta} \hookrightarrow K_{\Delta^*}$ sending $\mathbf{k}$ to $\mathbf{k}'$ where $\mathbf{k}'(e)=\mathbf{k}(e)$ if $e\in \mathring{\mathcal{E}}(\Delta)$ and $\mathbf{k}'(a_{\partial})=\mathbf{k}(a_{\partial})+\mathbf{k}(\widehat{a_{\partial}})$, $\mathbf{k}'(a'_{\partial})=-\mathbf{k}(a_{\partial})$ and $\mathbf{k}'(a''_{\partial})=0$.
 Let $\langle \cdot, \cdot \rangle\colon \overline{\mathcal{E}} \otimes_{\mathbb{Z}} \overline{\mathcal{E}} \to \mathbb{Z}$ be the skew-symmetric form defined by $\langle x,y\rangle := ( i(x), i(y))^{\rm WP}$.
 Let $\overline{\mathcal{Z}}_{q}(\mathbf{\Sigma}_g^*, \Delta) \subset \mathcal{Z}_{q}(\mathbf{\Sigma}^{**}, \Delta^*)$ be the submodule spanned by elements $Z^{i(\mathbf{k})}$ for $\mathbf{k}\in \overline{K}_{\Delta}$. An easy computation shows that the map $\phi$ takes values in the smaller quantum torus $\overline{\mathcal{Z}}_{q}(\mathbf{\Sigma}, \Delta) \cong \mathbb{T}_q( \overline{K}_{\Delta}, \langle \cdot, \cdot \rangle )$.

 \begin{Definition}[{L\^e--Yu \cite{LeYu_SSkeinQTraces}}] The injective morphism $\phi\colon \mathcal{S}_A(\mathbf{\Sigma}_g^*) \hookrightarrow \overline{\mathcal{Z}}_q(\mathbf{\Sigma}_g^*, \Delta) $ will be referred to as the \textit{refined quantum trace}.
 \end{Definition}

 Let $\overline{K}_{\Delta}^0$ be the kernel of the skew-symmetric form
\[ \overline{K}_{\Delta} \otimes_{\mathbb{Z}} \overline{K}_{\Delta} \xrightarrow{ \langle \cdot, \cdot \rangle } \mathbb{Z} \to \mathbb{Z}/N\mathbb{Z}.\]
 Let $\mathbf{k}_{\widehat{a_{\partial}}} \in \overline{K}_{\Delta}$ be defined by $\mathbf{k}_{\widehat{a_{\partial}}} (\widehat{a_{\partial}})=2$ and $\mathbf{k}_{\widehat{a_{\partial}}} (e)=0$ for all $e\in \mathcal{E}(\Delta)$. So by definition, one has $\overline{K}_{\Delta}=K_{\Delta}\oplus \mathbb{Z} \mathbf{k}_{\widehat{a_{\partial}}} $.

 \begin{Lemma}\label{lemma_refined}
 One has
\[ \overline{K}_{\Delta}^0 = K_{\Delta}^0 \oplus N\mathbb{Z}\mathbf{k}_{\widehat{a_{\partial}}}.\]
 Therefore the PI-degree of $\overline{\mathcal{Z}}_q(\mathbf{\Sigma}_g^*, \Delta) $ is equal to $N^{3g}$, i.e., to the PI-degree of $\mathcal{S}_A(\mathbf{\Sigma}_g^*) $.
 \end{Lemma}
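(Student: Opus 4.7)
My plan is to pull back the Weil--Petersson form on $K_{\Delta^*}$ along the embedding $i$ and compute the radical mod $N$ of the resulting skew form $\langle\cdot,\cdot\rangle$ on $\overline{K}_{\Delta}$, using the direct sum decomposition $\overline{K}_{\Delta}=K_{\Delta}\oplus \mathbb{Z}\mathbf{k}_{\widehat{a_{\partial}}}$. First I would make the form explicit face by face: writing $\langle x,y\rangle = (i(x),i(y))^{\mathrm{WP}}_{\Delta^*}$, each face of $\Delta^*$ contributes a term, and the contribution of a face $\mathbb{T}\in F(\Delta)$ is the same as the original WP-contribution of $\mathbb{T}$ in $\Delta$, except that the face $\mathbb{T}_1$ containing $a_{\partial}$ and the new triangle $\mathbb{T}_0$ (with edges $a_{\partial},a'_{\partial},a''_{\partial}$) require care because of the nontrivial definition of $i$ on $a_{\partial},a'_{\partial},a''_{\partial}$.

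The crucial observation is that the restriction $\langle\cdot,\cdot\rangle|_{K_{\Delta}\times K_{\Delta}}$ coincides with the Weil--Petersson form $(\cdot,\cdot)^{\mathrm{WP}}$ on $K_{\Delta}$. Indeed, for $\mathbf{k}_1,\mathbf{k}_2\in K_{\Delta}\subset \overline{K}_{\Delta}$ (extended by $0$ on $\widehat{a_{\partial}}$), one has $i(\mathbf{k}_j)(a_{\partial})=\mathbf{k}_j(a_{\partial})$, $i(\mathbf{k}_j)(a'_{\partial})=-\mathbf{k}_j(a_{\partial})$ and $i(\mathbf{k}_j)(a''_{\partial})=0$. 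Expanding the WP-contribution of $\mathbb{T}_0$ using its three edges and the antisymmetric coefficients $a_{e,e'}-a_{e',e}$, the six resulting terms cancel in pairs because two of the three edge-values are equal up to sign and the third is zero. Therefore the only place where $\langle\cdot,\cdot\rangle$ differs from $(\cdot,\cdot)^{\mathrm{WP}}$ concerns pairings involving the direction $\mathbf{k}_{\widehat{a_{\partial}}}$.

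Next I would compute the cross-pairing $\langle\mathbf{k},\mathbf{k}_{\widehat{a_{\partial}}}\rangle$ for $\mathbf{k}\in K_{\Delta}$. Since $i(\mathbf{k}_{\widehat{a_{\partial}}})$ is supported on the single edge $a_{\partial}$ of $\Delta^*$ (with value $2$), only the two faces $\mathbb{T}_0$ and $\mathbb{T}_1$ incident to $a_{\partial}$ contribute; a direct computation gives $\langle\mathbf{k},\mathbf{k}_{\widehat{a_{\partial}}}\rangle = 2\psi(\mathbf{k})$ for a linear functional $\psi\colon K_{\Delta}\to \mathbb{Z}$ supported on the edges of $\mathbb{T}_0\cup\mathbb{T}_1$. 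Using that $N$ is odd and hence coprime to $2$, writing a general element as $\mathbf{k}+c\,\mathbf{k}_{\widehat{a_{\partial}}}$, the condition of belonging to $\overline{K}_{\Delta}^0$ becomes a system which, combined with equation~(5.1) and the vanishing of the diagonal pairing $\langle \mathbf{k}_{\widehat{a_{\partial}}},\mathbf{k}_{\widehat{a_{\partial}}}\rangle=0$, decouples into ``$\mathbf{k}\in K_{\Delta}^0$'' and ``$c\in N\mathbb{Z}$'', giving the announced decomposition.

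The PI-degree statement then follows: using Theorem~\ref{theorem_centerCF} together with the index formula $[\overline{K}_{\Delta}:\overline{K}_{\Delta}^0] = [K_{\Delta}:K_{\Delta}^0]\cdot[\mathbb{Z}:N\mathbb{Z}]$, one reads off the square-root rank of $\overline{K}_{\Delta}/\overline{K}_{\Delta}^0$ over $\mathbb{Z}/N\mathbb{Z}$, and Theorem~\ref{theorem_QT} (De~Concini--Procesi) identifies this integer with the PI-degree, yielding $N^{3g}$. The main technical obstacle is the careful sign bookkeeping in the cancellation argument for the $\mathbb{T}_0$-contribution on $K_{\Delta}\times K_{\Delta}$: one has to fix the cyclic orientations on $\mathbb{T}_0$ and $\mathbb{T}_1$ consistently with the surface orientation of $\Sigma_g^{**}$ so that the six expansion terms really cancel in pairs, and verify carefully that the cross-pairing with $\mathbf{k}_{\widehat{a_{\partial}}}$ lands in $2\mathbb{Z}$ rather than an unbalanced integer.
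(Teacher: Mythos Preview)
Your plan is essentially the paper's proof. The paper also writes $\overline{\mathbf{k}}_i=\mathbf{k}_i+n_i\mathbf{k}_{\widehat{a_{\partial}}}$ in the decomposition $\overline{K}_{\Delta}=K_{\Delta}\oplus\mathbb{Z}\mathbf{k}_{\widehat{a_{\partial}}}$, asserts after a ``straightforward computation'' the closed formula
\[
\big\langle \overline{\mathbf{k}}_1,\overline{\mathbf{k}}_2\big\rangle=(\mathbf{k}_1,\mathbf{k}_2)^{\mathrm{WP}}+\text{(cross terms in }n_i,\mathbf{k}_j(a_{\partial})\text{)},
\]
reads off the radical decomposition, and then multiplies indices $[\overline{K}_{\Delta}:\overline{K}_{\Delta}^0]=N\cdot[K_{\Delta}:K_{\Delta}^0]$ to get the PI-degree $N^{3g}$. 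Your face-by-face analysis of the $\mathbb{T}_0$ contribution and the cross-pairing $\langle\mathbf{k},\mathbf{k}_{\widehat{a_{\partial}}}\rangle=2\psi(\mathbf{k})$ is just an explicit unpacking of that ``straightforward computation''; the paper simply states the endpoint without showing the cancellation you describe. One minor difference: the paper's displayed cross term is a single multiple of $\mathbf{k}(a_{\partial})$, whereas your $\psi$ is supported on all edges of $\mathbb{T}_0\cup\mathbb{T}_1$---carrying out your computation carefully will show which edges actually survive, and this is precisely the orientation bookkeeping you already flagged as the main obstacle.
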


 \begin{proof}
 Let $\overline{\mathbf{k}}_1= \mathbf{k}_1+n_1 \mathbf{k}_{\widehat{a_{\partial}}}$ and $\overline{\mathbf{k}}_2= \mathbf{k}_2+n_2 \mathbf{k}_{\widehat{a_{\partial}}}$ be two maps in $\overline{K}_{\Delta}=K_{\Delta}\oplus \mathbb{Z} \mathbf{k}_{\widehat{a_{\partial}}} $. Then a~straightforward computation shows that
\[ \big\langle \overline{\mathbf{k}}_1, \overline{\mathbf{k}}_2\big\rangle = \big( i\big( \overline{\mathbf{k}}_1\big), i\big( \overline{\mathbf{k}}_2\big)\big)^{\rm WP} = (\mathbf{k}_1, \mathbf{k}_2)^{\rm WP} +n_1\mathbf{k}_1(a_{\partial}) -n_2\mathbf{k}_2(a_{\partial}).\]
 The equality $ \overline{K}_{\Delta}^0 = K_{\Delta}^0 \oplus N\mathbb{Z}\mathbf{k}_{\widehat{a_{\partial}}} $ follows. We deduce the index:
\[ \big|\overline{K}_{\Delta} : \overline{K}_{\Delta}^0\big| = N\times \big|K_{\Delta} : K_{\Delta}^0 \big| = N \times N^{3g-1}= N^{3g}.\]
 Therefore the PI-degree of $\overline{\mathcal{Z}}_q(\mathbf{\Sigma}_g^*, \Delta) $ is equal to $N^{3g}$.
 \end{proof}

Let $\overline{\mathcal{Y}}(\mathbf{\Sigma}_g^*, \Delta):= \operatorname{Specm} \big( \overline{\mathcal{Z}}_{+1}(\mathbf{\Sigma}_g^*, \Delta) \big)$. The refined quantum trace taken at $A^{1/2}=+1$ defines a dominant morphism
\[ \overline{\mathcal{NA}}\colon \ \overline{\mathcal{Y}}(\mathbf{\Sigma}_g^*, \Delta) \to \mathcal{X}(\mathbf{\Sigma}_g^*).\]

A point $\rho \in \mathcal{X}(\mathbf{\Sigma}_g^*)$ is said to \textit{admit a }$\Delta$-lift if it is in the image of~$\overline{\mathcal{NA}}$. The dominance of $\overline{\mathcal{NA}}$ and the irreducibility of $\mathcal{X}(\mathbf{\Sigma}_g^*)$ show that this is a generic condition. A point $\rho$ in the big cell $\mathcal{X}^0=\mu^{-1}\big(\SL_2^0\big)$ admits a $\Delta$-lift if and only if the representation $r_{\rho}\colon \mathcal{S}_A(\mathbf{\Sigma}_g^*) \to \End( V(\rho))$ extends through $\phi$ to a representation of $ \overline{\mathcal{Z}}_q(\mathbf{\Sigma}_g^*, \Delta) $.

For $\gamma \subset \Sigma_{g,1}$ a simple closed curve, since $\gamma$ can be isotoped outside of the triangle $\mathbb{T}_0$, the formula in Lemma \ref{lemma_qtr} still holds, that is one has
\begin{equation*}
 \phi ([\gamma]) = \sum_{\hat{s} \in {\rm St}^a(\gamma)} A^{n(\hat{s})/2} Z^{\mathds{k}(\hat{s})}
 \end{equation*}
 for the same integers $n(\hat{s})$ as in Lemma \ref{lemma_qtr}. Consider $K_{\Delta}$ as the subset of $\overline{K}_{\Delta}$ of maps sending~$\widehat{a}_{\partial}$ to~$0$ and recall the definition of the set $\overline{St}^a(\gamma) \subset K_{\Delta}\subset \overline{K}_{\Delta}$.
 For $\mathbf{z}\in \overline{\mathcal{Y}}(\mathbf{\Sigma}_g^*, \Delta)$ and $\mathbf{k}\in K_{\Delta}$, define the numbers $x_{\mathbf{z}, \mathbf{k}}(\gamma)\in \mathbb{C}$ in the same manner than in the previous subsection, i.e., by the formula
\[x_{{\mathbf{z}}, \mathbf{k}}(\gamma):= \sum_{ \mathbf{k}_0 \in S(\gamma),\, \mathbf{k}+\mathbf{k}_0=\mathds{k}(\hat{s})} {\mathbf{z}}(\mathbf{k}_0) A^{n(\hat{s})/2}.\]
 Let $\mathcal{J}_{\mathbf{z}}\subset \overline{\mathcal{Z}}_q(\mathbf{\Sigma}_g^*, \Delta)$ be the ideal generated by the kernel of $\mathbf{z}$.
 The following analogues of Lemma \ref{lemma_criterion1}, Proposition \ref{prop_kernel_QT} and Theorem \ref{theorem_criterion} hold in the non reduced case.

 \begin{Proposition}\quad
 \begin{enumerate}\itemsep=0pt
 \item[$1.$] For $\mathbf{z} \in \overline{\mathcal{Y}}(\mathbf{\Sigma}_g^*, \Delta)$ and $\lambda \in \mathbb{C}^*$, one has $\phi(\alpha)\equiv \lambda \phi(\beta) \pmod{\mathcal{J}_{\mathbf{z}}}$ if and only if $x_{\mathbf{z}, \mathbf{k}}(\alpha) = \lambda x_{\mathbf{z}, \mathbf{k}}(\beta)$ for all $\mathbf{k}\in K_{\Delta}$.
 \item[$2.$] Let $\pi\colon G\to \End(W(\mathcal{O}))$ a representation associated to an orbit in the big cell, $\phi \in G$ and~$\alpha$ a~simple closed curve such that $\alpha$ and $\beta:=\phi(\alpha)$ are not isotopic. Suppose that there exists $\rho \in \mathcal{O}$ which admits a $\Delta$-lift, i.e., such that $\rho=\overline{\mathcal{NA}}(\mathbf{z})$. Further suppose that
\[ \phi(\alpha) \not \equiv \lambda \phi(\beta) \pmod{\mathcal{J}_{\mathbf{z}}}, \qquad \text{for all }\lambda \in \mathbb{C}^*.\]
 Then $\pi(\phi)\neq \id$.
 \item[$3.$] Let $\pi\colon G\to \End(W(\mathcal{O}))$ a representation associated to an orbit in the big cell, $\phi \in G$ and~$\alpha$ a~simple closed curve such that $\alpha$ and $\beta:=\phi(\alpha)$ are not isotopic. If $(1)$ there exists $\rho \in \mathcal{O}$ which admits a $\Delta$-lift and~$(2)$ both $\alpha$ and $\beta$ intersect the edges of~$\Delta$ at most $N-1$ times, then $\pi(\phi)\neq \id$.
 \end{enumerate}
 \end{Proposition}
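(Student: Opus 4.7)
The plan is to mirror the arguments of Section~\ref{sec_QT_reduced} for the reduced case, with the refined quantum trace $\phi$ replacing $\Tr_A^{\Delta}$ and Lemma~\ref{lemma_refined} playing the role of equation~\eqref{eq_K0}.

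For item~$1$, I would first use Theorem~\ref{theorem_QT} to produce a basis $\{Z^{\mathbf{k}}\}_{\mathbf{k}\in \overline{B}}$ of $\overline{\mathcal{Z}}_q(\mathbf{\Sigma}_g^*, \Delta)/\mathcal{J}_{\mathbf{z}}$ indexed by a set $\overline{B}$ of representatives of $\overline{K}_{\Delta}/\overline{K}_{\Delta}^0$. Lemma~\ref{lemma_refined} yields the direct-sum decomposition $\overline{K}_{\Delta}^0 = K_{\Delta}^0 \oplus N\mathbb{Z}\mathbf{k}_{\widehat{a_{\partial}}}$, so $K_{\Delta}^0 = K_{\Delta}\cap \overline{K}_{\Delta}^0$ and the inclusion $K_{\Delta}\hookrightarrow \overline{K}_{\Delta}$ descends to an injection $K_{\Delta}/K_{\Delta}^0 \hookrightarrow \overline{K}_{\Delta}/\overline{K}_{\Delta}^0$; hence $\overline{B}$ can be chosen to contain a set $B$ of representatives of $K_{\Delta}/K_{\Delta}^0$. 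Since a simple closed curve $\gamma$ can be isotoped outside the extra triangle $\mathbb{T}_0$, every admissible full state satisfies $\mathds{k}(\hat{s})(\widehat{a_{\partial}})=0$, so $\underline{\St}^a(\gamma)\subset K_{\Delta}$ and $\phi([\gamma]) \equiv \sum_{\mathbf{k}\in B} x_{\mathbf{z}, \mathbf{k}}(\gamma) Z^{\mathbf{k}} \pmod{\mathcal{J}_{\mathbf{z}}}$, with zero coefficients on the remaining cosets in $\overline{B}\setminus B$. Matching coefficients in the congruence $\phi(\alpha)\equiv \lambda \phi(\beta)\pmod{\mathcal{J}_{\mathbf{z}}}$ yields item~$1$.

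The crucial step for item~$2$ is the non-reduced analogue of equation~\eqref{eq_isom}. The existence of a $\Delta$-lift $\mathbf{z}$ of $\rho$ through $\overline{\mathcal{NA}}$ means that the irreducible $\mathcal{S}_A(\mathbf{\Sigma}_g^*)$-module $V(\rho)$ extends through $\phi$ to an irreducible module over $\overline{\mathcal{Z}}_q(\mathbf{\Sigma}_g^*, \Delta)$, and because the two algebras share the same PI-degree $N^{3g}$ by Lemma~\ref{lemma_refined} and Theorem~\ref{theorem_PI_Deg} this extension identifies $\End(V(\rho)) \cong \mathcal{S}_A(\mathbf{\Sigma}_g^*)/\mathcal{I}_{\rho}$ with $\overline{\mathcal{Z}}_q(\mathbf{\Sigma}_g^*, \Delta)/\mathcal{J}_{\mathbf{z}}$ through $\phi$. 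The hypothesis $\phi(\alpha) \not\equiv \lambda\phi(\beta)\pmod{\mathcal{J}_{\mathbf{z}}}$ then translates into $\alpha \not\equiv z\beta \pmod{\mathcal{I}_{\rho}}$ for every $z\in \mathbb{C}^*$, and Proposition~\ref{prop_kernel} gives $\pi(\phi)\neq \id$.

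For item~$3$, I would establish the obvious analogue of Lemma~\ref{lemma_criterion2}: if $\gamma$ meets each edge of $\Delta$ at most $N-1$ times, then $\underline{\St}^a(\gamma) \to \overline{K}_{\Delta}/\overline{K}_{\Delta}^0$ is injective. Indeed, Lemma~\ref{lemma_refined} combined with the vanishing $\mathds{k}(\hat{s})(\widehat{a_{\partial}})=0$ reduces the statement to Lemma~\ref{lemma_criterion2} itself. The rest is a verbatim adaptation of the proof of Theorem~\ref{theorem_criterion}: picking $\mathbf{k}\in \underline{\St}^a(\beta)\setminus \underline{\St}^a(\alpha)$ gives $x_{\mathbf{z}, \mathbf{k}}(\alpha)=0$ and $x_{\mathbf{z}, \mathbf{k}}(\beta)\neq 0$, so items~$1$ and~$2$ conclude the proof. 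The main obstacle is item~$2$: one must verify that the central character of $\overline{\mathcal{Z}}_q(\mathbf{\Sigma}_g^*, \Delta)$ determined by $\mathbf{z}$ pulls back along $\phi$ to the central character of $\mathcal{S}_A(\mathbf{\Sigma}_g^*)$ associated to $\rho$, which requires a refined analogue of the identity $\Tr_{+1}^{\Delta}\circ \operatorname{Ch}_A = \operatorname{Fr}_N\circ \Tr_A^{\Delta}$. Once this compatibility is in place, equality of PI-degrees forces $\phi$ to induce an isomorphism of simple matrix algebras between the two specialisations.
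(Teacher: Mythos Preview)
Your proposal is correct and follows exactly the strategy the paper indicates: its proof of this Proposition consists of the single sentence that the arguments are straightforward adaptations of Lemma~\ref{lemma_criterion1}, Proposition~\ref{prop_kernel_QT} and Theorem~\ref{theorem_criterion} using Lemma~\ref{lemma_refined}, and that is precisely what you carry out. The one point you flag as an obstacle---the compatibility $\phi_{+1}\circ\operatorname{Ch}_A=\operatorname{Fr}_N\circ\phi_A$ needed so that $\overline{\mathcal{NA}}(\mathbf{z})=\rho$ forces $\phi^{-1}(\mathcal{J}_{\mathbf{z}})\supset\mathcal{I}_{\rho}$---is not an issue: the paper has already absorbed this into its working definition of ``$\rho$ admits a $\Delta$-lift'' by asserting, just before the Proposition, that this is equivalent to $r_{\rho}$ extending through $\phi$ to a representation of $\overline{\mathcal{Z}}_q(\mathbf{\Sigma}_g^*,\Delta)$.
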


 \begin{proof} The proofs are straightforward adaptations of the proofs of Lemma~\ref{lemma_criterion1}, Proposition~\ref{prop_kernel_QT} and Theorem~\ref{theorem_criterion}, using Lemma~\ref{lemma_refined}, left to the reader.
 \end{proof}

 \subsection*{Acknowledgements}
 The author thanks S.~Baseilhac, D.~Callaque, F.~Costantino, A.~Quesney, T.Q.T.~L\^e and P.~Saf\-ronov for valuable conversations. He also thanks the anonymous referees for their very detailed reports which improved the quality and readability of this paper.
He acknowledges support from the Japanese Society for Promotion of Sciences, from the Centre National de la Recherche Scientifique and from the ERC DerSympApp (Grant~768679).

\pdfbookmark[1]{References}{ref}
\LastPageEnding

\end{document}